\definecolor{rouge}{rgb}{0.85,0.1,.4}
\definecolor{bleu}{rgb}{0.1,0.2,0.9}
\definecolor{violet}{rgb}{0.7,0,0.8}
\theoremstyle{plain}
\newtheorem{theorem}{Theorem}[section]
\newtheorem{lemma}[theorem]{Lemma}
\newtheorem{coro}[theorem]{Corollary}
\newtheorem{prop}[theorem]{Proposition}
\newtheorem{conj}[theorem]{Conjecture}
\theoremstyle{definition}
\newtheorem{defi}[theorem]{Definition}
\theoremstyle{remark}
\newtheorem{example}[theorem]{Example}
\newtheorem{rema}[theorem]{Remark}
\newtheorem{claim}[theorem]{Claim}
\theoremstyle{theorem}
\newtheorem{quest_alpha}{Question}
\def\quest_alpha{\Alph{quest_alpha}}
\def\g{{\mathfrak{g}}} 
\def\z{{\mathfrak{z}}}  
\def\k{{\Bbbk}}
\def\V{\mathbb{V}}
\def\rg{\ell}
\def\med{\medskip}
\def\small{\smallskip}
\def\noi{\noindent}
\def\geq{\geqslant}
\def\leq{\leqslant}
\def\N{\mathbb{N}}
\def\no{n$^{\circ}$}
\def\poie#1#2#3#4#5#6#7#8#9{\def\un{#5#6#7#8#9}\def\deux{#6#7#8#9}\def\trois{#2#4#8#9}
\def\quatre{#8#9}\def\cinq{#5#6#7}\def\six{#6#7}\def\sept{#2#4}
\ifx\un\empty {#1}_{#2}{#3 \hskip 0.15em}{#1}_{#4} \else \ifx\deux\empty 
{#5}(#1_{#2}){#3 \hskip 0.15em}{#5}(#1_{#4})
\else \ifx\trois\empty {#5}_{#6}(#1){#3 \hskip 0.15em}{#5}_{#7}(#1) 
\else \ifx\quatre\empty {#5}_{#6}(#1_#2){#3 \hskip 0.15em}{#5}_{#7}(#1_#4) 
\else \ifx\cinq\empty {#1}_{#2}^{#8}{#3 \hskip 0.15em}#1_#4^{#9} 
\else \ifx\six\empty {#5}(#1_{#2}^{#8}){#3 \hskip 0.15em}{#5}(#1_{#4}^{#9}) 
\else \ifx\sept\empty {#5}_{#6}(#1)^{#8}{#3 \hskip 0.15em}{#5}_{#7}(#1)^{#9} \else
{#5}_{#6}(#1_{#2}^{#8})^{#9}{#3 \hskip 0.15em}{#5}_{#7}(#1_{#4}^{#8})^{#9} 
\fi \fi \fi \fi \fi \fi \fi}
\def\poi#1#2#3#4#5#6#7{\def\un{#5#6#7}\def\deux{#6#7}
\def\trois{#2#4} \def\cinq{#3#4#5}
\ifx\un\empty {#1}_{#2}{#3 \hskip 0.15em}{#1}_{#4} \else
\ifx\deux\empty {#5}(#1_{#2}){#3 \hskip 0.15em}{#5}(#1_{#4}) \else
\ifx\trois\empty {#5}_{#6}(#1){#3 \hskip 0.15em}{#5}_{#7}(#1) \else
{#5_{#6}}(#1_{#2}){#3 \hskip 0.15em}{#5_{#7}}(#1_{#4}) \fi \fi \fi}
\def\rond{\raisebox{.3mm}{\scriptsize$\circ$}}
\def\mul{\raisebox{.3mm}{\scriptsize\hskip 0.15em$\times$\hskip 0.15em}}
\def\tens{\raisebox{.3mm}{\scriptsize$\otimes$}}
\def\dv#1#2{\langle {#1}\,,{#2}\rangle}
\def\tk#1#2{{#2}\otimes _{#1}}
\def\ec#1#2#3#4#5{\def\un{#3#4#5}\def\deux{#3#5}\def\trois{#3}
\def\four{#2#4#5}\def\five{#2#5}\def\six{#2}\def\seven{#3#4}
\def\eight{#2#4} \def\nine{#2#3#4}
\ifx\nine\empty {\rm #1}_{#5} \else
\ifx\un\empty {\rm #1}({\goth #2}) \else
\ifx\deux\empty {\rm #1}({\goth #2}_{#4}) \else
\ifx\trois\empty {\rm #1}_{#5}({\goth #2}_{#4}) \else
\ifx\four\empty {\rm #1}(#3) \else
\ifx\five\empty {\rm #1}(#3_{#4}) \else
\ifx\six\empty {\rm #1}_{#5}(#3_{#4}) \else
\ifx\seven\empty {\rm #1}_{#5} ({\goth#2})\else
\ifx\eight\empty {\rm #1}_{#5}({#3})
\fi \fi \fi \fi \fi \fi \fi \fi \fi}
\def\hec#1#2#3#4#5{\def\un{#3#4#5}\def\deux{#3#5}\def\trois{#3}
\def\four{#2#4#5}\def\five{#2#5}\def\six{#2}\def\seven{#3#4}
\def\eight{#2#4} \def\nine{#2#3#4}
\ifx\nine\empty \hat{{\rm #1}}_{#5} \else
\ifx\un\empty \hat{{\rm #1}}({\goth #2}) \else
\ifx\deux\empty \hat{{\rm #1}}({\goth #2}_{#4}) \else
\ifx\trois\empty \hat{{\rm #1}}_{#5}({\goth #2}_{#4}) \else
\ifx\four\empty \hat{{\rm #1}}(#3) \else
\ifx\five\empty \hat{{\rm #1}}(#3_{#4}) \else
\ifx\six\empty \hat{{\rm #1}}_{#5}(#3_{#4}) \else
\ifx\seven\empty \hat{{\rm #1}}_{#5} ({\goth#2})  \else
\ifx\eight\empty \hat{{\rm #1}}_{#5}({#3})
\fi \fi \fi \fi \fi \fi \fi \fi \fi}
\def\e#1#2{\ec {#1}#2{}{}{}}
\def\es#1#2{\ec {#1}{}{#2}{}{}}
\def\ai#1#2#3{\def\deux{#2#3} \def\trois{#3} \def\quatre{#2} 
\ifx\deux\empty \es S{{\goth #1}}^{{\goth #1}} \else
\ifx\trois\empty \es S{{\goth #1}^{#2}}^{{\goth #1}^{#2}} \else
\ifx\quatre\empty \es S{{\goth #1}_{#3}}^{{\goth #1}_{#3}} \else
\es S{{\goth #1}_{#3}^{#2}}^{{\goth #1}_{#3}^{#2}} \fi \fi \fi}
\def\aii#1#2#3#4{\def\deux{#2#3} \def\trois{#3} \def\quatre{#2} 
\ifx\deux\empty \sy {#4}{{\goth #1}}^{{\goth #1}} \else
\ifx\trois\empty \sy {#4}{{\goth #1}^{#2}}^{{\goth #1}^{#2}} \else
\ifx\quatre\empty \sy {#4}{{\goth #1}_{#3}}^{{\goth #1}_{#3}} \else
\sy {#4}{{\goth #1}_{#3}^{#2}}^{{\goth #1}_{#3}^{#2}} \fi \fi \fi}
\def\Bbb{\mathbb}
\def\goth{\mathfrak}
\def\cal{\mathcal}
\def\gi#1#2#3#4{\def\trois{#3#4} \def\quatre{#4}\def\cinq{#3}\ifx\trois\empty {\rm i}_{#1,{\goth #2}}
\else \ifx\quatre\empty {\rm i}_{#1_{#3},{\goth #2}} \else\ifx\cinq\empty {\rm i}_{#1,{\goth #2}_{#4}} \else {\rm i}_{#1_{#3},{\goth #2}_{#4}} \fi \fi \fi}
\def\j#1#2{\def\deux{#2} \ifx\deux\empty {\rm rk}\hskip .125em{{\goth #1}} \else {\rm rk}\hskip .125em{{\goth #1}_{#2}} \fi}
\def\aj#1#2{\def\deux{#2} \ifx\deux\empty {\rm j}_{{\goth #1}} \else {\rm j}_{{\goth #1}_{#2}} \fi}
\def\an#1#2{\def\deux{#2} \ifx\deux\empty {\cal O}_{#1} \else {\cal O}_{#1,#2} \fi }
\def\han#1#2{\def\deux{#2} \ifx\deux\empty {\hat{{\cal O}}}_{#1} \else {\hat{{\cal O}}}_{#1,#2} \fi }
\def\dim{{\rm dim}\hskip .125em}
\def\dd{{\rm d}}
\def\ad{{\rm ad}\hskip .1em}
\def\det{{\rm det}\hskip .125em}
\def\deg{{\rm deg}\hskip .125em}
\def\n{{\rm n}}
\def\s{{\rm s}}
\def\uu{{\rm u}}
\def\sy#1#2{{\rm S}^{#1}(#2)}
\def \ex #1#2{\bigwedge ^{#1}#2}
\def\ie#1{\hskip .125em ^{e}\hskip -.125em{#1}}
\begin{document}

\title[The symmetric invariants]
{The symmetric invariants of centralizers and Slodowy grading}

\author[Jean-Yves Charbonnel]{Jean-Yves Charbonnel}
\address{Jean-Yves Charbonnel, Universit\'e Paris Diderot - CNRS \\
Institut de Math\'ematiques de Jussieu - Paris Rive Gauche\\
UMR 7586 \\ Groupes, repr\'esentations et g\'eom\'etrie \\
B\^atiment Sophie Germain \\ Case 7012 \\ 
75205 Paris Cedex 13, France}
\email{jean-yves.charbonnel@imj-prg.fr}

\author[Anne Moreau]{Anne Moreau}
\address{Anne Moreau, Laboratoire de Math\'ematiques et Applications\\
T\'el\'eport 2 - BP 30179\\
Boulevard Marie et Pierre Curie\\
86962 Futuroscope Chasseneuil Cedex, France}
\email{anne.moreau@math.univ-poitiers.fr}

\subjclass
{17B35,17B20,13A50,14L24}

\keywords{symmetric invariant, centralizer, polynomial algebra, Slodowy grading}

\date\today

\begin{abstract}
Let $\g$ be a finite-dimensional simple Lie algebra of rank $\rg$ over an 
algebraically closed field $\k$ of characteristic zero, and let $e$ be a nilpotent 
element of $\g$. Denote by $\g^{e}$ the centralizer of $e$ in $\g$ 
and by $\ai g{e}{}$ the algebra of symmetric invariants of $\g^{e}$. 
We say that $e$ is {\em good} if the nullvariety of some $\rg$ homogenous elements of 
$\ai g{e}{}$ in $({\goth g}^{e})^{*}$ has codimension $\rg$. If $e$ is good then 
$\ai g{e}{}$ is a polynomial algebra. The main result of this paper stipulates that if 
for some homogenous generators of $\ai g{}{}$, the initial homogenous components of 
their restrictions to $e+\g^{f}$ are algebraically independent, with 
 $(e,h,f)$ an $\mathfrak{sl}_2$-triple of $\g$, then $e$ is good. 
As applications, we pursue the investigations of \cite{PPY} 
and we produce (new) examples of nilpotent elements 
that satisfy the above polynomiality condition, in simple Lie algebras of both classical 
and  exceptional types. We also give a counter-example in type {\bf D}$_{7}$.
\end{abstract}

\maketitle

\tableofcontents

\section{Introduction} \label{i}

\subsection{}\label{i1}
Let $\g$ be a finite-dimensional simple Lie algebra of rank $\rg$ over an 
algebraically closed field $\k$ of characteristic zero, let $\dv ..$ be the Killing form 
of $\g$ and let $G$ be the adjoint group of $\g$. If ${\goth a}$ is a subalgebra of $\g$,
we denote by S$({\goth a})$ the symmetric algebra of ${\goth a}$. For $x\in\g$, we 
denote by $\g^{x}$ the centralizer of $x$ in $\g$ and by $G^{x}$ the stabilizer of $x$ 
in $G$. Then 
${\rm Lie}(G^{x})={\rm Lie}(G^{x}_{0})=\g^x$ where $G^{x}_{0}$ is the 
identity component of $G^{x}$.  Moreover, $\es S{\g^{x}}$ is a $\g^x$-module and 
$\ai gx{}=\es S{\g^{x}}^{G^{x}_{0}}$. An interesting question, first raised by A.~Premet,
is the following: 

\begin{quest_alpha}\label{qi1}
Is $\es S{\g^{x}}^{\g^{x}}$ a polynomial algebra in $\rg$ variables?
\end{quest_alpha}

In order to answer this question, thanks to the Jordan decomposition, we can assume that
$x$ is nilpotent. Besides, if $\es S{\g^{x}}^{\g^{x}}$ is 
polynomial for some $x\in\g$, then it is so for any element in the adjoint orbit $G.x$
of $x$. If $x=0$, it is well-known since Chevalley that $\ai gx{}=\e Sg^{\g}$ 
is polynomial in $\rg$ variables. At the opposite extreme, if $x$ is a regular nilpotent 
element of $\g$, then $\g^x$ is abelian of dimension $\rg$, \cite{DV}, and 
$\ai gx{}=\es S{\g^{x}}$ is polynomial in $\rg$ variables too. 

For the introduction, let us say most simply that $x\in\g$  
{\em satisfies the polynomiality condition} if $\ai gx{}$ is a polynomial algebra in 
$\rg$ variables.

A positive answer to Question \ref{qi1} was suggested in \cite[Conjecture~0.1]{PPY} 
for any simple $\g$ and any $x\in\g$.  
O.~Yakimova has since discovered a counter-example in type ${\bf E}_8$,~\cite{Y3}, 
disconfirming the conjecture. More precisely, the elements of the minimal nilpotent orbit 
in ${\bf E}_8$ do not satisfy the polynomiality condition. 
The present paper contains another counter-example in type {\bf D}$_7$  
(cf.~Example~\ref{erc3}). In particular, we cannot expect a positive answer 
to~\cite[Conjecture 0.1]{PPY} for the simple Lie algebras of classical type. 
Question~\ref{qi1} still remains interesting and has a positive answer for a large number
of nilpotent elements $e\in\g$ as it is explained below. 

\subsection{Review of known results}\label{i2}
We briefly review in this paragraph what has been achieved so far about Question 
\ref{qi1}. Recall that the {\em index} of a finite-dimensional Lie algebra ${\goth q}$, 
denoted by ${\rm ind}\,{\goth q}$, is the minimal dimension of the stabilizers of linear 
forms on ${\goth q}$ for the coadjoint representation, (cf.~\cite{Di1}):
$${\rm ind}\,{\goth q}:=\min \{\dim{\goth q}^{\xi} \;;\; \xi\in{\goth q}^{*}\}\;  
\text{ where }\;{\goth q}^\xi:=\{x\in {\goth q}\;;\;\xi([x,{\goth q}])=0\}.$$
By \cite{Ro}, if $\goth q$ is algebraic, i.e., ${\goth q}$ is the Lie algebra of some 
algebraic linear group $Q$, then the index of ${\goth q}$ is the transcendence degree of
the field of $Q$-invariant rational functions on ${\goth q}^*$. 
The following result will be important for our purpose. 

\begin{theorem}[{\cite[Theorem 1.2]{CM}}]\label{ti1}
The index of $\g^x$ is equal to $\rg$ for any $x\in\g$.
\end{theorem}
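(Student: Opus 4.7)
The plan is to reduce to the nilpotent case by the Jordan decomposition and then establish the two inequalities $\ind\g^{x}\geq\rg$ and $\ind\g^{x}\leq\rg$ separately, with the second requiring an explicit construction from the Slodowy slice.

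\medskip

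\noi\emph{Reduction.} If $x=x_{s}+x_{n}$ is the Jordan decomposition, then $\g^{x_{s}}$ is a reductive subalgebra of $\g$ containing a Cartan, so its rank is $\rg$, and $\g^{x}=(\g^{x_{s}})^{x_{n}}$ with $x_{n}$ nilpotent in $\g^{x_{s}}$. Decomposing $\g^{x_{s}}$ as the sum of its centre and the simple ideals of its derived algebra, and using additivity of the index under such decompositions, the problem reduces to computing $\ind(\g')^{e}$ for a simple $\g'$ of rank the rank of $\g'$ and a nilpotent $e\in\g'$. So I assume from now on that $x=e$ is nilpotent in the simple Lie algebra $\g$.

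\medskip

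\noi\emph{Lower bound.} The inequality $\ind\g^{e}\geq\rg$ follows from a general-position argument. Taking $\eta\in\g^{*}$ regular, a dimension count on coadjoint orbits and their restriction to $(\g^{e})^{*}$ shows that the generic stabilizer of a linear form on $\g^{e}$ has dimension at least $\rg$; this can also be extracted from Vinberg's inequality applied to the inclusion $\g^{e}\subset\g$.

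\medskip

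\noi\emph{Upper bound.} Fix an $\mathfrak{sl}_{2}$-triple $(e,h,f)$. By Kostant's transversality $\g=\g^{f}\oplus[e,\g]$, and since $(\g^{e})^{\perp}=[e,\g]$ under the Killing form by ad-invariance, the form identifies $(\g^{e})^{*}$ with $\g^{f}$. By Kostant's slice theorem the slice $e+\g^{f}$ meets $\g_{\mathrm{reg}}$ in a dense open subset, so I may pick $y\in\g^{f}$ with $e+y\in\g_{\mathrm{reg}}$, whence $\dim\g^{e+y}=\rg$. For the corresponding $\xi\in(\g^{e})^{*}$, using $\langle y,[z,z']\rangle=\langle [y,z],z'\rangle$ I obtain
\[ (\g^{e})^{\xi}=\{z\in\g^{e}:[y,z]\in[e,\g]\}. \]
The main obstacle is to bound this space by $\rg$. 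The direct inclusion $\g^{e}\cap\g^{y}\subseteq(\g^{e})^{\xi}$ exhibits an $\rg$-dimensional subspace but does not settle the upper bound, since $(\g^{e})^{\xi}$ may be strictly larger. I would refine the argument using the $\ad h$-grading: all the spaces in sight ($\g^{e}$, $\g^{f}$, $\g^{e+y}$, $[e,\g]$, $(\g^{e})^{\xi}$) acquire weight decompositions, and an $\mathfrak{sl}_{2}$-representation analysis compares the Hilbert series of $\g^{e}$ with that of $\g^{e+y}$. Combined with Kostant's freeness theorem for the restriction of ${\rm S}(\g)^{\g}$ to $e+\g^{f}$, which yields $\rg$ algebraically independent functions whose differentials span $(\g^{f})^{*}$ at a regular point, this should pin down the total dimension of $(\g^{e})^{\xi}$ at $\rg$, concluding the proof.
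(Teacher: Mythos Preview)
The paper does not actually prove this theorem: it is quoted from \cite{CM}, and the surrounding text records that the result (Elashvili's conjecture) was first settled case-by-case, by Yakimova \cite{Y1} for classical types and by de~Graaf \cite{DeG} via computer for exceptional types, before the uniform argument of \cite{CM}. So there is no in-paper proof to compare your proposal against.

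That said, your proposal has a real gap precisely at the hard step. The reduction to nilpotent $e$ is standard and correct, and the lower bound $\ind\g^{e}\geq\rg$ is indeed the easy direction (it follows, for instance, from the existence of $\rg$ algebraically independent elements $\ie q_{1},\ldots,\ie q_{\rg}$ in $\ai ge{}$, cf.\ \cite[Corollary~2.3]{PPY}, or from Vinberg-type inequalities as you indicate). The upper bound, however, is the entire content of Elashvili's conjecture, and your sketch does not establish it. Your identification $(\g^{e})^{\xi}=\{z\in\g^{e}:[y,z]\in[e,\g]\}$ is correct, but the claim that $\g^{e}\cap\g^{y}$ has dimension $\rg$ is unjustified (you only know that $e+y$ is regular, not $y$; and $\g^{e+y}$, which \emph{is} $\rg$-dimensional, need not lie in $\g^{e}$). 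More importantly, you yourself acknowledge that this inclusion gives no upper bound on $\dim(\g^{e})^{\xi}$. The closing appeal to ``an $\mathfrak{sl}_{2}$-representation analysis'' and Kostant's freeness theorem, ending with ``this should pin down the total dimension'', is not an argument: Kostant's slice theorem controls the transcendence degree of invariant functions on $e+\g^{f}$, not the dimension of a coadjoint stabilizer in $\g^{e}$. The difficulty of closing this gap is exactly why the conjecture stood open for years and why all known proofs require either detailed case analysis or the substantially more elaborate machinery developed in \cite{CM}.
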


Theorem \ref{ti1} was first conjectured by Elashvili in the 
90's motivated by a result of Bolsinov, \cite[Theorem 2.1]{Bolb}. 
It was proven by O.~Yakimova when $\g$ is a simple Lie algebra 
of classical type, \cite{Y1}, and checked by a program by W. de Graaf 
when $\g$ is a simple Lie algebra of exceptional type, \cite{DeG}. 
Before that, the result was established for some particular classes of nilpotent elements
by D.~Panyushev, \cite{Pa2}. 

Theorem \ref{ti1} is deeply related to Question~\ref{qi1}. 
First of all, it implies that if $S(\g^{e})^{\g^{e}}$ is polynomial, it is so 
in $\ell$ variables. Further, according 
to Theorem \ref{ti1}, the main results of \cite{PPY} that we summarize below 
apply (see Theorem \ref{ti2}).  

Let $e$ be a nilpotent element of $\g$. 
By the Jacobson-Morosov Theorem, $e$ is embedded into a ${\goth {sl}}_2$-triple 
$(e,h,f)$ of $\g$. Denote by ${\cal S}_e:=e+\g^{f}$ the  
{\em Slodowy slice associated  with $e$}. Identify $\g^*$ with $\g$, and 
$({\g}^{e})^*$ with ${\goth g}^{f}$, through the Killing form $\dv ..$. For 
$p$ in $\e Sg\simeq\k[\g^*]\simeq\k[\g]$, denote by $\ie{p}$ the initial homogenous 
component of its restriction to ${\cal S}_e$. According to~\cite[Proposition 0.1]{PPY}, 
if $p$ is in $\ai g{}{}$, then $\ie{p}$ is in $\ai ge{}{}$. 
Let $(\g^{e})^*_{\rm sing}$ be the set of nonregular linear forms 
$x\in (\g^{e})^*$, i.e., 
$$ 
(\g^{e})^*_{\rm sing}:=\{x\in (\g^e)^*  \; \vert \; 
\dim (\g^{e})^x > {\rm ind}\,\g^{e}=\rg\}.
$$ 
If $(\g^{e})^*_{\rm sing}$ has codimension at least $2$ in 
$(\g^{e})^*$, we say that ${\goth g}^{e}$ is {\em nonsingular}.

\begin{theorem}[{\cite[Theorem 0.3]{PPY}}] \label{ti2}
Suppose that the following two conditions are satisfied:
\begin{itemize}
\item[(1)] for some homogenous generators $\poi q1{,\ldots,}{\rg}{}{}{}$ of 
$\ai g{}{}$, the polynomial functions $\poi {\ie q}1{,\ldots,}{\rg}{}{}{}$ are 
algebraically independent, 
\item[(2)] $\g^{e}$ is nonsingular. 
\end{itemize}
Then $S(\g^{e})^{\g^{e}}$ 
is a polynomial algebra with generators $\ie{q_1},\ldots,\ie{q_\ell}$. 
\end{theorem}

As a consequence of Theorem \ref{ti2}, if $\g$ is simple of type {\bf A} or {\bf C}, 
then all nilpotent elements of $\g$ satisfy the polynomiality condition, 
cf.~\cite[Theorems 4.2 and 4.4]{PPY}. The result for 
the type {\bf A} was independently obtained by Brown and Brundan,~\cite{BB}.  
In \cite{PPY}, the authors also provide some examples of nilpotent elements satisfying 
the polynomiality condition in the simple Lie algebras of types {\bf B} and {\bf D}, and 
a few ones in the simple exceptional Lie algebras. 

At last, note that the analogue question to Question~\ref{qi1} for the positive 
characteristic was dealt with by L.~Topley for the simple Lie algebras of types {\bf A} 
and {\bf C},~\cite{To}. 

\subsection{Main results}\label{i3}
The main goal of this paper is to continue the investigations of \cite{PPY}. 
Let us describe our main results. The following definition is central 
in our work (cf.~Definition \ref{dge1}):

\begin{defi} \label{di}
An element $x\in{\goth g}$ is called a {\em good element of ${\goth g}$} if for some 
homogenous sequence $(\poi p1{,\ldots,}{\rg}{}{}{})$ in $\ai g{x}{}$, the nullvariety of 
$\poi p1{,\ldots,}{\rg}{}{}{}$ in $({\goth g}^{x})^{*}$ has codimension $\rg$ in 
$({\goth g}^{x})^{*}$. 
\end{defi}

For example, regular nilpotent elements are good. 
Indeed, for $e$ in the regular nilpotent orbit of $\g$ and $(q_1,\ldots q_\rg)$ a 
homogenous generating family of $\ai g{}{}$, it is well-known that 
$\ie{q_i}={\rm d}_e q_i$ for $i=1,\ldots,\rg$ 
and that $({\rm d}_e q_1,\ldots,{\rm d}_e q_\rg)$ 
forms a basis of $\g^{e}$, \cite{Ko}. Hence $e$ is good. 

Also, by \cite[Theorem 5.4]{PPY}, all nilpotent elements of a simple Lie algebra 
of type {\bf A} are good. Moreover, according to \cite[Corollary 8.2]{Y4},  
{\em even}\footnote{i.e., 
this means that the Dynkin grading of $\g$ associated with the nilpotent element has no 
odd term.} nilpotent elements without odd (respectively even) Jordan blocks of 
$\g$ are good if $\g$ is of type {\bf C} (respectively {\bf B} or {\bf D}). 
We generalize these results (cf.~Theorem~\ref{tca1}, Corollary \ref{cca2} 
and Remark \ref{rca2}). 

The good elements satisfy the polynomiality condition (cf.~Theorem~\ref{tge1}): 

\begin{theorem} \label{ti3}
Let $x$ be a good element of ${\goth g}$. Then $\ai gx{}$ is a polynomial algebra and 
$\es S{{\goth g}^{x}}$ is a free extension of $\ai gx{}$. 
\end{theorem}

Furthermore, $x$ is good if and only if so is its nilpotent 
component in the Jordan decomposition (cf.~Proposition~\ref{pge1}). 
As a consequence, we can restrict the study to the case of nilpotent elements. 

The main result of the paper 
is the following (cf.~Theorem~\ref{tge2}) whose proof is outlined in Subsection \ref{i4}:

\begin{theorem} \label{ti4}
Suppose that for some homogenous generators $\poi q1{,\ldots,}{\rg}{}{}{}$ of 
$\ai g{}{}$, the polynomial functions $\poi {\ie q}1{,\ldots,}{\rg}{}{}{}$ are 
algebraically independent. Then $e$ is a good element of ${\goth g}$. In particular, 
$\ai ge{}$ is a polynomial algebra and $\es S{{\goth g}^{e}}$ is a free extension of 
$\ai ge{}$. Moreover, $(\poi {\ie q}1{,\ldots,}{\rg}{}{}{})$ is a regular sequence in 
$\es S{{\goth g}^{e}}$.
\end{theorem}

In other words, Theorem \ref{ti4} says that Condition (1) of Theorem \ref{ti2} 
is sufficient to ensure the polynomiality of $S(\g^{e})^{\g^{e}}$. However, 
if only Condition (1) of Theorem \ref{ti2} is satisfied, the (polynomial) algebra  
$S(\g^{e})^{\g^{e}}$ is not necessarily generated by the polynomial functions 
$\poi {\ie q}1{,\ldots,}{\rg}{}{}{}$. 
As a matter of fact, there are nilpotent elements $e$ satisfying Condition 
(1) and for which $S(\g^{e})^{\g^{e}}$ is not generated by some 
$\poi {\ie q}1{,\ldots,}{\rg}{}{}{}$, for any 
choice of homogenous generators $\poi q1{,\ldots,}{\rg}{}{}{}$ of 
$\ai g{}{}$ (cf. Remark \ref{r5ca3}). 

Theorem \ref{ti4} can be applied to a great number of nilpotent orbits in the simple 
classical Lie algebras (cf.~Section~\ref{ca}), and for some nilpotent orbits in the 
exceptional Lie algebras (cf.~Section~\ref{et}). 
For example, according to \cite[Proposition 6.3]{PY} and Theorem \ref{ti4}, 
the elements of the subregular nilpotent orbit of $\g$ are good. 

To state our results for the simple Lie algebras of
types {\bf B} and {\bf D}, let us introduce some more notations. Assume that 
$\g={\goth {so}}(\V)\subset {\goth {gl}}(\V)$ for some vector space $\V$ of dimension 
$2\rg+1$ or $2\rg$. For an endomorphism $x$ of ${\Bbb V}$ and for 
$i\in\{1,\ldots,\dim {\Bbb V}\}$, denote by $Q_{i}(x)$ the coefficient of degree 
$\dim {\Bbb V}-i$ of the characteristic polynomial of $x$. Then for any $x$ in 
${\goth g}$, $Q_{i}(x)=0$ whenever $i$ is odd. Define a generating family 
$\poi q1{,\ldots,}{\rg}{}{}{}$ of the algebra $\ai g{}{}$ as follows. For 
$i=1,\ldots,\rg-1$, set $q_{i}:=Q_{2i}$. If $\dim {\Bbb V}=2\rg +1$, set 
$q_{\rg}:=Q_{2\rg}$, and if $\dim {\Bbb V}=2\rg$, let $q_\ell$ be the Pfaffian 
that is a homogenous element 
of degree $\rg$ of $\ai g{}{}$ such that $Q_{2\rg}=q_{\rg}^{2}$. 
Denote by $\poi {\delta }1{,\ldots,}{\rg}{}{}{}$ the degrees of 
$\poi {\ie q}1{,\ldots,}{\rg}{}{}{}$ respectively. By \cite[Theorem 2.1]{PPY}, if 
$$\dim\g^{e}+\rg-2(\poi {\delta }1{+ \cdots +}{\rg}{}{}{}) = 0,$$
then the polynomials $\poi {\ie q}1{,\ldots,}{\rg}{}{}{}$ are algebraically independent. 
In that event, by Theorem~\ref{ti4}, $e$ is good and we will say that $e$ is 
{\em very good} (cf.~Corollary \ref{cca2} and Definition \ref{d3ca2}).  
The notion of very good element is specific to this setting  
where there are natural generators of $\ai g{}{}$. 

The very good nilpotent elements of $\g$ can be characterized in term of their 
associated partitions of $\dim\V$ (cf.~Lemma \ref{l4ca2}).
Theorem~\ref{ti4} also allows to obtain examples of good, but not very good, nilpotent 
elements of $\g$; for them, there are a few more work to do (cf.~Subsection~\ref{ca3}). 

In this way, we obtain a large number of good nilpotent elements, including all even 
nilpotent elements in type {\bf B}, or in type {\bf D} with odd rank 
(cf.~Corollary \ref{cca2}). For the type {\bf D} with even rank, we obtain the statement
for some particular cases (cf.~Theorem \ref{tca3}). 
On the other hand, there are examples of elements that satisfy the polynomiality 
condition but that are not good; see Examples~\ref{erc2} and \ref{e2rc2}. 
To deal with them, we use different techniques, more similar to those used 
in \cite{PPY}. These alternative methods are presented in Section \ref{rc}. 

As a result of all this, we observe for example that all nilpotent elements of 
${\goth {so}}(\k^{7})$ are good, and that all nilpotent 
elements of ${\goth {so}}(\k^{n})$, with $n\leq 13$, satisfy the polynomiality 
condition (cf.~Table \ref{tabrc2}).  In particular, by \cite[\S3.9]{PPY}, this provides
examples of good nilpotent elements for which $\g^{e}$ is singular. For such nilpotent 
elements, note that \cite[Theorem 0.3]{PPY} (cf. Theorem \ref{ti2}) cannot be applied. 

Our results do not cover all nilpotent orbits in type {\bf B} and {\bf D}. 
As a matter of fact, we obtain a counter-example in type {\bf D}$_7$ to Premet's 
conjecture (cf.~Example~\ref{erc3}). 

\begin{prop} \label{pi}
The nilpotent elements of ${\goth {so}}(\k^{14})$ associated with the partition 
$(3,3,2,2,2,2)$ of $14$ do not satisfy the polynomiality condition. 
\end{prop}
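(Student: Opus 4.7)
The strategy is to show that $\ai ge{}$ cannot be polynomial in $\rg=7$ variables by exhibiting obstructions that go beyond Theorem~\ref{ti3}. Since the index of $\g^e$ equals $\rg$ by Theorem~\ref{ti1}, the Krull dimension of $\ai ge{}$ is $7$, so it suffices to show that the minimal number of homogeneous generators of $\ai ge{}$ strictly exceeds $7$, or equivalently that the Hilbert series of $\ai ge{}$ cannot be written as $\prod_{i=1}^{7}(1-t^{d_i})^{-1}$ for any tuple of positive integers $(d_1,\dots,d_7)$.

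First, I would realize $e$ explicitly in $\mathfrak{so}(\k^{14})$ via an orthogonal block decomposition of $\k^{14}$ of sizes $3,3,2,2,2,2$ and complete $e$ to an $\mathfrak{sl}_2$-triple $(e,h,f)$. The standard formula yields $\dim\g^e=\tfrac{1}{2}\sum_i(\lambda_i')^2-\tfrac{1}{2}\#\{i:\lambda_i\text{ odd}\}=\tfrac{1}{2}(76-2)=37$, with reductive part isomorphic to $\mathfrak{so}_2\oplus\mathfrak{sp}_4$. Using the $\ad h$-weight decomposition of $\g^f$ together with the Slodowy grading, I would compute the degrees $\delta_1,\dots,\delta_7$ of the initial components $\ie{q_1},\dots,\ie{q_7}$ of the standard generators of $\ai g{}{}$ (the Casimirs of degrees $2,4,6,8,10,12$ together with the Pfaffian $q_7$ of degree~$7$) restricted to the Slodowy slice $\mathcal S_e=e+\g^f$.

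At this point Theorem~\ref{ti3} is expected to fail: either the numerical identity $\dim\g^e+\rg=2\sum_i\delta_i$ of \cite[Theorem~2.1]{PPY} does not hold for this partition, or a direct Jacobian computation on $\mathcal S_e$ reveals that the $\ie{q_i}$ are algebraically dependent there. To upgrade ``not good'' to ``not polynomial'', I would then use the alternative techniques of Section~\ref{rc} to exhibit further invariants of $\g^e$ produced from the reductive factor $\mathfrak{so}_2\oplus\mathfrak{sp}_4$ and from its action on the nilpotent radical of $\g^e$, and show that together with the $\ie{q_i}$ they span a subspace in low degree which cannot be realized as the homogeneous component of any polynomial algebra in $7$ variables whose Hilbert series agrees with that of $\ai ge{}$ through that degree.

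The main obstacle is ensuring completeness of this low-degree count: one must control enough homogeneous invariants in each degree to rule out every possible $7$-tuple of generator degrees compatible with the Poincaré series of $\ai ge{}$. This is a finite but delicate combinatorial check, naturally assisted by explicit matrix computation, and is the crucial step that distinguishes the partition $(3,3,2,2,2,2)$ from the many good partitions in type \textbf{D} treated earlier in the paper.
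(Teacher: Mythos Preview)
Your proposal has a genuine gap: the Hilbert-series/counting strategy you outline requires knowing, in each low degree, \emph{exactly} the dimension of the graded piece of $\ai ge{}$, not merely a lower bound coming from invariants you happen to write down. You acknowledge this (``ensuring completeness of this low-degree count''), but you give no mechanism for obtaining the upper bound, and in practice this is the entire difficulty. Producing extra invariants from the reductive factor only gives lower bounds and can never rule out a polynomial presentation by itself.

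The paper bypasses this obstacle entirely by a different route. First it establishes two structural facts: (a) the semiinvariants of $\es S{\g^e}$ are invariant (via \cite{DDV}, using that a well-chosen generator depends on the central coordinate $x_{37}$ of $\mathfrak{so}_2$), and (b) $\g^e$ is nonsingular (Claim in Example~\ref{erc3}, proved using Corollary~\ref{c2rc1},(ii) after constructing algebraically independent $\ie r_1,\ldots,\ie r_7$ and factoring them into explicit $p_1,\ldots,p_7$ of degrees $1,2,1,2,7,8,3$). These two facts feed into Proposition~\ref{prc1},(ii) (Joseph--Shafrir), which forces any hypothetical polynomial generating set $\varphi_1,\ldots,\varphi_7$ to satisfy $\sum_i\deg\varphi_i=\tfrac12(\dim\g^e+\rg)=22$. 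This single numerical constraint replaces your Hilbert-series computation. Using the known invariants $p_1,p_2,p_3,p_4,p_7$ of degrees $1,2,1,2,3$ (and the fact that the center has dimension $2$, plus a check that $p_7$ does not lie in the ideal generated by $p_1,p_3$), the paper reduces the possible degree tuples of the two remaining generators to $(5,8)$ or $(6,7)$. The contradiction is then obtained not by counting but by a fine algebraic analysis: in a suitable basis $x_1,\ldots,x_{37}$, one tracks the degree in $x_1$ (and in $x_2$) of $p_5,p_6$, uses that certain coefficients are prime in $\k[x_2,\ldots,x_{37}]$, and derives an incompatibility with any expression of $p_5$ or $p_6$ in terms of the hypothetical generators. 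This last step is the heart of the argument and has no analogue in your sketch.
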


\subsection{Outline of the proof of Theorem \ref{ti4}}\label{i4} 
Let $\poi q1{,\ldots,}{\rg}{}{}{}$ be homogenous generators of $\ai g{}{}$ of degrees
$\poi d1{,\ldots,}{\rg}{}{}{}$ respectively. The sequence 
$(\poi q1{,\ldots,}{\rg}{}{}{})$ is ordered so that 
$\poi d1{\leq \cdots \leq }{\rg}{}{}{}$. Assume that the polynomial functions 
$\poi {\ie q}1{,\ldots,}{\rg}{}{}{}$ are algebraically independent.

According to Theorem \ref{ti3}, it suffices to show that 
$e$ is good, and more accurately that the nullvariety of
$\poi {\ie q}1{,\ldots,}{\rg}{}{}{}$ in ${\goth g}^{f}$ has codimension $\rg$,  
since $\poi {\ie q}1{,\ldots,}{\rg}{}{}{}$ are invariant homogenous polynomials.  
To this end, it suffices to prove that 
$\es S{{\goth g}^{e}}$ is a free extension of  the $\k$-algebra generated by 
$\poi {\ie q}1{,\ldots,}{\rg}{}{}{}$ (see Proposition~\ref{pgf1},(ii)). 
We are led to find a subspace $V_0$ of $S$ such that the linear map
$$ \tk {\k}{V_0}\k[\poi {\ie q}1{,\ldots,}{\rg}{}{}{}] \longrightarrow S, 
\qquad v\tens a \longmapsto va$$
is a linear isomorphism. We explain below the construction of the subspace $V_0$.  

Let $\poi x1{,\ldots,}{r}{}{}{}$ be a basis of ${\goth g}^{e}$ such that for 
$i=1,\ldots,r$, $[h,x_{i}]=n_{i}x_{i}$ for some nonnegative integer $n_{i}$. For 
${\bf j}=(\poi j1{,\ldots,}{r}{}{}{})$ in ${\Bbb N}^{r}$, set:
$$ \vert {\bf j} \vert := \poi j1{+\cdots +}{r}{}{}{}, \qquad 
\vert {\bf j} \vert_{e} := j_{1}n_{1}+\cdots +j_{r}n_{r}+2\vert {\bf j} \vert, \qquad
x^{{\bf j}} = \poie x1{\cdots }{r}{}{}{}{j_{1}}{j_{r}}.$$
The algebra $\es S{{\goth g}^{e}}$ has two gradings: the standard one and the 
{\em Slodowy grading}. For all ${\bf j}$ in ${\Bbb N}^{r}$, $x^{{\bf j}}$ is 
homogenous with respect to these two gradings. It has standard degree 
$\vert {\bf j} \vert$ and, by definition, it has Slodowy degree 
$\vert {\bf j} \vert_{e}$. For $m$ nonnegative integer, denote by 
$\es S{{\goth g}^{e}}^{[m]}$ the subspace of $\es S{{\goth g}^{e}}$ of Slodowy degree $m$.

Let us simply denote by $S$ the algebra $\es S{{\goth g}^{e}}$ and let $t$ be an 
indeterminate. For any subspace $V$ of $S$, set:
$$ V[t] := \tk {\k}{\k[t]}V, \qquad V[t,t^{-1}] := \tk {\k}{\k[t,t^{-1}]}V, \qquad
V[[t]] := \tk {\k}{\k[[t]]}V, \qquad V((t)) := \tk {\k}{\k((t))}V ,$$
with $\k((t))$ the fraction field of $\k[[t]]$. For $V$ a subspace of $S[[t]]$, denote by
$V(0)$ the image of $V$ by the quotient morphism 
$$S[t]\longrightarrow S, \qquad a(t) \longmapsto a(0) .$$  

The Slodowy grading of $S$ induces a grading of the algebra $S((t))$ with $t$ 
having degree $0$. 
Let $\tau $ be the morphism of algebras 
$$ S \longrightarrow S[t], \qquad x_{i} \mapsto tx_{i}, \quad i=1,\ldots,r .$$
The morphism $\tau$ is a morphism of graded algebras. 
Denote by $\poi {\delta }1{,\ldots,}{\rg}{}{}{}$ the standard degrees of 
$\poi {\ie q}1{,\ldots,}{\rg}{}{}{}$ respectively, and set for $i=1,\ldots,\rg$:
$$Q_{i} := t^{-\delta _{i}} \tau (\kappa (q_{i})) \quad  \text{ with } \quad
\kappa (q_{i})(x) := q_{i}(e+x), \quad  \forall\,x\in {\goth g}^{f}.$$
Let $A$ be the subalgebra of $S[t]$ generated by 
$\poi Q1{,\ldots,}{\rg}{}{}{}$. Then $A(0)$ is the subalgebra of $S$ generated by 
$\poi {\ie q}1{,\ldots,}{\rg}{}{}{}$. 
For $(j_1,\ldots,j_\rg)$ in ${\Bbb N}^{\rg}$, 
$\poie q1{\cdots }{\rg}{\kappa }{}{}{j_{1}}{j_{\rg}}$ and 
$\poie {\ie q}1{\cdots }{\rg}{}{}{}{j_{1}}{j_{\rg}}$ 
are Slodowy homogenous of Slodowy degree $2d_{1}j_{1}+\cdots +2d_{\rg}j_{\rg}$ 
(cf.~\cite{Pr,PPY} or Proposition \ref{psg1},(i)). Hence, $A$ and $A(0)$ are graded subalgebras 
of $S[t]$ and $S$ respectively.
Denote by $A(0)_{+}$ the augmentation ideal of $A(0)$, and let $V_0$ be a graded 
complement to $SA(0)_{+}$ in $S$. The main properties of our data $A$ and $A(0)$ are 
the following ones:
\begin{itemize}
\item [{\rm (1)}] $A$ is a graded polynomial algebra, 
\item [{\rm (2)}] the canonical morphism $A\rightarrow A(0)$ is a homogenous isomorphism,
\item [{\rm (3)}] the algebra $S[t,t^{-1}]$ is a free extension of $A$,
\item [{\rm (4)}] the ideal $S[t,t^{-1}]A_{+}$ of $S[t,t^{-1}]$ is radical. 
\end{itemize}
With these properties we first obtain that $S[[t]]$ is a free extension of 
$A$ (cf.\ Corollary \ref{c2sg4}) and that $S[[t]]$ is a free extension of the 
subalgebra $\tilde{A}$ of $S[[t]]$ generated by $\k[[t]]$ and $A$ 
(cf.\ Theorem \ref{tsg5},(i)). From these results, we deduce that the linear map
$$ \tk {\k}{V_0}A(0) \longrightarrow S, \qquad v\tens a \longmapsto va$$
is a linear isomorphism, as expected; see Theorem \ref{tsg5},(iii). The key points of the
proof are Lemma~\ref{lsg1}, Lemma~\ref{l2sg2}, Proposition~\ref{psg3} and 
Corollary~\ref{c2sg4}.

\subsection{A related problem}
Let us now mention a recent result of T. Arakawa and A. Premet which resembles 
our results, \cite{AP}. 

Let $V^{\rm cri}(\g^{e})$ be the universal affine Vertex algebra associated with $\g^{e}$ 
at critical level, and let $Z(V^{\rm cri}(\g^{e}))$ be the center of 
$V^{\rm cri}(\g^{e})$. Assume that Conditions (1) et (2) of Theorem \ref{ti2} are 
satisfied. Then $S(\hat{\g}_-^{e})^{\hat{\g}_+^{e}}$ is a polynomial algebra, with 
$\hat{\g}^{e}_-:=\g^{e}[t^{-1}]t^{-1}$. Moreover, $Z(V^{\rm cri}(\g^{e}))$ is a 
polynomial algebra, and explicit generators can be described. 

The particular case where $e=0$ is an old result of B.~Feigin and E.~Frenkel, \cite{FF}. 
Arakawa and Premet have used {\em affine $W$-algebras} to prove the general case.  

It would be interesting to extend the results of Arakawa and Premet to the 
setting of Theorem \ref{ti4}, that is to the cases where only the Conditon (1) of 
Theorem \ref{ti2} is satisfied, at least to the cases where 
we have explicit generators of $S(\g)^{\g^{e}}$, not necessarily of the form 
$\ie{q_1},\ldots,\ie{q_\ell}$ for some generators $q_1,\ldots,q_\ell$ 
of $S(\g)^\g$; cf.~e.g.~Remark \ref{r5ca3}. 
 
\subsection{} \label{i5}
The remainder of the paper will be organized as follows. 

\small

Section \ref{gf} is about general facts on commutative algebra, useful for the Sections 
\ref{ge} and \ref{sg}. In Section~\ref{ge}, the notions of good elements and good orbits 
are introduced, and some properties of good elements are described. Theorem \ref{tge1} 
asserts that the good elements satisfy the polynomiality condition.
The main result (Theorem \ref{tge2}) is also stated in this section.
Section \ref{sg} is devoted to the proof of Theorem \ref{tge2}. 
In Section \ref{ca}, we give applications of Theorem \ref{tge2} to 
the simple classical Lie algebras. In Section \ref{et}, we give applications to the 
exceptional Lie algebras of types {\bf E}$_6$, {\bf F}$_4$ and {\bf G}$_2$. This allows  
us to exhibit a great number of good nilpotent 
orbits. Other examples, counter-examples, remarks and a conjecture are discussed 
in Section \ref{rc}. In this last section, other techniques are developed. 
  
\subsection*{Acknowledgments}
We thank Alexander Premet for 
his important comments on the previous version of this paper. 
We also thank the referee for careful reading and
thoughtful suggestions. 

This work was partially supported by the ANR-project 10-BLAN-0110.

\section{General facts on commutative algebra} \label{gf}
We state in this section preliminary results on commutative algebra. 
Theorem \ref{tgf4} will be particularly important in Sections~\ref{ge} 
for the proof of Theorem \ref{tge1}. As for Proposition \ref{pgf1}, 
it will be used in the proof of Theorem \ref{tge2}.    

\subsection{} \label{gf1}
As a rule, for $A$ a graded algebra over ${\Bbb N}$, we denote by $A_{+}$ the 
ideal of $A$ generated by its homogenous elements of positive degree. For $M$ a graded 
$A$-module, we set $M_{+}:=A_{+}M$. 

Let $S$ be a finitely generated regular graded $\k$-algebra over ${\Bbb N}$. 
If $E$ is a finite dimensional vector space over $\k$, we denote by ${\rm S}(E)$ 
the polynomial algebra generated by $E$. It is a finitely generated regular 
$\k$-algebra, graded over ${\Bbb N}$ by the standard grading. 
Let $A$ be a graded subalgebra of $S$, different from $S$ and 
such that $A=\k+A_{+}$.
Let $X_A$ and $X_S$ be the affine varieties ${\mathrm {Specm}}(A)$ and 
${\mathrm {Specm}}(S)$ respectively, and let 
$\pi_{A,S}$ be the morphism from $X_S$ to $X_A$ whose 
comorphism is the canonical injection from $A$ into $S$. 
Let ${\cal N}_{0}$ be the nullvariety of $A_{+}$ in 
$X_S$ and set 
$$N:=\dim S-\dim A.$$ 

The following lemma is well-known. It is an easy consequence of a Chevalley's theorem
\cite[Ch. II, Exercise 3.22]{Ha} for Assertions (i) and (ii), and of  
\cite[Ch. 5, Theorem 13.5]{Mat} for Assertion (iii). 
 
\begin{lemma}\label{lgf1}
{\rm (i)} The irreducible components of the fibers of $\pi_{A,S}$ have dimension at least
$N$.

{\rm (ii)} If ${\cal N}_{0}$ has dimension $N$, then the fibers of $\pi_{A,S}$ are  
equidimensional of dimension $N$.

{\rm (iii)} Suppose that $S={\rm S}(E)$ for some finite dimensional $\k$-vector space 
$E$. If ${\cal N}_{0}$ has dimension $N$, then for some 
$x_1,\ldots,x_N$ in $E$, the nullvariety of $x_1,\ldots,x_N$ in ${\cal N}_{0}$ is equal 
to $\{0\}$. 
\end{lemma}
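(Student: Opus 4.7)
The plan is to prove the three assertions in turn, using the standard fiber-dimension theorem, the $\gm$-action coming from the $\N$-grading, and a Noether-normalization style induction. For part (i), the inclusion $A\hookrightarrow S$ gives a dominant morphism $\pi_{A,S}\colon X_S\to X_A$ of finite type between affine varieties (note that $S$, being a finitely generated regular graded $\k$-algebra with $S_0=\k$, is a domain, and hence so is $A$). The standard theorem on dimensions of fibers then implies that for every $y$ in the image of $\pi_{A,S}$, each irreducible component of $\pi_{A,S}^{-1}(y)$ has dimension at least $\dim X_S-\dim X_A=\dim S-\dim A=N$; on empty fibers the assertion is vacuous.

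For part (ii), the $\N$-gradings on $A$ and $S$ equip $X_A$ and $X_S$ with compatible $\gm$-actions for which $\pi_{A,S}$ is equivariant. Because $A=\k+A_+$, the origin $0\in X_A$ corresponding to the maximal ideal $A_+$ lies in the closure $\overline{\gm\cdot y}$ of every $\gm$-orbit. Given $y\in X_A$ with $\pi_{A,S}^{-1}(y)\neq\emptyset$, equivariance shows that the fibers $\pi_{A,S}^{-1}(t\cdot y)$ for $t\in\gm$ are all isomorphic to $\pi_{A,S}^{-1}(y)$; upper-semicontinuity of the fiber dimension then forces $\dim\pi_{A,S}^{-1}(y)\le\dim\pi_{A,S}^{-1}(0)=\dim{\cal N}_0=N$. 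Combined with (i), this gives $\dim\pi_{A,S}^{-1}(y)=N$, and since each irreducible component is already of dimension $\ge N$ by (i), equidimensionality of dimension $N$ follows.

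For part (iii), the identification $S={\rm S}(E)$ realizes $X_S$ as $E^*$, and ${\cal N}_0\subseteq E^*$ is a closed cone of dimension $N$. I would argue by induction on $N$: at each stage, given the current intersection $C_{i-1}:=V(x_1,\ldots,x_{i-1})\cap{\cal N}_0$ (still a closed cone in $E^*$), I pick $x_i\in E$ outside the finite union of the proper linear subspaces $\{x\in E\colon x|_Y\equiv 0\}$ indexed by the top-dimensional irreducible components $Y$ of $C_{i-1}$; this is possible since $\k$ is infinite and each such $Y$, being of dimension $\ge 1$, has nonzero linear span in $E^*$. Krull's principal ideal theorem then ensures that cutting each such $Y$ by $x_i$ strictly reduces its dimension by one, so $\dim C_i\le\dim C_{i-1}-1$; after $N$ steps, $C_N$ is a closed cone of dimension zero, hence equal to $\{0\}$.

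The main obstacle lies in the specialization step of part (ii): one has to combine carefully the upper-semicontinuity of fiber dimension with the $\gm$-equivariance and the fact that $0\in\overline{\gm\cdot y}$ to transport dimension information from a typical fiber $\pi_{A,S}^{-1}(y)$ to the special fiber ${\cal N}_0$. Once this is set up, (i) is a direct application of the standard fiber-dimension theorem and (iii) is a routine Noether-normalization induction relying only on the infinitude of $\k$.
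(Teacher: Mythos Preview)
Your proposal is correct and follows essentially the same approach as the paper: part (i) via the fiber-dimension theorem, part (ii) via the $\gm$-action and upper-semicontinuity of fiber dimension, and part (iii) via an inductive choice of linear forms avoiding the top-dimensional components. The only minor difference is in (iii): the paper maintains equidimensionality of the successive intersections (re-invoking the setup with $A$ replaced by $A[x_1,\dots,x_i]$), whereas you simply track that the top dimension drops by one at each step; both arguments are valid and yield the same conclusion.
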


Let $\overline{A}$ be the algebraic closure of $A$ in $S$.

\begin{lemma}\label{l2gf1}
Let $M$ be a graded $A$-module and let $V$ be a graded
subspace of $M$ such that $M=V\oplus M_{+}$. Denote by $\tau$ the canonical map
$\tk {\k}{A}V \longrightarrow M$. Then $\tau $ is surjective. Moreover, $\tau $
is bijective if and only if $M$ is a flat $A$-module.
\end{lemma}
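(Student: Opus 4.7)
The statement is a standard graded--Nakayama / flatness-criterion packaged for this setting, so my plan is to handle surjectivity first by a direct induction on degree, and then reduce the flatness equivalence to the graded Nakayama lemma applied to $\ker \tau$.

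For surjectivity, I would argue by induction on the degree of a homogeneous element $m \in M$. The base case, degree equal to the minimal degree appearing in $M$, forces $m \in V$ since $M_+ = A_+M$ contains no elements of degree below that minimum plus one. For the inductive step, I would decompose $m = v + m'$ with $v \in V$ and $m' \in M_+$, then write $m' = \sum a_i m_i$ with $a_i \in A_+$ homogeneous of positive degree and $m_i \in M$ homogeneous of strictly smaller degree than $m$; by induction each $m_i$ is hit by $\tau$, so $m$ is as well.

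For the ``only if'' direction of the equivalence, if $\tau$ is bijective then $M \simeq A \otimes_{\k} V$; choosing a $\k$-basis of $V$ exhibits $M$ as a free, hence flat, $A$-module. The interesting direction is ``if''. Set $K := \ker \tau$, which is a graded submodule of $A \otimes_{\k} V$. Tensor the short exact sequence
\[
0 \longrightarrow K \longrightarrow A\otimes_{\k}V \stackrel{\tau}{\longrightarrow} M \longrightarrow 0
\]
over $A$ with the graded $A$-module $\k = A/A_+$. Flatness of $M$ gives $\mathrm{Tor}_1^A(\k,M) = 0$, so the induced sequence
\[
0 \longrightarrow K/A_+K \longrightarrow V \longrightarrow M/A_+M \longrightarrow 0
\]
is exact. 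But the right-hand map is, by the defining property $M = V \oplus M_+$, the canonical isomorphism $V \stackrel{\sim}{\longrightarrow} M/M_+$. Hence $K/A_+K = 0$, i.e.\ $K = A_+K$.

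It remains to deduce $K = 0$, which is exactly graded Nakayama: if $K$ were nonzero, choose a nonzero homogeneous $x \in K$ of minimal degree; writing $x = \sum a_j y_j$ with $a_j \in A_+$ homogeneous of positive degree and $y_j \in K$ homogeneous, each $y_j$ has strictly smaller degree than $x$, contradicting minimality. Thus $K = 0$ and $\tau$ is bijective. The only real subtlety is ensuring $K$ inherits a grading so that the Nakayama argument applies; this is immediate because $\tau$ is a morphism of graded $A$-modules (the grading on $A \otimes_{\k} V$ being the tensor product grading), and this is the step that makes the argument work.
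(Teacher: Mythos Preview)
Your proof is correct. The surjectivity argument and the ``only if'' direction match the paper's in spirit. The ``if'' direction, however, takes a genuinely different route.

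The paper proves injectivity of $\tau$ by showing directly that any finite set $v_1,\ldots,v_n$ in $M$ whose images in $M/M_+$ are $\k$-linearly independent remains $A$-linearly independent. The key tool is the \emph{equational criterion for flatness}: given a relation $\sum a_i v_i = 0$ in a flat module, one can write $v_i = \sum_j b_{i,j} y_j$ with $\sum_i a_i b_{i,j} = 0$ for all $j$. Since some $b_{n,j}$ lies in $\k^\ast$ (because $\overline{v_n}\neq 0$ and $A = \k + A_+$), this lets one eliminate $a_n$ and reduce to $n-1$ elements, giving an induction on $n$.

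Your argument instead packages the whole thing into a $\mathrm{Tor}$ computation followed by graded Nakayama on $K=\ker\tau$. This is shorter and more conceptual; the paper's argument is more hands-on and avoids invoking $\mathrm{Tor}$ explicitly, at the cost of the somewhat intricate induction inside Claim~\ref{clgf1}. Both rely on the same implicit bounded-below hypothesis on the grading of $M$ (needed for your minimal-degree argument and for the paper's $M\subset M'+A_+^k M$ step to terminate), which is harmless in the applications.
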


\begin{proof}
Let $M'$ be the image of $\tau $.  
Since $M=V\oplus M_+ = V + A_+ M\subset  M' + A_+M$, we get by induction on $k$, 
$$ M \subset M' + A_{+}^{k}M .$$ 
Since $M$ is graded and since $A_{+}$ is generated by elements of positive degree, 
$M=M'$.

If $\tau $ is bijective, then all basis of $V$ is a basis of the $A$-module $M$. In 
particular, it is a flat $A$-module. Conversely, let us suppose that $M$ is 
a flat $A$-module. For $v$ in $M$, denote by $\overline{v}$ the element of $V$ such that
$v-\overline{v}$ is in $A_{+}M$. 

\begin{claim}\label{clgf1}
Let $(\poi v1{,\ldots,}{n}{}{}{})$ be a homogenous sequence in $M$ such that 
$\overline{v_{1}},\ldots,\overline{v_{n}}$ are linearly independent over $\k$. 
Then $\poi v1{,\ldots,}{n}{}{}{}$ are linearly independent over $A$.
\end{claim}

\begin{proof}[Proof of Claim~\ref{clgf1}] 
Since the sequence $(\poi v1{,\ldots,}{n}{}{}{})$ is homogenous, it suffices to prove 
that for a homogenous sequence $(\poi a1{,\ldots,}{n}{}{}{})$ in $A$,
$$ a_{1}v_{1}+\cdots +a_{n}v_{n} = 0 \Longrightarrow \poi a1{=\cdots =}{n}{}{}{} = 0 .$$ 
Prove the statement by induction on $n$. First of all, by flatness, for some
homogenous sequence $(\poi y1{,\ldots,}{k}{}{}{})$ in $M$ and for some homogenous 
sequence $(b_{i,j},i=1,\ldots,n,j=1,\ldots,k)$, 
$$ v_{i} = \sum_{j=1}^{k} b_{i,j}y_{j} \quad  \text{and} \quad 
\sum_{l=1}^{n} a_{l}b_{l,m} = 0$$
for $i=1,\ldots,n$ and $m=1,\ldots,k$. For $n=1$, since $\overline{v_{1}}\neq 0$, 
for some $j$, $b_{1,j}$ is in $\k^{*}$ since $A=\k+A_{+}$. So $a_{1}=0$. Suppose the 
statement true for $n-1$. Since $\overline{v_{n}}\neq 0$, for some $j$, $b_{n,j}$ is 
in $\k^{*}$, whence
$$ a_{n} = -\sum_{i=1}^{n-1} \frac{b_{i,j}}{b_{n,j}} a_{i} \quad  \text{and} \quad
\sum_{i=1}^{n-1} a_{i}(v_{i} - \frac{b_{i,j}}{b_{n,j}} v_{n}) = 0 .$$
Since $\overline{v_{1}},\ldots,\overline{v_{n}}$ are linearly independent over $\k$, so 
are the elements
$$ \left(\overline{v_{i} - (b_{i,j}/b_{n,j}) v_{n}}, \quad i=1,\ldots,n-1 \right) .$$
By induction hypothesis, $\poi a1{=\cdots =}{n-1}{}{}{}=0$, whence $a_{n}=0$.
\end{proof}

According to Claim~\ref{clgf1}, any homogenous basis of $V$ consists of linearly 
independent elements over $A$. Hence any homogenous basis of $V$ is a basis of the 
$A$-module $M$ since $M=AV$.
\end{proof}

\begin{coro}\label{cgf1}  
Suppose that $S={\rm S}(E)$ for some finite dimensional $\k$-vector space 
$E$, and suppose that $\dim {\cal N}_{0}=N$. Then $\overline{A}$ is the integral closure 
of $A$ in $\es SE$. In particular, $\overline{A}$ is finitely generated. 
\end{coro}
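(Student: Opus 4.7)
The plan is to use Lemma \ref{lgf1}(iii) to enlarge $A$ inside $S$ to an algebra $A'$ over which $S$ is integral, and then to harvest both the integrality of $\overline{A}$ over $A$ and its finite generation from the structure of this extension.

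\textbf{Construction of $A'$.} Pick $x_{1},\ldots,x_{N}\in E$ by Lemma \ref{lgf1}(iii), so that the nullvariety of $x_{1},\ldots,x_{N}$ in ${\cal N}_{0}$ is $\{0\}$. Set $A':=A[x_{1},\ldots,x_{N}]$, a graded subalgebra of $S$ with $A'=\k+A'_{+}$. Then the nullvariety of $A'_{+}$ in $X_{S}$ is $\{0\}$, so by the Nullstellensatz some power of $S_{+}$ lies in $SA'_{+}$; hence $S/SA'_{+}$ is finite-dimensional over $\k$. Applying Lemma \ref{l2gf1} to $S$ as a graded $A'$-module with $V$ a graded complement to $SA'_{+}$ in $S$, this $V$ is finite-dimensional and $S=A'V$, so $S$ is a finitely generated $A'$-module. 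In particular $S$ is integral over $A'$, and $\dim A'=\dim S=\dim A+N$, which forces $x_{1},\ldots,x_{N}$ to be algebraically independent over $A$. Thus $A'$ is a polynomial ring over $A$.

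\textbf{Identification with the integral closure.} Let $s\in\overline{A}$; then $s$ is integral over $A'$ and algebraic over ${\rm Frac}(A)$. Since ${\rm Frac}(A')={\rm Frac}(A)(x_{1},\ldots,x_{N})$ is purely transcendental over ${\rm Frac}(A)$, the algebraic closure of ${\rm Frac}(A)$ in ${\rm Frac}(A')$ is ${\rm Frac}(A)$. The coefficients of the minimal polynomial of $s$ over ${\rm Frac}(A')$ are symmetric functions of the conjugates of $s$, which are all roots of the minimal polynomial of $s$ over ${\rm Frac}(A)$; hence they are algebraic over ${\rm Frac}(A)$, and therefore lie in ${\rm Frac}(A)$. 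By integrality of $s$ over $A'$, these coefficients are also integral over $A'$. Now any $c\in{\rm Frac}(A)$ integral over $A'=A[x_{1},\ldots,x_{N}]$ is automatically integral over $A$: expanding an integral dependence relation for $c$ in the polynomial ring ${\rm Frac}(A)[x_{1},\ldots,x_{N}]$ and extracting the coefficient of $x^{0}$ yields a monic relation for $c$ over $A$. Therefore $s$ satisfies a monic polynomial over the integral closure of $A$ in ${\rm Frac}(A)$, so by transitivity $s$ is integral over $A$. Combined with the obvious reverse inclusion, $\overline{A}$ is the integral closure of $A$ in $\es SE$.

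\textbf{Finite generation.} By Eakin--Nagata applied to $A'\subset S$ (with $S$ Noetherian and a finitely generated $A'$-module), $A'$ is Noetherian; being graded connected, $A'$ is then finitely generated as a $\k$-algebra. Using that $A'$ is $A$-free on the monomials $x^{I}$, the identity $JA'\cap A=J$ for every ideal $J\subset A$ transfers Noetherianity from $A'$ to $A$, so $A$ itself is a finitely generated $\k$-algebra. Next, ${\rm Frac}(S)$ is a finitely generated field extension of ${\rm Frac}(A)$, so the algebraic closure $K$ of ${\rm Frac}(A)$ in ${\rm Frac}(S)$ is a finite extension of ${\rm Frac}(A)$, and clearly ${\rm Frac}(\overline{A})\subset K$. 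Since finitely generated $\k$-algebras are Nagata, the integral closure of $A$ in $K$ is a finitely generated $A$-module; then $\overline{A}$, being an $A$-submodule of this module over the Noetherian ring $A$, is itself a finitely generated $A$-module and hence a finitely generated $\k$-algebra.

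\textbf{Main obstacle.} The first two steps are essentially self-contained, relying only on Lemmas \ref{lgf1} and \ref{l2gf1} and the minimal polynomial $+$ specialization trick. The genuine difficulty lies in the final step: to upgrade ``integral over $A$'' to ``finitely generated'' one must both verify that $A$ is itself finitely generated over $\k$ (via Eakin--Nagata and transfer of Noetherianity through the polynomial extension $A\subset A'$) and import the classical Nagata/finiteness-of-integral-closure theorem for finitely generated $\k$-algebras in a finite field extension.
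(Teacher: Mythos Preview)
Your argument is correct and follows the same blueprint as the paper: build $A'=A[x_{1},\ldots,x_{N}]$ via Lemma~\ref{lgf1}(iii), show $S$ is finite over $A'$ using Lemma~\ref{l2gf1}, and then descend integrality from $A'$ to $A$ by specializing $x_{i}\mapsto 0$. The descent step is organized differently, however. The paper applies the specialization directly to \emph{any} integral dependence equation of $p\in\overline{A}$ over $A'$, observing that the relation, read in $S$, is a polynomial identity in $x_{1},\ldots,x_{N}$ with coefficients in $A[p]$; since $p$ is algebraic over $A$, the $x_{i}$ remain algebraically independent over $A[p]$, so every coefficient vanishes, and the constant term gives a monic equation for $p$ over $A$. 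You instead route through the minimal polynomial over ${\rm Frac}(A')$ and pure transcendence of ${\rm Frac}(A')/{\rm Frac}(A)$, which is a correct but longer path to the same specialization. For the finite-generation clause, the paper simply takes $A$ to be finitely generated (as in all the applications) and quotes \cite[\S33, Lemma~1]{Mat}, whereas your Eakin--Nagata plus Noetherian-descent argument actually \emph{derives} the finite generation of $A$ from the hypotheses---more work, but it closes a point the paper leaves implicit.
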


\begin{proof}
Since $A$ is finitely generated, so is its integral closure in $\es SE$ by
\cite[\S33, Lemma 1]{Mat}. According to the hypothesis on ${\cal N}_{0}$ and 
Lemma~\ref{lgf1},(iii), for some $\poi x1{,\ldots,}{N}{}{}{}$ in $E$, the nullvariety of 
$\poi x1{,\ldots,}{N}{}{}{}$ in ${\cal N}_{0}$ is equal to $\{0\}$. In particular, 
$\poi x1{,\ldots,}{N}{}{}{}$ are algebraically independent over $A$ since $E$ has 
dimension $N+\dim A$. Let $J$ be the ideal of $\es SE$ generated by $A_{+}$ and 
$\poi x1{,\ldots,}{N}{}{}{}$. Then the radical of $J$ is the augmentation ideal of 
$\es SE$ so that $J$ has finite codimension in $\es SE$. For $V$ a homogenous complement 
to $J$ in $\es SE$, $\es SE$ is the $A[\poi x1{,\ldots,}{N}{}{}{}]$-submodule generated by
$V$ by Lemma~\ref{l2gf1}. Hence $\es SE$ is a finite extension of 
$A[\poi x1{,\ldots,}{N}{}{}{}]$.

Let $p$ be in $\overline{A}$. Since $A[\poi x1{,\ldots,}{N}{}{}{}]$ is finitely 
generated, $A[\poi x1{,\ldots,}{N}{}{}{}][p]$ is a finite extension of 
$A[\poi x1{,\ldots,}{N}{}{}{}]$. Let 
$$ p^{m} + a_{m-1}p^{m-1} + \cdots + a_{0} = 0$$ 
an integral dependence equation of $p$ over $A[\poi x1{,\ldots,}{N}{}{}{}]$. For 
$i=0,\ldots,m$, $a_{i}$ is a polynomial in $\poi x1{,\ldots,}{N}{}{}{}$ with coefficients
in $A$ since $\poi x1{,\ldots,}{N}{}{}{}$ are algebraically independent over $A$. Denote
by $a_{i}(0)$ its constant coefficient. Since $p$ is in $\overline{A}$, 
$\poi x1{,\ldots,}{N}{}{}{}$ are algebraically independent over $A[p]$, whence
$$ p^{m} + a_{m-1}(0)p^{m-1} + \cdots + a_{0}(0) = 0 .$$  
As a result, $\overline{A}$ is the integral closure of $A$ in $\es SE$. 
\end{proof}

Most of the following proposition is contained in~\cite[Corollary 6.2.3]{Ben}. 
Since Proposition \ref{pgf1} is more extensive, we give a proof. 

\begin{prop}\label{pgf1}
Let us consider the following conditions on $A$:
\begin{itemize}
\item [{\rm 1)}] $A$ is a polynomial algebra,
\item [{\rm 2)}] $A$ is a regular algebra, 
\item [{\rm 3)}] $A$ is a polynomial algebra generated by $\dim A$ 
homogenous elements,  
\item [{\rm 4)}] the $A$-module $S$ is faithfully flat,
\item [{\rm 5)}] the $A$-module $S$ is flat,
\item [{\rm 6)}] the $A$-module $S$ is free.
\end{itemize}

{\rm (i)} The conditions {\rm (1)}, {\rm (2)}, {\rm (3)} are equivalent. 

{\rm (ii)} The conditions {\rm (4)}, {\rm (5)}, {\rm (6)} are equivalent. 
Moreover, Condition~{\rm (4)} implies Condition~{\rm (2)}  
and, in that event, ${\cal N}_{0}$ is equidimensional of dimension $N$.

{\rm (iii)} If ${\cal N}_{0}$ is equidimensional of dimension $N$, then the 
conditions~{\rm (1)}, {\rm (2)}, {\rm (3)}, {\rm (4)}, {\rm (5)},~{\rm (6)} are all 
equivalent. 
\end{prop}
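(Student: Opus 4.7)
The plan is to treat the three parts as three essentially independent arguments, linked at the very end by a Cohen--Macaulay/regular-sequence argument.

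For (i), the implications $(3)\Rightarrow(1)\Rightarrow(2)$ are immediate, since polynomial algebras are regular. For $(2)\Rightarrow(3)$, I would use that $A_{+}$ is the unique homogeneous maximal ideal of the connected graded algebra $A=\k+A_{+}$, so $A_{A_{+}}$ is a regular local ring whose Krull dimension equals $\dim A$. By graded Nakayama, a homogeneous lift $f_{1},\dots,f_{d}$ of a $\k$-basis of $A_{+}/A_{+}^{2}$ generates $A_{+}$, hence generates $A$ as a $\k$-algebra; the count $d=\dim A$ comes from regularity of $A_{A_{+}}$, and the surjection $\k[x_{1},\dots,x_{d}]\to A$ sending $x_{i}\mapsto f_{i}$ must be an isomorphism because both rings are integral graded $\k$-algebras of the same Krull dimension.

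For (ii), the equivalence of (5) and (6) is an application of Lemma \ref{l2gf1}: take $V$ to be a graded complement of $A_{+}S$ in $S$; flatness turns the canonical surjection $\tk \k AV \to S$ into an isomorphism, producing (6). Since a nonzero free module is faithfully flat, (6) implies (4), and (4) trivially implies (5). For $(4)\Rightarrow(2)$, I would invoke faithfully flat descent of regularity: faithful flatness ensures that $\mathrm{Spec}(S)\to\mathrm{Spec}(A)$ is surjective, so every maximal ideal $\mathfrak{m}$ of $A$ lifts to a prime $\mathfrak{q}$ of $S$, and the local flat extension $A_{\mathfrak{m}}\hookrightarrow S_{\mathfrak{q}}$ with regular target forces $A_{\mathfrak{m}}$ to be regular (Matsumura). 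The equidimensionality of $\mathcal{N}_{0}$ under (4) follows from the fact that a flat morphism of Noetherian schemes of finite type has equidimensional fibers of dimension equal to the relative dimension $N=\dim S-\dim A$, applied to the closed fiber $\pi_{A,S}^{-1}(A_{+})=\mathcal{N}_{0}$. I expect this faithfully-flat-descent step to be the main technical point, all others being essentially formal consequences of graded Nakayama and Lemma \ref{l2gf1}.

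For (iii), only a bridge between the two groups is needed. Assume $\mathcal{N}_{0}$ is equidimensional of dimension $N$ and that (3) holds, with $A=\k[f_{1},\dots,f_{\rg}]$, $\rg=\dim A$, and the $f_{i}$ homogeneous. Then $A_{+}S=(f_{1},\dots,f_{\rg})$ is an ideal whose zero locus is $\mathcal{N}_{0}$, hence of height $\rg$ in $S$. Since $S$ is regular, and therefore Cohen--Macaulay, any ideal of height $\rg$ generated by $\rg$ elements is generated by a regular sequence; so $f_{1},\dots,f_{\rg}$ is a regular $S$-sequence. The Koszul complex $K_{\bullet}(f_{1},\dots,f_{\rg};S)$ is then a free resolution of $S/A_{+}S$ as an $A$-module, yielding $\mathrm{Tor}^{A}_{i}(\k,S)=0$ for $i>0$. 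The graded local criterion of flatness then gives (5), closing the loop and establishing the equivalence of all six conditions under the equidimensionality hypothesis.
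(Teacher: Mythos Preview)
Your proof is correct; the overall structure matches the paper's, but the technical ingredients in (ii) and especially (iii) differ.

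In (ii), for $(5)\Rightarrow(4)$ the paper argues directly that flatness makes $\pi_{A,S}$ open, and since its image is $\mathrm{G_m}$-stable and contains $x_0=A_+$, it must be all of $X_A$; you route through $(5)\Rightarrow(6)\Rightarrow(4)$, which works just as well. For $(4)\Rightarrow(2)$ the paper passes through finite global dimension (faithfully flat descent of finite projective dimension, then Serre), whereas you use descent of regularity along a flat local map with regular target; both are standard Matsumura citations.

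The more substantive divergence is in (iii). The paper invokes ``miracle flatness'' (\cite[Theorem~23.1]{Mat}): once $A$ is regular and, by Lemma~\ref{lgf1}(ii), all fibers of $\pi_{A,S}$ are equidimensional of dimension $N$, flatness follows immediately from regularity of source and target. Your route instead uses Cohen--Macaulayness of $S$ to see that $f_1,\dots,f_{\ell}$ is a regular sequence, then Koszul acyclicity to kill $\mathrm{Tor}^A_i(\k,S)$, then the graded local criterion. One phrasing correction: $K_\bullet(f;S)$ is a free $S$-resolution of $S/A_+S$, not an $A$-resolution; the clean statement is that $K_\bullet(f;A)$ freely resolves $\k$ over the polynomial ring $A$, and $K_\bullet(f;A)\otimes_A S = K_\bullet(f;S)$, whose acyclicity then gives the Tor vanishing you want. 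Your argument is more hands-on and avoids the miracle flatness theorem; the paper's is a one-line citation but leans on a deeper result.
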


\begin{proof}
Let $n$ be the dimension of $A$.

(i) The implications $(3)\!\Rightarrow\!(1)$, $(1)\!\Rightarrow\!(2)$  are 
straightforward. Let us suppose that $A$ is a regular algebra.  Since $A$ is graded and 
finitely generated, there exists a homogenous sequence $(x_1,\ldots,x_n)$ in $A_{+}$ 
representing a basis of $A_{+}/A_{+}^{2}$. Let $A'$ be the subalgebra of $A$ generated by
$x_1,\ldots,x_n$. Then 
$$A_{+} \subset A' + A_{+}^{2}.$$
So by induction on $m$,
$$A_{+} \subset A' + A_{+}^{m}$$
for all positive integer $m$. Then $A=A'$ since $A$ is graded and $A_{+}$ is generated by 
elements of positive degree. Moreover, $\poi x1{,\ldots,}{n}{}{}{}$ are algebraically 
independent over $\k$ since $A$ has dimension $n$. Hence $A$ is a polynomial algebra 
generated by $n$ homogenous elements.

(ii) The implications $(4)\!\Rightarrow\!(5)$, $(6)\!\Rightarrow\!(5)$ are 
straightforward and $(5)\!\Rightarrow\!(6)$ is a consequence of Lemma~\ref{l2gf1}.   

$(5)\!\Rightarrow\!(4)$: 
Recall that $x_{0}=A_{+}$. 
Let us suppose that $S$ is a flat $A$-module. Then $\pi_{A,S}$ is an open morphism 
whose image contains $x_{0}$. Moreover, $\pi (X_S)$ is stable under the action of 
${\mathrm {G}}_{{\mathrm {m}}}$. So $\pi_{A,S} $ is surjective. Hence, by 
\cite[Ch.\,3, Theorem 7.2]{Mat}, $S$ is a faithfully flat extension of $A$.

$(4)\!\Rightarrow\!(2)$:  
Since $S$ is regular and since $S$ is a faithfully flat extension of $A$, all finitely 
generated $A$-module has finite projective dimension. 
So by \cite[Ch.\,7, \S19, Lemma~2]{Mat}, the global dimension of $A$ is finite. Hence
by \cite[Ch.\,7, Theorem 19.2]{Mat}, $A$ is regular. 

If Condition (4) holds, by \cite[Ch.\,5, Theorem 15.1]{Mat}, the fibers of $\pi_{A,S} $ 
are equidimensional of dimension $N$. So ${\cal N}_{0}$ is equidimensional of dimension 
$N$.

(iii) Suppose that ${\cal N}_{0}$ is equidimensional of dimension $N$. By 
(i) and (ii), it suffices to prove that $(2)\!\Rightarrow\!(5)$. 
By~Lemma \ref{lgf1},(ii), the fibers of $\pi_{A,S}$ are equidimensional of dimension $N$.
Hence by \cite[Ch.\,8, Theorem 23.1]{Mat}, $S$ is a flat extension of $A$ since 
$S$ and $A$ are regular.
\end{proof}

\subsection{} \label{gf2}
We present in this paragraph some results about algebraic extensions, 
that are independent of Subsection \ref{gf1}. These results are used only in the proof 
of Proposition \ref{pgf3}. Our main reference is \cite{Mat}. For $A$ an algebra and 
${\goth p}$ a prime ideal of $A$, $A_{{\goth p}}$ denotes the localization of $A$ at 
${\goth p}$. 

\smallskip

Let $t$ be an indeterminate, and let $L$ be a field containing $\k$. Let $B$, $L_{1}$, 
$B_{1}$ satisfying the following conditions:

\smallskip

\begin{tabular}{ll}
{\rm (I)} & $L_{1}$ is an algebraic extension of $L(t)$ of finite degree,\\
{\rm (II)} & $L$ is algebraically closed in $L_{1}$,\\
{\rm (III)} & $B$ is a finitely generated subalgebra of $L$, $L$ is the
fraction field of $B$ and $B$ is integrally closed in $L$, \\
{\rm (IV)} & $B_{1}$ is the integral closure of $B[t]$ in $L_{1}$, \\
{\rm (V)} & $tB_{1}$ is a prime ideal of $B_{1}$.
\end{tabular}

\medskip

For $C$ a subalgebra of $L$, containing $B$, we set: 
$$R(C):=\tk {B}{C}B_{1},$$
and we denote by $\mu _{C}$ the canonical morphism 
$R(C)\rightarrow CB_{1}$. 
Since $C$ and $B_{1}$ are integral algebras, the morphisms $c\mapsto c\tens 1$ and 
$b\mapsto 1\tens b$ from $C$ and $B_{1}$ to $R(C)$ respectively are embeddings. So, 
$C$ and $B_{1}$ are identified to subalgebras of $R(C)$ by these embeddings.
We now investigate some properties of the algebras $R(C)$.

\begin{lemma}\label{lgf2}
Let $\mu_L$ be the canonical morphism $R(L)\rightarrow LB_{1}$.

{\rm (i)} The algebra $R(L)$ is reduced and $\mu_L$ is an isomorphism.

{\rm (ii)} The ideal $tLB_{1}$ of $LB_{1}$ is maximal. Furthermore $B_{1}$ is a finite 
extension of $B[t]$.

{\rm (iii)} The algebra $LB_{1}$ is the direct sum of $L$ and $tLB_{1}$.

{\rm (iv)} The ring $LB_{1}$ is integrally closed in $L_{1}$.
\end{lemma}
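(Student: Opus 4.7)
The strategy is to prove the four statements in the order (i), (iv), (ii), (iii); the proof of (iii) will rest on both the normality established in (iv) and the structural information about $tLB_1$ from (ii), and it is where the main difficulty lies.

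Parts (i) and (iv) are the easiest. For (i), since $B_1\subset L_1$ is a subring of a field, it is torsion-free as a $B$-module, and hence $R(L)=L\otimes_B B_1$ is canonically the localization $(B\setminus\{0\})^{-1}B_1$. This localization embeds into $L_1$ with image exactly $LB_1$, so $\mu_L$ is an isomorphism and $R(L)$ is an integral domain, in particular reduced. For (iv), $B_1$ is integrally closed in $L_1=\mathrm{Frac}(B_1)$ by (IV), and this property is preserved under localization at $B\setminus\{0\}$, hence $LB_1$ is integrally closed in $L_1$.

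For (ii), the finiteness of $B_1$ over $B[t]$ follows from the classical theorem that the integral closure of a Noetherian normal domain in a finite separable extension of its fraction field is a finite module: by (III), $B$ is Noetherian and normal, and these properties pass to $B[t]$, while $L_1/L(t)$ is finite separable in characteristic zero. For the maximality of $tLB_1$, the relation $tB_1\cap B=\{0\}$ is verified by the standard specialization argument: if $b\in B\setminus\{0\}$ were in $tB_1$, writing $b=ta$ with $a$ integral over $B[t]$, setting $t=0$ in a monic integral relation for $a$ yields $b=0$, a contradiction. Hence $tLB_1=(B\setminus\{0\})^{-1}(tB_1)$ is a prime ideal of $LB_1$. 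Since $LB_1$ is a finitely generated $L$-algebra integral over $L[t]$, and the contraction $tLB_1\cap L[t]$ is a prime ideal that contains $tL[t]$ and is strictly below $L[t]$, it must equal $tL[t]$; therefore $LB_1/tLB_1$ is a finitely generated $L$-algebra, integral over $L$, and a domain, hence a finite field extension $F$ of $L$, making $tLB_1$ maximal.

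For (iii), the main obstacle, the task is to show that the finite extension $F=LB_1/tLB_1$ of (ii) is trivial. The approach is to produce an $L$-algebra embedding $F\hookrightarrow L_1$, whence by (II) every algebraic element of $L_1$ over $L$ lies in $L$, forcing $F=L$ and giving the direct sum decomposition $LB_1=L\oplus tLB_1$. By (iv), $LB_1$ is a one-dimensional normal Noetherian domain, so its localization at $tLB_1$ is a DVR $\mathcal{O}\subset L_1$ with residue field $F$. The plan for the embedding is to lift a primitive element of $F$ over $L$ to an algebraic element of $L_1$: one passes to the henselization of $\mathcal{O}$, uses Hensel's lemma (the residue characteristic being zero) to split the minimal polynomial of the primitive element, and then exploits the primality of $tB_1$ in (V) together with the finite degree $[L_1:L(t)]$ to descend the Hensel lift back inside $L_1$. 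The delicate step is this descent, in which (V) ensures that the fibre of $\mathrm{Spec}(B_1)\to\mathrm{Spec}(B[t])$ above the prime $(t)$ is irreducible so that the cover does not split spuriously, and (II) then forbids any proper algebraic enlargement of $L$ inside $L_1$.
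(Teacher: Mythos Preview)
Your treatment of (i), (ii), and (iv) is correct and close in spirit to the paper's. For (i) the paper clears denominators directly to show the kernel vanishes, which is your localization remark; for (iv) the paper clears denominators by hand rather than invoking preservation of normality under localization, but the content is identical. Your (ii) is also essentially the paper's argument (the paper uses (V) to check primality of $tLB_1$ by clearing denominators into $B_1$, then finiteness of $B_1$ over $B[t]$ via \cite[\S33, Lemma~1]{Mat}, then maximality).

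The gap is in (iii). You correctly identify that the whole point is to produce an $L$-algebra embedding $F=LB_1/tLB_1\hookrightarrow L_1$, after which (II) finishes. But your mechanism does not deliver this. Hensel's lemma yields a root $\gamma$ of the minimal polynomial $p(X)\in L[X]$ only in the henselization $\mathcal{O}^h$ (or completion $\hat{\mathcal{O}}$), whose fraction field properly contains $L_1$. Condition (V) guarantees a single prime over $(t)$, so $\hat{L_1}=L_1\otimes_{L(t)}L((t))$ is a field; but this only places $L(\gamma)$ inside $\hat{L_1}$, not inside $L_1$. The phrase ``descend the Hensel lift back inside $L_1$'' names the difficulty without resolving it. In fact, if $\deg p=d>1$ then $p$ can have \emph{no} root in $L_1$: any such root would be algebraic over $L$, hence in $L$ by (II), contradicting irreducibility. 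So no lift of the primitive element can land in $L_1$; a lifting argument cannot be the proof.

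The paper's route is quite different and avoids completions. Since $L$ is algebraically closed in $L_1$ and we are in characteristic zero, $L_1/L$ is a regular extension, so $Q\otimes_L L_1$ is a domain, hence a field $QL_1$. Inside $QL_1$ one has both $Q$ and $B_1$, hence the subring $QB_1$. Comparing the exact sequence $0\to tLB_1\to LB_1\to Q\to 0$ with its base change to $Q$ over $L$ yields $Q\subset LB_1+tQB_1$. Since $QB_1$ is a finite $L[t]$-module, Nakayama gives $(1+ta)QB_1\subset LB_1$ for some $a\in L[t]$, whence $Q\subset L_1$; then (II) forces $Q=L$. The key difference is that linear disjointness immediately supplies a common overfield in which both $Q$ and $L_1$ sit and in which (II) applies, whereas your valuation-theoretic overfield $\hat{L_1}$ is too large for (II) to bite.
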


\begin{proof}
(i) Let $a$ be in the kernel of $\mu_L $. Since $L$ is the fraction field of $B$, for some
$b$ in $B$, $ba=1\tens \mu_L (ba)$ so that $ba=0$ and $a=0$. As a result, $\mu_L $
is an isomorphism and $R(L)$ is reduced since $LB_{1}$ is integral.

(ii) Since $t$ is not algebraic over $L$ and since $LB_{1}$ is integral over $L[t]$ by
Condition (IV), $tLB_{1}$ is strictly contained in $LB_{1}$. Let $a$ and $b$ be in 
$LB_{1}$ such that $ab$ is in $tLB_{1}$. By Condition (III), for some $c$ in 
$B\setminus \{0\}$, $ca$ and $cb$ are in $B_{1}$. So, by Condition (V), $ca$ or $cb$ is 
in $tB_{1}$. Hence $a$ or $b$ is in $tLB_{1}$. As a result, $tLB_{1}$ is a prime ideal 
and the quotient $Q$ of $LB_{1}$ by $tLB_{1}$ is an integral domain. Denote by $\iota $ 
the quotient morphism. Since $L$ is a field, the restriction of $\iota $ to $L$ is an 
embedding of $L$ into $Q$. According to Conditions (I) and (IV) and 
\cite[\S33, Lemma 1]{Mat}, $B_{1}$ is a finite extension of $B[t]$. Then $Q$ is a finite 
extension of $L$ and $tLB_{1}$ is a maximal ideal of $LB_{1}$. 

(iii) Since $L$ is algebraically closed in $L_{1}$, $Q$ and $L_{1}$ are 
linearly disjoint over $L$. So, $\tk {L}QL_{1}$ is isomorphic to the extension of $L_{1}$
generated by $Q$. Denoting this extension by $QL_{1}$, $QB_{1}$ is a subalgbera of 
$QL_{1}$ and we have the exact sequences
$$\xymatrix{ 0 \ar[r] & tLB_{1} \ar[r] & LB_{1} \ar[r] & Q \ar[r] & 0 \\
0 \ar[r] & tQB_{1} \ar[r] & QB_{1} \ar[r] & \tk LQQ \ar[r] & 0 \\
0 \ar[r] & tQB_{1} \ar[r] & tQB_{1}+LB_{1} \ar[r] & Q\tens 1 \ar[r] & 0 }$$
As a result, 
$$ Q \subset LB_{1} + tQB_{1}.$$
By (ii), $QB_{1}$ is a finite $L[t]$-module. So, by Nakayama's Lemma, for some $a$ in 
$L[t]$, $(1+ta)QB_{1}$ is contained in $LB_{1}$. As a result, $Q$ is contained in 
$L_{1}$, whence $Q=L$ since $L$ is algebraically closed in $L_{1}$. The assertion follows
since $Q$ is the quotient of $LB_1$ by $t LB_1$.

(iv) Let $a$ be in the integral closure of $LB_{1}$ in $L_{1}$ and let
$$ a^{m} + a_{m-1}a^{m-1} + \cdots + a_{0} = 0$$
an integral dependence equation of $a$ over $LB_{1}$. For some $b$ in $L\setminus \{0\}$,
$ba_{i}$ is in $B_{1}$ for $i=0,\ldots,m-1$. Then, by Condition (IV), $ba$ is in $B_{1}$ 
since it satisfies an integral dependence equation over $B_{1}$. As a result, $LB_{1}$
is integrally closed in $L_{1}$.
\end{proof}

Let $L_{2}$ be the Galois extension of $L(t)$ generated by $L_{1}$, and let $\Gamma $ be
the Galois group of the extension $L_{2}$ of $L(t)$. Denote by $B_{2}$ the integral 
closure of $B[t]$ in $L_{2}$. For $C$ subalgebra of $L$, containing $B$, set
$$ R_{2}(C) := \tk BCB_{2},$$
and denote by $\mu _{C,2}$ the canonical morphism $R_{2}(C)\rightarrow CB_{2}$. The 
action of $\Gamma $ in $B_{2}$ induces an action of $\Gamma $ in $R_{2}(C)$ given 
by $g.(c\tens b) = c\tens g(b)$.

\begin{lemma}\label{l2gf2}
Let $x$ be a primitive element of $L_{1}$, and let $\Gamma_x$ be the stabilizer 
of $x$ in $\Gamma$. 

{\rm (i)} The subfield $L_{1}$ of $L_{2}$ is the set of fixed points under the action of 
$\Gamma _{x}$ in $L_{2}$, and $B_{1}$ is the set of fixed points under the action of 
$\Gamma _{x}$ in $B_{2}$.

{\rm (ii)} For $C$ subalgebra of $L$ containing $B$, the canonical morphism 
$R(C)\rightarrow R_{2}(C)$ is an embedding and its image is the set of fixed points 
under the action of $\Gamma _{x}$ in $R_{2}(C)$.

{\rm (iii)} For $C$ subalgebra of $L$, containing $B$, $C[t]$ is embedded in $R(C)$ and 
$R_{2}(C)$. Moreover, $C[t]$ is the set of fixed points under the action of $\Gamma $
in $R_{2}(C)$.
\end{lemma}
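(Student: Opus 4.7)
The plan is to reduce everything to the main theorem of Galois theory combined with a Reynolds-operator argument available because $\mathrm{char}\,\k = 0$. I handle the three parts of the lemma in order.

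For (i), the primitive-element hypothesis gives $\Gamma_x = \mathrm{Gal}(L_2/L_1)$, so $L_1 = L_2^{\Gamma_x}$ by the main theorem of Galois theory. For the integral assertion, I would first check that $B_1 = L_1 \cap B_2$: the inclusion $B_1 \subset L_1 \cap B_2$ is clear, and conversely any element of $L_1 \cap B_2$ lies in $L_1$ and is integral over $B[t]$, hence in $B_1$ by Condition~(IV). Since $B_2$ is $\Gamma$-stable (because $\Gamma$ fixes $B[t]$ pointwise), intersecting $L_2^{\Gamma_x} = L_1$ with $B_2$ yields $B_2^{\Gamma_x} = B_1$.

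For (ii), I would introduce the Reynolds projector $\rho_x := |\Gamma_x|^{-1}\sum_{g \in \Gamma_x} g$ on $B_2$, which is a well-defined $B_1$-linear map onto $B_1$ because we are in characteristic zero. This produces a $B_1$-module splitting $B_2 = B_1 \oplus M_x$ with $M_x := \ker \rho_x$, and any element $m \in M_x^{\Gamma_x}$ would satisfy $m = \rho_x(m) = 0$, so $M_x^{\Gamma_x} = 0$. Because $\Gamma \subset \mathrm{Gal}(L_2/L(t))$ fixes $L \supset C$ pointwise, the induced $\Gamma_x$-action on $R_2(C) = C \otimes_B B_2$ is trivial on the $C$-factor, and tensoring the splitting over $B$ with $C$ gives a $\Gamma_x$-stable decomposition
$$R_2(C) \;=\; R(C) \;\oplus\; (C \otimes_B M_x).$$
Injectivity of the canonical map $R(C) \to R_2(C)$ is then immediate, and the induced operator $\mathrm{id}_C \otimes \rho_x$ projects $R_2(C)$ onto $R(C)$ while killing the second summand; hence any $z \in R_2(C)^{\Gamma_x}$ equals its own Reynolds average and lies in $R(C)$.

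For (iii), the chain $B[t] \subset B_1 \subset B_2$ gives canonical embeddings $C[t] \hookrightarrow R(C) \hookrightarrow R_2(C)$, and $\Gamma$ fixes $C[t]$ pointwise since it acts trivially on $L(t) \supset B[t]$. To identify $R_2(C)^{\Gamma}$, I would first establish $B_2^{\Gamma} = B[t]$: this reduces to $B_2 \cap L(t) = B[t]$, which holds because Condition~(III) implies $B[t]$ is integrally closed in $L(t)$, while $B_2$ is integral over $B[t]$. The same Reynolds argument as in (ii), now with $\Gamma$ in place of $\Gamma_x$, then yields $R_2(C)^{\Gamma} = C \otimes_B B[t] = C[t]$. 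The main obstacle throughout is the tensor-product bookkeeping in (ii) and (iii): one must verify that the relevant Galois group acts trivially on the $C$-factor so that the Reynolds operator descends cleanly to $R_2(C)$, and that the intersection $(C \otimes_B M_x)^{\Gamma_x}$ vanishes. The $B_1$-module splitting $B_2 = B_1 \oplus M_x$ is precisely the device that circumvents any flatness hypothesis on $C/B$.
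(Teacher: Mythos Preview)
Your proof is correct and follows essentially the same approach as the paper: Galois theory plus a Reynolds-operator argument in characteristic zero. The paper phrases (ii) in terms of the maps $\iota\colon R(C)\to R_2(C)$ and $\varphi\colon R_2(C)\to R(C)$, $c\otimes a\mapsto c\otimes a^{\#}$, and checks $\iota\circ\varphi$ is the averaging projection, while you package the same content as the $B_1$-module splitting $B_2=B_1\oplus M_x$ tensored up to $C$; these are equivalent formulations of the same idea.
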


\begin{proof}
(i) Let $L'_{1}$ be the
set of fixed points under the action of $\Gamma _{x}$ in $L_{2}$, 
$$L'_{1}=\{y \in L_2 \; \vert \; \Gamma_{x}.y=y\}.$$
Then $L_{1}$ is 
contained in $L'_{1}$, and $L_{2}$ is an extension of degree $\vert \Gamma _{x} \vert$ of 
$L'_{1}$. Since $x$ is a primitive element of $L_{1}$, the degree
of this extension is equal to $\vert \Gamma .x \vert$ so that $L_{2}$ is an extension of 
degree $\vert \Gamma _{x} \vert$. Hence $L'_{1}=L_{1}$. 

Since $B_{2}$ is the integral closure of $B[t]$ in $L_{2}$, $B_{2}$ is invariant under 
$\Gamma $. Moreover, the intersection of $B_{2}$ and $L_{1}$ is equal to $B_{1}$ by
Condition (IV). Hence $B_{1}$ is the set of fixed points under the action of 
$\Gamma _{x}$ in $B_{2}$.

(ii) For $a$ in $B_{2}$ and $b$ in $R_{2}(C)$, set:
$$ a^{\#} := \frac{1}{\vert \Gamma _{x} \vert} \sum_{g\in \Gamma _{x}} g(a), \quad
\overline{b} := \frac{1}{\vert \Gamma _{x} \vert} \sum_{g\in \Gamma _{x}} g.b .$$
Then $a\mapsto a^{\#}$ is a projection of $B_{2}$ onto $B_{1}$. Moreover, it is a 
morphism of $B_{1}$-module. Denote by $\iota $ the canonical morphism 
$R(C)\rightarrow R_{2}(C)$, and by $\varphi $ the morphism 
$$ R_{2}(C) \longrightarrow R(C), \quad c\tens a \longmapsto c\tens a^{\#} .$$
For $b$ in $R_{2}(C)$,
$$ \varphi (b) = \varphi (\overline{b}) \quad  \text{and} \quad 
\iota \rond \varphi (b)  = \overline{b}$$
Then $\varphi $ is a surjective morphism and the image of $\iota \rond \varphi $ is
the set of fixed points under the action of $\Gamma _{x}$ in $R_{2}(C)$. Moreover
$\iota $ is injective, whence the assertion. 

(iii) From the equalities
$$ R(C) = \tk {B[t]}{(\tk BCB[t])}B_{1} \quad  \text{and} \quad C[t] = \tk BCB[t]$$
we deduce that $R(C) = \tk {B[t]}{C[t]}B_{1}$. In the same way, 
$R_{2}(C) = \tk {B[t]}{C[t]}B_{2}$. Then, since $C[t]$ is an integral algebra, the 
morphism $c\mapsto c\tens 1$ is an embedding of $C[t]$ in $R(C)$ and $R_{2}(C)$. 
Moreover, $C[t]$ is invariant under the action of $\Gamma $ in $R_{2}(C)$. 

Let $a$ be in $R_{2}(C)$ invariant under $\Gamma $. Then $a$ has an expansion
$$ a = \sum_{i=1}^{k} c_{i}\tens b_{i}$$
with $\poi c1{,\ldots,}{k}{}{}{}$ in $C[t]$ and $\poi b1{,\ldots,}{k}{}{}{}$ in $B_{2}$.
Since $a$ is invariant under $\Gamma $,
$$ a = \frac{1}{\vert \Gamma  \vert} \sum_{g\in \Gamma } g.a = 
\frac{1}{\vert \Gamma  \vert} \sum_{g\in \Gamma } \sum_{i=1}^{k} c_{i}\tens g.b_{i} .$$
For $i=1,\ldots,k$, set: 
$$ b'_{i} := \frac{1}{\vert \Gamma  \vert} \sum_{g\in \Gamma } g.b_{i}$$
The elements $\poie b1{,\ldots,}{k}{}{}{}{\prime}{\prime}$ are in $B[t]$, and
$$ a = (\sum_{i=1}^{k} c_{i}b'_{i})\tens 1 \in C[t] ,$$
whence the assertion.
\end{proof}

From now on, we fix a finitely generated subalgebra $C$ of $L$ containing $B$. Denote by
${\goth n}$ the nilradical of $R(C)$.

\begin{lemma}\label{l3gf2}
Let ${\goth k}$ be the kernel of $\mu _{C,2}$ and let ${\goth n}_{2}$ be the nilradical
of $R_{2}(C)$.

{\rm (i)} The algebras $R(C)$ and $R_{2}(C)$ are finitely generated. Furthermore, they 
are finite extensions of $C[t]$. 

{\rm (ii)} For $a$ in ${\goth k}$, $ba=0$ for some $b$ in $B\setminus \{0\}$.

{\rm (iii)} The ideal ${\goth k}$ is the minimal prime ideal of $R_{2}(C)$ such that 
${\goth k}\cap B=\{0\}$. Moreover, ${\goth k}\cap B[t]=\{0\}$.

{\rm (iv)} The ideal ${\goth n}$ is the kernel of $\mu _{C}$. Moreover, 
${\goth n}_{2}={\goth k}$ and ${\goth n}$ is a prime ideal.

{\rm (v)} The local algebra $R(C)_{{\goth n}}$ is isomorphic to $L_{1}$.
\end{lemma}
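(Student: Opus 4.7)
The plan is to prove the five items in order, leveraging the previous parts and earlier lemmas.

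For \textbf{(i)}, applying \cite[\S33, Lemma 1]{Mat} to the algebraic extension $L_2/L(t)$ (as was done for $L_1$ in Lemma~\ref{lgf2}(ii)) shows that $B_2$ is finite over $B[t]$; the identifications $R(C) = C[t] \otimes_{B[t]} B_1$ and $R_2(C) = C[t] \otimes_{B[t]} B_2$ then yield finiteness over $C[t]$, and hence finite generation over $\k$ since $C[t]$ is. For \textbf{(ii)}, I would write $a = \sum_i c_i \otimes b_i$ with $c_i \in C \subseteq L = \mathrm{Frac}(B)$, pick $b \in B \setminus \{0\}$ clearing the denominators of the $c_i$, and compute $ba = 1 \otimes \sum_i (bc_i) b_i$ in $R_2(C)$; applying $\mu_{C,2}$ gives $\sum_i (bc_i) b_i = b\,\mu_{C,2}(a) = 0$ in $CB_2 \subseteq L_2$, and injectivity of $B_2 \hookrightarrow L_2$ forces $\sum_i (bc_i) b_i = 0$ in $B_2$, so $ba = 0$.

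For \textbf{(iii)}, $\mu_{C,2}$ surjects onto the domain $CB_2 \subseteq L_2$, making ${\goth k}$ prime. Using the embedding $B[t] \hookrightarrow R_2(C)$ from Lemma~\ref{l2gf2}(iii), the restriction of $\mu_{C,2}$ to $B[t]$ is the inclusion into $L_2$, hence injective, so ${\goth k} \cap B[t] = {\goth k} \cap B = \{0\}$. Minimality of ${\goth k}$ among primes with $\cap B = \{0\}$ follows directly from (ii): for any such prime $\mathfrak{p}$ and any $a \in {\goth k}$, the $b$ of (ii) lies outside $\mathfrak{p}$, so $ba = 0 \in \mathfrak{p}$ forces $a \in \mathfrak{p}$.

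For \textbf{(iv)}, $\ker \mu_C$ is prime by the same argument (with $CB_1 \subseteq L_1$), giving ${\goth n} \subseteq \ker \mu_C$ and ${\goth n}_2 \subseteq {\goth k}$ automatically. The key for the reverse inclusions is the computation
$$L(t) \otimes_{C[t]} R_2(C) = L(t) \otimes_{B[t]} B_2 = L_2,$$
a field, which identifies ${\goth k}$ with the $C[t]$-torsion submodule of $R_2(C)$ and with the kernel of the natural map $R_2(C) \to L_2$. Applying (ii) to a finite generating set of ${\goth k}$ (from (i)) produces a single $b \in B \setminus \{0\}$ with $b{\goth k} = 0$, so ${\goth k} = \mathrm{Ann}_{R_2(C)}(b)$ is an associated prime. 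Combining primality and minimality of ${\goth k}$ from (iii) with the Noetherianness and finite integral structure of $R_2(C)$ over $C[t]$, one argues that ${\goth k}$ is the unique minimal prime of $R_2(C)$, so ${\goth n}_2 = {\goth k}$. The same argument for $R(C)$ yields ${\goth n} = \ker \mu_C$, which is therefore prime.

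For \textbf{(v)}, (iv) gives ${\goth n} = \ker \mu_C$ prime with ${\goth n} \cap B = \{0\}$; for $a \in {\goth n}$, (ii) provides $b \in B \setminus \{0\}$ with $ba = 0$, and since $b \notin {\goth n}$, $a = 0$ in $R(C)_{\goth n}$. Hence ${\goth n} R(C)_{\goth n} = 0$, and $R(C)_{\goth n}$ is a field equal to $\mathrm{Frac}(R(C)/{\goth n}) = \mathrm{Frac}(CB_1)$; since $CB_1 \subseteq L_1$ contains $B_1$ (generating $L_1$ over $L(t)$) and its fraction field contains $L(t)$ (from $B \subseteq C$ and $t \in B_1$), we conclude $\mathrm{Frac}(CB_1) = L_1$. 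The main obstacle is the uniqueness-of-minimal-prime step in (iv), where promoting ${\goth k}$ from being minimal among primes avoiding $B \setminus \{0\}$ to being the unique minimal prime of $R_2(C)$ requires the finite integral structure over $C[t]$ combined with the annihilator characterization of ${\goth k}$.
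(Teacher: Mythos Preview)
Parts (i), (ii), (iii), and (v) of your sketch are essentially correct and match the paper's approach. The gap is in (iv), precisely at the ``main obstacle'' you flag: promoting ${\goth k}$ from being the minimal prime with ${\goth k}\cap B=\{0\}$ to being the \emph{unique} minimal prime of $R_{2}(C)$. You propose that this follows from ``the Noetherianness and finite integral structure of $R_{2}(C)$ over $C[t]$ combined with the annihilator characterization of ${\goth k}$'', but these facts alone are insufficient. Indeed, a finite extension of a domain can have minimal primes contracting to nonzero primes of the base: take $A=\k[x]\subset B=\k[x,y]/(xy,\,y^{2}-y)\cong \k[x]\times\k$; here $B$ is finite over the domain $A$, yet the minimal prime $(x,y-1)$ meets $A$ in $(x)\neq 0$. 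Your annihilator argument shows only that every minimal prime $\mathfrak q\neq{\goth k}$ contains the element $b$, which does not exclude their existence.

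The paper closes this gap using the Galois group $\Gamma$ in an essential way. Since ${\goth k}$ is characterized as the (unique) minimal prime with ${\goth k}\cap B=\{0\}$ and $\Gamma$ acts on $R_{2}(C)$ fixing $B$, the ideal ${\goth k}$ is $\Gamma$-stable. Then for $a\in{\goth k}$ one expands
\[
0=\prod_{g\in\Gamma}(a-g.a)=a^{m}+a_{m-1}a^{m-1}+\cdots+a_{0},
\]
where the coefficients $a_{i}$ are $\Gamma$-fixed, hence in $C[t]$ by Lemma~\ref{l2gf2}(iii), and lie in ${\goth k}$; since ${\goth k}\cap C[t]=\{0\}$, all $a_{i}=0$ and $a^{m}=0$. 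This proves ${\goth k}\subseteq{\goth n}_{2}$, hence ${\goth n}_{2}={\goth k}$. The statement for $R(C)$ then follows by restriction via the embedding of Lemma~\ref{l2gf2}(ii): ${\goth n}={\goth n}_{2}\cap R(C)={\goth k}\cap R(C)=\ker\mu_{C}$. Your ``same argument for $R(C)$'' would inherit the same gap; the passage through $R_{2}(C)$ and $\Gamma$ is what makes the nilpotence argument work.
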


\begin{proof}
(i) According to Lemma~\ref{l2gf2},(iii), $R(C)$ is an extension of $C[t]$ and 
$R(C)=\tk {B[t]}{C[t]}B_{1}$. Then, by Lemma~\ref{lgf2},(ii), $R(C)$ is 
a finite extension of $C[t]$. In particular, $R(C)$ is a finitely generated algebra since 
so is $C$. In the same way, $R_{2}(C)$ is a finite extension of $C[t]$ and it is 
finitely generated.

(ii) Let $a$ be in ${\goth k}$. Then $a$ has an expansion 
$$ a = \sum_{i=1}^{k} c_{i}\tens b_{i}$$
with $\poi c1{,\ldots,}{k}{}{}{}$ in $C$ and $\poi b1{,\ldots,}{k}{}{}{}$ in $B_{2}$.
Since $C$ and $B$ have the same fraction field, for some $b$ in $B\setminus \{0\}$, 
$bc_{i}$ is in $B$, whence
$$ ba = 1\tens (\sum_{i=1}^{k}bc_{i}b_{i}) .$$
So $ba=0$ since ${\goth k}$ is the kernel of $\mu _{C,2}$. 

(iii) By (i) there are finitely many minimal prime ideals
of $R_{2}(C)$. Denote them by $\poi {{\goth p}}1{,\ldots,}{k}{}{}{}$. Since $C[t]$ is an 
integral algebra, ${\goth n}_{2}\cap C[t] = \{0\}$ so that ${\goth p}_{i}\cap C = \{0\}$ 
for some $i$. Let $i$ be such that ${\goth p}_{i}\cap B=\{0\}$ and let $a$ be in 
${\goth k}$. By (ii), for some $b$ in $B\setminus \{0\}$, $ba$ is in ${\goth p}_{i}$. 
Hence ${\goth k}$ is contained in ${\goth p}_{i}$. Since $CB_{2}$ is an integral algebra,
${\goth k}$ is a prime ideal. Then ${\goth p}_{i}={\goth k}$ since ${\goth p}_{i}$ is a 
minimal prime ideal, whence the assertion since for some $j$, 
${\goth p}_{j}\cap C[t]=\{0\}$.

(iv) By (iii), there is only one minimal prime ideal of $R_{2}(C)$ whose intersection 
with $B$ is equal to $\{0\}$. So, it is invariant under $\Gamma $. Hence ${\goth k}$
is invariant under $\Gamma $. As a result, for $a$ in ${\goth k}$,
$$ 0 = \prod_{g\in \Gamma } (a - g.a) = a^{m} + a_{m-1}a^{m-1} + \cdots + a_{0}$$
with $m=\vert \Gamma  \vert$ and $\poi a0{,\ldots,}{m-1}{}{}{}$ in ${\goth k}$. Moreover,
by Lemma~\ref{l2gf2},(iii), $\poi a0{,\ldots,}{m-1}{}{}{}$ are in $C[t]$. So, by (iii),
they are all equal to zero so that $a$ is a nilpotent element. Hence ${\goth k}$
is contained in ${\goth n}_{2}$. Then ${\goth n}_{2}={\goth k}$ by (iii).

By Lemma~\ref{l2gf2},(ii), $R(C)$ identifies with a subalgebra of $R_{2}(C)$ so that
${\goth n}={\goth n}_{2}\cap R(C)$, and $\mu _{C}$ is the restriction of 
$\mu _{C,2}$ to $R(C)$. Hence ${\goth n}$ is the kernel of $\mu _{C}$ and 
${\goth n}$ is a prime ideal of $R(C)$.

(v) By (iii), ${\goth n}\cap C = \{0\}$. So, by (ii), ${\goth n}R(C)_{{\goth n}}=\{0\}$.
As a result, $R(C)_{{\goth n}}$ is a field since ${\goth n}R(C)_{{\goth n}}$ is a maximal
ideal of $R(C)_{{\goth n}}$. Moreover, by (iii), it is isomorphic to a subfield of 
$L_{1}$, containing $B_{1}$. So, $R(C)_{{\goth n}}$ is isomorphic to $L_{1}$.
\end{proof}

For $c$ in $L[t]$, denote by $c(0)$ the constant term of $c$ as a polynomial in $t$ with 
coefficients in $L$.

\begin{lemma}\label{l4gf2}
Assume that $C$ is integrally closed in $L$. Denote
by $\overline{CB_{1}}$ the integral closure of $CB_{1}$ in $L_{1}$. 

{\rm (i)} Let $i\in\{1,2\}$. For all positive integer $j$, the intersection of $C[t]$ and 
$t^{j}LB_{i}$ equals $t^{j}C[t]$.

{\rm (ii)} The intersection of $tLB_{1}$ and $\overline{CB_{1}}$ equals 
$t\overline{CB_{1}}$.

{\rm (iii)} The algebra $\overline{CB_{1}}$ is contained in $C+t\overline{CB_{1}}$.

{\rm (iv)} The algebra $B_{1}$ is the direct sum of $B$ and $tB_{1}$.
\end{lemma}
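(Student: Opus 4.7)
I plan to establish the four parts in the order (iv), (i), (ii), (iii), since the decomposition in (iv) feeds the later arguments and (ii) supplies the essential input to (iii).

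For (iv), I would first prove $B_1 \cap tLB_1 = tB_1$: if $\mu b = ta$ with $b \in B_1$, $a \in B_1$, $\mu \in B \setminus \{0\}$, then Condition~(V) together with $B \cap tB_1 \subset L \cap tLB_1 = \{0\}$ (Lemma~\ref{lgf2},(iii)) forces $b \in tB_1$. Consequently $B_1/tB_1$ embeds into $LB_1/tLB_1 \cong L$, and being finite over $B$ by Lemma~\ref{lgf2},(ii), it is integral over $B$; Condition~(III) then yields $B_1/tB_1 = B$, hence $B_1 = B \oplus tB_1$. For (i), induction on $j$ reduces matters to $j=1$: decomposing $c \in C[t] \cap tLB_i$ as $c_0 + tc'$ with $c_0 \in C$, $c' \in C[t]$ places $c_0$ in $L \cap tLB_i$. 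For $i=1$ this vanishes by Lemma~\ref{lgf2},(iii). For $i=2$, given $\lambda = tz \in L \cap tLB_2$, integrality of $LB_2$ over $L[t]$ provides a monic relation $z^n + p_{n-1}(t)z^{n-1} + \cdots + p_0(t) = 0$; substituting $z = \lambda/t$ and multiplying by $t^n$ yields an identity in $L[t]$ whose constant-in-$t$ coefficient is $\lambda^n$, forcing $\lambda = 0$.

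The technical core is (ii). Given $a \in \overline{CB_1} \cap tLB_1$, set $a = tb$ with $b \in LB_1$. Since $B_1$ is integral over $B[t]$ and $C \supset B$, $CB_1$ is integral over $C[t]$, so $a$ is integral over $C[t]$. As $C$ is integrally closed in $L$, $C[t]$ is integrally closed in $L(t)$, so the minimal polynomial of $a$ over $L(t)$ has coefficients in $C[t]$: write $P(X) = X^d + p_{d-1}X^{d-1} + \cdots + p_0$ with $p_i \in C[t]$. The key step is to consider $Q(X) := t^{-d}P(tX) = X^d + (p_{d-1}/t)X^{d-1} + \cdots + p_0/t^d$. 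Because $t$ is a unit in $L(t)$, $L(t)(a) = L(t)(b)$, and $Q$ is the minimal polynomial of $b$ over $L(t)$. But $b \in LB_1$ is integral over $L[t]$ (since $LB_1$ is finite over $L[t]$ by Lemma~\ref{lgf2},(ii)), so $Q \in L[t][X]$; hence $p_{d-i} \in t^i L[t]$, and combined with $p_{d-i} \in C[t]$ and the elementary identity $C[t] \cap t^i L[t] = t^i C[t]$ this yields $Q \in C[t][X] \subset CB_1[X]$. Therefore $b$ is integral over $CB_1$, i.e., $b \in \overline{CB_1}$, giving $a \in t\overline{CB_1}$. The reverse inclusion $t\overline{CB_1} \subset tLB_1 \cap \overline{CB_1}$ uses $\overline{CB_1} \subset LB_1$, a consequence of Lemma~\ref{lgf2},(iv). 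I expect this minimal-polynomial manipulation, in particular matching the integrality conditions over $C[t]$ and over $L[t]$, to be the main technical obstacle.

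Finally, (iii) is a short consequence. The argument of (iv), applied to elements of $CB_1$ written as $\sum c_i b_i$ with $b_i = \beta_i + t\tilde b_i$, gives $CB_1 = C \oplus tCB_1$ (directness from $L \cap tLB_1 = 0$), and hence $CB_1/tCB_1 \cong C$. For $b \in \overline{CB_1}$ satisfying $b^n + c_{n-1}b^{n-1} + \cdots + c_0 = 0$ with $c_i \in CB_1$, reducing modulo $tLB_1$ and using Lemma~\ref{lgf2},(iii) produces a monic equation over $C$ for the image $\bar b \in L$; integrality of $C$ in $L$ then forces $\bar b \in C$. Any lift $c \in C$ of $\bar b$ satisfies $b - c \in \overline{CB_1} \cap tLB_1 = t\overline{CB_1}$ by (ii), so $b \in C + t\overline{CB_1}$.
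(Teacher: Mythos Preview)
Your proof is correct, but it follows a genuinely different path from the paper's.

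The paper orders the parts as (i), (ii), (iii), (iv) and bases (ii) and (iii) on the Galois action of $\Gamma$ on $L_2$: for $a\in\overline{CB_1}\subset\overline{CB_2}$ one expands the product $\prod_{g\in\Gamma}(a-g(a))$ to obtain a monic relation $a^m+a_{m-1}a^{m-1}+\cdots+a_0=0$ whose coefficients lie in the $\Gamma$-fixed ring $C[t]$. For (ii), when $a\in tLB_1$, $\Gamma$-invariance of $tLB_2$ forces $a_i\in t^{m-i}LB_2\cap C[t]=t^{m-i}C[t]$ (this is where part (i) for $i=2$ is used), and dividing through by $t^m$ gives the integral dependence of $a/t$ over $C[t]$. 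Part (iii) follows by reducing the same Galois relation modulo $tLB_1$, and (iv) is the special case $C=B$ of (iii).

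Your argument bypasses the Galois machinery entirely. You prove (iv) first by an elementary quotient argument ($B_1/tB_1\hookrightarrow L$ is finite over $B$, hence equals $B$), then handle (ii) via the minimal polynomial of $a$ over $L(t)$: integral closedness of $C[t]$ in $L(t)$ puts the coefficients in $C[t]$, integrality of $b=a/t$ over $L[t]$ puts the scaled coefficients in $L[t]$, and intersecting gives what is needed. This is arguably cleaner since it avoids passing to the auxiliary extension $L_2$ and the results of Lemma~\ref{l2gf2}. The trade-off is that the paper's approach gives a single uniform monic relation used for both (ii) and (iii), whereas you need the extra step of first decomposing $CB_1=C\oplus tCB_1$ (via (iv)) before running (iii). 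Your ad hoc treatment of $L\cap tLB_2=0$ in (i) is also more self-contained than the paper's, which silently invokes the analogue of Lemma~\ref{lgf2},(ii) for $B_2$.
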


\begin{proof}
First of all, $CB_{1}$ and $\overline{CB_{1}}$ are finite extensions of $C[t]$ by 
Lemma~\ref{l2gf2},(i), and ~\cite[\S 33, Lemma 1]{Mat}. So $\overline{CB_{1}}$ is the
integral closure of $C[t]$ in $L_{1}$ by Condition (IV). Denote by $\overline{CB_{2}}$ 
the integral closure of $C[t]$ in $L_{2}$. Since $C$ is integrally closed in $L$, 
$C[t]$ is integally closed in $L[t]$. Hence $C[t]$ is the set of fixed points under
the action of $\Gamma $ in $\overline{CB_{2}}$. Let $a$ be in $\overline{CB_{2}}$. Then
$$ 0 = \prod_{g\in \Gamma } (a-g(a)) = a^{m} + a_{m-1} a^{m-1} + \cdots + a_{0}$$ 
with $\poi a0{,\ldots,}{m-1}{}{}{}$ in $C[t]$. 

(i) Since $t^{j}LB_{1}$ is contained in $t^{j}LB_{2}$ and contains $t^{j}C[t]$, it 
suffices to prove the assertion for $i=2$. Let us prove it by induction on $j$. Let $c$ 
be in $C[t]$. Then $c-c(0)$ is in $tLB_{2}$. By Lemma~\ref{lgf2},(ii), 
$L\cap tLB_{2}=\{0\}$ since $L$ is a field, whence $C\cap tLB_{2}=\{0\}$ since $C$ is 
contained in $L$. As a result, if $c$ is in $tLB_{2}$, $c(0)=0$ and $c$ is in $tC[t]$, 
whence the assertion for $j=1$. Suppose the assertion true for $j-1$. Let $c$ be in 
$C[t]\cap t^{j}LB_{2}$. By induction hypothesis, $c=t^{j-1}c'$ with $c'$ in $C[t]$. Then 
$c'$ is in $C[t]\cap tLB_{2}$, whence $c$ is in $t^{j}C[t]$ by the assertion for $j=1$. 

(ii) Suppose that $a$ is in $tLB_{1}$. Since $tLB_{2}$ is invariant under $\Gamma $,
for $i=0,\ldots,m-1$, $a_{i}$ is in $t^{m-i}LB_{2}$. Set for $i=0,\ldots,m-1$, 
$$ a'_{i} := \frac{a_{i}}{t^{m-i}}.$$
Then by (i), $\poie a0{,\ldots,}{{m-1}}{}{}{}{\prime}{\prime}$ are in 
$C[t]$. Moreover, 
$$ (\frac{a}{t})^{m} + a'_{m-1}(\frac{a}{t^{m-1}})^{m-1} + \cdots +
a'_{0} = 0,$$
so that $a/t$ is in $\overline{CB_{1}}$, whence the assertion.

(iii) Suppose that $a$ is in $\overline{CB_{1}}$. By Lemma~\ref{lgf2},(iii), $L$ is the 
quotient of $LB_{1}$ by $tLB_{1}$. So, denoting by $\overline{a}$ the image of $a$ by the
quotient morphism,
$$ \overline{a}^{m} + a_{m-1}(0) \overline{a}^{m-1} + \cdots + a_{0}(0) = 0.$$
Then $\overline{a}$ is in $C$ since $C$ is integrally closed. Hence $a$ is in 
$C+tLB_{1}$. As a result, by (ii), $\overline{CB_{1}}$ is contained in 
$C+t\overline{CB_{1}}$.

(iv) By Condition (III), $B$ is integrally closed in $L$. So the assertion results 
from (iii) and Condition (IV) for $C=B$.
\end{proof}

\begin{coro}\label{cgf2}
The ideal $R(C)t$ of $R(C)$ is prime and $t$ is not a zero divisor in $R(C)$. 
\end{coro}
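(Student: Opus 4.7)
The plan is to leverage the direct sum decomposition $B_{1}=B\oplus tB_{1}$ from Lemma~\ref{l4gf2},(iv) to obtain a parallel decomposition of $R(C)=\tk{B}{C}B_{1}$, from which both assertions will follow at once. The one subtle point is that $C$ need not be flat over $B$, so that tensoring an arbitrary injection with $C$ might fail to preserve injectivity; this is precisely why the \emph{splitting} provided by Lemma~\ref{l4gf2},(iv) is crucial, since split inclusions of $B$-modules are preserved under any tensor product.

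Tensoring the $B$-module decomposition $B_{1}=B\oplus tB_{1}$ with $C$ over $B$ yields
$$R(C)=(\tk{B}{C}B)\oplus(\tk{B}{C}{tB_{1}})\cong C\oplus(\tk{B}{C}{tB_{1}})$$
as $C$-modules. The second summand, viewed inside $R(C)$ via the map induced by the inclusion $tB_{1}\hookrightarrow B_{1}$, coincides with $tR(C)$, because $t\cdot(c\tens b)=c\tens tb$; hence internally $R(C)=C\oplus tR(C)$. The composition $C\hookrightarrow R(C)\twoheadrightarrow R(C)/tR(C)$ is then a ring homomorphism that is both surjective (by the sum) and has zero kernel (by the directness), producing a ring isomorphism $R(C)/tR(C)\cong C$. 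Since $C\subset L$ is an integral domain, $tR(C)$ is a prime ideal, which gives the first assertion.

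For the second assertion, I would show that multiplication by $t$, viewed as a map $m_{t}\colon R(C)\to R(C)$, is injective. Because $B_{1}\subset L_{1}$ is a domain and $t\neq 0$ in $L_{1}$ (as $t$ is transcendental over $L$ by condition~(I)), the $B$-linear map $B_{1}\to B_{1}$, $b\mapsto tb$, is an injection with image $tB_{1}$, hence induces a $B$-module isomorphism $B_{1}\xrightarrow{\sim}tB_{1}$. Applying $\tk{B}{C}{-}$ yields an isomorphism $\tk{B}{C}B_{1}\xrightarrow{\sim}\tk{B}{C}{tB_{1}}$, and composing with the inclusion $\tk{B}{C}{tB_{1}}\hookrightarrow R(C)$---which is injective since it identifies a direct summand by the previous paragraph---recovers $m_{t}$. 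Therefore $m_{t}$ is injective and $t$ is not a zero divisor in $R(C)$, completing the proof.
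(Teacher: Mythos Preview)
Your proof is correct. For the primality of $R(C)t$, your argument and the paper's are essentially the same: both deduce the decomposition $R(C)=C\oplus tR(C)$ from Lemma~\ref{l4gf2},(iv), and conclude that $R(C)/tR(C)\cong C$ is a domain. The paper phrases the directness slightly differently, checking $C\cap tCB_{1}=\{0\}$ via $\mu_{C}$, while you obtain it more cleanly from the fact that tensoring preserves split exact sequences.

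For the non-zero-divisor assertion, your argument is genuinely different and more elementary. The paper proceeds indirectly: it first observes that the nilradical $\mathfrak{n}=\ker\mu_{C}$ lies in the prime ideal $tR(C)$, then shows every element of $\mathfrak{n}$ is infinitely divisible by $t$, and finally uses Noetherianity of $R(C)$ (finite generation from Lemma~\ref{l3gf2},(i)) to stabilize the chain of annihilator ideals $J_{k}=\{a:at^{k}=0\}\subset\mathfrak{n}$ and force $J_{1}=0$. Your approach bypasses all of this by factoring multiplication by $t$ as the isomorphism $R(C)\xrightarrow{\sim}\tk{B}{C}{tB_{1}}$ (obtained by tensoring the $B$-module isomorphism $B_{1}\xrightarrow{\cdot t}tB_{1}$) followed by the split inclusion $\tk{B}{C}{tB_{1}}\hookrightarrow R(C)$. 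This uses nothing beyond the splitting already established in Lemma~\ref{l4gf2},(iv) and avoids any appeal to the nilradical or to Noetherianity. Your recognition that flatness of $C$ over $B$ is not needed precisely because the inclusion $tB_{1}\hookrightarrow B_{1}$ is split is the key observation that makes this shortcut work.
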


\begin{proof}
According to Lemma~\ref{l4gf2},(iv), $R(C) = C+R(C)t$. Furthermore, this sum is
direct since $C\cap tCB_{1}=\{0\}$ by Lemma~\ref{lgf2},(ii) and since the restriction of 
$\mu _{C}$ to $C$ is injective. Then $R(C)t$ is a prime ideal of $R(C)$ since $C$ is an 
integral algebra.

Since $R(C)t$ is a prime ideal, ${\goth n}$ is contained in $R(C)t$. According to 
Lemma~\ref{l3gf2},(iv), ${\goth n}$ is the kernel of $\mu _{C}$. Let $a$ be in 
${\goth n}$. Then $a=a't$ for some $a'$ in $R(C)$. Since $0=\mu _{C}(a't)=\mu _{C}(a')t$,
$a'$ is in ${\goth n}$. As a result, by induction on $m$, for all positive integer $m$, 
$a=a_{m}t^{m}$ for some $a_{m}$ in ${\goth n}$.

For $k$ positive integer, denote by $J_{k}$ the subset of elements $a$ of $R(C)$ such 
that $at^{k}=0$. Then $(\poi J1{,}{2}{}{}{},\ldots)$ is an increasing sequence of ideals 
of $R(C)$. For $a$ in $J_{k}$, $0=\mu _{C}(at^{k})=\mu _{C}(a)t^{k}$. Hence $J_{k}$ is 
contained in ${\goth n}$. According to Lemma~\ref{l3gf2},(i), the $\k$-algebra $R(C)$ is 
finitely generated. So for some positive integer $k_{0}$, $J_{k}=J_{k_{0}}$ for all $k$ 
bigger than $k_{0}$. Let $a$ be in $J_{1}$. Then $a = a_{k_{0}}t^{k_{0}}$ for some 
$a_{k_{0}}$ in ${\goth k}$. Since $a_{k_{0}}t^{k_{0}+1}=0$, $a_{k_{0}}$ is in $J_{k_{0}}$
so that $a=0$. Hence $t$ is not a zero divisor in $R(C)$. 
\end{proof}

\begin{prop}\label{pgf2}
Suppose that $C$ is integrally closed and Cohen-Macaulay. Let ${\goth p}$ be 
a prime ideal of $CB_{1}$, containing $t$ and let $\tilde{{\goth p}}$ be its inverse 
image by $\mu _{C}$.

{\rm (i)} The local algebra $(CB_{1})_{{\goth p}}$ is normal.

{\rm (ii)} The local algebra $R(C)_{\tilde{{\goth p}}}$ is Cohen-Macaulay and reduced.
In particular, the canonical morphism 
$R(C)_{\tilde{{\goth p}}}\rightarrow (CB_{1})_{{\goth p}}$ is an isomorphism.

{\rm (iii)} The local algebra $(CB_{1})_{{\goth p}}$ is Cohen-Macaulay.
\end{prop}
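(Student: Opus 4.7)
The plan is to prove part (ii) first, which immediately yields (iii), and then deduce (i) by comparing $CB_{1}$ with its integral closure $\overline{CB_{1}}$ in $L_{1}$ and applying Nakayama's lemma. The crucial inputs throughout are the direct-sum decomposition $R(C) = C\oplus R(C)t$ and the fact that $t$ is a non-zero-divisor of $R(C)$, both established in Corollary~\ref{cgf2}.

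For parts (ii) and (iii), set $q := \tilde{\goth{p}}\cap C$. The decomposition yields $R(C)/tR(C)\cong C$ as $C$-algebras, so $R(C)_{\tilde{\goth{p}}}/tR(C)_{\tilde{\goth{p}}}\cong C_{q}$. Since $t\in\tilde{\goth{p}}$ is a non-zero-divisor of $R(C)$, standard depth and dimension theory yield
\[
\mathrm{depth}\,R(C)_{\tilde{\goth{p}}} = \mathrm{depth}\,C_{q}+1 \quad\text{and}\quad \dim R(C)_{\tilde{\goth{p}}} = \dim C_{q}+1,
\]
so the Cohen-Macaulayness of $C$ forces Cohen-Macaulayness of $R(C)_{\tilde{\goth{p}}}$. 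For reducedness, Lemma~\ref{l3gf2},(iv) identifies $\goth{n}$ as the unique minimal prime of $R(C)$, whence $\goth{n}R(C)_{\tilde{\goth{p}}}$ is the unique minimal prime of $R(C)_{\tilde{\goth{p}}}$; the further localization $R(C)_{\goth{n}}$ is the field $L_{1}$ by Lemma~\ref{l3gf2},(v). Thus $R(C)_{\tilde{\goth{p}}}$ satisfies Serre's condition $(R_{0})$, and combined with $(S_{1})$ (a consequence of Cohen-Macaulayness) this gives reducedness. In particular $\goth{n}R(C)_{\tilde{\goth{p}}} = 0$, so the canonical surjection $R(C)_{\tilde{\goth{p}}} \to (CB_{1})_{\goth{p}}$ is an isomorphism, from which (iii) is immediate.

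For part (i), applying $\mu_{C}$ to $R(C) = C\oplus R(C)t$ yields $CB_{1} = C\oplus tCB_{1}$ inside $L_{1}$, the directness coming from $L\cap tLB_{1} = 0$ (Lemma~\ref{lgf2},(iii)). Since $\overline{CB_{1}}$ is integral over $CB_{1}\subset LB_{1}$ and $LB_{1}$ is integrally closed in $L_{1}$ by Lemma~\ref{lgf2},(iv), we have $\overline{CB_{1}}\subset LB_{1}$; Lemma~\ref{l4gf2},(iii) then sharpens to the direct-sum decomposition $\overline{CB_{1}} = C\oplus t\overline{CB_{1}}$, and under the inclusion $CB_{1}\hookrightarrow \overline{CB_{1}}$ both projections onto the $C$-summand coincide. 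Hence $M := \overline{CB_{1}}/CB_{1}$ satisfies $M = tM$. By \cite[\S33, Lemma 1]{Mat}, $\overline{CB_{1}}$ is a finite $C[t]$-module, hence a finite $CB_{1}$-module, so Nakayama applied in the local ring $(CB_{1})_{\goth{p}}$ (in which $t$ lies in the maximal ideal) gives $M_{\goth{p}} = 0$; thus $(CB_{1})_{\goth{p}} = (\overline{CB_{1}})_{\goth{p}}$, a localization of the integrally closed domain $\overline{CB_{1}}$, and therefore normal. The main technical hurdle is establishing the two compatible direct-sum decompositions $CB_{1} = C\oplus tCB_{1}$ and $\overline{CB_{1}} = C\oplus t\overline{CB_{1}}$, on which the Nakayama step in (i) entirely rests; the arguments for (ii) are then fairly standard applications of Rees' lemma and Serre's criterion for reducedness.
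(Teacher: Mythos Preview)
Your proof is correct and follows essentially the same approach as the paper: the Cohen--Macaulay and reducedness arguments for (ii) via the non-zero-divisor $t$, the quotient $R(C)/tR(C)\cong C$, and Serre's criterion using Lemma~\ref{l3gf2},(iv)--(v) are exactly what the paper does, and your Nakayama argument for (i) based on $\overline{CB_{1}}\subset CB_{1}+t\overline{CB_{1}}$ (Lemma~\ref{l4gf2},(iii)) is the paper's argument as well. The only cosmetic differences are that the paper treats (i) before (ii) and does not spell out the direct-sum decompositions of $CB_{1}$ and $\overline{CB_{1}}$ as explicitly---indeed, for (i) only the inclusion $\overline{CB_{1}}\subset CB_{1}+t\overline{CB_{1}}$ is needed to get $M=tM$, so your compatibility check on the $C$-projections, while correct, is not strictly necessary.
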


\begin{proof}
(i) Let $\overline{CB_{1}}$ be the integral closure of $CB_{1}$ in $L_{1}$. Setting 
$S:=CB_{1}\setminus {\goth p}$, $(CB_{1})_{{\goth p}}$ is the localization of 
$CB_{1}$ with respect to $S$. Denote by $(\overline{CB_{1}})_{{\goth p}}$ the localization
of $\overline{CB_{1}}$ with respect to $S$. Then $(\overline{CB_{1}})_{{\goth p}}$ is a 
finite $(CB_{1})_{{\goth p}}$-module since $\overline{CB_{1}}$ is a finite extension 
of $CB_{1}$. According to Lemma~\ref{l3gf2},(iii),
$$ \overline{CB_{1}} \subset CB_{1} + t\overline{CB_{1}} .$$
Then since $t$ is in ${\goth p}$, 
$$ (\overline{CB_{1}})_{{\goth p}}/(CB_{1})_{{\goth p}} = 
{\goth p}(\overline{CB_{1}})_{{\goth p}}/(CB_{1})_{{\goth p}}.$$
So, by Nakayama's Lemma, $(\overline{CB_{1}})_{{\goth p}}=(CB_{1})_{{\goth p}}$, whence
the assertion.

(ii) According to Corollary~\ref{cgf2}, $R(C)t$ is a prime ideal containing ${\goth n}$. 
Denote by $\overline{{\goth p}}$ the intersection of ${\goth p}$ and $C$. Since 
$\tilde{{\goth p}}$ is the inverse image of ${\goth p}$ by $\mu _{C}$, 
$C_{\overline{{\goth p}}}$ is the quotient of $R(C)_{\tilde{{\goth p}}}$ by 
$R(C)_{\tilde{{\goth p}}}t$. Since $C$ is Cohen-Macaulay, so is 
$C_{\overline{{\goth p}}}$. As a result, $R(C)_{\tilde{{\goth p}}}$ is 
Cohen-Macaulay since $t$ is not a zero divisor in $R(C)$ by Corollary~\ref{cgf2} and
since $R(C)_{\tilde{{\goth p}}}t$ is a prime ideal of height $1$.

Denote by $\mu _{C,\tilde{{\goth p}}}$ the canonical extension of $\mu _{C}$ to
$R(C)_{\tilde{{\goth p}}}$. Then $(CB_{1})_{{\goth p}}$ is the image of 
$\mu _{C,\tilde{{\goth p}}}$. According to Lemma~\ref{l3gf2},(iv), the nilradical 
${\goth n}R(C)_{\tilde{{\goth p}}}$ of $R(C)_{\tilde{{\goth p}}}$ is the minimal
prime ideal of $R(C)_{\tilde{{\goth p}}}$ and it is the kernel of 
$\mu _{C,\tilde{{\goth p}}}$. By Lemma~\ref{l3gf2},(v), the localization of 
$R(C)_{\tilde{{\goth p}}}$ at ${\goth n}R(C)_{\tilde{{\goth p}}}$ is a field. In 
particular, it is regular. Then, by \cite[\S 1, Proposition 15]{Bou}, 
$R(C)_{\tilde{{\goth p}}}$ is a reduced algebra since it is Cohen-Macaulay. As a result, 
$\mu _{C,\tilde{{\goth p}}}$ is an isomorphism onto $(CB_{1})_{{\goth p}}$.

(iii) results from (ii).
\end{proof}

\subsection{} \label{gf3}
Return to the situation of Subsection \ref{gf1}, and keep its notations. 
From now on, and until the end of the section, we assume that 
$S={\rm S}(E)$ for some finite dimensional $\k$-vector space $E$. 
As a rule, if $B$ is a subalgebra of ${\rm S}(E)$, we denote by $K(B)$ its fraction 
field, and we set for simplicity 
$$K:=K({\rm S}(E)).$$ 
Furthermore we assume until the end of the section that the following conditions hold:
\begin{itemize}
\item[(a)] $\dim {\cal N}_{0}=N$, 
\item[(b)] $A$ is a polynomial algebra, 
\item[(c)] $K(\overline{A})$ is algebraically closed in $K$.
\end{itemize} 
We aim to prove Theorem~\ref{tgf4} (see Subsection \ref{gf4}). 
Let $(\poi v1{,\ldots,}{N}{}{}{})$ be a sequence of elements of $E$ such that 
its nullvariety in ${\cal N}_{0}$ equals $\{0\}$. Such a sequence does exist by 
Lemma~\ref{lgf1},(iii). Set 
$$C :=\overline{A}[\poi v1{,\ldots,}{N}{}{}{}].$$
By Proposition~\ref{pgf1},(ii), 
$C$ is a polynomial algebra if and only if so is $\overline{A}$ since $C$ is a 
faithfully flat extension of $\overline{A}$. Therefore, in order to prove 
Theorem~\ref{tgf4}, it suffices to prove that S$(E)$ is a free extension of $C$, again by
Proposition~\ref{pgf1},(ii). This is now our goal.

\smallskip

Condition (c) is actually not useful for the following lemma: 

\begin{lemma}\label{lgf3}
The algebra $C$ is integrally closed and $\es SE$ is the integral closure of $C$ in $K$.
\end{lemma}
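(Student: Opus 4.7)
The plan is to handle the two assertions of the lemma separately, starting with the easier one.

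First I would show that $\es SE$ is the integral closure of $C$ in $K$. The argument from the proof of Corollary~\ref{cgf1} already gives the template: since the nullvariety of $\poi v1{,\ldots,}{N}{}{}{}$ in ${\cal N}_{0}$ is $\{0\}$, the ideal $J$ of $\es SE$ generated by $A_{+}$ and $\poi v1{,\ldots,}{N}{}{}{}$ has radical equal to the augmentation ideal, hence has finite codimension. Fixing a graded complement $V$ to $J$ in $\es SE$ and applying Lemma~\ref{l2gf1} shows that $\es SE$ is the $A[\poi v1{,\ldots,}{N}{}{}{}]$-submodule generated by $V$; in particular, $\es SE$ is a finite, hence integral, extension of $A[\poi v1{,\ldots,}{N}{}{}{}]\subset C$. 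Since $\es SE={\rm S}(E)$ is a polynomial algebra it is integrally closed in $K$, so $\es SE$ equals the integral closure of $C$ in $K$.

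Next I would show that $C$ itself is integrally closed. This requires two ingredients. On one hand, $\poi v1{,\ldots,}{N}{}{}{}$ are algebraically independent over $\overline{A}$: since $\es SE$ is integral over $C$, one has $\dim C=\dim \es SE=\dim A+N$; combined with the fact that $\overline{A}/A$ is an algebraic extension, so $\dim\overline{A}=\dim A$, this forces algebraic independence. Consequently, $C=\overline{A}[\poi v1{,\ldots,}{N}{}{}{}]$ is a polynomial algebra over $\overline{A}$ in $N$ variables. On the other hand, $\overline{A}$ is integrally closed in its fraction field $K(\overline{A})$: indeed, given $b\in K(\overline{A})\subset K$ integral over $\overline{A}$, the inclusion $\overline{A}\subset\es SE$ shows that $b$ is integral over $\es SE$, so $b\in\es SE$ by the normality of the polynomial algebra; as $b$ is algebraic over $A$ and lies in $\es SE$, the very definition of $\overline{A}$ as the algebraic closure of $A$ in $\es SE$ yields $b\in\overline{A}$. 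A polynomial algebra over an integrally closed domain is integrally closed, so $C$ is integrally closed.

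I do not expect a real obstacle here: the only genuine input is the finiteness of $\es SE$ over $A[\poi v1{,\ldots,}{N}{}{}{}]$, which was already established inside the proof of Corollary~\ref{cgf1}, and the remainder is bookkeeping separating the roles of $A$, $\overline{A}$ and $C$. Note that condition (c) of Subsection~\ref{gf3} plays no role in this particular lemma; it is conditions (a) and (b) that are implicitly used, via Lemma~\ref{lgf1}(iii) and the regularity of $A$.
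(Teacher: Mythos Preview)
Your proof is correct and follows essentially the same route as the paper's, with the two assertions treated in reverse order. The paper first argues algebraic independence of $\poi v1{,\ldots,}{N}{}{}{}$ over $\overline{A}$, invokes Serre's normality criterion to get $C$ integrally closed, and then reapplies Corollary~\ref{cgf1} with $C$ in place of $A$ (noting that the nullvariety of $C_{+}$ in $E^{*}$ is $\{0\}$) to conclude that $\es SE$ is the integral closure of $C$; your version instead recycles the finiteness argument from the proof of Corollary~\ref{cgf1} directly and then deduces the dimension count, but the content is the same. Your explicit verification that $\overline{A}$ is integrally closed in $K(\overline{A})$ spells out what the paper compresses into the phrase ``since so is $\overline{A}$ by definition,'' and your closing remark that condition~(c) is unused matches the paper's own observation preceding the lemma.
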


\begin{proof}
Since $\overline{A}$ has dimension $\dim E-N$ and since the nullvariety
of $\poi v1{,\ldots,}{N}{}{}{}$ in ${\cal N}_{0}$ is $\{0\}$, 
$\poi v1{,\ldots,}{N}{}{}{}$ are algebraically independent over $A$ and 
$\overline{A}$. By Serre's normality criterion \cite[\S 1, \no 10, Th\'eor\`eme 4]{Bou}, 
any polynomial algebra over a normal ring is normal. So $C$ is integrally closed since 
so is $\overline{A}$ by definition. Moreover, $C$ is a homogenous finitely generated  
subalgebra of $\es SE$ since so is $\overline{A}$ by Corollary~\ref{cgf1}. Since $C$ has
dimension $\dim E$, $\es SE$ is algebraic over $C$. Then, by Corollary~\ref{cgf1}, 
$\es SE$ is the integral closure of $C$ in $K$. Indeed, $\es SE$ is integrally closed as 
a polynomial algebra and $\{0\}$ is the nullvariety of $C_{+}$ in $E^{*}$.
\end{proof}
\smallskip

Set $Z_{0} := {\mathrm {Specm}}(\overline{A})$ and $Z:=Z_{0}\times \k^{N}$. 
Then $Z$ is equal to ${\mathrm {Specm}}(C)$. Let ${X}_{0}$ be a desingularization of 
$Z_{0}$ and let ${\pi} _{0}$ be the morphism of desingularization. Such a 
desingularization does exist by \cite{Hi}. Set $X := X_{0}\times \k^{N}$ and 
denote by $\pi $ the morphism 
$$X\longrightarrow Z , \qquad (x,v) \longmapsto (\pi _{0}(x),v) .$$   
Then $(X,\pi )$ is a desingularization of $Z$.

Fix $x_{0}$ in $\pi _{0}^{-1}(C_{+})$. For $i=0,\ldots,N$, set 
$X_{i} := X_{0}\times \k^{i}$ and let $x_{i} := (x_{0},0_{ \k^{i}})$. Define $K_{i}$, 
$C'_{i}$, $C_{i}$ by the induction relations:
\begin{list}{}{}
\item {\rm (1)} $C'_{0} := C_{0} := \overline{A}$ and $K_{0}$ is the fraction field of 
$\overline{A}$,
\item {\rm (2)} $C'_{i}:=C'_{i-1}[v_{i}]$,
\item {\rm (3)} $K_{i}$ is the algebraic closure of $K_{i-1}(v_{i})$ in $K$ and $C_{i}$ 
is the integral closure of $C_{i-1}[v_{i}]$ in $K_{i}$.
\end{list}

\begin{lemma}\label{l2gf3}
Let $i=1,\ldots,N$.

{\rm (i)} The field $K_{i}$ is a finite extension of $K_{i-1}(v_{i})$ and $K_{i-1}$ is 
algebraically closed in $K_{i}$.

{\rm (ii)} The algebra $C_{i}$ is finitely generated and integrally closed in $K$. 
Moreover, $K_{i}$ is the fraction field of $C_{i}$. 

{\rm (iii)} The algebra $C_{i}$ is contained in $\es SE$ and $C_{N}=\es SE$. Moreover, 
$K_{N}=K$.

{\rm (iv)} The algebra $C_{i}$ is a finite extension of $C'_{i}$.

{\rm (v)} The algebra $C_{i}$ is the intersection of $\es SE$ and $K_{i}$. Moreover, 
$v_{i}C_{i}$ is a prime ideal of $C_{i}$.
\end{lemma}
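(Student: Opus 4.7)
The plan is to argue by induction on $i$. The base case $i=0$ is immediate: $K_0 = K(\overline{A})$ is algebraically closed in $K$ by hypothesis~(c), and $\overline{A}$ is finitely generated and integrally closed in $\es SE$ by Corollary~\ref{cgf1}.

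For the inductive step I would first dispatch~(i), (ii), (iv) and the local part of~(iii). For~(i), since $K$ is finitely generated over $\k$ and hence over $K_{i-1}(v_i)$, the algebraic closure of $K_{i-1}(v_i)$ inside the finitely generated extension $K$ is itself a finite extension, giving $[K_i : K_{i-1}(v_i)] < \infty$; the induction hypothesis that $K_{i-1}$ is algebraically closed in $K$ transfers automatically to $K_i \subset K$. For~(ii), the theorem on finiteness of normalization (cf.~\cite[\S33, Lemma~1]{Mat}) applied to the finitely generated $\k$-domain $C_{i-1}[v_i]$ and the finite extension $K_i$ of its fraction field $K_{i-1}(v_i)$ yields that $C_i$ is a finite $C_{i-1}[v_i]$-module, hence finitely generated as a $\k$-algebra; that $C_i$ is integrally closed in all of $K$ (and not only in $K_i$) follows from the definition of $K_i$ as the algebraic closure of $K_{i-1}(v_i)$ in $K$, while $K(C_i) = K_i$ is a routine clearing-of-denominators argument. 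Part~(iv) then follows by transitivity from~(ii) and the inductive hypothesis. The inclusion $C_i \subset \es SE$ of~(iii) is immediate because $\es SE$ is integrally closed in $K$ and contains $C_{i-1}[v_i]$.

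The global identities $K_N = K$ and $C_N = \es SE$ of~(iii) are best handled at the end: $K_0$ has transcendence degree $\dim E - N$ over $\k$, and $v_1,\ldots,v_N$ are algebraically independent over $\overline{A}$ (as used in the proof of Lemma~\ref{lgf3}), so $K$ is algebraic over $K_0(v_1,\ldots,v_N) \subset K_N$; since $K_N$ is algebraically closed in $K$, this forces $K_N = K$, whence $C_N$ is integrally closed in $K$ and contains $C$, so $C_N = \es SE$ by Lemma~\ref{lgf3}. The most delicate clause, which I expect to be the main obstacle, is~(v). For the equality $C_i = \es SE \cap K_i$, only the inclusion $\supset$ requires work: any $x \in \es SE \cap K_i$ is integral over $C \subset C_i[v_{i+1},\ldots,v_N]$ by Lemma~\ref{lgf3}, and a transcendence degree count (using that $K_i$ is algebraic over $K_0(v_1,\ldots,v_i)$) shows that $v_{i+1},\ldots,v_N$ remain algebraically independent over $K_i$; taking constant terms with respect to $v_{i+1},\ldots,v_N$ in the integral dependence equation then produces an integral dependence equation for $x$ with coefficients in $C_i$, so $x \in C_i$ by integral closedness. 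Primality of $v_i C_i$ follows from the identity $v_i C_i = v_i\,\es SE \cap C_i$: if $x = v_i y \in C_i$ with $y \in \es SE$, then $y = x/v_i \in K_i \cap \es SE = C_i$; this embeds $C_i / v_i C_i$ into $\es SE / v_i\,\es SE \cong \es S{E/\k v_i}$, an integral domain.
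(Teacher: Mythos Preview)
Your proof is correct and follows essentially the same reasoning as the paper's: finiteness of normalization for (ii) and (iv), integrality plus the ``constant-term'' trick for the first half of (v), and primality of $v_i\,\es SE$ together with the identity $C_i=\es SE\cap K_i$ for the second half of (v).

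The only noteworthy difference is in the organization of (v). The paper proves $C_i=\es SE\cap K_i$ by \emph{downward} induction: starting from $C_N=\es SE$, it shows $C_{N-i}=C_{N-i+1}\cap K_{N-i}$ by taking an integral dependence equation over $C_{N-i}[v_{N-i+1}]$ and setting the single variable $v_{N-i+1}$ to its constant term. You instead go directly: for $x\in\es SE\cap K_i$ you take an integral equation over $C\subset C_i[v_{i+1},\ldots,v_N]$ (via Lemma~\ref{lgf3}) and eliminate all of $v_{i+1},\ldots,v_N$ at once. Both arguments rest on the same fact, namely that $v_{i+1},\ldots,v_N$ remain algebraically independent over $K_i$, so the difference is purely organizational; your version is arguably a bit cleaner since it avoids the auxiliary downward induction.
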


\begin{proof}
(i) By Condition (c), $K_{0}$ is algebraically closed in $K$. So $K_{0}$ is 
algebraically closed in $K_{1}$. By definition, for $i>1$, $K_{i-1}$ is algebraically
closed in $K$. So it is in $K_{i}$. Since the nullvariety of $\poi v1{,\ldots,}{N}{}{}{}$
in ${\cal N}_{0}$ equals $\{0\}$, $\poi v1{,\ldots,}{N}{}{}{}$ are algebraically 
independent over $K_{0}$. Hence $K_{i-1}(\poi vi{,\ldots,}{N}{}{}{})$ is a field of 
rational fractions over $K_{i-1}$. Moreover, $K$ is an algebraic extension of 
$K_{i-1}(\poi vi{,\ldots,}{N}{}{}{})$ by Lemma~\ref{lgf3}. Since $\es SE$ is a finitely 
generated $\k$-algebra, $K$ is a finite extension of 
$K_{i-1}(\poi vi{,\ldots,}{N}{}{}{})$. By definition, $K_{i}$ is the algebraic closure of
$K_{i-1}(v_{i})$ in $K$. Hence $K_{i}$ is a finite extension of $K_{i-1}(v_{i})$.

(ii) Prove the assertion by induction on $i$. By definition, it is true for $i=0$ and 
$C_{i}$ is the integral closure of $C_{i-1}[v_{i}]$ in $K_{i}$ for $i=1,\ldots,N$, 
whence the assertion by (i) and \cite[\S33, Lemma 1]{Mat}.

(iii) Since $\es SE$ is integrally closed in $K$, $C_{i}$ is contained in $\es SE$
by induction on $i$. By definition, the field $K_{N}$ is algebraically closed in 
$K$ and it contains $C$. So $K_{N}=K$ by Lemma~\ref{lgf3}. Since $C_{N}$ is integrally 
closed in $K_{N}$ and it contains $C$, $C_{N}=\es SE$ by Lemma~\ref{lgf3}.

(iv) Prove the assertion by induction on $i$. By definition, it is true for $i=0$. Suppose
that it is true for $i-1$. Then $C_{i}$ is a finite extension of $C'_{i-1}[v_{i}]=C'_{i}$.

(v) Prove by induction on $i$ that $C_{N-i}$ is the intersection of $\es SE$ and 
$K_{N-i}$ for $i=0,\ldots,N$. By (iii), it is true for $i=0$. Suppose that it is true
for $i-1$. By induction hypothesis, it suffices to prove that $C_{N-i}$ is the 
intersection of $C_{N-i+1}$ and $K_{N-i}$. Let $a$ be in this intersection. Then 
$a$ satisfies an integral dependence equation over $C_{N-i}[v_{N-i+1}]$:
$$ a^{m}+a_{m-1}a^{m-1}+\cdots + a_{0} = 0 .$$
Denoting by $a_{j}(0)$ the constant term of $a_{j}$ as a polynomial in $v_{N-i+1}$ with
coefficients in $C_{N-i}$,
$$ a^{m}+a_{m-1}(0)a^{m-1}+\cdots + a_{0}(0) = 0 $$
since $a$ is in $K_{N-i}$ and $v_{N-i+1}$ is algebraically independent over $K_{N-i}$. 
Hence $a$ is in $C_{N-i}$ since $C_{N-i}$ is integrally closed in $K_{N-i}$ by (ii).

Let $a$ and $b$ be in $C_{i}$ such that $ab$ is in $v_{i}C_{i}$. Since $v_{i}$ is in 
$E$, $v_{i}\es SE$ is a prime ideal of $\es SE$. So $a$ or $b$ is in $v_{i}\es SE$ since
$C_{i}$ is contained in $\es SE$. Hence $a/v_{i}$ or $b/v_{i}$ are in the intersection of
$\es SE$ and $K_{i}$. So $a$ or $b$ is in $v_{i}C_{i}$.
\end{proof}

\begin{rema}\label{rgf3}
According to Lemma~\ref{l2gf3},(i),(ii),(iv), for $i=1,\ldots,N$, $K_{i-1}$, $v_{i}$, 
$C_{i-1}$, $K_{i}$, $C_{i}$ satisfy Conditions (I), (II), (III), (V) satisfed by 
$L$, $t$, $B$, $L_{1}$, $B_{1}$ in Subsection {\ref{gf2}}. Moreover, Condition (IV) is 
satisfied by construction (cf.~Lemma \ref{l2gf3},(v)). 
\end{rema}

\begin{prop}\label{pgf3}
Let $i=1,\ldots,N$. 

{\rm (i)} The semi-local algebra $\an {X_{i}}{x_{i}}C_{i}$ is normal and 
Cohen-Macaulay. 

{\rm (ii)} The canonical morphism 
$\tk {C'_{i}}{\an {X_{i}}{x_{i}}}C_{i}\rightarrow \an {X_{i}}{x_{i}}C_{i}$ is an 
isomorphism.
\end{prop}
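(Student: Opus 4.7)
The plan is to prove (i) and (ii) together by induction on $i$, with the tacit base case $i=0$: since $C_0 = \overline{A}$ embeds into $\an{X_0}{x_0}$ via $\pi_0^\ast$, the algebra $\an{X_0}{x_0}C_0$ coincides with $\an{X_0}{x_0}$ itself, which is a regular local ring (because $X_0$ is smooth) and therefore is normal and Cohen--Macaulay; the tensor-product statement in (ii) is immediate at this level. For the inductive step $i-1 \Rightarrow i$, the key observation is Remark~\ref{rgf3}: the quintuple $(K_{i-1}, v_i, C_{i-1}, K_i, C_i)$ satisfies Conditions (I)--(V) of Subsection~\ref{gf2} in the roles of $(L, t, B, L_1, B_1)$. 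I then intend to invoke Proposition~\ref{pgf2} with $C = \an{X_{i-1}}{x_{i-1}} C_{i-1}$; its fraction field is $K_{i-1}$ by Lemma~\ref{l2gf3}(ii), and the induction hypothesis delivers exactly the normality and Cohen--Macaulay properties required of this $C$.

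Concretely, $\an{X_i}{x_i}$ is a localization of $\an{X_{i-1}}{x_{i-1}}[v_i]$ at the maximal ideal $(\mathfrak{m}_{x_{i-1}}, v_i)$, so each maximal ideal $\mathfrak{n}$ of the semi-local ring $\an{X_i}{x_i}C_i$ lies over the maximal ideal of $\an{X_i}{x_i}$ (which contains $v_i$) and corresponds to a prime $\mathfrak{p}$ of $CB_1 = \an{X_{i-1}}{x_{i-1}} C_{i-1}\cdot C_i$ containing $v_i$, with a natural identification $(\an{X_i}{x_i}C_i)_{\mathfrak{n}} \cong (CB_1)_{\mathfrak{p}}$. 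Proposition~\ref{pgf2}(i) and (iii) then show that each $(CB_1)_{\mathfrak{p}}$ is normal and Cohen--Macaulay, yielding (i) since both properties are local at maximal ideals. For (ii), one assembles the local isomorphisms $R(C)_{\tilde{\mathfrak{p}}} \to (CB_1)_{\mathfrak{p}}$ of Proposition~\ref{pgf2}(ii) over the finitely many such primes $\mathfrak{p}$ (finiteness coming from the integrality of $C_i$ over $C'_i$), and rewrites $\tk{C'_i}{\an{X_i}{x_i}}C_i$ as a localization of $R(C)$ by using the inductive hypothesis (ii) at level $i-1$ to collapse $\tk{C'_{i-1}}{\an{X_{i-1}}{x_{i-1}}}C_{i-1}$ to $\an{X_{i-1}}{x_{i-1}} C_{i-1}$.

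The main obstacle is a technical one: the algebra $\an{X_{i-1}}{x_{i-1}} C_{i-1}$ is not finitely generated over $\k$, whereas the formalism of Subsection~\ref{gf2} implicitly fixes $C$ as a finitely generated subalgebra of $L$. To remedy this, I would first apply Proposition~\ref{pgf2} to a finitely generated subalgebra $D$ of $\an{X_{i-1}}{x_{i-1}} C_{i-1}$---for instance, $D = \mathcal{O}(U) C_{i-1}$ where $U$ is a small enough affine open neighbourhood of $x_{i-1}$ in $X_{i-1}$ on which $\mathcal{O}(U)$ is smooth---and then localize at the maximal ideal of $D$ lying under $x_{i-1}$; normality, Cohen--Macaulay-ness, and the tensor-product isomorphism all survive this localization. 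A secondary but more routine task is to track the manipulations relating the various tensor products over $C'_i$, over $C_{i-1}[v_i]$ and over $C_{i-1}$ that appear in comparing $\tk{C'_i}{\an{X_i}{x_i}}C_i$ with $R(C)$; this is precisely what the inductive hypothesis at level $i-1$ is designed to make possible.
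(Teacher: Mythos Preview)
Your proposal is essentially the same approach as the paper's: induction on $i$ with base case $i=0$, applying Proposition~\ref{pgf2} with the role of $C$ played by $\mathfrak{A}_{i-1}:=\an{X_{i-1}}{x_{i-1}}C_{i-1}$, using Remark~\ref{rgf3} to justify Conditions (I)--(V), and identifying the localizations of $\an{X_i}{x_i}C_i$ at its maximal ideals with localizations of $\mathfrak{A}_{i-1}C_i$ at primes containing $v_i$.

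Two remarks. First, your worry about finite generation is well-spotted: the paper simply applies Proposition~\ref{pgf2} to $\mathfrak{A}_{i-1}$ without comment, although Subsection~\ref{gf2} fixed $C$ to be finitely generated. Your fix via an affine neighborhood works; alternatively one checks that the proofs in Subsection~\ref{gf2} only need $R(C)$ to be noetherian, which holds since $\mathfrak{A}_{i-1}$ is a localization of a finitely generated $\k$-algebra. Second, for (ii) your phrase ``assemble the local isomorphisms'' hides a point the paper makes explicit: one must know that \emph{every} maximal ideal of $\tk{C'_i}{\an{X_i}{x_i}}C_i$ is of the form $\nu_i^{-1}(\mathfrak{m})$ for some maximal $\mathfrak{m}$ of $\an{X_i}{x_i}C_i$, not just the ones arising this way. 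The paper secures this by invoking Lemma~\ref{l3gf2}(iv) to identify the kernel of the (pre-localized) canonical map with the nilradical, so that $\nu_i$ induces a bijection on $\mathrm{Spec}$; then the support of $\ker\nu_i$, being closed and avoiding all closed points of the semi-local ring, is empty. Your sketch would need this step added.
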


\begin{proof}
(i)  The local ring $\an {X_{i}}{x_{i}}$ is an extension of $C'_{i}$ and $C_{i}$ is a 
finite extension of $C'_{i}$ by Lemma~\ref{l2gf3},(iv). So $\an {X_{i}}{x_{i}}C_{i}$ is a
semi-local ring as a finite extension of the local ring $\an {X_{i}}{x_{i}}$. Prove
the assertion by induction on $i$. For $i=0$, 
$\an {X_{0}}{x_{0}}C_{0}=\an {X_{0}}{x_{0}}$ and $\an {X_{0}}{x_{0}}$ is a regular local
algebra. Suppose that it is true for $i-1$ and set 
${\goth A}_{i-1}:= \an {X_{i-1}}{x_{i-1}}C_{i-1}$. Then ${\goth A}_{i-1}$ is a subalgebra 
of $K_{i-1}$ since $\an {X_{i-1}}{x_{i-1}}$ is contained in the fraction field of 
$C'_{i-1}$. Let ${\goth m}$ be a maximal ideal of
$\an {X_{i}}{x_{i}}C_{i}$.  The local ring $\an {X_{i}}{x_{i}}$ is the localization 
of $\an {X_{i-1}}{x_{i-1}}[v_{i}]$ at ${\goth m}\cap \an {X_{i-1}}{x_{i-1}}[v_{i}]$.
Hence $v_{i}$ is in ${\goth m}$, and 
${\goth m}\cap {\goth A}_{i-1}C_{i}$ is a prime ideal of ${\goth A}_{i-1}C_{i}$ such that
the localization of ${\goth A}_{i-1}C_{i}$ at this prime ideal is the localization of 
$\an {X_{i}}{x_{i}}C_{i}$ at ${\goth m}$. 
By the induction hypothesis, ${\goth A}_{i-1}$ is normal and Cohen-Macaulay. According to
Remark~\ref{rgf3} and Proposition~\ref{pgf2},(i) and (iii), the localization of 
${\goth A}_{i-1}C_{i}$ at ${\goth m}\cap {\goth A}_{i-1}C_{i}$ is normal and 
Cohen-Macaulay, whence the assertion.

(ii) Prove the assertion by induction on $i$. For $i=0$, $C_{0}$ is contained in 
$\an {X_{0}}{x_{0}}$. Suppose that it is true for $i-1$. For $j\in\{i-1,i\}$, denote by 
$\nu _{j}$ the canonical morphism 
$$\tk {C'_{j}}{\an {X_{j}}{x_{j}}}C_{j} \longrightarrow \an {X_{j}}{x_{j}}C_{j} .$$ 
Recall that ${\goth A}_{i-1} := \an {X_{i-1}}{x_{i-1}}C_{i-1}$. By induction hypothesis, 
the morphism 
$\nu _{i-1}\tens {\mathrm {id}}_{C_{i}}$,
$$ \tk {C_{i-1}}{(\tk {C'_{i-1}}{\an {X_{i-1}}{x_{i-1}}}C_{i-1})}C_{i} 
\longrightarrow \tk {C_{i-1}}{{\goth A}_{i-1}}C_{i}$$
is an isomorphism. Since $C'_{i-1}$ is contained in $\an {X_{i-1}}{x_{i-1}}$,
$$\tk {C'_{i-1}}{\an {X_{i-1}}{x_{i-1}}}C'_{i-1}[v_{i}]=\an {X_{i-1}}{x_{i-1}}[v_{i}].$$
Furthermore,
$$ \tk {C_{i-1}}{(\tk {C'_{i-1}}{\an {X_{i-1}}{x_{i-1}}}C_{i-1})}C_{i} =
\tk {C'_{i-1}}{\an {X_{i-1}}{x_{i-1}}}C_{i} = \tk {C'_{i-1}[v_{i}]}
{(\tk {C'_{i-1}}{\an {X_{i-1}}{x_{i-1}}}C'_{i-1}[v_{i}])}C_{i} ,$$
whence an isomorphism 
$$ \tk {C'_{i-1}[v_{i}]}{\an {X_{i-1}}{x_{i-1}}[v_{i}]}C_{i} \longrightarrow 
\tk {C_{i-1}}{{\goth A}_{i-1}}C_{i}.$$
Let ${\goth m}$ be as in (i). Set 
$${\goth p}:={\goth m}\cap {\goth A}_{i-1}C_{i}, \qquad 
\tilde{{\goth m}} := \nu _{i}^{-1}({\goth m}), $$ 
and denote by $\tilde{{\goth p}}$ the inverse image of ${\goth p}$ by the canonical 
morphism 
$$ \tk {C_{i-1}}{{\goth A}_{i-1}}C_{i} \longrightarrow {\goth A}_{i-1}C_{i} .$$
According to Proposition~\ref{pgf2},(ii), the canonical morphism 
$$ (\tk {C_{i-1}}{\an {X_{i-1}}{x_{i-1}}C_{i-1}}C_{i})_{\tilde{{\goth p}}} 
\longrightarrow  (\an {X_{i-1}}{x_{i-1}}C_{i})_{{\goth p}}$$
is an isomorphism since $\an {X_{i-1}}{x_{i-1}}C_{i-1}$ is a finitely generated 
subalgebra of $K_{i-1}$, containing $C_{i-1}$, which is Cohen-Macaulay and integrally 
closed. Let ${\goth p}^{\#}$ be the inverse image of $\tilde{{\goth p}}$ by the 
isomorphism 
$$ \tk {C'_{i-1}[v_{i}]}{\an {X_{i-1}}{x_{i-1}}[v_{i}]}C_{i} \longrightarrow 
\tk {C_{i-1}}{\an {X_{i-1}}{x_{i-1}}C_{i-1}}C_{i}.$$
Then the canonical morphism 
$$ (\tk {C'_{i}}{\an {X_{i-1}}{x_{i-1}}[v_{i}]}C_{i})_{{\goth p}^{\#}} \longrightarrow 
(\an {X_{i-1}}{x_{i-1}}C_{i})_{{\goth p}}$$
is an isomorphism. From the equalities
$$ (\tk {C'_{i}}{\an {X_{i-1}}{x_{i-1}}[v_{i}]}C_{i})_{{\goth p}^{\#}} =
(\tk {C'_{i}}{\an {X_{i}}{x_{i}}}C_{i})_{\tilde{{\goth m}}}, \qquad
(\an {X_{i-1}}{x_{i-1}}C_{i})_{{\goth p}} = (\an {X_{i}}{x_{i}}C_{i})_{{\goth m}}$$
we deduce that the support of the kernel of $\nu _{i}$ in 
${\mathrm {Spec}}(\tk {C'_{i}}{\an {X_{i}}{x_{i}}}C_{i})$ does not contain 
$\tilde{{\goth m}}$. As a result, denoting by ${\cal S}_{i}$ this support, 
${\cal S}_{i}$ does not contain the inverse images by $\nu _{i}$ of the maximal 
ideals of $\an {X_{i}}{x_{i}}C_{i}$. 

According to Lemma~\ref{l3gf2},(iv), the kernel of the canonical morphism 
$$ \tk {C_{i-1}}{{\goth A}_{i-1}}C_{i} \longrightarrow 
\an {X_{i-1}}{x_{i-1}}C_{i} $$
is the nilradical of $\tk {C_{i-1}}{{\goth A}_{i-1}}C_{i}$. Hence, the kernel of 
the canonical morphism  
$$ \tk {C'_{i}}{\an {X_{i-1}}{x_{i-1}}[v_{i}]}C_{i} \rightarrow 
\an {X_{i-1}}{x_{i-1}}[v_{i}]C_{i}$$
is the nilradical of $\tk {C'_{i}}{\an {X_{i-1}}{x_{i-1}}[v_{i}]}C_{i}$ since
the canonical map
$$ \tk {C'_{i-1}[v_{i}]}{\an {X_{i-1}}{x_{i-1}}[v_{i}]}C_{i} \longrightarrow 
\tk {C_{i-1}}{{\goth A}_{i-1}}C_{i}$$
is an isomorphism by induction hypothesis. As a result, all element of ${\cal S}_{i}$ is 
the inverse image of a prime ideal in $\an {X_{i}}{x_{i}}C_{i}$. Hence ${\cal S}_{i}$ is 
empty, and $\nu _{i}$ is an isomorphism.
\end{proof}

The following Corollary results from Proposition~\ref{pgf3} and Lemma~\ref{l2gf3},(iii)
since $\pi ^{-1}(C_{+})=\pi _{0}^{-1}(C_{+})\times \{0\}$.

\begin{coro} \label{cgf3}
Let $x$ be in $\pi ^{-1}(C_{+})$.

{\rm (i)} The semi-local algebra $\an {{X}}x\es SE$ is normal and Cohen-Macaulay.

{\rm (ii)} The canonical morphism $\tk {C}{\an Xx}\es SE\rightarrow \an Xx\es SE$ is an
isomorphism.
\end{coro}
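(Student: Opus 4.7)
The plan is to obtain the corollary by simply specialising Proposition~\ref{pgf3} to the top level $i=N$, exploiting the identifications furnished by Lemma~\ref{l2gf3},(iii) and by the inductive definitions of $X_i$, $C_i$, $C'_i$.

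First, I would read off the following identifications at level $i=N$ from the construction in Subsection \ref{gf3}: one has $X_N=X_0\times\k^N=X$; one has $C'_N=\overline{A}[v_{1},\ldots,v_{N}]=C$; and by Lemma~\ref{l2gf3},(iii), $C_N=\es SE$. Moreover the distinguished base point at this level is $x_N=(x_0,0_{\k^N})$ for the chosen $x_0\in\pi_0^{-1}(C_+)$.

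Next, I would verify the set-theoretic identity $\pi^{-1}(C_{+})=\pi_0^{-1}(C_+)\times\{0\}$. Since $v_{1},\ldots,v_{N}$ all lie in $C_{+}$, the closed subvariety of $Z=Z_0\times\k^N$ cut out by $C_{+}$ is contained in $Z_0\times\{0\}$, and conversely any point $(z,0)$ with $z$ in the nullvariety of $\overline{A}_{+}$ annihilates $C_{+}$; because $\pi(y,v)=(\pi_0(y),v)$, pulling back to $X$ yields the asserted product decomposition. In particular every $x\in\pi^{-1}(C_+)$ has the form $(x_0,0_{\k^N})=x_N$ for some $x_0\in\pi_0^{-1}(C_+)$.

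Finally, I would apply Proposition~\ref{pgf3} with $i=N$ to this $x_0$. Part~(i) of the proposition gives that $\an{X_N}{x_N}C_N=\an{X}{x}\es SE$ is normal and Cohen-Macaulay, which is assertion~(i) of the corollary. Part~(ii) gives that the canonical morphism $\tk{C'_N}{\an{X_N}{x_N}}C_N\longrightarrow\an{X_N}{x_N}C_N$ is an isomorphism, which after the identifications above is precisely the isomorphism $\tk{C}{\an{X}{x}}\es SE\longrightarrow\an{X}{x}\es SE$ claimed in~(ii). There is no real obstacle beyond this bookkeeping, since Proposition~\ref{pgf3} was proved by induction on $i$ for every level from $0$ up to $N$.
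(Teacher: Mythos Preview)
Your proposal is correct and follows essentially the same approach as the paper, which simply records that the corollary results from Proposition~\ref{pgf3} (at level $i=N$) together with Lemma~\ref{l2gf3},(iii) and the identity $\pi^{-1}(C_{+})=\pi_{0}^{-1}(C_{+})\times\{0\}$. You have just spelled out the bookkeeping in more detail.
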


Let $d$ be the degree of the extension $K$ of $K(C)$. Let $x$ be in $\pi ^{-1}(C_{+})$, 
and denote by $Q_{x}$ the quotient of $\an {{X}}x \es SE$ by ${\goth m}_{x}\es SE$, 
with ${\goth m}_{x}$ the maximal ideal of $\an Xx$.

\begin{lemma}\label{l3gf3}
Let $V$ be a homogenous complement to $\es SEC_{+}$ in ${\rm S}(E)$. 

{\rm (i)} The $\k$-space $V$ has finite dimension, $\es SE=CV$ and $K = K(C)V$.

{\rm (ii)} The $\k$-space $Q_{x}$ has dimension $d$. Furthermore, for all subspace $V'$ 
of dimension $d$ of $V$ such that $Q_{x}$ is the image of $V'$ by the quotient map, the 
canonical map 
$$ \tk {\k}{\an Xx}V' \longrightarrow \an Xx \es SE$$
is bijective.
\end{lemma}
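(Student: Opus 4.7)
The plan for (i) is to apply Lemma~\ref{l2gf1} with $M = \es SE$ viewed as a graded $C$-module and $V$ as the graded complement to $M_{+} = C_{+}\es SE$; the lemma yields surjectivity of $\tk {\k}{V}C \to \es SE$, hence $\es SE = CV$. Finite-dimensionality of $V$ follows since $V \simeq \es SE/C_{+}\es SE$, and by construction of $C = \overline{A}[\poi v1{,\ldots,}{N}{}{}{}]$ the nullvariety of $C_{+}$ in $E^{*}$ is $\{0\}$ (the $v_i$ cut out $\{0\}$ inside $\mathcal{N}_0$, which equals the nullvariety of $\overline{A}_+$ by integrality), forcing this quotient to be an Artinian $\k$-algebra. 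For $K = K(C)V$, I localize the equality $\es SE = CV$ at $C\setminus \{0\}$ to obtain the $K(C)$-span of $V$; this localization is already all of $K$ because $\es SE$ is integral over $C$ by Lemma~\ref{lgf3}, so any nonzero element of $\es SE$ divides a nonzero element of $C$ via the constant term of its minimal integral-dependence equation.

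For (ii), the central step is to prove that $\an Xx \es SE$ is a \emph{free} $\an Xx$-module of rank $d$. By Corollary~\ref{cgf3}, this semi-local ring is normal and Cohen--Macaulay and canonically isomorphic to $\tk C{\an Xx}\es SE$; since $\es SE$ is a finite $C$-module (Lemma~\ref{lgf3}, together with finiteness of integral closure inside a finite field extension of the fraction field of a finitely generated $\k$-algebra in characteristic zero), $\an Xx \es SE$ is a finite module over the regular local ring $\an Xx$. The ideal ${\goth m}_{x}\an Xx \es SE$ is primary to the Jacobson radical of $\an Xx \es SE$ because the quotient is Artinian, so any regular system of parameters of $\an Xx$ is a system of parameters of $\an Xx \es SE$, hence a regular sequence by Cohen--Macaulayness; the Auslander--Buchsbaum formula then gives the freeness. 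The rank equals $d$ by tensoring with the fraction field of $\an Xx$, which is $K(X) = K(C)$ since $(X,\pi)$ is a desingularization of $Z$, and using $\tk C{K(C)}\es SE = K$ of $K(C)$-dimension $d$.

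Freeness of rank $d$ immediately gives $\dim_{\k}Q_{x} = d$, the first assertion. For the second, let $V'\subset V$ have dimension $d$ and surject onto $Q_{x}$. The $\an Xx$-linear map $\tk{\k}{\an Xx}V' \to \an Xx \es SE$ is surjective modulo ${\goth m}_{x}$ and hence surjective by Nakayama's lemma, applied to the target which is finite over $\an Xx$. Being a surjection between free $\an Xx$-modules of the same finite rank $d$, it is bijective: localizing at the generic point yields a surjection of $K(C)$-vector spaces of equal dimension, necessarily an isomorphism, so the kernel is a torsion submodule of the torsion-free source, hence zero.

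The main technical obstacle is the freeness step above: one must identify the $\an Xx$-depth of the semi-local ring $\an Xx \es SE$ with $\dim \an Xx$. The primary decomposition of ${\goth m}_{x}\es SE$ inside the Jacobson radical of $\an Xx \es SE$ reduces this to the standard fact that a system of parameters in a Cohen--Macaulay local ring is a regular sequence, applied at each maximal ideal; this then plugs into Auslander--Buchsbaum to complete the argument.
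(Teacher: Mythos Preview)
Your proof is correct and follows essentially the same route as the paper. Both arguments hinge on the Cohen--Macaulay property of $\an Xx\es SE$ from Corollary~\ref{cgf3} to conclude that this finite module over the regular local ring $\an Xx$ is flat, hence free; the paper invokes \cite[Ch.~8, Theorem~23.1]{Mat} for this step while you phrase it via Auslander--Buchsbaum, but the content is identical. The only stylistic difference is in the endgame: once flatness is known, the paper carries out a hands-on linear-independence argument for the $v_i$ using the equational criterion for flatness, whereas you use the cleaner observation that a surjection of free modules of equal finite rank over a local ring is bijective. Your treatment of part~(i) is also fine, though the paper gets finite-dimensionality of $V$ more directly by citing Lemma~\ref{lgf3} (finiteness of $\es SE$ over $C$) rather than arguing via the nullvariety of $C_{+}$.
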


\begin{proof}
(i) According to Lemma~\ref{lgf3}, $\es SE$ is a finite extension of $C$. Hence, the 
$\k$-space $V$ is finite dimensional. On the other hand, we have  
$\es SE = V +  \es SEC_{+}$. Hence, by induction on $m$, 
$\es SE = CV +  \es SE C_{+}^m$ for any $m$, whence 
$\es SE =CV$ since $C_+$ is generated by elements of positive degree. 
As a result, $K=K(C)V$ since the $\k$-space $V$ is finite dimensional.

(ii) Let $d'$ be the dimension of $Q_{x}$. By (i), since $C_{+}$ is contained in 
${\goth m}_{x}$,
$$ \an Xx \es SE = V + {\goth m}_{x}\es SE .$$
As a result, for some subspace $V'$ of dimension $d'$ of $V$, $Q_{x}$ is the image of 
$V'$ by the quotient map. Then,
$$ \an Xx \es SE = \an Xx V' + {\goth m}_{x}\es SE ,$$ 
and by Nakayama's Lemma, $\an Xx \es SE = \an Xx V'$. Let $(\poi v1{,\ldots,}{d'}{}{}{})$ 
be a basis of $V'$. Suppose that the elements $\poi v1{,\ldots,}{d'}{}{}{}$ are not 
linearly independent over $\an Xx$. A contradiction is expected. Let $l$ be the smallest 
integer such that $$ a_{1} v_{1} + \cdots + a_{d'}v_{d'} = 0 $$
for some sequence $(\poi a1{,\ldots,}{d'}{}{}{})$ in ${\goth m}_{x}^{l}$, not contained 
in ${\goth m}_{x}^{l+1}$. According to Corollary~\ref{cgf3},(i) and 
\cite[Ch.\,8, Theorem 23.1]{Mat}, $\an Xx \es SE$ is a flat extension of $\an Xx$ since
$\an Xx \es SE$ is a finite extension of $\an Xx$. So, for some 
$\poi w1{,\ldots,}{m}{}{}{}$ in $\es SE$ and for some sequences 
$(\poi b{i,1}{,\ldots,}{i,m}{}{}{}$, $i=1,\ldots,d')$ in $\an Xx$,
$$ v_{i} = \sum_{j=1}^{m} b_{i,j}w_{j} \quad  \text{and} \quad
\sum_{j=1}^{d'} a_{j}b_{j,k} = 0$$
for all $i=1,\ldots,d'$ and for $k=1,\ldots,m$. Since $\an Xx \es SE = \an Xx V'$, 
$$ w_{j} = \sum_{k=1}^{d'} c_{j,k}v_{k}$$
for some sequence $(c_{j,k}, j=1,\ldots,m, i=1,\ldots,d')$ in $\an Xx$. Setting
$$ u_{i,k} = \sum_{j=1}^{m} b_{i,j}c_{j,k}$$
for $i,k=1,\ldots,d'$, we have
$$ v_{i} = \sum_{k\in I} u_{i,k} v_{k} \quad  \text{and} \quad
\sum_{j\in I} a_{j}u_{j,i} =0$$
for all $i=1,\ldots,d'$. Since $\poi v1{,\ldots,}{d'}{}{}{}$ are linearly independent 
modulo ${\goth m}_{x}\es SE$, 
$$u_{i,k} - \delta _{i,k} \in {\goth m}_{x}$$
for all $(i,k)$, with $\delta _{i,k}$ the Kronecker symbol. As a result, $a_{i}$ is
in ${\goth m}_{x}^{l+1}$ for all $i$, whence a contradiction. Then the canonical map
$$ \tk {\k}{\an Xx}V' \longrightarrow \an Xx \es SE$$
is bijective. Since $K=K(C)\es SE$ 
and since $K(C)$ is the fraction field of $\an Xx$, 
$\poi v1{,\ldots,}{d'}{}{}{}$ is a basis of $K$ over $K(C)$. Hence, $d'=d$ and the 
assertion follows.
\end{proof}

Recall that $K_{0}$ is the fraction field of $\overline{A}$. 
Let $\poi v{N+1}{,\ldots,}{N+r}{}{}{}$ be elements of $E$ such that
$\poi v1{,\ldots,}{N+r}{}{}{}$ is a basis of $E$. 
Denoting by 
$\poi t1{,\ldots,}{r}{}{}{}$ some indeterminates, let $\vartheta $ be the morphism of 
$C$-algebras
$$ C[\poi t1{,\ldots,}{r}{}{}{}] \longrightarrow \es SE, \qquad t_{i} \longmapsto 
v_{N+i},$$ 
and let $\tilde{\vartheta }$ be the morphism of 
$K_{0}[\poi v1{,\ldots,}{N}{}{}{}]$-algebras
$$ K_{0}[\poi v1{,\ldots,}{N}{}{}{},\poi t1{,\ldots,}{r}{}{}{}] \longrightarrow 
\tk {\overline{A}}{K_{0}}\es SE, \qquad t_{i} \longmapsto v_{N+i} .$$
For ${\bf i}=(\poi i1{,\ldots,}{N}{}{}{})$ in ${\Bbb N}^{N}$ and for 
${\bf j}=(\poi j1{,\ldots,}{r}{}{}{})$ in ${\Bbb N}^{r}$, set:
$$ v^{{\bf i}} := \poie v1{\cdots }{N}{}{}{}{i_{1}}{i_{N}}, \qquad
t^{{\bf j}} := \poie t1{\cdots }{r}{}{}{}{j_{1}}{j_{r}} .$$
For $a$ in $\overline{A}$, denote by $\overline{a}$ the polynomial in 
$\k[\poi v1{,\ldots,}{N}{}{}{},\poi t1{,\ldots,}{r}{}{}{}]$ such that 
$\vartheta (\overline{a})=a$. 

\begin{lemma}\label{l4gf3}
Let $I$ be the ideal of $C[\poi t1{,\ldots,}{r}{}{}{}]$ generated by the 
elements $a-\overline{a}$ with $a$ in $\overline{A}$. 

{\rm (i)} For all homogenous generating family $(\poi a1{,\ldots,}{m}{}{}{})$ of 
$\overline{A}_{+}$, $I$ is the ideal generated by the sequence 
$(a_{i}-\overline{a_{i}}\,, \,i=1,\ldots,m)$.

{\rm (ii)} The ideal $I$ is the kernel of $\vartheta $. 
\end{lemma}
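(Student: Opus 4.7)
The plan is to prove (i) by a telescoping-identity argument, and (ii) by identifying the quotient $C[t_1,\ldots,t_r]/I$ with the polynomial algebra in $v_1,\ldots,v_N,t_1,\ldots,t_r$.

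For (i), the key observation is that the map $a\mapsto \overline{a}$ is itself a ring homomorphism. Indeed, $\overline{A}$ sits inside $\es SE=\k[v_1,\ldots,v_{N+r}]$, and $a\mapsto \overline{a}$ is nothing but the restriction to $\overline{A}$ of the obvious $\k$-algebra isomorphism $\k[v_1,\ldots,v_{N+r}]\to\k[v_1,\ldots,v_N,t_1,\ldots,t_r]$ sending $v_{N+i}\mapsto t_i$ and $v_j\mapsto v_j$ for $j\leq N$. Given a graded generating family $(a_1,\ldots,a_m)$ of $\overline{A}_+$ and any $a\in \overline{A}_+$, write $a=P(a_1,\ldots,a_m)$ for some $P\in\k[T_1,\ldots,T_m]$ without constant term; then $\overline{a}=P(\overline{a_1},\ldots,\overline{a_m})$. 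The standard telescoping identity
\[ P(a_1,\ldots,a_m)-P(\overline{a_1},\ldots,\overline{a_m}) = \sum_{i=1}^{m}(a_i-\overline{a_i})\,Q_i\]
with $Q_i\in C[t_1,\ldots,t_r]$ (obtained by changing one entry at a time) shows that $a-\overline{a}$ lies in the ideal generated by the sequence $(a_i-\overline{a_i})_{i=1,\ldots,m}$, proving (i).

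For (ii), inclusion $I\subseteq\ker\vartheta$ is immediate since $\vartheta(a-\overline{a})=a-a=0$. For the reverse inclusion, I would factor $\vartheta$ through $C[t_1,\ldots,t_r]/I$ and identify this quotient. The algebra $C[t_1,\ldots,t_r]$ is generated as a $\k$-algebra by $\overline{A}$ together with $v_1,\ldots,v_N,t_1,\ldots,t_r$. Modulo $I$, every $a\in\overline{A}$ coincides with $\overline{a}\in \k[v_1,\ldots,v_N,t_1,\ldots,t_r]$, so the natural map
\[ \psi:\k[v_1,\ldots,v_N,t_1,\ldots,t_r]\longrightarrow C[t_1,\ldots,t_r]/I\]
is surjective. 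Let $\overline{\vartheta}$ be the morphism $C[t_1,\ldots,t_r]/I\to\es SE$ induced by $\vartheta$.

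The composition $\overline{\vartheta}\circ\psi$ sends $v_j\mapsto v_j$ ($j\leq N$) and $t_i\mapsto v_{N+i}$, so it is the tautological isomorphism $\k[v_1,\ldots,v_N,t_1,\ldots,t_r]\xrightarrow{\sim}\k[v_1,\ldots,v_{N+r}]=\es SE$. Since $\overline{\vartheta}\circ\psi$ is injective and $\psi$ is surjective, both $\psi$ and $\overline{\vartheta}$ are isomorphisms; in particular $\ker\vartheta=I$, proving (ii). The only real step requiring attention is the telescoping identity in (i), which is routine but deserves to be stated carefully because $P$ may have arbitrarily high degree; the argument poses no real obstacle because $\overline{A}$ and its image under $a\mapsto \overline{a}$ sit simultaneously inside $C[t_1,\ldots,t_r]$.
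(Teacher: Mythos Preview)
Your proof is correct and follows essentially the same approach as the paper. The only notable difference is packaging: for (i), the paper writes $a=\sum b_i a_i$ with $b_i\in\overline{A}$ and inducts on the degree of $a$, while you observe directly that $a\mapsto\overline{a}$ is a ring homomorphism and apply the polynomial telescoping identity in one step; for (ii), the paper decomposes an arbitrary element of $C[t_1,\ldots,t_r]$ explicitly as (element of $I$) + (element of $\k[v_1,\ldots,v_N,t_1,\ldots,t_r]$), whereas you phrase the same computation as factoring $\vartheta$ through the quotient and identifying $C[t_1,\ldots,t_r]/I$ with the polynomial ring---both arguments ultimately rest on the injectivity of $\vartheta$ on $\k[v_1,\ldots,v_N,t_1,\ldots,t_r]$.
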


\begin{proof}
(i) Let $I'$ be the ideal of $C[\poi t1{,\ldots,}{r}{}{}{}]$ generated by the sequence
$(a_{i}-\overline{a_{i}},i=1,\ldots,m)$. Since the map $a\mapsto \overline{a}$ is linear, 
it suffices to prove that $a-\overline{a}$ is in $I'$ for all homogenous element $a$
of $\overline{A}_{+}$. Prove it by induction on the degree of $a$. For some homogenous
sequence $(\poi b1{,\ldots,}{m}{}{}{})$ in $\overline{A}$, 
$$ a = b_{1}a_{1} + \cdots + b_{m}a_{m}$$
so that
$$ a - \overline{a} = \sum_{i=1}^{m} b_{i}(a_{i}-\overline{a_{i}}) + 
\sum_{i=1}^{m} \overline{a_{i}}(b_{i}-\overline{b_{i}}) .$$
If $a$ has minimal degree, $\poi b1{,\ldots,}{m}{}{}{}$ are in $\k$ and 
$b_{i}=\overline{b_{i}}$ for $i=1,\ldots,m$. Otherwise, for $i=1,\ldots,m$, if $b_{i}$ 
is not in $\k$, $b_{i}$ has degree smaller than $a$, whence the assertion by induction 
hypothesis. 

(ii) By definition, $I$ is contained in the kernel of $\vartheta $. Let $a$ be in
$C[\poi t1{,\ldots,}{r}{}{}{}]$. Then $a$ has an expansion
$$ a = \sum_{({\bf i},{\bf j}) \in {\Bbb N}^{N}\times {\Bbb N}^{r}}
a_{{\bf i},{\bf j}} v^{{\bf i}}t^{{\bf j}} $$
with the $a_{{\bf i},{\bf j}}$'s in $\overline{A}$, whence
$$ a = \sum_{({\bf i},{\bf j}) \in {\Bbb N}^{N}\times {\Bbb N}^{r}}
(a_{{\bf i},{\bf j}} - \overline{a_{{\bf i},{\bf j}})}v^{{\bf i}}t^{{\bf j}} 
+ \sum_{({\bf i},{\bf j})\in {\Bbb N}^{N}\times {\Bbb N}^{r}}
\overline{a_{{\bf i},{\bf j}}} v^{{\bf i}} t^{{\bf j}} .$$
If $\vartheta (a)=0$, then 
$$ \sum_{({\bf i},{\bf j})\in {\Bbb N}^{N}\times {\Bbb N}^{r}}
\overline{a_{{\bf i},{\bf j}}} v^{{\bf i}} t^{{\bf j}} = 0$$
since the restriction of $\vartheta $ to 
$\k[\poi v1{,\ldots,}{N}{}{}{},\poi t1{,\ldots,}{r}{}{}{}]$ is injective, whence the 
assertion.
\end{proof}

For $x$ in ${{\pi}} ^{-1}(C_{+})$, denote by $\vartheta _{x}$ the morphism 
$$ \an Xx[\poi t1{,\ldots,}{r}{}{}{}] \longrightarrow K, \qquad 
at^{{\bf j}} \longmapsto av_{N+1}^{j_{1}}\cdots v_{N+r}^{j_{r}} .$$

\begin{prop}\label{p2gf3}
Let $x$ be in $\pi ^{-1}(C_{+})$.

{\rm (i)} The kernel of $\vartheta _{x}$ is the ideal of 
$\an Xx[\poi t1{,\ldots,}{r}{}{}{}]$ generated by $I$. Furthermore, the image of 
$\vartheta _{x}$ is the subalgebra $\an Xx\es SE$ of $K$.

{\rm (ii)} The intersection of ${\goth m}_{x}\es SE$ and $\es SE$ is equal to  
$C_{+}\es SE$.
\end{prop}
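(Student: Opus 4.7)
The plan is to deduce both statements from the isomorphism $\tk {C}{\an Xx}\es SE \stackrel{\sim}{\longrightarrow} \an Xx\es SE$ provided by Corollary~\ref{cgf3},(ii), together with Lemma~\ref{l4gf3},(ii). In essence, part~(i) is a right-exactness-of-tensor computation, and part~(ii) follows by reducing that same isomorphism modulo ${\goth m}_{x}$.

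For~(i), Lemma~\ref{l4gf3},(ii) asserts that $\vartheta $ induces an isomorphism $C[\poi t1{,\ldots,}{r}{}{}{}]/I \stackrel{\sim}{\longrightarrow} \es SE$. Tensoring this over $C$ by $\an Xx$ yields an isomorphism
$$ \an Xx[\poi t1{,\ldots,}{r}{}{}{}]/I\an Xx[\poi t1{,\ldots,}{r}{}{}{}]
\stackrel{\sim}{\longrightarrow} \tk {C}{\an Xx}\es SE ,$$
and composing with Corollary~\ref{cgf3},(ii) gives an $\an Xx$-algebra morphism from $\an Xx[\poi t1{,\ldots,}{r}{}{}{}]$ to $\an Xx\es SE$ sending $t_{i}$ to $v_{N+i}$. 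By construction, this morphism coincides with $\vartheta _{x}$. Reading off the kernel and the image from the tensor-product description gives $\ker \vartheta _{x} = I\an Xx[\poi t1{,\ldots,}{r}{}{}{}]$ and $\vartheta _{x}(\an Xx[\poi t1{,\ldots,}{r}{}{}{}]) = \an Xx\es SE$, as required.

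For~(ii), since $x\in \pi ^{-1}(C_{+})$, the contraction of ${\goth m}_{x}$ to $C$ is the maximal ideal of $C$ corresponding to $\pi (x)=C_{+}$, hence ${\goth m}_{x}\cap C = C_{+}$. The residue field of $\an Xx$ being $\k$, reducing the isomorphism of Corollary~\ref{cgf3},(ii) modulo ${\goth m}_{x}$ produces the canonical identifications
$$ \an Xx\es SE/{\goth m}_{x}\es SE \;\simeq\; \tk {C}{\k}\es SE \;=\;
\es SE/C_{+}\es SE ,$$
through which the composition $\es SE \hookrightarrow \an Xx\es SE \longrightarrow \an Xx\es SE/{\goth m}_{x}\es SE$ becomes the natural projection $\es SE \longrightarrow \es SE/C_{+}\es SE$. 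The kernel of this composition, computed on the two sides, equals $\es SE\cap {\goth m}_{x}\es SE$ and $C_{+}\es SE$, respectively, whence the asserted equality.

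The only points requiring a moment of care are the identifications, in both parts, of the morphisms produced from Corollary~\ref{cgf3},(ii) after base change with the already given morphisms $\vartheta _{x}$ and $\es SE \to \es SE/C_{+}\es SE$; these are immediate from the definitions and from ${\goth m}_{x}\cap C = C_{+}$. No genuine obstacle remains beyond the results already established in this section.
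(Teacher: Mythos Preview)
Your proof is correct. Part~(i) is essentially the paper's argument, phrased more tersely: both tensor the short exact sequence $0\to I\to C[\poi t1{,\ldots,}{r}{}{}{}]\to \es SE\to 0$ from Lemma~\ref{l4gf3},(ii) by $\an Xx$ over $C$ and then invoke Corollary~\ref{cgf3},(ii); the paper writes this out as a commutative diagram with a small chase, whereas you state the resulting isomorphism directly.

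For part~(ii) your route is genuinely different and noticeably shorter. The paper argues elementwise: it picks graded generators $\poi a1{,\ldots,}{m}{}{}{}$ of $\overline{A}_{+}$, uses Lemma~\ref{l4gf3},(i) to write $I$ in terms of the $a_{i}-\overline{a_{i}}$, lifts an element of ${\goth m}_{x}\es SE\cap\es SE$ through $\vartheta _{x}$ via part~(i), and then separates constant from nonconstant parts in $v_{1},\ldots,v_{N}$ and in ${\goth m}_{x}$ to force the element into $C_{+}\es SE$. You instead reduce the isomorphism of Corollary~\ref{cgf3},(ii) modulo ${\goth m}_{x}$, using ${\goth m}_{x}\cap C=C_{+}$, to identify $\an Xx\es SE/{\goth m}_{x}\es SE$ with $\es SE/C_{+}\es SE$ in a way compatible with the inclusion $\es SE\hookrightarrow \an Xx\es SE$; the equality of kernels then drops out immediately. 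Your argument avoids any explicit use of the generators of $I$ or of $\overline{A}_{+}$, and shows that part~(ii) is really a formal consequence of Corollary~\ref{cgf3},(ii) alone. The paper's computation, by contrast, makes the mechanism more concrete but is longer and relies on Lemma~\ref{l4gf3},(i), which your proof does not need.
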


\begin{proof}
(i) From the short exact sequence
$$ 0 \longrightarrow I \longrightarrow C[\poi t1{,\ldots,}{r}{}{}{}] \longrightarrow 
\es SE \longrightarrow 0$$
we deduce the exact sequence
$$ \tk C{\an Xx}I \longrightarrow \tk C{\an Xx}C[\poi t1{,\ldots,}{r}{}{}{}] 
\longrightarrow \tk C{\an Xx}\es SE \longrightarrow 0 .$$
Moreover, we have a commutative diagram
$$\xymatrix{  && &&  0 \ar[d] & \\
\tk C{\an Xx}I \ar[rr]^{\hspace{-1em}\dd} \ar[d]^{\delta } && 
\tk C{\an Xx}C[\poi t1{,\ldots,}{r}{}{}{}] \ar[d]^{\delta } \ar[rr]^{\dd} &&
\tk C{\an Xx}\es SE \ar[d]^{\delta } \ar[r] & 0 \\
\an XxI \ar[d] \ar[rr]^{\hspace{-1em}\dd} && \an Xx[\poi t1{,\ldots,}{r}{}{}{}] 
\ar[d] \ar[rr]^{\dd} &&
\an Xx \es SE \ar[d] \ar[r] & 0 \\ 0 && 0 && 0 & }$$
with exact columns by Corollary~\ref{cgf3},(ii). For $a$ in 
$\an Xx[\poi t1{,\ldots,}{}{}{}{}]$ such that $\dd a = 0$,
$$ a = \delta b, \quad b = \dd c \quad  \text{with} \quad 
b \in \tk C{\an Xx}C[\poi t1{,\ldots,}{r}{}{}{}], \quad 
c \in \tk C{\an Xx}I ,$$
so that $a = \dd \rond \delta c$. Hence $\an Xx I$ is the kernel of $\vartheta _{x}$.

(ii) Let $\poi a1{,\ldots,}{m}{}{}{}$ be a homogenous generating family of 
$\overline{A}_{+}$. For $i=1,\ldots,m$,
$$ \overline{a_{i}} = \sum_{({\bf j},{\bf k})\in {\Bbb N}^{N}\times {\Bbb N}^{r}}
a_{i,{\bf j},{\bf k}} v^{{\bf j}} t^{{\bf k}} ,$$
with the $a_{i,{\bf j},{\bf k}}$'s in $\k$. Set:
$$ a'_{i} := \sum_{{\bf k}\in {\Bbb N}^{N}} a_{i,0,{\bf k}} t^{{\bf k}} .$$
For $i=1,\ldots,m$,
$$ a'_{i} \in \overline{a_{i}} - a_{i} + C_{+}[\poi t1{,\ldots,}{r}{}{}{}]$$ 
since $a_{i}$ is in $\overline{A}_{+}$ so that $\vartheta (a'_{i})$ is in 
$C_{+}\es SE$.

Since $C_{+}$ is contained in ${\goth m}_{x}$, $C_{+}\es SE$ is contained in 
${\goth m}_{x}\es SE \cap \es SE$. Let $a$ be in 
${\goth m}_{x}[\poi t1{,\ldots,}{r}{}{}{}]$ such that $\vartheta _{x}(a)$ is 
in $\es SE$. According to (i),
$$ a \in C[\poi t1{,\ldots,}{r}{}{}{}] + \an Xx I .$$
So, by Lemma~\ref{l3gf3},(i), 
$$ a = b + b_{1}(a_{1}-\overline{a_{1}}) + \cdots + b_{m} (a_{m}-\overline{a_{m}}) ,$$
with $b$ in $C[\poi t1{,\ldots,}{r}{}{}{}]$ and $\poi b1{,\ldots,}{m}{}{}{}$ in 
$\an Xx$. Then,
$$ b = b_{0} + b_{+}, \quad  \text{with} \quad b_{0} \in \k[t_{1},\ldots,t_{r}] 
\quad  \text{and} \quad b_{+} \in C_{+}[\poi t1{,\ldots,}{r}{}{}{}]$$
$$ b_{i} = b_{i,0} + b_{i,+}, \quad  \text{with} \quad b_{i,0} \in \k 
\quad  \text{and} \quad b_{i,+} \in {\goth m}_{x}$$
for $i=1,\ldots,m$. Since $a$ is in ${\goth m}_{x}[\poi t1{,\ldots,}{r}{}{}{}]$ and
$\poi a1{,\ldots,}{m}{}{}{}$ are in $C_{+}$,
$$ b_{0} - b_{1,0}\overline{a_{1}} - \cdots - b_{m,0}\overline{a_{m}}
\in {\goth m}_{x}[\poi t1{,\ldots,}{r}{}{}{}] $$
Moreover, for $i=1,\ldots,m$,
$$\overline{a_{i}} - a'_{i} \in C_{+}[\poi t1{,\ldots,}{r}{}{}{}] .$$
Hence 
$$ b_{0} - b_{1,0}a'_{1} - \cdots - b_{m,0}a'_{m} = 0 \quad  \text{since} \quad
{\goth m}_{x}[\poi t1{,\ldots,}{r}{}{}{}] \cap \k[\poi t1{,\ldots,}{r}{}{}{}] = 0 .$$
As a result, $\vartheta _{x}(a)$ is in $C_{+}\es SE$ since 
$\vartheta _{x}(a)= \vartheta _{x}(b_{0}) + \vartheta _{x}(b_{+})$.
\end{proof}

\subsection{}\label{gf4}
We are now in a position to prove the main result of the section. 
Recall the main notations: 
$E$ is a finite dimensional vector space over $\k$, 
$A$ is a homogenous subalgebra of S$(E)$, different from S$(E)$ and 
such that $A=\k+A_{+}$, ${\cal N}_{0}$ is the nullvariety of $A_{+}$ in 
$E^{*}$, $K$ is the fraction field of S$(E)$ and $K(\overline{A})$ 
that one of $\overline{A}$, the algebraic closure of $A$ in S$(E)$. 

\begin{theorem}\label{tgf4}
Suppose that the following conditions are satisfied:
\begin{itemize}
\item [{\rm (a)}] ${\cal N}_{0}$ has dimension $N$, 
\item [{\rm (b)}] $A$ is a polynomial algebra,
\item [{\rm (c)}] $K(\overline{A})$ is algebraically closed in $K$.
\end{itemize}
Then $\overline{A}$ is a polynomial algebra. Moreover, $\es SE$ is a free extension of 
$\overline{A}$.
\end{theorem}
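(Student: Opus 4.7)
The plan is to apply Proposition~\ref{pgf1}(iii) to $\overline{A}$. Since $\overline{A}$ is integral over $A$ (by Corollary~\ref{cgf1}), and since every homogeneous element of $\overline{A}_{+}$ satisfies an integral equation whose coefficients all lie in $A_{+}$, the augmentation ideal $\overline{A}_{+}$ cuts out the same nullvariety ${\cal N}_{0}$ in $E^{*}$ as $A_{+}$, which has dimension $N=\dim E-\dim \overline{A}$. So it suffices to prove that $\es SE$ is flat over $\overline{A}$. I would then introduce the intermediate algebra $C:=\overline{A}[\poi v1{,\ldots,}{N}{}{}{}]$, where $\poi v1{,\ldots,}{N}{}{}{}\in E$ are chosen by Lemma~\ref{lgf1}(iii) so that their common zero set in ${\cal N}_{0}$ is $\{0\}$. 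Since $C$ is a polynomial extension of $\overline{A}$, it is faithfully flat over $\overline{A}$, and by transitivity it is enough to prove that $\es SE$ is flat over $C$.

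For this flatness, let $V$ be a graded complement to $\es SE\,C_{+}$ in $\es SE$. By Lemma~\ref{l3gf3}(i), $V$ is finite-dimensional with $\es SE=CV$ and $K=K(C)V$. For any $x\in \pi^{-1}(C_{+})$, Proposition~\ref{p2gf3}(ii) gives $\es SE\cap {\goth m}_{x}\es SE=C_{+}\es SE$, so the natural map
$$\es SE/C_{+}\es SE \;\hookrightarrow\; \an Xx\es SE/{\goth m}_{x}\an Xx\es SE=Q_{x}$$
is injective, yielding $\dim V\leq d:=[K:K(C)]$; the reverse inequality follows since $V$ spans $K$ over $K(C)$. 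Hence $\dim V=d$. Fix a graded basis $(\poi w1{,\ldots,}{d}{}{}{})$ of $V$; by Lemma~\ref{l2gf1}, the canonical surjection $\tk \k C V \to \es SE$ is bijective if and only if $\es SE$ is flat over $C$, so it remains to prove injectivity. Given a relation $\sum_{i=1}^{d}c_{i}w_{i}=0$ with $c_{i}\in C$, I would transport it into $\an Xx\es SE$ for any $x\in \pi^{-1}(C_{+})$: Lemma~\ref{l3gf3}(ii) applied with $V'=V$ says that $(\poi w1{,\ldots,}{d}{}{}{})$ is an $\an Xx$-basis of $\an Xx\es SE$, so each $c_{i}$ vanishes in $\an Xx$. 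Since $C$ is a domain (as a polynomial extension of the domain $\overline{A}$) and $\an Xx$ lies in the fraction field $K(C)$, the structure map $C\to \an Xx$ is injective, forcing $c_{i}=0$ in $C$.

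The hardest technical work has already been absorbed into Propositions~\ref{pgf3} and~\ref{p2gf3} via the desingularization analysis, which delivers both the normality and Cohen--Macaulay property of $\an Xx\es SE$ and the crucial identification $\es SE\cap {\goth m}_{x}\es SE=C_{+}\es SE$. In the argument above, the main delicate point is the dimension count $\dim V=d$, which rests on the combination of Proposition~\ref{p2gf3}(ii) with the spanning statement $K=K(C)V$. Once this is in hand, injectivity of the canonical map reduces to a short argument exploiting only that $C$ is a domain contained in $\an Xx$.
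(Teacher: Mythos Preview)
Your proof is correct and follows essentially the same route as the paper. Both arguments reduce to showing that $\es SE$ is free (equivalently, flat) over $C=\overline{A}[\poi v1{,\ldots,}{N}{}{}{}]$, and both establish this by combining Proposition~\ref{p2gf3}(ii) with Lemma~\ref{l3gf3}(ii): the identity $\es SE\cap{\goth m}_x\es SE=C_{+}\es SE$ forces $\dim V=d$ and then the $\an Xx$-freeness of $\an Xx\es SE$ on the basis $(\poi w1{,\ldots,}{d}{}{}{})$ yields injectivity of $\tk{\k}{C}V\to\es SE$ because $C\hookrightarrow\an Xx$. The only cosmetic difference is that you phrase the outer reduction via transitivity of flatness through $C/\overline{A}$ and Proposition~\ref{pgf1}(iii), whereas the paper passes through ``$C$ polynomial $\Leftrightarrow$ $\overline{A}$ polynomial''; these are equivalent repackagings of the same implication.
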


\begin{proof}
Use the notations of Subsection \ref{gf3}. In particular, set 
$$C=\overline{A}[\poi v1{,\ldots,}{N}{}{}{}],$$
with $(\poi v1{,\ldots,}{N}{}{}{})$ a sequence of elements of $E$ such that 
its nullvariety in ${\cal N}_{0}$ is equal to $\{0\}$ (cf.~Lemma~\ref{lgf1},(iii)), 
and let $K(C)$ be the fraction field of $C$. As already explained, according to 
Proposition~\ref{pgf1},(ii), it suffices to prove that $\es SE$ is a free extension of 
$C$. Let $V$ be as in Lemma~\ref{l4gf3}, a homogenous complement to $\es SEC_+$ in 
$\es SE$. Recall that $X$ is a desingularization of $Z={\rm Specm}(C)$ and that $\pi$ is 
the morphism of desingularization. Let $x$ be in ${\pi} ^{-1}(C_{+})$. According to 
Proposition~\ref{p2gf3},(ii), for some subspace $V'$ of $V$, $V'$ is a complement 
to ${\goth m}_{x}\es SE$ in $\an Xx \es SE$. Then, by Lemma~\ref{l3gf3},(ii), $V'$
has dimension the degree of the extension $K$ of $K(C)$ and the canonical map
$$ \tk {\k}{\an Xx}V' \longrightarrow \an Xx \es SE $$
is bijective. 
Moreover, 
$$V' \oplus \es SE C_{+} = \es SE \quad  \text{and} \quad V' = V .$$
Indeed, for $a\in \es SE$, write $a = b+c$ with $b\in V'$ and 
$c \in {\goth m}_{x}\es SE$. Since $V'$ is contained in $\es SE$, $c$ is in 
$\es SE$, whence $c$ in $\es SE C_{+}$ by Proposition~\ref{p2gf3},(ii). 
In addition, $\es SE = CV$ as it has been observed in the proof of Lemma \ref{l3gf3},(i). 
As a result, the canonical map 
$$ \tk {\k}{C}V \longrightarrow \es SE $$ 
is bijective. This concludes the proof of the theorem.
\end{proof}

\section{Good elements and good orbits} \label{ge} 
Recall that $\k$ is an algebraically closed field of characteristic zero. 
As in the introduction, $\g$ is a simple Lie algebra over $\k$ of rank $\rg$, 
$\dv ..$ denotes the Killing form of $\g$, and $G$ denotes the adjoint group 
of $\g$. 

\subsection{}\label{ge1}
The notions of good element and good orbit in ${\goth g}$ are introduced in this 
paragraph. 

For $x$ in ${\goth g}$, denote by $\g^{x}$ its centralizer in $\g$, by $G^{x}$ 
its stabilizer in $G$, by $G_0^{x}$ the identity component of $G^{x}$ and 
by $K_{x}$ the fraction field of the symmetric algebra $\es S{{\goth g}^{x}}$. 
Then $\ai gx{}$ and $K_x^{\g^{x}}$ denote the sets of $G_0^{x}$-invariant elements of 
$\es S{{\goth g}^{x}}$ and $K_x$ respectively. 

\begin{lemma}\label{lge1}
Let $x$ be in ${\goth g}$. Then $K_{x}^{{\goth g}^{x}}$ is the fraction field of 
$\ai gx{}$ and $K_{x}^{{\goth g}^{x}}$ is algebraically closed in $K_{x}$ of 
transcendental degree $\rg$ over $\k$.
\end{lemma}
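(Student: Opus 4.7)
\emph{Plan.} The statement decomposes into two independent assertions: (A) the equality $K_{x}^{\g^{x}} = \mathrm{Frac}(\ai gx{})$, and (B) the algebraic closedness of $K_{x}^{\g^{x}}$ in $K_{x}$. I would prove (B) first, since it rests on a clean connectedness argument, and then use transcendence-degree inputs together with (B) to handle (A).

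For (B), the key observation is that $G_{0}^{x}$ is a connected algebraic group with Lie algebra $\g^{x}$, so on rational functions the Lie-algebra action is the infinitesimal form of the group action and the two sets of invariants coincide: $K_{x}^{\g^{x}} = K_{x}^{G_{0}^{x}}$. Given $f \in K_{x}$ algebraic over $K_{x}^{G_{0}^{x}}$ with minimal polynomial $P(T) \in K_{x}^{G_{0}^{x}}[T]$, the coefficients of $P$ are $G_{0}^{x}$-fixed, so $G_{0}^{x}$ permutes the finite set of roots of $P$. The connected group $G_{0}^{x}$ acts trivially on any finite set, hence $G_{0}^{x}\cdot f = \{f\}$ and $f \in K_{x}^{G_{0}^{x}} = K_{x}^{\g^{x}}$, proving (B).

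For (A), the inclusion $\mathrm{Frac}(\ai gx{}) \subseteq K_{x}^{\g^{x}}$ is immediate. For the reverse inclusion, I would write $f \in K_{x}^{\g^{x}}$ as $f = a/b$ with $a,b \in \es S{\g^{x}}$ and $\gcd(a,b) = 1$. Letting $\partial_{\xi}$ denote the derivation of $\es S{\g^{x}}$ induced by $\mathrm{ad}\,\xi$ for $\xi \in \g^{x}$, the identity $\partial_{\xi}(a/b) = 0$ gives $b\,\partial_{\xi}a = a\,\partial_{\xi}b$ in the UFD $\es S{\g^{x}}$; coprimality forces $b \mid \partial_{\xi}b$, and since $\partial_{\xi}$ preserves total degree one deduces $\partial_{\xi}b = \lambda(\xi)b$ and similarly $\partial_{\xi}a = \lambda(\xi)a$ for the same linear form $\lambda : \g^{x} \to \k$, which a direct check shows is a Lie-algebra character (i.e.\ vanishes on $[\g^{x},\g^{x}]$).

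The heart of the argument, and the main obstacle, is to force $\lambda = 0$, which then yields $a, b \in \ai gx{}$ and hence $f \in \mathrm{Frac}(\ai gx{})$. I would argue this through a transcendence-degree comparison combined with (B): Rosenlicht's theorem applied to $G_{0}^{x}$ acting on $\g^{x}$ gives $\mathrm{trdeg}_{\k}K_{x}^{\g^{x}} = \mathrm{ind}\,\g^{x}$, which is $\rg$ by Theorem~\ref{ti1}; in parallel, reducing to the case of a nilpotent element in the reductive algebra $\g^{x_{s}}$ via the Jordan decomposition $x = x_{s}+x_{n}$ and invoking the construction of \cite[Proposition~0.1]{PPY} produces $\rg$ algebraically independent elements inside $\ai gx{}$. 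Thus $K_{x}^{\g^{x}}$ is algebraic over $\mathrm{Frac}(\ai gx{})$. Using $\ai gx{} = \es S{\g^{x}} \cap K_{x}^{\g^{x}}$ together with the normality of the polynomial ring $\es S{\g^{x}}$ gives that $\ai gx{}$ is integrally closed in $\es S{\g^{x}}$, which combined with (B) promotes this algebraicity to the desired equality of fields and finishes the proof.
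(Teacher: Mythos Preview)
Your proof of (B) is correct and matches the paper's argument exactly.

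For (A), the semi-invariant setup is a detour that you rightly abandon: the intermediate goal ``$\lambda=0$'' is never established, and in fact it can fail --- centralizers $\g^{e}$ may admit genuine semi-invariants that are not invariant (this is why, for instance, Example~\ref{erc3} has to invoke \cite{DDV} to verify it in a specific case). So that route would not close.

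The transcendence-degree argument you pivot to is essentially the paper's, but your final sentence has the logic slightly misaligned. The claim ``$\ai gx{}$ is integrally closed in $\es S{\g^{x}}$'' does \emph{not} follow from normality of $\es S{\g^{x}}$ together with $\ai gx{}=\es S{\g^{x}}\cap K_{x}^{\g^{x}}$ alone: for $s\in\es S{\g^{x}}$ integral over $\ai gx{}$ you still need (B) to conclude $s\in K_{x}^{\g^{x}}$. So (B) belongs to the justification of that claim, not to the step after it. With this reshuffling your argument is complete.

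The paper's version is marginally cleaner and avoids (B) in the proof of (A) altogether: take $A$ to be the integral closure of $\ai gx{}$ \emph{inside} $K_{x}^{\g^{x}}$; then $\mathrm{Frac}(A)=K_{x}^{\g^{x}}$ since the extension is algebraic, while $A\subset\es S{\g^{x}}$ by normality and $A\subset K_{x}^{\g^{x}}$ by construction, giving $A\subset\es S{\g^{x}}\cap K_{x}^{\g^{x}}=\ai gx{}$ directly.
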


\begin{proof}
Let $a$ be in $K_{x}$, algebraic over $K_{x}^{{\goth g}^{x}}$. For all $g$ in 
$G^{x}_{0}$, $g.a$ satisfies the same equation of algebraic dependence over 
$K_{x}^{{\goth g}^{x}}$ as $a$. Since a polynomial in one indeterminate has a finite 
number of roots, the $G_0^{x}$-orbit of $a$ is finite. But this orbit is then reduced to 
$\{a\}$, $G_0^{x}$ being connected. Hence $a$ is in $K_{x}^{{\goth g}^{x}}$. This shows 
that $K_{x}^{{\goth g}^{x}}$ is algebraically closed in $K_{x}$. According to 
\cite[Theorem 1.2]{CM} (see also Theorem \ref{ti1}), the index of 
${\goth g}^{x}$ is equal to $\rg$. So, by \cite{Ro}, the transcendental degree of 
$K_{x}^{{\goth g}^{x}}$ over $\k$ is equal to $\rg$. It remains to prove that 
$K_{x}^{{\goth g}^{x}}$ is the fraction field of $\ai gx{}$.

Since ${\goth g}^{x}$ is the centralizer of $x_{\n}$ in the reductive Lie algebra
${\goth g}^{x_{\s}}$, we can suppose $x$ nilpotent. Any rational invariant is a quotient
of two semi-invariant polynomials, because of the prime factor decomposition. Each
semi-invariant has a central character $\lambda$, a character of the center of a Levi
subalgebra in ${\goth g}^{x}$.  By~\cite[Lemma 4.6,(i)]{JS}, there is also a
semi-invariant with the character $-\lambda$.
 Multiplying both numerator and denominator by this invariant, we get
 the same invariant as a quotient of invariants, whence the lemma.
\end{proof}

\begin{defi}\label{dge1}
An element $x\in{\goth g}$ is called a {\em good element of ${\goth g}$} if for some 
homogenous elements $\poi p1{,\ldots,}{\rg}{}{}{}$ of $\ai g{x}{}$, the nullvariety of 
$\poi p1{,\ldots,}{\rg}{}{}{}$ in $({\goth g}^{x})^{*}$ has codimension $\rg$ in
$({\goth g}^{x})^{*}$. A $G$-orbit in ${\goth g}$ is called {\em good} if it is the orbit
of a good element.
\end{defi}

Since the nullvariety of $\ai g{}{}_{+}$ in ${\goth g}$ is the nilpotent cone of 
${\goth g}$, $0$ is a good element of ${\goth g}$. For $(g,x)$ in $G\times {\goth g}$ and
for $a$ in $\ai g{x}{}$, $g(a)$ is in $\ai g{g(x)}{}$. So, an orbit is good if and only 
if all its elements are good. 

\begin{theorem}\label{tge1}
Let $x$ be a good element of ${\goth g}$. Then $\ai gx{}$ is a polynomial algebra and 
$\es S{{\goth g}^{x}}$ is a free extension of $\ai gx{}$.  
\end{theorem}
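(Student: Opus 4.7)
The plan is to deduce Theorem \ref{tge1} from Theorem \ref{tgf4}. Set $E := \g^{x}$, $S := \es S{\g^{x}}$, and let $\poi p1{,\ldots,}{\rg}{}{}{}$ be homogeneous elements of $\ai gx{}$ whose common nullvariety $\cali N0$ in $E^{*}$ has codimension $\rg$; such a sequence exists because $x$ is a good element. Put $A := \k[\poi p1{,\ldots,}{\rg}{}{}{}]$, a graded subalgebra of $S$ with $A=\k+A_{+}$. The goal is then to verify the three hypotheses (a), (b), (c) of Theorem \ref{tgf4} for this particular $A$, and to read off the conclusion of Theorem \ref{tge1} from it.

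Conditions (a) and (b) should be the easy part. By Lemma \ref{lgf1}(i), every fiber of $\pi_{A,S}$ has dimension at least $\dim S - \dim A$, so $\dim\cali N0\geq \dim S - \dim A$. Combined with $\dim\cali N0=\dim E-\rg$ and the obvious bound $\dim A\leq\rg$ coming from the number of generators, this forces $\dim A=\rg$. Hence $\poi p1{,\ldots,}{\rg}{}{}{}$ are algebraically independent, $A$ is a polynomial algebra in $\rg$ variables, and $\dim\cali N0=N$ in the notation of Theorem~\ref{tgf4}.

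The substantive step is condition (c), which I plan to obtain through the identification $\overline{A}=\ai gx{}$. For the inclusion $\ai gx{}\subset\overline{A}$: Theorem~\ref{ti1} together with \cite{Ro} ensures that $K_{x}^{\g^{x}}$ has transcendence degree $\rg$ over $\k$, and since $A\subset\ai gx{}\subset K_{x}^{\g^{x}}$ with $A$ already of transcendence degree $\rg$, every element of $\ai gx{}$ is algebraic over $A$ and lies in $S$, hence in $\overline{A}$. Conversely, any $a\in\overline{A}$ is algebraic over $K_{x}^{\g^{x}}$; by Lemma \ref{lge1} this field is algebraically closed in $K_{x}$, so $a\in K_{x}^{\g^{x}}\cap\es S{\g^{x}}=\ai gx{}$, the last equality expressing that a $G_{0}^{x}$-invariant element of $K_{x}$ that happens to lie in $\es S{\g^{x}}$ is a $G_{0}^{x}$-invariant of $\es S{\g^{x}}$. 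It follows that $K(\overline{A})=K_{x}^{\g^{x}}$ is algebraically closed in $K_{x}$ by Lemma \ref{lge1}, which is condition (c).

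The main obstacle is precisely this identification $\overline{A}=\ai gx{}$; it relies on the nontrivial input that the index of $\g^{x}$ equals $\rg$ (Theorem \ref{ti1}) in order to match transcendence degrees, together with the algebraic closedness statement of Lemma \ref{lge1}. Once it is established, Theorem \ref{tgf4} simultaneously delivers that $\overline{A}=\ai gx{}$ is a polynomial algebra and that $\es S{\g^{x}}$ is a free extension of $\ai gx{}$, which are the two assertions of Theorem \ref{tge1}.
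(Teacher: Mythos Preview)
Your proof is correct and follows essentially the same route as the paper: set $A=\k[\poi p1{,\ldots,}{\rg}{}{}{}]$, use the codimension hypothesis together with Lemma~\ref{lgf1} to get that $A$ is a polynomial algebra of dimension $\rg$ with $\dim{\cal N}_{0}=N$, identify $\overline{A}$ with $\ai gx{}$ via Lemma~\ref{lge1} and the index equality of Theorem~\ref{ti1}, and then invoke Theorem~\ref{tgf4}. The only cosmetic differences are that you spell out the two inclusions $\ai gx{}\subset\overline{A}$ and $\overline{A}\subset\ai gx{}$ separately and cite Lemma~\ref{lgf1}(i) rather than (ii) for the dimension bound; the paper compresses this into a couple of sentences.
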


\begin{proof}
Let $\poi p1{,\ldots,}{\rg}{}{}{}$ be homogenous elements of $\ai gx{}$ such that the 
nullvariety of $\poi p1{,\ldots,}{\rg}{}{}{}$ in $({\goth g}^{x})^{*}$ has codimension 
$\rg$. Denote by $A$ the subalgebra of $\ai gx{}$ generated by 
$\poi p1{,\ldots,}{\rg}{}{}{}$. Then $A$ is a homogenous subalgebra of ${\rm S}(\g^{x})$ 
and the nullvariety of $A_{+}$ in $({\goth g}^{x})^{*}$ has codimension $\rg$. So, by 
Lemma~\ref{lgf1},(ii), $A$ has dimension $\rg$. Hence, $\poi p1{,\ldots,}{\rg}{}{}{}$ are 
algebraically independent and $A$ is a polynomial algebra. Denote by $\overline{A}$ the
algebraic closure of $A$ in $\es S{{\goth g}^{x}}$. By Lemma~\ref{lge1}, $\overline{A}$
is contained in $\ai gx{}$ and the fraction field of $\ai gx{}$ is algebraically closed
in $K_{x}$. As a matter of fact, $\overline{A}=\ai gx{}$ since the fraction fields of $A$
and $\ai gx{}$ have the same transcendental degree. Hence, by Theorem~\ref{tgf4}, 
$\ai gx{}$ is a polynomial algebra and $\es S{{\goth g}^{x}}$ is free extension of 
$\ai gx{}$. 
\end{proof}

\begin{rema} \label{rge1}
The algebra $\ai gx{}$ may be polynomial even though $x$ is not good. Indeed, let us 
consider a nilpotent element $e$ of $\g=\mathfrak{so}(\k^{10})$ in the nilpotent orbit 
associated with the partition $(3,3,2,2)$. Then the algebra $\ai ge{}$ is polynomial, 
generated by elements of degrees $1,1,2,2,5$. But the nullcone 
has an irreducible component of codimension at most $4$. So, $e$ is not good. 
We refer the reader to Example \ref{erc2} for more details. 
\end{rema}

For $x\in\g$, denote by $x_{\s}$ and $x_{\n}$ the semisimple and the nilpotent 
components of $x$ respectively. 

\begin{prop}\label{pge1}
Let $x$ be in ${\goth g}$. Then $x$ is good if and only if $x_{\n}$ is a good 
element of the derived algebra of ${\goth g}^{x_{\s}}$.
\end{prop}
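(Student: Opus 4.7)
The plan is to reduce the Jordan-decomposition equivalence to a tensor factorisation of the invariant algebra that mirrors the decomposition of $\g^x$. Set $\l := \g^{x_s}$, which is reductive; let $\z$ denote its centre and $\l' := [\l,\l]$ its derived ideal, so $\l = \z \oplus \l'$. Since $x_s$ and $x_n$ commute and every nilpotent element of a reductive Lie algebra lies in its derived subalgebra, $x_n \in \l'$ and $\g^x = \l^{x_n} = \z \oplus (\l')^{x_n}$, with $\z$ central in $\g^x$. The adjoint action of $\g^x$ on itself is therefore trivial on $\z$ and restricts on $(\l')^{x_n}$ to its usual action, so one gets a graded algebra factorisation
\[
\ai gx{} \;=\; \es S{\z} \otimes B, \qquad B := \es S{(\l')^{x_n}}^{(\l')^{x_n}}.
\]
Writing $d := \dim\z$ and $\rg' := \mathrm{rk}\,\l'$, one has $\rg = \mathrm{rk}\,\l = d + \rg'$.

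For the direction $(\Leftarrow)$, suppose $x_n$ is good in $\l'$ and fix homogeneous $p_1,\ldots,p_{\rg'} \in B$ whose nullvariety $\mathcal N_1$ in $((\l')^{x_n})^*$ has codimension $\rg'$. Adjoining a basis $z_1,\ldots,z_d$ of $\z$ produces $\rg$ homogeneous elements of $\ai gx{}$ whose common nullvariety in $(\g^x)^* = \z^* \oplus ((\l')^{x_n})^*$ equals $\{0\}\times \mathcal N_1$, of codimension $d + \rg' = \rg$. Hence $x$ is good.

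For $(\Rightarrow)$, suppose $x$ is good. By Theorem \ref{tge1}, $\es S{\g^x}$ is a free extension of $\ai gx{}$, so Proposition \ref{pgf1}(ii) gives that the nullvariety of the augmentation ideal of $\ai gx{}$ in $(\g^x)^*$ is equidimensional of codimension $\rg$. Since that augmentation ideal is generated by $z_1,\ldots,z_d$ together with any homogeneous generating set of $B_+$, the nullvariety in question equals $\{0\} \times \mathcal N_B$, where $\mathcal N_B$ is the nullvariety of $B_+$ in $((\l')^{x_n})^*$; thus $\mathcal N_B$ is equidimensional of codimension $\rg - d = \rg'$. Theorem \ref{ti1} (together with Rosenlicht's theorem, cf.\ the proof of Lemma \ref{lge1}) ensures that $\dim B = \mathrm{ind}\,(\l')^{x_n} = \rg'$, so $\rg' = \dim\es S{(\l')^{x_n}} - \dim B$, and Proposition \ref{pgf1}(iii) applies to the inclusion $B \subset \es S{(\l')^{x_n}}$. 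It forces $B$ to be a polynomial algebra on $\rg'$ homogeneous generators $p_1,\ldots,p_{\rg'}$, whose common nullvariety is precisely $\mathcal N_B$ of codimension $\rg'$. Hence $x_n$ is a good element of $\l'$.

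The principal structural input is the factorisation $\ai gx{} = \es S{\z} \otimes B$ and the matching of the nullvariety of its augmentation ideal in $(\g^x)^*$ with that of $B_+$ in $((\l')^{x_n})^*$ through the product decomposition; once that is clean, the remainder assembles directly from Theorem \ref{tge1}, Theorem \ref{ti1}, and parts (ii) and (iii) of Proposition \ref{pgf1}.
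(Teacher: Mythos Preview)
Your overall strategy matches the paper's: decompose $\g^x=\z\oplus(\l')^{x_n}$, factor $\ai gx{}=\es S{\z}\otimes B$ with $B=\es S{(\l')^{x_n}}^{(\l')^{x_n}}$, and transfer the codimension condition across the product. The $(\Leftarrow)$ direction is correct and identical to the paper's.

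In the $(\Rightarrow)$ direction there is a genuine gap. Proposition~\ref{pgf1}(iii) only says that, under the equidimensionality hypothesis on $\mathcal N_B$, conditions (1)--(6) are \emph{equivalent}; it does not assert that any of them holds. So your sentence ``Proposition~\ref{pgf1}(iii) applies \ldots\ It forces $B$ to be a polynomial algebra'' is unjustified: you have verified the hypothesis of (iii) but none of the six conditions. (Incidentally, the line ``$\rg'=\dim\es S{(\l')^{x_n}}-\dim B$'' is also literally false; you presumably meant that $\mathcal N_B$ has dimension $N:=\dim(\l')^{x_n}-\rg'$.)

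There are two clean fixes. The one closest to your argument: from the freeness of $\es S{\g^x}$ over $\ai gx{}$ (Theorem~\ref{tge1}) base-change along the quotient by the ideal generated by $\z$ to obtain that $\es S{(\l')^{x_n}}\cong \es S{\g^x}\otimes_{\ai gx{}}B$ is free over $B$. This is condition (6), so Proposition~\ref{pgf1}(ii) gives that $B$ is regular, and then (i) gives that $B$ is polynomial on $\rg'$ homogeneous generators whose nullvariety is $\mathcal N_B$. The paper takes a shorter route: since $\ai gx{}$ is polynomial (Theorem~\ref{tge1}) and $\z$ sits in degree~$1$, one may choose homogeneous generators $p_1,\ldots,p_\rg$ of $\ai gx{}$ with $p_1,\ldots,p_d$ a basis of $\z$ and $p_{d+1},\ldots,p_\rg\in B$; the nullvariety of $p_{d+1},\ldots,p_\rg$ in $((\l')^{x_n})^*$ then has codimension $\rg-d=\rg'$ (because the full nullvariety of $(\ai gx{})_+$ in $(\g^x)^*$ has codimension $\rg$ by freeness and Proposition~\ref{pgf1}(ii), and it equals $\{0\}\times\mathcal N$). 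Either completion closes the gap.
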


\begin{proof}
Let ${\goth z}$ be the center of ${\goth g}^{x_{\s}}$ and let ${\goth a}$ be the 
derived algebra of ${\goth g}^{x_{\s}}$. Then 
$$\begin{array}{ccc}
{\goth g}^{x} = {\goth z}\oplus {\goth a}^{x_{\n}}, &&
\ai gx{} = \tk {\k}{\e Sz}\ai a{x_{\n}}{} .\end{array}$$
By the first equality, $({\goth a}^{x_{\n}})^{*}$ identifies with the 
orthogonal complement to ${\goth z}$ in $({\goth g}^{x})^{*}$. Set 
$d:=\dim {\goth z}$. Suppose that $x_{\n}$ is a good element of 
${\goth a}$ and let $\poi p1{,\ldots,}{\rg -d}{}{}{}$ be homogenous elements of 
$\ai a{x_{\n}}{}$ whose nullvariety in $({\goth a}^{x_{\n}})^{*}$ has codimension 
$\rg -d$. Denoting by $\poi v1{,\ldots,}{d}{}{}{}$ a basis of ${\goth z}$, 
the nullvariety of $\poi v1{,\ldots,}{d}{}{}{},\poi p1{,\ldots,}{\rg -d}{}{}{}$ in 
$({\goth g}^{x})^{*}$ is the nullvariety of $\poi p1{,\ldots,}{\rg -d}{}{}{}$ in 
$({\goth a}^{x_{\n}})^{*}$. Hence, $x$ is a good element of ${\goth g}$.

Conversely, let us suppose that $x$ is a good element of ${\goth g}$. By 
Theorem \ref{tge1}, $\ai gx{}{}$ is a polynomial algebra generated by 
homogenous polynomials $\poi p1{,\ldots,}{\rg}{}{}{}$. Since ${\goth z}$ is contained
in $\ai gx{}$, $\poi p1{,\ldots,}{\rg}{}{}{}$ can be chosen so that 
$\poi p1{,\ldots,}{d}{}{}{}$ are in ${\goth z}$ and $\poi p{d+1}{,\ldots,}{\rg}{}{}{}$ 
are in $\ai a{x_{\n}}{}$. Then the nullvariety of $\poi p{d+1}{,\ldots,}{\rg}{}{}{}$ in 
$({\goth a}^{x_{\n}})^{*}$ has codimension $\rg -d$. Hence, $x_{\n}$ is a good element of 
${\goth a}$.
\end{proof}

\subsection{} \label{ge2}
In view of Theorem \ref{tge1}, we wish to find a sufficient condition for that an element 
$x\in\g$ is good. According to Proposition \ref{pge1}, it is enough to consider the case 
where $x$ is nilpotent. 

\smallskip

Let $e$ be a nilpotent element of ${\goth g}$, embedded into an ${\goth {sl}}_{2}$-triple 
$(e,h,f)$ of $\g$. Identify the dual of $\g$ with $\g$, and the dual of ${\goth g}^{e}$ 
with ${\goth g}^{f}$ through the Killing form $\dv ..$ of $\g$. For $p$ in 
$\e Sg\simeq \k[\g]$, denote by $\kappa (p)$ the restriction to $\g^{f}$ of the 
polynomial function $x\mapsto p(e+x)$ and denote by $\ie{p}$ its initial homogenous 
component. According to~\cite[Proposition 0.1]{PPY}, for $p$ in $\e Sg^{{\goth g}}$, 
$\ie{p}$ is in $\ai ge{}$.

\smallskip

The proof of the following theorem will be achieved in Subsection \ref{sg4}.  

\begin{theorem}\label{tge2}
Suppose that for some homogenous generators $\poi q1{,\ldots,}{\rg}{}{}{}$ of 
$\ai g{}{}$, the polynomial functions $\poi {\ie q}1{,\ldots,}{\rg}{}{}{}$ are 
algebraically independent. Then $e$ is a good element of ${\goth g}$. In particular, 
$\ai ge{}$ is a polynomial algebra and $\es S{{\goth g}^{e}}$ is a free extension of 
$\ai ge{}$. Moreover, $(\poi {\ie q}1{,\ldots,}{\rg}{}{}{})$ is a regular sequence in 
$\es S{{\goth g}^{e}}$.
\end{theorem}

The overall idea of the proof is the following. 

\smallskip
 
According to Theorem~\ref{tge1}, it suffices to prove that $e$ is good, and 
more accurately that the nullvariety of $\poi {\ie q}1{,\ldots,}{\rg}{}{}{}$ in 
${\goth g}^{f}$ has codimension $\rg$ since $\poi {\ie q}1{,\ldots,}{\rg}{}{}{}$ 
are invariant homogenous polynomials. As explained in the introduction, we will use the 
Slodowy grading on $\es S{\g^{e}}[[t]]$ and $\es S{\g^{e}}((t))$, induced from that on 
$\es S{\g^{e}}$, to deal with this problem. This is the main purpose of Section \ref{sg}. 

\section{Slodowy grading and proof of Theorem \ref{ti4}} \label{sg}
This section is devoted to the proof of Theorem \ref{tge2} (or Theorem \ref{ti4}). 
The proof is achieved in Subsection~\ref{sg5}.  
As in the previous section, ${\goth g}$ is a simple Lie algebra over $\k$ 
and $(e,h,f)$ is an ${\goth {sl}}_{2}$-triple of ${\goth g}$. Let us simply denote by $S$
the algebra $\es S{{\goth g}^{e}}$.

Let $\poi q1{,\ldots,}{\rg}{}{}{}$ be homogenous generators of $\ai g{}{}$ of degrees
$\poi d1{,\ldots,}{\rg}{}{}{}$ respectively. The sequence 
$(\poi q1{,\ldots,}{\rg}{}{}{})$ is ordered so that 
$\poi d1{\leq \cdots \leq }{\rg}{}{}{}$. We assume in the whole 
section that the polynomial functions $\poi {\ie q}1{,\ldots,}{\rg}{}{}{}$ are 
algebraically independent. 
The aim is to show that $e$ is good (cf.\ Definition \ref{dge1}). 

\subsection{} \label{sg1}
Let $\poi x1{,\ldots,}{r}{}{}{}$ be a basis of ${\goth g}^{e}$ such that for 
$i=1,\ldots,r$, $[h,x_{i}]=n_{i}x_{i}$ for some nonnegative integer $n_{i}$. For 
${\bf j}=(\poi j1{,\ldots,}{r}{}{}{})$ in ${\Bbb N}^{r}$, set:
$$ \vert {\bf j} \vert := \poi j1{+\cdots +}{r}{}{}{}, \qquad 
\vert {\bf j} \vert_{e} := j_{1}n_{1}+\cdots +j_{r}n_{r}+2\vert {\bf j} \vert, \qquad
x^{{\bf j}} = \poie x1{\cdots }{r}{}{}{}{j_{1}}{j_{r}}.$$
The algebra $S$ has two gradings: the standard one and the 
{\em Slodowy grading}. For all ${\bf j}$ in ${\Bbb N}^{r}$, $x^{{\bf j}}$ is 
homogenous with respect to these two gradings. It has standard degree 
$\vert {\bf j} \vert$ and Slodowy degree $\vert {\bf j} \vert_{e}$. In this 
section, we only consider the Slodowy grading. So, by grading we will always 
mean Slodowy grading. For $m$ nonnegative integer, denote by $S^{[m]}$ 
the subspace of $S$ of degree $m$.

Let $t$ be an indeterminate. For all subspace $V$ of $S$, set:
$$ V[t] := \tk {\k}{\k[t]}V, \qquad V[t,t^{-1}] := \tk {\k}{\k[t,t^{-1}]}V, \qquad
V[[t]] := \tk {\k}{\k[[t]]}V, \qquad V((t)) := \tk {\k}{\k((t))}V ,$$
with $\k((t))$ the fraction field of $\k[[t]]$. For $V$ a subspace of $S[[t]]$, denote by
$V(0)$ the image of $V$ by the quotient morphism 
$$S[t]\longrightarrow S, \qquad a(t) \longmapsto a(0) .$$
  
The grading of $S$ induces a grading of the algebra $S((t))$ with $t$ having 
degree $0$. For $V$ a homogenous subspace of $S((t))$ and for $m$ a nonnegative integer, 
let $V^{[m]}$ be its component of degree $m$. In particular, for $V$ a homogenous 
subspace of $S$, $V((t))$ is a homogenous subspace of $S((t))$ and 
$$V((t))^{[m]}=V^{[m]}((t)).$$
Let $\tau $ be the morphism of algebras,  
$$\tau \colon  S \longrightarrow S[t], \quad x_{i} \mapsto tx_{i} \quad\text{for}\quad 
i=1,\ldots,r .$$
The morphism $\tau$ is a morphism of homogenous algebras. 
Denote by $\poi {\delta }1{,\ldots,}{\rg}{}{}{}$ the standard degrees of 
$\poi {\ie q}1{,\ldots,}{\rg}{}{}{}$ respectively, and set for $i=1,\ldots,\rg$ 
$$Q_{i} := t^{-\delta _{i}} \tau (\kappa (q_{i})).$$
Let $A$ be the subalgebra of $S[t]$ generated by 
$\poi Q1{,\ldots,}{\rg}{}{}{}$. Then observe that $A(0)$ is the subalgebra of $S$ 
generated by $\poi {\ie q}1{,\ldots,}{\rg}{}{}{}$. 
For ${\bf j}=(\poi j1{,\ldots,}{\rg}{}{}{})$ in ${\Bbb N}^{\rg}$, set 
$$ q^{{\bf j}} := \poie q1{\cdots }{\rg}{}{}{}{j_{1}}{j_{\rg}}, \qquad
\kappa (q)^{{\bf j}} := \poie q1{\cdots }{\rg}{\kappa }{}{}{j_{1}}{j_{\rg}}, 
\qquad \ie q^{{\bf j}} := \poie {\ie q}1{\cdots }{\rg}{}{}{}{j_{1}}{j_{\rg}}, 
\qquad Q^{{\bf j}} := \poie Q1{\cdots }{\rg}{}{}{}{j_{1}}{j_{\rg}}.$$

\begin{prop}\label{psg1}
{\rm (i)} For ${\bf j}$ in ${\Bbb N}^{\rg}$, $\kappa (q)^{{\bf j}}$ and $\ie q^{{\bf j}}$
are homogenous of degree $2d_{1}j_{1}+\cdots +2d_{\rg}j_{\rg}$.

{\rm (ii)} The map $Q\mapsto Q(0)$ is an isomorphism of homogenous algebras from $A$ 
onto $A(0)$.
\end{prop}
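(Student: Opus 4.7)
For (i), my plan is a direct weight computation on $\kappa(q_{i})$. Fix a weight basis $(x_{\alpha})$ of $\g^{e}$ with $[h,x_{\alpha}]=n_{\alpha}x_{\alpha}$, and let $(y_{\alpha})$ be the basis of $\g^{f}$ dual to $(x_{\alpha})$ under the Killing form, so $[h,y_{\alpha}]=-n_{\alpha}y_{\alpha}$. Through the identification $\g^{f}\simeq(\g^{e})^{*}$, the coordinate function $y\mapsto\langle x_{\alpha},y\rangle$ on $\g^{f}$ is identified with $x_{\alpha}\in\g^{e}\subset S$, of Slodowy degree $n_{\alpha}+2$. Polarizing $\kappa(q_{i})(y)=q_{i}(e+y)$ and expanding $y=\sum c_{\alpha}y_{\alpha}$ in the weight basis yields a sum of terms $c_{\alpha_{1}}\cdots c_{\alpha_{k}}\,q_{i}(e,\ldots,e,y_{\alpha_{1}},\ldots,y_{\alpha_{k}})$ (with $e$ appearing $d_{i}-k$ times). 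By $h$-invariance of $q_{i}$, such a polarized coefficient vanishes unless the total $\ad h$-weight of the arguments is zero, i.e.\ $2(d_{i}-k)=n_{\alpha_{1}}+\cdots+n_{\alpha_{k}}$. When this holds, the Slodowy degree of the corresponding monomial $x_{\alpha_{1}}\cdots x_{\alpha_{k}}$ equals $(n_{\alpha_{1}}+2)+\cdots+(n_{\alpha_{k}}+2)=2d_{i}$. Hence every monomial contribution to $\kappa(q_{i})$ shares the Slodowy degree $2d_{i}$, so $\kappa(q_{i})$ is Slodowy-homogeneous of degree $2d_{i}$, and so is $\ie q_{i}$ as the standard-homogeneous component of $\kappa(q_{i})$ of minimal standard degree $\delta_{i}$. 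The statements for $\kappa(q)^{{\bf j}}$ and $\ie q^{{\bf j}}$ then follow by multiplicativity.

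For (ii), the morphism $\tau$ preserves the Slodowy grading on $S[t]$ (with $t$ placed in Slodowy degree $0$), because $\tau(x_{i})=tx_{i}$ has the same Slodowy degree $n_{i}+2$ as $x_{i}$. Combined with (i), $\tau(\kappa(q_{i}))$ is Slodowy-homogeneous of degree $2d_{i}$ in $S[t]$. The standard-degree decomposition $\kappa(q_{i})=\sum_{k\geq\delta_{i}}\kappa(q_{i})_{[k]}$ (with $\kappa(q_{i})_{[k]}$ the standard-degree-$k$ part) gives $\tau(\kappa(q_{i}))=\sum_{k\geq\delta_{i}}t^{k}\kappa(q_{i})_{[k]}$, so that $Q_{i}=t^{-\delta_{i}}\tau(\kappa(q_{i}))$ lies in $S[t]$, is Slodowy-homogeneous of degree $2d_{i}$, and satisfies $Q_{i}(0)=\kappa(q_{i})_{[\delta_{i}]}=\ie q_{i}$. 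The specialization $Q\mapsto Q(0)$ at $t=0$ is therefore a graded $\k$-algebra homomorphism from $A$ onto $A(0)=\k[\ie q_{1},\ldots,\ie q_{\rg}]$.

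Injectivity of this surjection is obtained by comparing Slodowy-graded Hilbert series. Since $A$ is generated by the $\rg$ elements $Q_{i}$ of Slodowy degree $2d_{i}$, its $m$-th graded piece $A^{[m]}$ is spanned by the monomials $Q^{{\bf j}}$ with $2d_{1}j_{1}+\cdots+2d_{\rg}j_{\rg}=m$, and therefore has dimension at most $N_{m}:=\#\{{\bf j}\in\N^{\rg}\mid 2d_{1}j_{1}+\cdots+2d_{\rg}j_{\rg}=m\}$. By the standing hypothesis that $\ie q_{1},\ldots,\ie q_{\rg}$ are algebraically independent, $A(0)$ is a polynomial algebra in these generators, so $\dim A(0)^{[m]}=N_{m}$. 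The surjection $A^{[m]}\twoheadrightarrow A(0)^{[m]}$ then forces equality of dimensions in each Slodowy degree, and $Q\mapsto Q(0)$ is a graded isomorphism. The main difficulty is really part (i), namely the Slodowy-homogeneity of $\kappa(q_{i})$; (ii) then falls out of the above dimension comparison.
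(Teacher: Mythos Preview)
Your proof is correct and follows essentially the same line as the paper's. For (i), the paper packages the same $h$-weight computation via the Kazhdan one-parameter action $y\mapsto t^{2}h(t^{-1})(y)$ on the Slodowy slice rather than via explicit polarization, but the content is identical; for (ii), the paper argues directly that the spanning set $(Q^{{\bf j}})$ maps to the linearly independent set $(\ie q^{{\bf j}})$, which is exactly your Hilbert-series comparison in different clothing.
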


\begin{proof} 
(i) follows from \cite[Section 5]{Pr} or \cite[Section 2]{PPY}. 

(ii) The set $(Q^{{\bf j}},{\bf j}\in {\Bbb N}^{\rg})$ is a basis of the $\k$-space $A$ 
and the image of $Q^{{\bf j}}$ by the map $Q\mapsto Q(0)$ is equal to $\ie q^{{\bf j}}$. 
Moreover, by (i), $Q^{{\bf j}}$ and $\ie q^{{\bf j}}$ are homogenous of degree 
$2d_{1}j_{1}+\cdots +2d_{\rg}j_{\rg} $ so that $Q\mapsto Q(0)$ is a morphism of graded 
algebras. By definition, its image is $A(0)$. Since $\poi {\ie q}1{,\ldots,}{\rg}{}{}{}$ 
are algebraically independent, it is injective.
\end{proof}

By Proposition~\ref{psg1},(ii), $A$ and $A(0)$ are isomorphic homogenous subalgberas of 
$S[t]$ and $S$ respectively. In particular, $A$ is a polynomial algebra since   
$A(0)$ is polynomial by our hypothesis. 

Denote by $A_+$ and $A(0)_{+}$ the ideals of $A$ and $A(0)$ generated by the homogenous
elements of positive degree respectively, and denote by $\tilde{A}$ the subalgebra of 
$S[[t]]$ generated by $\k[[t]]$ and $A$, i.e.,  
$$\tilde{A}:=\k[[t]] A.$$

\begin{lemma}\label{lsg1}
{\rm (i)} The algebra $\tilde{A}$ is isomorphic to $\tk {\k}{\k[[t]]}A$. In particular,
it is regular. 

{\rm (ii)} The element $t$ of $\tilde{A}$ is prime.

{\rm (iii)} Each prime element of $A$ is a prime element of $\tilde{A}$.
\end{lemma}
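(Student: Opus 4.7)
For part (i), Proposition \ref{psg1}(ii) shows that $A$ is a polynomial algebra in $\poi Q1{,\ldots,}{\rg}{}{}{}$, so $\{Q^{\bf j}\}_{{\bf j}\in\N^{\rg}}$ is a $\k$-basis of $A$. To get $\tilde A \cong \k[[t]]\otimes_{\k} A$ I only need to check that these monomials remain $\k[[t]]$-linearly independent in $S[[t]]$. Suppose $\sum_{\bf j} a_{\bf j}(t)\, Q^{\bf j} = 0$ with $a_{\bf j}(t)\in\k[[t]]$ not all zero. Since $S$ is a domain, so is $S[[t]]$ and $t$ is not a zero divisor; dividing by the largest common power of $t$ one may assume that some $a_{\bf j}(0)\neq 0$. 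Setting $t=0$ and using $Q_i(0)=\ie{q}_i$ gives $\sum_{\bf j} a_{\bf j}(0)\,\ie{q}^{\bf j}=0$, contradicting the algebraic independence of $\poi{\ie q}1{,\ldots,}{\rg}{}{}{}$. Hence $\tilde A \cong \k[[t]]\otimes_{\k} A \cong \k[[t]][Q_{1},\dots,Q_{\rg}]$, which is regular as a polynomial ring over the regular (one-dimensional) ring $\k[[t]]$.

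Part (ii) is then immediate: the isomorphism of (i) identifies $\tilde A/t\tilde A$ with the polynomial algebra $A$, so $\tilde A/t\tilde A$ is an integral domain and $t\tilde A$ is prime.

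For part (iii), let $p$ be a prime element of $A$ and set $B:=A/pA$, a $\k$-algebra which is an integral domain. By (i), $\tilde A/p\tilde A \cong \k[[t]]\otimes_{\k} B$, and it suffices to show this is a domain. I plan to exhibit an injective $\k$-algebra homomorphism
\[
\varphi \colon \k[[t]]\otimes_{\k} B \longrightarrow B[[t]], \qquad f\otimes b \longmapsto fb,
\]
which is multiplicative since $\k[[t]]$ is central in the commutative ring $B[[t]]$. Choosing a $\k$-basis $(e_{\alpha })$ of $B$, any element of the source is uniquely of the form $\sum_{\alpha } f_{\alpha }\otimes e_{\alpha }$ with only finitely many $f_{\alpha }\in\k[[t]]$ nonzero; its image has $n$-th coefficient $\sum_{\alpha } f_{\alpha ,n} e_{\alpha }\in B$, which vanishes only if every $f_{\alpha ,n}=0$, proving injectivity. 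Since $B$ is a domain, so is $B[[t]]$ and hence its subring $\k[[t]]\otimes_{\k} B$; thus $p\tilde A$ is prime.

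The main (mild) obstacle is part (iii): although $\tilde A\cong\k[[t]][Q_{1},\dots,Q_{\rg}]$ is a UFD, a direct irreducibility argument is blocked by the fact that $\k^{*}+t\tilde A\not\subset\tilde A^{\times}$ (for instance $1-tQ_{1}$ is not a unit of $\tilde A$), so a content-style reduction from irreducibility of $p$ in $A$ to irreducibility of $p$ in $\tilde A$ does not close. The detour through the power-series embedding above bypasses this cleanly.
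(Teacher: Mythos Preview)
Your proof is correct and follows essentially the same approach as the paper: in (i) you reduce to $t=0$ via Proposition~\ref{psg1}(ii), in (ii) you identify $\tilde A/t\tilde A\cong A$, and in (iii) you use $\tilde A/p\tilde A\cong \k[[t]]\otimes_\k (A/pA)$. The only difference is that the paper simply asserts the conclusion of (iii) from this isomorphism, whereas you spell out the embedding $\k[[t]]\otimes_\k B\hookrightarrow B[[t]]$ to justify that the tensor product is a domain—a detail the paper leaves implicit.
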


\begin{proof}
(i) Let $a_{m},m\in {\Bbb N},$ be in $A$ such that
$$ \sum_{m\in {\Bbb N}} t^{m}a_{m} = 0 .$$
If $a_{m}\neq 0$ for some $m$, then $a_{p}(0)=0$ if $p$ is the smallest one such that
$a_{p}\neq 0$. By Proposition~\ref{psg1},(ii), it is not possible. Hence, the 
canonical map
$$ \tk {\k}{\k[[t]]}A \longrightarrow \tilde{A}$$
is an isomorphism. As observed just above, $A$ is a polynomial 
algebra. Then $\tilde{A}$ is a regular algebra by~\cite[Ch.\ 7, Theorem 19.5]{Mat}. 

(ii) By (i), $A$ is the quotient of $\tilde{A}$ by $t\tilde{A}$ so that $t$ is a prime 
element of $\tilde{A}$.

(iii) By (i), for $a$ in $A$, the quotient $\tilde{A}/\tilde{A}a$ is isomorphic to 
$\tk {\k}{\k[[t]]}A/Aa$. Hence $a$ is a prime element of $\tilde{A}$ if it is a prime 
element of $A$.
\end{proof}

As it has been explained in Subsection \ref{ge2}, in order to prove Theorem \ref{tge2}, 
we aim to prove that $S$ is a free extension that $A(0)$. As a first step, we describe in
Subsections \ref{sg2}, \ref{sg3} and \ref{sg4} some properties of the algebra $A$. We 
show in Subsection \ref{sg3} that $S((t))$ is a free extension of $A$ 
(cf.\ Proposition \ref{psg3},(iii)), and we show in Subsection \ref{sg4} 
that $S[[t]]$ is a free extension of $A$ (cf.\ Corollary \ref{c2sg4}). 
In Subsection~\ref{sg5}, we consider the algebra $\tilde{A}$ and prove that $S[[t]]$ 
is a free extension of $\tilde{A}$ (cf.\ Theorem \ref{tsg5},(i)). The expected result 
will follow from this (cf.\ Theorem \ref{tsg5},(iii)). 

\subsection{} \label{sg2} 
Let $\theta _{e}$ be the map
$$ G\times (e+{\goth g}^{f}) \longrightarrow {\goth g}, \qquad  (g,x) \mapsto g(x),$$
and let ${\cal J}_{e}$ be the ideal of $\es S{{\goth g}^{e}}$ generated by the elements 
$\kappa(q_1),\ldots,\kappa(q_\rg)$. The following lemma is known by 
\cite[Theorem 5.4]{Pr} and the proof of \cite[Theorem 2.1]{PPY}.

\begin{lemma}\label{lsg2}
{\rm (i)} The map $\theta_e$ is a smooth morphism onto a dense open subset of ${\goth g}$,
containing $G.e$.

{\rm (ii)} The nullvariety of ${\cal J}_{e}$ in ${\goth g}^{f}$ is equidimensional of 
dimension $r-\rg$.

{\rm (iii)} The ideal ${\cal J}_{e}$ of $\es S{{\goth g}^{e}}$ is radical.
\end{lemma}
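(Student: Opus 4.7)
My plan is to handle the three parts in order, using in each case that $\theta_e$ is a smooth morphism near $G\times\{e\}$ and then propagating this via $G$-equivariance and standard properties of Slodowy slices.

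For (i), I would compute the differential of $\theta_e$ at a point $(g,e+v)$. Up to left translation by $g$ (which is an automorphism of $\g$), the differential sends $(X,w)\in\g\oplus\g^f$ to $[X,e+v]+w$, so surjectivity is equivalent to $[\g,e+v]+\g^f=\g$. At $v=0$ this follows from the $\mathfrak{sl}_2$-decomposition $\g=[e,\g]\oplus\g^f$. To upgrade this to all $v\in\g^f$, I would use the Killing-form orthogonality $[\g,e+v]^{\perp}=\g^{e+v}$ and $(\g^f)^{\perp}=[\g,e]$, so the failure of surjectivity is measured by $\g^{e+v}\cap[\g,e]$, a well-known transversality property of the Slodowy slice which vanishes on all of $e+\g^f$. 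Smoothness of $\theta_e$ implies that its image is open in $\g$; it contains $e=\theta_e(1,e)$, and is $G$-stable by construction, hence contains $G.e$.

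For (ii), note that the nullvariety $N\subset\g^f$ of $\mathcal{J}_e$ is, under the translation $v\mapsto e+v$, exactly $(e+\g^f)\cap\mathcal{N}$, where $\mathcal{N}\subset\g$ is the nilpotent cone (the zero locus of $q_1,\dots,q_\ell$ by Kostant's theorem). Since $\mathcal{N}$ is $G$-stable,
$$\theta_e^{-1}(\mathcal{N})\;\simeq\;G\times N.$$
Kostant's theorem further says $\mathcal{N}$ is irreducible of dimension $\dim\g-\ell$. A smooth morphism of relative dimension $r$ (here $r=\dim\g^f=\dim\g^e$, from the dimension count $\dim G+r-\dim\g=r$) pulls back an equidimensional closed subscheme of dimension $d$ to an equidimensional one of dimension $d+r$. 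Hence $G\times N$ is equidimensional of dimension $(\dim\g-\ell)+r$, whence $N$ is equidimensional of dimension $r-\ell$.

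For (iii), I would combine (ii) with the fact that $\mathcal{N}$ is reduced (in fact its defining ideal $(q_1,\ldots,q_\ell)\subset\es S{\g}$ is prime). Smooth morphisms preserve reducedness on preimages, so $G\times N$ is reduced as a subscheme of $G\times(e+\g^f)$; hence $N$ is reduced in $\g^f$. To finish, observe that the ideal defining $\theta_e^{-1}(\mathcal{N})\cap(\{1\}\times(e+\g^f))=N$ inside $\k[e+\g^f]\simeq\es S{\g^e}$ is precisely $\mathcal{J}_e$. So the scheme $\mathrm{Spec}(\es S{\g^e}/\mathcal{J}_e)$ is reduced, i.e.\ $\mathcal{J}_e$ is a radical ideal. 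The main technical step is really (i): once the smoothness of $\theta_e$ on all of $G\times(e+\g^f)$ is established, parts (ii) and (iii) follow by standard transfer of dimension and of reducedness along smooth morphisms, together with Kostant's description of $\mathcal{N}$. Note also that (ii) together with $\es S{\g^e}$ being Cohen--Macaulay shows that $(\kappa(q_1),\dots,\kappa(q_\ell))$ is a regular sequence, which is consistent with (iii).
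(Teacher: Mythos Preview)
Your proposal is correct, and for parts (i) and (ii) it is essentially the same argument as in the paper. One small point on (i): the transversality $\g^{e+v}\cap[\g,e]=0$ for all $v\in\g^f$ is indeed standard, but its usual proof goes through the contracting $\mathbb{G}_m$-action $t\mapsto t^{-2}h(t)$ on $e+\g^f$, which is exactly what the paper invokes directly (the submersion locus is open, $G$-stable, contains $(1,e)$, and is stable under the contracting action, hence is everything). So you are citing as known what the paper proves in one line; this is fine, just be aware of the underlying mechanism.

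Part (iii) is where your argument genuinely diverges from the paper's. The paper argues intrinsically on the slice: by (ii) the scheme $\mathrm{Spec}(\es S{\g^e}/\mathcal{J}_e)$ is a complete intersection, hence Cohen--Macaulay; then each irreducible component contains a point $x$ with $e+x$ regular nilpotent (because $G\cdot(e+X)$ is open in $\mathcal{N}$), and at such a point Kostant's criterion on $\dd q_i$ forces the $\dd\kappa(q_i)(e+x)$ to be independent, giving a smooth point on each component; Cohen--Macaulay plus generic reducedness then yields reducedness. Your route instead transports reducedness from $\mathcal{N}$ itself: by Kostant the ideal $(q_1,\dots,q_\ell)$ is prime, so $\mathcal{N}$ is a reduced scheme; smooth base change along $\theta_e$ preserves reducedness, and since $\theta_e^{-1}(\mathcal{N})=G\times\mathrm{Spec}(\es S{\g^e}/\mathcal{J}_e)$ scheme-theoretically (using $G$-invariance of the $q_i$), the factor $\mathrm{Spec}(\es S{\g^e}/\mathcal{J}_e)$ is reduced. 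Your argument is shorter and more conceptual, at the cost of using the full strength of Kostant's theorem (radicality of $(q_1,\dots,q_\ell)$), whereas the paper only needs irreducibility of $\mathcal{N}$ together with the differential criterion at regular elements.
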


Denote by $\cal{V}$ the nullvariety of $A_+$ in $\g^{f}\times \k$, and by $\cal{V}_0$ 
the nullvariety of $A(0)_+$ in $\g^{f}$. 
Then denote by ${\cal V}_{*}$ the union of the irreducible components of ${\cal V}$ 
which are not contained in ${\goth g}^{f}\times \{0\}$. 
Note that ${\cal V}_0\times\{0\}$ is the nullvariety of $t$ in ${\cal V}$, and that 
$${\cal V} = {\cal V}_{*} \cup {\cal V}_0 \times\{0\}.$$

\begin{coro}\label{csg2}
{\rm (i)} The variety ${\cal V}_{*}$ is equidimensional of dimension $r+1-\rg$. Moreover,
for $X$ an irreducible component of ${\cal V}_{*}$ and for $z$ in $\k$, the 
nullvariety of $t-z$ in $X$ has dimension $r-\rg$.

{\rm (ii)} The algebra $S[t,t^{-1}]$ is a free extension of $A$.

{\rm (iii)} The ideal $S[t,t^{-1}]A_{+}$ of $S[t,t^{-1}]$ is radical.
\end{coro}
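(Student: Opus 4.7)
The plan is to first prove (i) by a direct dimension analysis of $\cal V$, then deduce (ii) from (i) via Proposition~\ref{pgf1}(iii), and finally obtain (iii) by transporting the ideal generated by the $Q_i$ to one generated by the $\kappa(q_i)$ through a suitable automorphism of $S[t,t^{-1}]$.

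For part (i), I would observe that $\cal V$ is cut out in ${\goth g}^{f}\times\k$ by the $\rg$ equations $\poi Q1{,\ldots,}{\rg}{}{}{}$, so Krull's Hauptidealsatz ensures every irreducible component has dimension at least $r+1-\rg$. On the open subset ${\goth g}^{f}\times\k^{*}$, the identity $Q_{i}(x,z)=z^{-\delta _{i}}\kappa (q_{i})(zx)$ (valid for $z\in\k^{*}$) shows that the fiber of ${\cal V}\to\k$ over $z$ equals $z^{-1}{\cal V}_{e}$, which has dimension $r-\rg$ by Lemma~\ref{lsg2}(ii); hence ${\cal V}\cap({\goth g}^{f}\times\k^{*})$ is equidimensional of dimension $r+1-\rg$. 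Any irreducible component $X$ of ${\cal V}_{*}$ meets this open set in a dense open subset and therefore has dimension exactly $r+1-\rg$. For the fiber statement, $X$ cannot be contained in any ${\goth g}^{f}\times\{z\}$: for $z=0$ this is ruled out by the definition of ${\cal V}_{*}$, and for $z\in\k^{*}$ by the fact that such a fiber of $\cal V$ has dimension $r-\rg<\dim X$. Hence $t-z$ restricts to a nonzero function on $X$, so Krull's Hauptidealsatz yields that the nullvariety of $t-z$ in $X$ has dimension at most $r-\rg$, while the dominant morphism $X\to\k$ gives the reverse inequality.

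For part (ii), I would apply Proposition~\ref{pgf1}(iii) to $S':=S[t,t^{-1}]$ together with its subalgebra $A$, equipped with the Slodowy $\Bbb N$-grading in which $t$ and $t^{-1}$ are placed in degree $0$. The algebra $S'$ is a finitely generated regular graded $\k$-algebra (being a localization of $\k[t,\poi x1{,\ldots,}{r}{}{}{}]$), while $A$ is a graded subalgebra with $A_{0}=\k$ since each generator $Q_{i}$ has positive Slodowy degree $2d_{i}$ by Proposition~\ref{psg1}(i); and $A$ is polynomial, hence regular, by Proposition~\ref{psg1}(ii) together with our standing hypothesis on $\poi {\ie q}1{,\ldots,}{\rg}{}{}{}$. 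The nullvariety of $A_{+}$ in ${\rm Specm}(S')={\goth g}^{f}\times\k^{*}$ is precisely ${\cal V}_{*}\cap({\goth g}^{f}\times\k^{*})$, which by (i) is equidimensional of dimension $r+1-\rg=\dim S'-\dim A$. The proposition then yields the freeness of $S'$ over $A$. The main point to verify is that $S'_{0}=\k[t,t^{-1}]\neq\k$ does not invalidate the argument; inspection of the proofs of Lemma~\ref{lgf1}(ii) and Proposition~\ref{pgf1} shows that only $A_{0}=\k$ is used, to ensure the cone-point-in-closure property for the $\gm$-action on $X_{A}$.

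For part (iii), I would consider the $\k$-algebra automorphism $\sigma$ of $S[t,t^{-1}]$ defined by $\sigma(x_{j})=t^{-1}x_{j}$ and $\sigma(t)=t$; a direct computation gives $\sigma(Q_{i})=t^{-\delta _{i}}\kappa (q_{i})$. Since the $t^{-\delta _{i}}$ are units, the image $\sigma(S[t,t^{-1}]A_{+})$ coincides with the ideal $S[t,t^{-1}]\cdot{\cal J}_{e}$. This latter ideal is radical because the quotient $S[t,t^{-1}]/S[t,t^{-1}]{\cal J}_{e}$ is isomorphic to $(S/{\cal J}_{e})[t,t^{-1}]$, which is reduced since ${\cal J}_{e}$ is radical by Lemma~\ref{lsg2}(iii). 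As $\sigma$ is an automorphism, $S[t,t^{-1}]A_{+}$ is likewise radical.
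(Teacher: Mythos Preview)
Your proof is correct and follows the same strategy as the paper for all three parts: the dimension analysis in (i), the appeal to Proposition~\ref{pgf1} in (ii), and the automorphism reduction in (iii). Your $\sigma$ is precisely the inverse of the paper's $\overline{\tau}$, so the reduction of $S[t,t^{-1}]A_{+}$ to $S[t,t^{-1}]{\cal J}_{e}$ is identical.

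The one notable difference is in the final step of (iii). The paper proves that $S[t,t^{-1}]{\cal J}_{e}$ is radical by a direct coefficient argument: taking $a$ in the radical but not in the ideal with minimal support in its $t$-expansion, one shows the lowest coefficient already lies in ${\cal J}_{e}$, contradicting minimality. Your route via the isomorphism $S[t,t^{-1}]/S[t,t^{-1}]{\cal J}_{e}\cong (S/{\cal J}_{e})[t,t^{-1}]$ is more direct and transparent, since a Laurent polynomial ring over a reduced ring is reduced. Both arguments rest on Lemma~\ref{lsg2}(iii); yours simply packages the passage to $S[t,t^{-1}]$ more cleanly.

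Your remark in (ii) about $S'_{0}=\k[t,t^{-1}]\neq\k$ is well observed but in fact unnecessary: the hypotheses in \S\ref{gf1} only require $A=\k+A_{+}$, not $S_{0}=\k$, so Proposition~\ref{pgf1} applies to $S[t,t^{-1}]$ without modification.
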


\begin{proof}
(i) Let ${\cal V}'_{*}$ be the intersection of ${\cal V}_{*}$ and 
${\goth g}^{f}\times \k^{*}$ and let $X$ be an irreducible component of ${\cal V}'_{*}$. 
Then ${\cal V}'_{*}$ is the nullvariety of $\poi Q1{,\ldots,}{\rg}{}{}{}$ in 
${\goth g}^{f}\times \k^{*}$ since $A_{+}$ is the ideal of $A$ generated by 
$\poi Q1{,\ldots,}{\rg}{}{}{}$. In particular, $X$ has dimension at least $r+1-\rg$.
For $z$ in $\k^{*}$, denote by $X_{z}$ the subvariety of ${\goth g}^{f}$ such that 
$X_{z}\times \{z\}=X \cap {\goth g}^{f}\times \{z\}$. By definition, for $i=1,\ldots,\rg$,
$Q_{i}=t^{-\delta _{i}}\tau \rond \kappa (q_{i})$. Hence ${\cal V}'_{*}$ is the 
nullvariety of $\poi q1{,\ldots,}{\rg}{\tau \rond \kappa }{}{}$ in 
${\goth g}^{f}\times \k^{*}$ and $X_{z}$ is the image of $X_{1}$ by the homothety
$v\mapsto z^{-1}v$. By Lemma~\ref{lsg2},(ii), $X_{1}$ has dimension $r-\rg$. Hence 
$X_{z}$ has dimension $r-\rg$ and $X$ has dimension at most $r+1-\rg$. As a result, 
$X$ has dimension $r+1-\rg$ and $X_{z}$ is strictly contained in $X$, whence the 
assertion since $X$ is not contained in ${\goth g}^{f}\times \{0\}$ by definition.

(ii) The algebra $S[t,t^{-1}]$ is graded and $A$ is a homogenous polynomial subalgebra of 
$S[t,t^{-1}]$. According to (i), the fiber at $A_{+}$ of the extension $S[t,t^{-1}]$
of $A$ is equidimensional of dimension $r+1-\rg$. Hence, by Proposition~\ref{pgf1}, 
$S[t,t^{-1}]$ is a free extension of $A$.

(iii) Let ${\cal I}_{e}$ be the ideal of $S[t,t^{-1}]$ generated by 
$\poi q1{,\ldots,}{\rg}{\tau \rond \kappa }{}{}$. Since 
$t^{\delta _{i}}Q_{i}=\tau \rond \kappa (q_{i})$ for $i=1,\ldots,\rg$, 
we get ${\cal I}_{e}=S[t,t^{-1}]A_{+} $. Denote by $\overline{\tau }$
the endomorphism of the algebra $S[t,t^{-1}]$ defined by
$$ \overline{\tau }(t)=t, \quad \overline{\tau }(x_{1}) = tx_{1},\ldots,
\overline{\tau }(x_{r}) = tx_{r}.$$
Then $\overline{\tau }$ is an automorphism and ${\cal I}_{e}=
\overline{\tau }(S[t,t^{-1}] {\cal J}_{e})$.
So, it suffices to prove that the ideal $S[t,t^{-1}] {\cal J}_{e}$ is radical. 

Let ${\cal J}'_{e}$ be the radical of $S[t,t^{-1}]{\cal J}_{e}$. 
For $a$ in $S[t,t^{-1}]$, $a$ has a unique expansion
$$ a = \sum_{m\in {\Bbb Z}} t^{m} a_{m}$$
with $(a_{m},m\in {\Bbb Z})$ a sequence of finite support in $S$. Denote by $\nu (a)$ the
cardinality of this finite support. Moreover, $a$ is in $S[t,t^{-1}]{\cal J}_{e}$ if and 
only if $a_{m}$ is in ${\cal J}_{e}$ for all $m$. Suppose that $S[t,t^{-1}]{\cal J}_{e}$ 
is strictly contained in ${\cal J}'_{e}$. A contradiction is expected. Let $a$ be in 
${\cal J}'_{e} \setminus S[t,t^{-1}] {\cal J}_{e}$ such that 
$\nu (a)$ is minimal. Denote by $m_{0}$ the smallest integer such that $a_{m_{0}}\neq 0$.
For some positive integer, $a^{k}$ and $(t^{-m_{0}}a)^{k}$ are in 
$S[t,t^{-1}] {\cal J}_{e}$ and we have
$$ (t^{-m_{0}}a)^{k} = a_{m_{0}}^{k} + \sum_{m>0} t^{m}b_{m} $$ 
with the $b_{m}$'s in ${\cal J}_{e}$. Then $a_{m_{0}}^{k}$ is in ${\cal J}_{e}$ and by 
Lemma~\ref{lsg2},(iii), $a_{m_{0}}$ is in ${\cal J}_{e}$. As a result 
$a':=a-t^{m_{0}}a_{m_{0}}$ is an element of ${\cal J}'_{e}$ such that 
$\nu (a') < \nu (a)$. By the minimality of $\nu (a)$, $a'$ is in 
$S[t,t^{-1}] {\cal J}_{e}$ and so is $a$, whence the contradiction. 
\end{proof}

Let ${\cal I}_{*}$ be the ideal of definition of ${\cal V}_{*}$ in $S[t]$. 
Then ${\cal I}_{*}$ is an ideal of $S[t]$ containing the radical of $S[t]A_+$.
It will be shown that ${\cal V}_*={\cal V}$ and that $S[t]A_+$ is radical 
(cf.\ Theorem \ref{tsg5}). Thus, ${\cal I}_*$ will be at the end equal to $S[t]A_+$. 

Let $\poi {{\goth p}}1{,\ldots,}{m}{}{}{}$ be the minimal prime ideals containing
$S[t]A_{+}$ and let $\poi {{\goth q}}1{,\ldots,}{m}{}{}{}$ be the primary decomposition 
of $S[t]A_{+}$ such that ${\goth p}_{i}$ is the radical of ${\goth q}_{i}$ for 
$i=1,\ldots,m$.

\begin{lemma}\label{l2sg2}
{\rm (i)} For $a$ in $S[t]$, $a$ is in ${\cal I}_{*}$ if and only if $t^{m}a$ is in 
$S[t]A_{+}$ for some positive integer $m$. Moreover, for some nonnegative integer $l$,
$t^{l}{\cal I}_{*}$ is contained in $S[t]A_{+}$. 

{\rm (ii)} The ideal ${\cal I}_{*}$ is the intersection of the prime ideals 
${\goth p}_{i}$ which do not contain $t$. Furthermore, for such $i$, 
${\goth p}_{i}={\goth q}_{i}$, i.e. ${\goth q}_i$ is radical. 
\end{lemma}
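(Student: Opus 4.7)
The plan is to derive both statements from Corollary \ref{csg2},(iii), which asserts that the ideal $S[t,t^{-1}]A_{+}$ of $S[t,t^{-1}]$ is radical, combined with the standard correspondence between prime ideals of $S[t]$ not containing $t$ and prime ideals of $S[t,t^{-1}]$. The starting observation is that, by the Nullstellensatz applied to the polynomial algebra $S[t]$, the irreducible component $V({\goth p}_{i})$ of ${\cal V}$ is contained in ${\goth g}^{f}\times \{0\}$ if and only if $t\in {\goth p}_{i}$; hence
$${\cal I}_{*} = \bigcap _{i : t\notin {\goth p}_{i}} {\goth p}_{i}.$$

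For part (i), I would pass to the localization at $t$. By the definition of localization, the contraction $S[t,t^{-1}]A_{+} \cap S[t]$ consists precisely of those $a \in S[t]$ with $t^{m}a \in S[t]A_{+}$ for some $m\geq 0$. On the other hand, Corollary \ref{csg2},(iii) ensures that $S[t,t^{-1}]A_{+}$ is radical, hence equals the intersection of its minimal primes $\bigcap _{i : t \notin {\goth p}_{i}} {\goth p}_{i}S[t,t^{-1}]$. Contracting back and using that ${\goth p}_{i}S[t,t^{-1}] \cap S[t] = {\goth p}_{i}$ whenever $t\notin {\goth p}_{i}$, this contraction is equal to ${\cal I}_{*}$, which proves the first part of (i). The moreover statement then follows from noetherianity of $S[t]$: one picks finitely many generators $a_{1},\ldots,a_{k}$ of ${\cal I}_{*}$, chooses exponents $m_{j}$ with $t^{m_{j}}a_{j}\in S[t]A_{+}$, and takes $l=\max _{j} m_{j}$.

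For part (ii), I would exploit the compatibility of primary decomposition with localization. The decomposition $S[t]A_{+} = \bigcap _{i} {\goth q}_{i}$ localizes to a primary decomposition $S[t,t^{-1}]A_{+} = \bigcap _{i : t\notin {\goth p}_{i}} {\goth q}_{i}S[t,t^{-1}]$, in which each ${\goth q}_{i}S[t,t^{-1}]$ is ${\goth p}_{i}S[t,t^{-1}]$-primary. Since $S[t,t^{-1}]A_{+}$ is radical and the ${\goth p}_{i}S[t,t^{-1}]$ are its minimal (hence isolated) associated primes, the uniqueness of isolated primary components forces ${\goth q}_{i}S[t,t^{-1}] = {\goth p}_{i}S[t,t^{-1}]$ for each such $i$. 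Contracting to $S[t]$ and using once more that $t\notin {\goth p}_{i}$ gives ${\goth q}_{i}={\goth p}_{i}$, because $t$ is a nonzerodivisor modulo ${\goth q}_{i}$ whenever $t\notin \sqrt{{\goth q}_{i}} = {\goth p}_{i}$. The main delicate point throughout is the careful handling of these contractions of ${\goth p}_{i}$-primary ideals, which is precisely where the hypothesis $t\notin {\goth p}_{i}$ is crucially used.
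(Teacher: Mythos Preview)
Your proof is correct and rests on the same key input as the paper's, namely Corollary~\ref{csg2},(iii), but you package the argument differently. The paper argues part~(i) geometrically: if $a\in{\cal I}_{*}$ then $ta$ vanishes on ${\cal V}_{*}\cup({\goth g}^{f}\times\{0\})\supset{\cal V}$, hence $ta$ lies in the radical of $S[t]A_{+}$, and then Corollary~\ref{csg2},(iii) is invoked inside $S[t,t^{-1}]$ to conclude $t^{m+1}a\in S[t]A_{+}$. For part~(ii), the paper does \emph{not} localize; instead it proves directly that $S[t]A_{+}={\cal I}_{*}\cap{\cal I}'_{*}$ with ${\cal I}'_{*}:=(t^{l})+S[t]A_{+}$, and reads off the primary components of $S[t]A_{+}$ not containing $t$ from the (prime) decomposition of the radical ideal ${\cal I}_{*}$.

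Your approach, by contrast, runs the whole argument through the extension--contraction formalism for the localization $S[t]\to S[t,t^{-1}]$: the identity ${\cal I}_{*}=S[t,t^{-1}]A_{+}\cap S[t]$ handles~(i), and the standard compatibility of primary decomposition with localization (together with uniqueness of isolated components and the fact that $t$ is a nonzerodivisor modulo ${\goth q}_{i}$ when $t\notin{\goth p}_{i}$) handles~(ii). This is more streamlined and avoids the auxiliary ideal ${\cal I}'_{*}$; the paper's route is more hands-on and produces the explicit decomposition $S[t]A_{+}={\cal I}_{*}\cap{\cal I}'_{*}$ as a by-product.
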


\begin{proof}
(i) Let $a$ be in $S[t]$. If $t^{l}a$ is in $S[t]A_{+}$ for some positive integer $l$, 
then $a$ is equal to $0$ on ${\cal V}_{*}$ so that $a$ is in ${\cal I}_{*}$. Conversely, 
if $a$ is in ${\cal I}_{*}$, then $ta$ is in the radical of $S[t]A_{+}$ since ${\cal V}$ 
is contained in the union of ${\cal V}_{*}$ and ${\goth g}^{f}\times \{0\}$. According to 
Corollary~\ref{csg2},(iii), for some positive integer $m$, $t^{m}(ta)$ is in 
$S[t]A_{+}$. Since ${\cal I}_{*}$ is finitely generated as an ideal of $S[t]$, we deduce 
that for some nonnegative integer $l$, $t^{l}{\cal I}_{*}$ is contained in $S[t]A_{+}$, 
whence the assertion.

(ii) Let $i \in\{ 1,\ldots,m\}$. Then ${\goth p}_i$ does not contain $t$ if and only if 
the nullvariety of ${\goth p}_{i}$ in ${\goth g}^{f}\times \k$ is an irreducible 
component of ${\cal V}_{*}$, whence the first part of the statement. 

By (i), for some nonnegative integer $l$, $t^{l}{\cal I}_{*}$ is contained in 
$S[t]A_{*}$. Let $l$ be the minimal nonnegative integer satisfying this condition. If 
$l=0$, ${\cal I}_{*}=S[t]A_{+}$, whence the assertion. Suppose $l$ positive. Denote by 
${\cal I}'_{*}$ the ideal of $S[t]$ generated by $t^{l}$ and $S[t]A_{+}$. It suffices to 
prove that $S[t]A_{+}$ is the intersection of ${\cal I}_{*}$ and 
${\cal I}'_{*}$. As a matter of fact, if so, the primary 
decomposition of $S[t]A_{+}$ is the union of the primary decompositions of ${\cal I}_{*}$ 
and ${\cal I}'_{*}$ since the minimal prime ideals containing ${\cal I}_{*}$ 
do not contain $t$.

Let $a$ be in the intersection of ${\cal I}_{*}$ and ${\cal I}'_{*}$. Then 
$$ a = t^{l}b + \sum_{i=1}^{\rg} a_{i}Q_{i}$$
with $b,\poi a1{,\ldots,}{l}{}{}{}$ in $S[t]$. Since $S[t]A_{+}$ is contained in 
${\cal I}_{*}$, $t^{l}b$ is in ${\cal I}_{*}$ and $b$ is in ${\cal I}_{*}$ by (i). Hence 
$t^{l}b$ and $a$ are in $S[t]A_{+}$. As a result, $S[t]A_{+}$ is the intersection of 
${\cal I}_{*}$ and ${\cal I}'_{*}$ since $S[t]A_{+}$ is contained in this intersection.
\end{proof}

\subsection{} \label{sg3}
Let $V_0$ be a homogenous complement to $SA(0)_{+}$ in $S$. We will show that the linear 
map 
$$ \tk {\k}{V_0}A(0) \longrightarrow S, \qquad v\tens a \longmapsto va$$
is a linear isomorphism (cf.\ Theorem \ref{tsg5}).

\begin{lemma}\label{lsg3} 
We have 
$S[[t]] = V_{0}[[t]]+S[[t]]A_{+}$ and $S((t)) = V_{0}((t))+S((t))A_{+}$.
\end{lemma}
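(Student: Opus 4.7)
The plan is to establish first the equality $S[[t]]=V_0[[t]]+S[[t]]A_+$ by iteratively lifting the decomposition $S=V_0\oplus SA(0)_+$ to formal power series, and then to deduce the second equality by scaling by a power of $t^{-1}$.

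The key input is that $Q_i(0)=\ie{q}_i$ by construction, and that $A(0)_+$ is generated as an ideal of $A(0)$ by $\ie{q}_1,\ldots,\ie{q}_\rg$, so that $SA(0)_+=\sum_i S\,\ie{q}_i$. Given $a\in S[[t]]$, write $a=\sum_{n\geq 0}t^n a_n$ with $a_n\in S$. Since $S=V_0\oplus\sum_i S\,\ie{q}_i$, decompose $a_0=v_0+\sum_i c_{i,0}\,\ie{q}_i$ with $v_0\in V_0$ and $c_{i,0}\in S$. Then $\sum_i c_{i,0}Q_i$ lies in $S[t]A_+$ and reduces at $t=0$ to $a_0-v_0$, so that $a-v_0-\sum_i c_{i,0}Q_i\in tS[[t]]$. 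Dividing by $t$ and iterating this procedure, one produces sequences $(v_n)$ in $V_0$ and $(c_{i,n})$ in $S$ such that, for every $N\geq 0$,
\[
a-\sum_{n=0}^{N-1}t^n v_n-\sum_i\Bigl(\sum_{n=0}^{N-1}t^n c_{i,n}\Bigr)Q_i\in t^N S[[t]].
\]

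The main technical obstacle is to ensure that the resulting formal series $v:=\sum_n t^n v_n$ and $c_i:=\sum_n t^n c_{i,n}$ belong, respectively, to $V_0[[t]]=\tk{\k}{\k[[t]]}V_0$ and $S[[t]]=\tk{\k}{\k[[t]]}S$, since for an infinite-dimensional vector space these tensor products are strictly smaller than the space of all formal power series with values in it. This is overcome by working degree by degree in the Slodowy grading. Each $S^{[d]}$ is finite-dimensional (since $x_1,\ldots,x_r$ all have Slodowy degree at least $2$), and any element of $S[[t]]$, being a finite $\k[[t]]$-linear combination of polynomials, has bounded Slodowy degree; hence $S[[t]]=\bigoplus_d S^{[d]}[[t]]$ as a genuine direct sum, and it suffices to treat $a\in S^{[d]}[[t]]$ for fixed $d$. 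By Proposition~\ref{psg1}(i), $Q_i$ is Slodowy-homogeneous of degree $2d_i$, and both $V_0$ and $\ie{q}_i$ are Slodowy-homogeneous; hence the decomposition produces $v_n\in V_0^{[d]}$ and $c_{i,n}\in S^{[d-2d_i]}$, both finite-dimensional, so that $\sum_n t^n v_n\in\tk{\k}{\k[[t]]}V_0^{[d]}\subset V_0[[t]]$ and $\sum_n t^n c_{i,n}\in\tk{\k}{\k[[t]]}S^{[d-2d_i]}\subset S[[t]]$ as required.

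Finally, the second equality follows at once: any $a\in S((t))$ equals $t^{-N}a'$ for some $N\geq 0$ and $a'\in S[[t]]$, and writing $a'=v'+\sum_i c'_i Q_i$ with $v'\in V_0[[t]]$ and $c'_i\in S[[t]]$ from the first equality yields $a=t^{-N}v'+\sum_i(t^{-N}c'_i)Q_i\in V_0((t))+S((t))A_+$.
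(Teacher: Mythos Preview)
Your proof is correct and follows essentially the same approach as the paper: reduce to a fixed Slodowy degree $d$ (using that $S^{[d]}$ is finite-dimensional and $S[[t]]=\bigoplus_d S^{[d]}[[t]]$), then iteratively lift the decomposition $S^{[d]}=V_0^{[d]}\oplus(SA(0)_+)^{[d]}$ through powers of $t$. The only organizational difference is that the paper first picks a $\k[[t]]$-basis $(\varphi_1,\ldots,\varphi_m)$ of the finite free module $(S[[t]]A_+)^{[d]}$ and iterates with scalar coefficients, whereas you work directly with the generators $Q_i$ and carry coefficients $c_{i,n}\in S^{[d-2d_i]}$; both lead to the same convergence in the $t$-adically separated module $S^{[d]}[[t]]$. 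One small point you leave implicit: after constructing $v=\sum_n t^n v_n$ and $c_i=\sum_n t^n c_{i,n}$, you should note that $a-v-\sum_i c_iQ_i$ lies in $\bigcap_N t^N S^{[d]}[[t]]=0$, which follows immediately since the tails $v-\sum_{n<N}t^nv_n$ and $(c_i-\sum_{n<N}t^nc_{i,n})Q_i$ are also in $t^N S^{[d]}[[t]]$.
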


\begin{proof} 
The equality $S((t)) = V_{0}((t))+S((t))A_{+}$ will follow from 
the equality $S[[t]] = V_{0}[[t]]+S[[t]]A_{+}$. 
Since $S[[t]]$, $V_{0}[[t]]$ and $S[[t]]A_{+}$ are 
homogenous, it suffices to show that for $d$ a positive integer, 
$$S[[t]]^{[d]} \subset V_{0}[[t]]^{[d]} + (S[[t]]A_{+})^{[d]},$$
the inclusion $V_{0}[[t]]+S[[t]]A_{+} \subset S[[t]]$ being obvious. 

Let $d$ be a positive integer and let $a$ be in $S[[t]]^{[d]}$. Let 
$(\poi {\varphi }1{,\ldots,}{m}{}{}{})$ be a basis of the $\k[[t]]$-module 
$(S[[t]]A_{+})^{[d]}$. Such a basis does exist since $\k[[t]]$ is a principal ring and 
$S[[t]]^{[d]}$ is a finite free $\k[[t]]$-module. Then 
$\poi 0{}{,\ldots,}{}{\varphi }{1}{m}$ generate $(SA(0)_{+})^{[d]}$. 
Since $S^{[d]}=V_{0}^{[d]}\oplus (SA(0)_{+})^{[d]}$, 
$$a - a_{0} - \sum_{j=1}^{m} a_{0,j} \varphi _{j} = t \psi_0,$$
with $a_{0}$ in $V_{0}^{[d]}$, $\poie a{{0,1}}{,\ldots,}{{0,m}}{}{}{}{}{}$ 
in $\k$ and $\psi_0 \in S[[t]]^{[d]}$. Suppose that there are sequences 
$(\poi a0{,\ldots,}{n}{}{}{})$ and 
$(\poie a{{i,1}}{,\ldots,}{{i,m}}{}{}{}{}{})$, for $i=0,\ldots,n$, in $V_{0}^{[d]}$ and 
$\k$ respectively such that 
$$a-\sum_{i=0}^{n} a_{i} t^{i}- \sum_{i=0}^{n} \sum_{j=1}^{m} t^{i}a_{i,j}\varphi _{j}
= t^{n+1} \psi_n $$ 
for some $\psi_n $ in $S[[t]]^{[d]}$. Then for some $a_{n+1}$ in $V_{0}^{[d]}$ and 
$\poie a{{n+1,1}}{,\ldots,}{{n+1,m}}{}{}{}{}{}$ in $\k$,
$$ \psi_n - a_{n+1} - \sum_{j=1}^{m} a_{n+1,j} \varphi _{j} \in tS[[t]]$$
so that
$$a-\sum_{i=0}^{n+1} a_{i}t^{i} - \sum_{i=0}^{n+1} 
\sum_{j=1}^{m}a_{i,j}\varphi _{j} t^{i} \in t^{n+2} S[[t]] .$$ 
As a result, 
$$ a \in V_{0}[[t]]^{[d]} + (S[[t]]A_{+})^{[d]}$$
since $S[[t]]^{[d]}$ is a finite $\k[[t]]$-module.
\end{proof}

Recall that $\poi {{\goth p}}1{,\ldots,}{m}{}{}{}$ are the minimal prime ideals of 
$S[t]$ containing $S[t]A_{+}$. Since $A_{+}$ is a homogenous subspace of $S[t]$, 
$S[t]A_{+}$ is a homogenous ideal of $S[t]$, and so are 
$\poi {{\goth p}}1{,\ldots,}{m}{}{}{}$. According
to Lemma~\ref{l2sg2},(ii), ${\cal I}_{*}$ is the intersection of the ${\goth p}_{i}$'s 
which do not contain $t$. Hence, ${\cal I}_{*}$ is homogenous. Thereby, 
${\cal I}_{*}\cap V_{0}[t]$ has a homogenous complement in $V_{0}[t]$. Set 
$$W:={\cal I}_{*}\cap V_{0}[t].$$
Then $W(0)$ is a homogenous subspace of $V_{0}$. Denote by $V'_{0}$ a homogenous 
complement to $W(0)$ in $V_{0}$. Then set 
$$V''_0:=W(0)$$ 
so that $V_0 = V'_{0} \oplus V''_{0}.$

\begin{lemma}\label{l2sg3}
Let $(v_{i},i\in J)$ be a homogenous basis of $V'_{0}$.

{\rm (i)} The elements $v_{i},i\in J,$ are linearly independent over $\k[t]$.

{\rm (ii)} The sum of $W$ and of $V'_{0}[t]$ is direct. 
\end{lemma}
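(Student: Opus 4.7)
My plan for (i) is to reduce it immediately to a tensor-product observation. Since $V_0'\subset S$ and $(v_i)_{i\in J}$ is a $\k$-basis of $V_0'$, the identification $S[t]\simeq \tk{\k}{\k[t]}S$ makes $S[t]$ a free $\k[t]$-module in which the $v_i$ are part of a basis; concretely, collecting coefficients of powers of $t$ in a hypothetical relation $\sum_i p_i(t)v_i = 0$ produces finitely many $\k$-linear relations among the $v_i$, all of which must be trivial. No invocation of earlier lemmas is required.

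For (ii), I will show the equivalent statement $W \cap V_0'[t] = \{0\}$ by descent on the $t$-adic valuation. Take $w$ in this intersection and expand $w = \sum_{n\geq 0} v_n t^n$ with $v_n \in V_0'$ (a finite sum). The first step is to evaluate at $t=0$: $w(0)=v_0$ lies in $W(0)=V_0''$ by the very definition of $V_0''$, and simultaneously in $V_0'$, whence $w(0)\in V_0'\cap V_0''=\{0\}$ and $w = tw_1$ for some $w_1\in V_0'[t]$. The crux of the argument is then to push $w_1$ back into $W$, i.e.~to show $w_1 \in {\cal I}_*$. This is where I appeal to Lemma~\ref{l2sg2},(ii): since ${\cal I}_*$ is the intersection of those minimal primes ${\goth p}_i$ of $S[t]A_+$ that do \emph{not} contain $t$, primality together with $tw_1 = w \in {\cal I}_* \subset {\goth p}_i$ and $t\notin{\goth p}_i$ forces $w_1\in{\goth p}_i$ for every such $i$, hence $w_1\in{\cal I}_*$ and thus $w_1 \in W\cap V_0'[t]$. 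Iterating the dichotomy ``$w(0)=0$, then divide by $t$'' places $w$ in $\bigcap_{n\geq 0}t^nV_0'[t]=\{0\}$, since $w$ has bounded degree in $t$. The only substantive ingredient is the primary-decomposition result Lemma~\ref{l2sg2},(ii), which supplies exactly the fact that $t$ is a non-zero-divisor modulo ${\cal I}_*$; everything else is routine bookkeeping, so I do not foresee any real obstacle.
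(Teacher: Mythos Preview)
Your proof is correct and follows essentially the same route as the paper's. The paper merges (i) and (ii) into a single contradiction argument (assuming $\sum_i c_i v_i = w$ with some $c_i\neq 0$ and $w\in W$, possibly $w=0$) and invokes Lemma~\ref{l2sg2},(i) rather than~(ii) to pass from $t^m w' \in {\cal I}_*$ to $w'\in{\cal I}_*$; but both parts of Lemma~\ref{l2sg2} encode the same fact that $t$ is a non-zero-divisor modulo ${\cal I}_*$, and your iterative ``evaluate at $0$, divide by $t$'' is exactly the paper's descent rephrased.
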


\begin{proof}
We prove (i) and (ii) all together. 

Let $(c_{i},i\in J)$ be a sequence in $\k[t]$, with finite support $J_{c}$, such that
$$ \sum_{i\in J} c_{i}v_{i} = w$$
for some $w$ in $W$. Suppose that $J_{c}$ is not empty. A contradiction is expected. 
Since $V'_{0}$ is a complement to $V''_{0}$, $c_{i}(0)=0$ for all $i$ in $J$. Then, for 
$i$ in $J_{c}$, $c_{i}=t^{m_{i}}c'_{i}$  with $m_{i}>0$ and $c'_{i}(0)\neq 0$. Denote by 
$m$ the smallest of the integers $m_{i}$, for $i\in J_{c}$. Then $w=t^{m}w'$ for some 
$w'$ in $V_{0}[t]$, and 
$$ \sum_{i\in J_{c}} t^{m_{i}-m} c'_{i}v_{i} = w' .$$
According to Lemma~\ref{l2sg2},(i), $w'$ is in ${\cal I}_{*}$. So, $c'_{i}(0)=0$ 
for $i$ such that $m_{i}=m$, whence the contradiction.
\end{proof}

As a rule, for $M$ a $\k[t]$-submodule of $S[t]$, we denote by $\widehat{M}$ 
the $\k[[t]]$-module generated by $M$, i.e., 
$$\hat{M} = \k[[t]] M.$$

\begin{lemma}\label{l3sg3}
Let $M$ be a $\k[t]$-submodule of $S[t]$. 

{\rm (i)} Let $a$ be in the intersection of $S[t]$ and $\widehat{M}$. For
some $q$ in $\k[t]$ such that $q(0)\neq 0$, $qa$ is in $M$.

{\rm (ii)} For $N$ a $\k[t]$-submodule of $S[t]$, the intersection of 
$\widehat{M}$ and $\widehat{N}$ is the $\k[[t]]$-module generated by $M\cap N$.
\end{lemma}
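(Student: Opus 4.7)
The plan is to deduce both statements from two facts about the inclusion $\k[t]\hookrightarrow \k[[t]]$: the ring $\k[[t]]$ is a flat $\k[t]$-module (being torsion-free over the principal ideal domain $\k[t]$), and it is faithfully flat over the localization $\k[t]_{(t)}$ (being the completion of a Noetherian local ring at its maximal ideal). Combined with the identification $S[[t]]=\k[[t]]\otimes_{\k[t]}S[t]$, flatness implies that for any $\k[t]$-submodule $M\subset S[t]$ the natural map $\k[[t]]\otimes_{\k[t]}M\to S[[t]]$ is injective with image $\widehat M=\k[[t]]\,M$, so that $\widehat M\cong \k[[t]]\otimes_{\k[t]}M$ canonically.

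For (i), I would apply the functor $\k[[t]]\otimes_{\k[t]}-$ to the short exact sequence $0\to M\to S[t]\to N\to 0$, where $N:=S[t]/M$. Flatness produces the exact sequence $0\to \widehat M\to S[[t]]\to \k[[t]]\otimes_{\k[t]}N\to 0$, so that the condition $a\in S[t]\cap \widehat M$ is equivalent to $1\otimes \bar a=0$ in $\k[[t]]\otimes_{\k[t]}N$, where $\bar a$ denotes the image of $a$ in $N$. Rewriting this extension as $\k[[t]]\otimes_{\k[t]}N=\k[[t]]\otimes_{\k[t]_{(t)}}N_{(t)}$ and invoking the faithful flatness of $\k[[t]]$ over $\k[t]_{(t)}$, the canonical map $N_{(t)}\to \k[[t]]\otimes_{\k[t]_{(t)}}N_{(t)}$ is injective; hence $\bar a/1=0$ already in $N_{(t)}$. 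By the very definition of localization, some $q\in \k[t]$ with $q(0)\neq 0$ satisfies $q\bar a=0$ in $N$, i.e.\ $qa\in M$, which is the desired conclusion.

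For (ii), the inclusion $\widehat{M\cap N}\subseteq \widehat M\cap \widehat N$ is immediate. For the reverse inclusion I would apply the same flat functor $\k[[t]]\otimes_{\k[t]}-$ to the short exact sequence
\[
0\longrightarrow M\cap N\longrightarrow M\oplus N\longrightarrow M+N\longrightarrow 0,
\]
in which $M\cap N$ sits diagonally in $M\oplus N$ and the second arrow is $(m,n)\mapsto m-n$. Flatness preserves exactness, and the three resulting objects identify inside $S[[t]]$ with $\widehat{M\cap N}$, $\widehat M\oplus \widehat N$ and $\widehat{M+N}=\widehat M+\widehat N$ respectively. Since the kernel of the difference arrow $\widehat M\oplus \widehat N\to S[[t]]$ is evidently the diagonal copy of $\widehat M\cap \widehat N$, comparison with the exactness of the tensored sequence gives $\widehat M\cap \widehat N=\widehat{M\cap N}$.

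The only real obstacle is the descent step in (i): since $M$ need not be finitely generated over $\k[t]$, one cannot simply clear denominators by hand from an expression $a=\sum f_i m_i$ with $f_i\in \k[[t]]$ and $m_i\in M$, and the argument really requires the faithful flatness of $\k[[t]]$ over $\k[t]_{(t)}$ to push the vanishing back from $S[[t]]$ to a localization of $S[t]$. Once this ingredient is granted, both parts become formal consequences of flat base change applied to the short exact sequences above.
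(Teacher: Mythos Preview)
Your proof is correct and follows essentially the same approach as the paper: both parts rest on the flatness of $\k[[t]]$ over $\k[t]$ applied to short exact sequences, and part (ii) in particular is identical to the paper's argument via the sequence $0\to M\cap N\to M\oplus N\to M+N\to 0$. For part (i) the paper instead introduces the annihilator $J=\mathrm{Ann}_{\k[t]}(\bar a)\subset\k[t]$ and shows by a diagram chase that $\k[[t]]J=\k[[t]]$, hence $J\not\subset(t)$; this is exactly equivalent to your faithful-flatness step showing $\bar a/1=0$ in $N_{(t)}$, just packaged differently.
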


\begin{proof}
(i) Denote by $\overline{a}$ the image of $a$ in 
$S[t]/M$ by the quotient map and by $J$ its annihilator in $\k[t]$. Then we have 
a commutative diagram with exact lines and columns:
$$\xymatrix{
0 \ar[r] & M \ar[r]^{\dd} & S[t] \ar[rr]^{\dd} && S[t]/M \ar[r] & 0 \\ 
0 \ar[r] & J \ar[r]^{\dd}  & \k[t] \ar[rr]^{\dd} \ar[u]^{\delta } && 
\k[t]\overline{a} \ar[r] \ar[u]^{\delta } & 0 \\ 
& & 0 \ar[u] && 0 \ar[u] & }$$
Since $\k[[t]]$ is a flat extension of $\k[t]$, tensoring this diagram by $\k[[t]]$ gives
the following diagram with exact lines and columns:
$$\xymatrix{
0 \ar[r] & \widehat{M} \ar[r]^{\dd} & S[[t]] \ar[rr]^{\dd} && 
\tk {\k[t]}{\k[[t]]}S[t]/M \ar[r] & 0 \\ 
0 \ar[r] & \k[[t]]J \ar[r]^{\dd} & \k[[t]] \ar[rr]^{\dd} \ar[u]^{\delta }
&& \k[[t]]\overline{a} \ar[r] \ar[u]^{\delta } & 0 \\ 
& & 0 \ar[u] && 0 \ar[u] & }$$
For $b$ in $\k[[t]]$, $(\delta \rond \dd) b = (\dd \rond \delta) b = 0$ since $a$ is in 
$\widehat{M}$, whence $\dd b = 0$. As a result, $\k[[t]]J=\k[[t]]$. So $qa$ is in 
$M$ for some $q$ in $\k[t]$ such that $q(0)\neq 0$. 

(ii) Since $\k[[t]]$ is a flat extension of $\k[t]$, the canonical morphism 
$$ \tk {\k[t]}{\k[[t]]}M \longrightarrow \widehat{M}.$$
is an isomorphism and from the short exact sequence
$$ 0 \longrightarrow M\cap N \longrightarrow M\oplus N \longrightarrow M+N
\longrightarrow 0$$
we deduce the short exact sequence
$$ 0 \longrightarrow \tk {\k[t]}{\k[[t]]} M\cap N \longrightarrow 
\tk {\k[t]}{\k[[t]]} (M\oplus N) \longrightarrow \tk {\k[t]}{\k[[t]]} (M+N)
\longrightarrow 0 ,$$
whence the short exact sequence
$$ 0 \longrightarrow \widehat{M\cap N} \longrightarrow 
\widehat{M}\oplus \widehat{N} \longrightarrow \widehat{M+N}
\longrightarrow 0 ,$$
and whence the assertion.
\end{proof}

\begin{prop}\label{psg3}
{\rm (i)} The space $V_{0}[[t]]$ is the direct sum of $V'_{0}[[t]]$ and $\widehat{W}$.

{\rm (ii)} The space $S[[t]]$ is the direct sum of $V'_{0}[[t]]$ and of $W+S[[t]]A_{+}$.

{\rm (iii)} The linear map
$$ \tk {\k}{V'_{0}((t))}A \longrightarrow S((t)), \qquad v\tens a \longmapsto v\tens a$$
is a homogenous isomorphism onto $S((t))$.

{\rm (iv)} For all nonnegative integer $d$,
$$ \dim S^{[d]} = \sum_{i=0}^{d} \dim {V'_{0}}^{[d-i]} \mul \dim A^{[i]} .$$
\end{prop}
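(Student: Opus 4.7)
The plan is to establish the four parts in sequence, deferring the directness assertion in (ii) and deriving it from (iii). For (i), the direct-sum $V'_0[[t]] \cap \widehat{W} = 0$ follows from Lemma~\ref{l3sg3}(ii) applied with $M = V'_0[t]$ and $N = W$, combined with Lemma~\ref{l2sg3}(ii), which gives $V'_0[t] \cap W = 0$: thus $V'_0[[t]] \cap \widehat{W} = \widehat{V'_0[t]} \cap \widehat{W} = \widehat{V'_0[t] \cap W} = 0$, since $\widehat{V'_0[t]} = V'_0[[t]]$. For the sum equality $V_0[[t]] = V'_0[[t]] + \widehat{W}$, I use iterated $t$-adic lifting: given $v \in V_0[[t]]$, decompose $v(0) \in V_0 = V'_0 \oplus V''_0$ as $v'_0 + w_0(0)$ with $v'_0 \in V'_0$ and $w_0 \in W$ (using $V''_0 = W(0)$); then $v - v'_0 - w_0 \in tV_0[[t]]$, and iteration converges $t$-adically to the desired decomposition.

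For the sum equality in (ii), I apply a graded Nakayama argument to $M := S[[t]]/(V'_0[[t]] + W + S[[t]]A_+)$: in each Slodowy degree $d$, $M^{[d]}$ is finitely generated over the local ring $\k[[t]]$, and $M^{[d]}/tM^{[d]} = S^{[d]}/(V'_0 + W(0) + SA(0)_+)^{[d]} = 0$ since $V'_0 \oplus W(0) = V_0$ and $V_0 \oplus SA(0)_+ = S$; hence $M^{[d]} = 0$, which gives $S[[t]] = V'_0[[t]] + W + S[[t]]A_+$.

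For (iii), tensor the sum equality of (ii) with $\k((t))$ over $\k[[t]]$. By Lemma~\ref{l2sg2}(i), $W \subset \mathcal{I}_*$ satisfies $t^{\ell} W \subset S[t]A_+$ for some $\ell$, so $W \subset S((t))A_+$ in $S((t))$, and the tensored decomposition reads $S((t)) = V'_0((t)) + S((t))A_+$. Surjectivity of $\phi$ follows by iteration, terminating in each Slodowy graded piece because $A_+$ is generated by strictly-positive-degree elements. For bijectivity, I first establish that $S((t))$ is flat over $A$ via miracle flatness: $A$ is regular, $S((t)) = \k((t))[x_1, \ldots, x_r]$ is Cohen--Macaulay, and $(Q_1, \ldots, Q_{\rg})$ is a regular sequence in $S((t))$, inherited via the flat base change $\k[t,t^{-1}] \to \k((t))$ from the regular sequence in $S[t,t^{-1}]$ obtained by combining the Cohen--Macaulayness of $S[t,t^{-1}]$ with the equidimensionality of $\mathcal{V}_*$ in Corollary~\ref{csg2}(i). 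Then Lemma~\ref{l2gf1} applied to any graded complement of $S((t))A_+$ in $S((t))$ yields an $A$-module isomorphism; a dimension comparison in each Slodowy graded piece, using part~(i) tensored to compute $\dim_\k V'_0^{[d]}$, shows that $V'_0((t))$ is itself such a complement, establishing the bijection $\phi$.

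Formula (iv) is then obtained by equating $\k((t))$-dimensions in each Slodowy graded piece of the isomorphism in (iii); the deferred directness $V'_0[[t]] \cap (W + S[[t]]A_+) = 0$ follows via the chain $V'_0[[t]] \cap (W + S[[t]]A_+) \subset V'_0((t)) \cap S((t))A_+ = 0$, using (iii) to guarantee that the right-hand intersection vanishes. The main technical obstacle is completing the flatness-plus-dimension argument for bijectivity in (iii); this rests critically on the regular-sequence and equidimensionality results of Corollary~\ref{csg2}, which is precisely the structural input prepared in Subsection~\ref{sg2}.
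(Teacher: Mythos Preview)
Your argument for (i) and for the sum equality in (ii) is correct and essentially matches the paper's approach (the paper derives the sum in (ii) from Lemma~\ref{lsg3} plus part (i), which amounts to your Nakayama argument). Your flatness argument for $S((t))$ over $A$ is valid, though the paper deduces it more directly from Corollary~\ref{csg2}(ii) via the flat base change $\k[t] \to \k[[t]]$, avoiding the regular-sequence detour.

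The genuine gap is in your ``dimension comparison'' for the injectivity in (iii). You claim that tensoring part~(i) lets you compute $\dim_\k {V'_0}^{[d]}$ and match it against $\dim_{\k((t))}(S((t))/S((t))A_+)^{[d]}$, but (i) only gives $\dim_\k V_0^{[d]} = \dim_\k {V'_0}^{[d]} + \mathrm{rk}_{\k[[t]]}\widehat{W}^{[d]}$, and you have no independent handle on $\mathrm{rk}_{\k[[t]]}\widehat{W}^{[d]}$. Knowing only that $\widehat{W}\otimes_{\k[[t]]}\k((t)) \subset S((t))A_+$ does not tell you that it equals $V_0((t)) \cap S((t))A_+$, so you cannot conclude that $V'_0((t))$ has the correct graded $\k((t))$-dimension to be a \emph{direct} complement rather than merely a spanning subspace. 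Your deferred-directness strategy therefore becomes circular at this point.

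The paper avoids this by proving the directness in (ii) \emph{first}, using exactly the lemmas you already invoke for (i): since $S[t]A_+ \subset \mathcal{I}_*$ and $V'_0[t] \subset V_0[t]$, one has $V'_0[t] \cap S[t]A_+ \subset V'_0[t] \cap (V_0[t]\cap\mathcal{I}_*) = V'_0[t] \cap W = 0$ by Lemma~\ref{l2sg3}(ii); then Lemma~\ref{l3sg3}(ii) gives $V'_0[[t]] \cap S[[t]]A_+ = 0$; finally $V'_0[[t]] \cap (W + S[[t]]A_+) = 0$ follows because $t^{l}W \subset S[[t]]A_+$ (Lemma~\ref{l2sg2}(i)) and $V'_0[[t]]$ is $t$-torsion-free. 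With directness in (ii) in hand, (iii) is immediate from flatness plus Lemma~\ref{l2gf1}, and (iv) from (iii). Your detour is unnecessary and is precisely where the gap appears.
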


\begin{proof}
(i) According to Lemma~\ref{l3sg3},(ii), the intersection of $V'_{0}[[t]]$ and 
$\widehat{W}$ is the $\k[[t]]$-submodule generated by the intersection of $V'_{0}[t]$ and
$W$. So, by Lemma~\ref{l2sg3},(iii), the sum of $V'_{0}[[t]]$ and $\widehat{W}$ is 
direct. 

Let $(v_{i},i \in J)$ be a homogenous basis of $V'_{0}$. Let $d$ be a positive integer 
and let $v$ be in $V_{0}^{[d]}$. Denote by $J_{d}$ the set of indices $i$ such that 
$v_{i}$ has degree $d$. Since $V_{0}$ is the direct sum of $V'_{0}$ and $V''_{0}$, for 
some $w$ in $W^{[d]}$ and for some $c_{i},i\in J_{d},$ in $\k$,
$$ v - \sum_{i\in J} c_{i}v_{i} = w(0).$$
Since $w-w(0)$ is in $tV_{0}[t]^{[d]}$, 
$$ v - \sum_{i\in J_{d}} c_{i} v_{i} - w \in tV_{0}[t]^{[d]}.$$
As a result,
$$ V_{0}^{[d]}[[t]] \subset {V'_{0}}^{[d]}[[t]] + \widehat{W}^{[d]} + 
tV_{0}^{[d]}[[t]] .$$
Then by induction on $m$,
$$ V_{0}^{[d]}[[t]] \subset {V'_{0}}^{[d]}[[t]] + \widehat{W}^{[d]} + 
t^{m}V_{0}^{[d]}[[t]] .$$
So, since $V_{0}^{[d]}[[t]]$ is a finitely generated $\k[[t]]$-module,
$$ V_{0}^{[d]}[[t]] = {V'_{0}}^{[d]}[[t]] + \widehat{W}^{[d]}  ,$$
whence the assertion.

(ii) According to Lemma~\ref{l2sg2},(i), for some nonnegative integer $l$, 
$t^{l}{\cal I}_{*}$ is contained in $S[t]A_{+}$. Hence $\widehat{W}+S[[t]]A_{+}$ is equal
to $W+S[[t]]A_{+}$. So, by (i) and Lemma~\ref{lsg3},
$$ S[[t]] = V'_{0}[[t]] + W + S[[t]]A_{+} .$$
According to Lemma~\ref{l2sg3},(ii), the intersection of $V'_{0}[t]$ and $S[t]A_{+}$
is equal to $\{0\}$ since $S[t]A_{+}$ is contained in ${\cal I}_{*}$. As a result, by 
Lemma~\ref{l3sg3},(ii), the intersection of $V'_{0}[[t]]$ and $S[[t]]A_{+}$ is 
equal to $\{0\}$. If $a$ is in the intersection of $V'_{0}[[t]]$ and $W+S[[t]]A_{+}$,
$t^{l}a$ is in the intersection of $V'_{0}[[t]]$ and $S[[t]]A_{+}$. So the sum of 
$V'_{0}[[t]]$ and $W+S[[t]]A_{+}$ is direct.

(iii) According to Lemma~\ref{l2sg2},(i), $W$ is contained in $S((t))A_{+}$. So, by (ii),
$$ S((t)) = V'_{0}((t)) \oplus S((t))A_{+} .$$
Since $\k[[t]]$ is a flat extension of $\k[t]$, and since 
$$S((t))=\tk {\k[t]}{\k[[t]]}S[t,t^{-1}],$$
we deduce that $S((t))$ is a flat extension of $A$ by Corollary~\ref{csg2},(ii). So, by 
Lemma~\ref{l2gf1}, all basis of $V'_{0}[[t]]$ over $\k$ consists of linearly independent 
elements over $A$. The assertion follows.

(iv) First of all, the canonical map 
$$ \tk {\k}{\k((t))} A \longrightarrow \k((t))A$$
is an isomorphism by Lemma~\ref{lsg1},(i). As a result, we have the canonical isomorphism
$$ \tk {\k((t))}{V'_{0}((t))}{\k((t))A} \longrightarrow  
\tk {\k((t))}{V'_{0}((t))} (\tk {\k}{\k((t))}A), $$
and for all nonnegative integer $i$, 
$$ \dim A^{[i]} = \dim _{\k((t))} (\k((t))A)^{[i]} .$$
From the above isomorphism, it results that the canonical morphism 
$$ \tk {\k((t))}{V'_{0}((t))}{\k((t))A} \longrightarrow \tk {\k}{V'_{0}((t))}A $$
is an isomorphism of graded spaces since 
$\tk {\k((t))}{V'_{0}((t))}\k((t)) = {V'_{0}((t))}$. As a result, by (iii), the canonical 
morphism 
$$ \tk {\k((t))}{V'_{0}((t))}{\k((t))A} \longrightarrow S((t))$$
is a homogenous isomorphism. So, for all nonnegative integer $d$, 
$$ \dim _{\k((t))}S((t))^{[d]} = 
\sum_{i=0}^{d} \dim _{\k((t))} {V'_{0}((t))}^{[d-i]} \mul 
\dim _{\k((t))} (\k((t))A)^{[i]} ,$$
whence the assertion since $\dim S^{[i]} = \dim _{\k((t))}S((t))^{[i]}$ and 
$\dim {V'_{0}}^{[i]} = \dim _{\k((t))}V'_{0}((t))^{[i]}$ for all $i$.
\end{proof}

\subsection{} \label{sg4}
Let $(w_{k}, k\in K)$ be a homogenous sequence in $W$ such that $(w_{k}(0),k\in K)$ is a 
basis of $V''_0=W(0)$. For $k$ in $K$, denote by $m_{k}$ the smallest integer such that 
$t^{m_{k}}w_{k}$ is in $S[t]A_{+}$. According to Lemma~\ref{l2sg2},(i),
$m_{k}$ is finite for all $k$. Moreover, $m_{k}$ is 
positive since $W(0)\cap SA(0)_+=\{0\}$. Set
$$ \Theta := \{(k,i) \; \vert \; k \in K, \; i \in \{0,\ldots,m_{k}-1\}\},$$
and set for all $(k,i)$ in $\Theta $, 
$$w_{k,i} := t^{i}w_{k}.$$
Let $E_{\Theta }$ be the $\k$-subspace of $V_{0}[t]$ generated 
by the elements $w_{k,i}, (k,i) \in \Theta$.

Set 
$$\widehat{{\cal I}_{*}} : =\k[[t]]{\cal I}_*.$$ 
It is an ideal of $S[[t]]$.

\begin{lemma}\label{lsg4} 
{\rm (i)} For some $q$ in $\k[t]$ such that $q(0)\neq 0$, $q{\cal I}_{*}$ is contained in
$W+S[t]A_{+}$.

{\rm (ii)} The space $W$ is contained in $E_{\Theta }+S[t]A_{+}$. Moreover, 
$\widehat{{\cal I}_{*}}$ is the sum of $E_{\Theta }$ and $S[[t]]A_{+}$.

{\rm (iii)} The sequence $(w_{k,i}, (k,i) \in \Theta)$ is a homogenous basis of 
$E_{\Theta }$.

{\rm (iv)} For all nonnegative integer $i$, $E_{\Theta }^{[i]}$ has finite dimension. 

{\rm (v)} For $i$ a nonnegative integer, there exists a nonnegative integer $l_{i}$ such 
that $t^{l_{i}}E_{\Theta }^{[i]}$ is contained in $V'_{0}[[t]]A_{+}$.
\end{lemma}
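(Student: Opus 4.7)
I will prove the five statements in sequence, leveraging three tools: the direct-sum decomposition $S[[t]]=V_0'[[t]]\oplus(W+S[[t]]A_+)$ of Proposition~\ref{psg3}(ii), the primary-decomposition description of ${\cal I}_*$ in Lemma~\ref{l2sg2}, and the faithful-flatness packaging of Lemma~\ref{l3sg3}. For (i), take $a\in{\cal I}_*$; Lemma~\ref{l2sg2}(i) supplies $l$ with $t^la\in S[t]A_+\subset W+S[[t]]A_+$. Projecting on the $V_0'[[t]]$-summand (this projection being $\k[t]$-linear) kills $t^la$, and hence $a$, since $V_0'[[t]]$ is $t$-torsion-free. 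So $a=w+s$ with $w\in W$ and $s\in N:=S[t]\cap S[[t]]A_+$. To extract a \emph{uniform} $q$ I exploit noetherianity: the ideal $N$ of $S[t]$ is finitely generated, say by $g_1,\ldots,g_r$; for each $g_j$, Lemma~\ref{l3sg3} (with $M=S[t]A_+$, so $\hat M=S[[t]]A_+$) furnishes $q_j\in\k[t]$ with $q_j(0)\neq 0$ and $q_jg_j\in S[t]A_+$. Setting $q:=\prod_jq_j$ yields $qN\subset S[t]A_+$ since $S[t]A_+$ is an $S[t]$-ideal, whence $q{\cal I}_*\subset W+S[t]A_+$.

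For (ii), I first claim $W=\sum_k\k[t]w_k$. Given $w\in W$, expand $w(0)$ in the basis $(w_k(0))$ of $V_0''$ and subtract to obtain $w=\sum_ka_{k,0}w_k+tw'$ with $w'\in V_0[t]$; by Lemma~\ref{l2sg2}(ii), $t$ lies in no prime ideal containing ${\cal I}_*$, so $tw'\in{\cal I}_*$ forces $w'\in{\cal I}_*\cap V_0[t]=W$, and iteration terminates by bounded $t$-degree. For $w=\sum_kc_kw_k$, splitting $c_k=c_k^{<}+t^{m_k}d_k$ with $\deg_tc_k^{<}<m_k$ shows $W\subset E_\Theta+S[t]A_+$, as each $t^{m_k}w_k\in S[t]A_+$. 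For $\widehat{{\cal I}_*}=E_\Theta+S[[t]]A_+$, the inclusion $\supset$ follows from $w_{k,i}\in{\cal I}_*$ and $S[[t]]A_+\subset\widehat{{\cal I}_*}$; conversely, (i) together with the invertibility of $q$ in $\k[[t]]$ gives $\widehat{{\cal I}_*}\subset\widehat{W}+S[[t]]A_+$, then $\widehat{W}\subset\widehat{E_\Theta}+S[[t]]A_+$ by the previous step, and finally $\widehat{E_\Theta}\subset E_\Theta+S[[t]]A_+$ by dividing each $\k[[t]]$-coefficient of $t^iw_k$ modulo $t^{m_k-i}$, the remainder landing in $E_\Theta$ and the tail multiplying $t^{m_k}w_k\in S[t]A_+$.

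For (iii), an induction on the $t$-valuation combined with the linear independence of $(w_k(0))$ in $V_0''$ proves that the $w_k$'s are $\k[t]$-linearly independent in $V_0[t]$, so the family $(w_{k,i})_{(k,i)\in\Theta}$ is $\k$-linearly free and graded, as $t$ has Slodowy degree $0$. For (iv), each $w_{k,j}$ has Slodowy degree $\deg w_k$, so $E_\Theta^{[i]}$ is spanned by the pairs $(k,j)$ with $\deg w_k=i$ and $j<m_k$; this set is finite because $V_0''^{[i]}\subset S^{[i]}$ is finite-dimensional (Slodowy-graded pieces of $S$ being finite-dimensional) and each $m_k<\infty$. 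For (v), Proposition~\ref{psg3}(iii) identifies $S((t))A_+$ with $V_0'((t))A_+$ under the graded isomorphism $\tk\k{V_0'((t))}A\to S((t))$; since $E_\Theta\subset\widehat{{\cal I}_*}\subset S((t))A_+$, every $w_{k,j}\in E_\Theta^{[i]}$ admits a finite expansion $\sum_\beta u_\beta a_\beta$ with $u_\beta\in V_0'((t))^{[i-\deg a_\beta]}$ and $a_\beta\in A_+$. Finite-dimensionality of each graded piece $V_0'^{[m]}$ lets me clear all denominators of the $u_\beta$'s by a single power of $t$ for one fixed $w_{k,j}$, and finite-dimensionality of $E_\Theta^{[i]}$ from (iv) provides the uniform $l_i$ by taking the maximum of the resulting exponents over a basis. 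The delicate point is the noetherian argument in (i) that converts the pointwise statement of Lemma~\ref{l3sg3} into a uniform $q$; the remaining items are essentially bookkeeping once Proposition~\ref{psg3} and Lemma~\ref{l2sg2} are in play.
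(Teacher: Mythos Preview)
Your proof of (i) is correct and differs slightly from the paper's: where the paper applies Lemma~\ref{l3sg3}(i) directly with $M=V_0[t]+S[t]A_+$ (whose $\k[[t]]$-span is all of $S[[t]]$ by Lemma~\ref{lsg3}) to get $qa\in V_0[t]+S[t]A_+$ and then observes that the $V_0[t]$-component lies in $W$, you instead use the finer direct sum of Proposition~\ref{psg3}(ii) to place $a\in{\cal I}_*$ into $W+S[[t]]A_+$, and then invoke noetherianity of $S[t]$ together with Lemma~\ref{l3sg3}(i) to uniformly clear the $S[[t]]A_+$-part back to $S[t]A_+$. Both routes are valid; the paper's is marginally shorter since it avoids the detour through the ideal $N=S[t]\cap S[[t]]A_+$. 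Your arguments for (iii), (iv), (v) match the paper's.

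There is, however, a genuine gap in your argument for (ii). Your claim $W=\sum_k\k[t]w_k$, with justification ``iteration terminates by bounded $t$-degree,'' is not correct: subtracting $\sum_k a_{k,0}w_k$ can \emph{raise} the $t$-degree when some $w_k$ has $t$-degree larger than $w$, so the iteration need not terminate. Concretely, if $W^{[i]}$ happened to be the free $\k[t]$-module on two elements $v_1,v_2\in V_0^{[i]}$ and one chose $w_{k_1}=v_1+tv_2$, $w_{k_2}=v_2+tv_1$ (a legitimate choice, since $w_{k_1}(0)=v_1$, $w_{k_2}(0)=v_2$ form a basis of $W^{[i]}(0)$), then starting from $w=v_1$ the iteration cycles: $w'=-v_2$, $w''=v_1$, and so on; moreover $v_1\notin\k[t]w_{k_1}+\k[t]w_{k_2}$, so $W\neq\sum_k\k[t]w_k$ for this choice. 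The fix is easy and is essentially what the paper does: your peeling-off still gives $w\in\sum_k\k[t]w_k+t^{N+1}W$ for every $N$, and since $t^{N+1}W\subset t^{N+1}{\cal I}_*\subset S[t]A_+$ for $N$ large enough by Lemma~\ref{l2sg2}(i), you obtain $w\in E_\Theta+S[t]A_+$ directly, without ever needing the equality $W=\sum_k\k[t]w_k$. With this repair, the rest of your argument for (ii), including the second assertion, goes through.
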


\begin{proof}
(i) Let $a$ be in ${\cal I}_{*}$. According to  Lemma~\ref{lsg3} and 
Lemma~\ref{l3sg3},(i), for some $q$ in 
$\k[t]$ such that $q(0)\neq 0$, $qa \in {\cal I}_{*}$ and $qa=a_{1}+a_{2}$ with $a_{1}$ 
in $V_{0}[t]$ and $a_{2}$ in $S[t]A_{+}$. Then $a_{1}$ is in ${\cal I}_{*}$ since so are 
$a_{2}$ and $qa$. So $a_1\in {\cal I}_{*} \cap V_0[t]=W$. The assertion follows because 
${\cal I}_{*}$ is finitely generated. 

(ii) Let us prove the first assertion. It suffices to prove
$$ W \subset E_{\Theta } + S[t]A_{+} + t^{m}S[t]$$   
for all $m$. Indeed, $W$, $E_{\Theta }$, $S[t]A_{+}$ are contained in ${\cal I}_{*}$. 
So, if $w= e+a+t^{m}b$, with $w\in W$, $e\in E_\Theta$ and $b \in S[t]$, then 
$b$ is in ${\cal I}_*$ and so, for $m$ big enough, it is in $S[t]A_+$ by 
Lemma~\ref{l2sg2},(i). 

Prove now the inclusion by induction on $m$. The inclusion is tautological for $m=0$, 
and it is true $m=1$ because $E_{\Theta }(0)=V''_{0}$.
Suppose that it is true for $m>0$. Let $w$ be in $W$. By induction hypothesis, 
$$w=a+b+t^{m}c,\quad  \text{with} \quad 
a \in E_{\Theta }, \; b \in S[t]A_{+}, \; c \in S[t] .$$
Since $E_{\Theta }$ and $S[t]A_{+}$ are contained in ${\cal I}_{*}$, $c$ is in 
${\cal I}_{*}$ by Lemma~\ref{l2sg2},(i). According to (i), for some $q$ in $\k[t]$ 
such that $q(0)\neq 0$, $qc=a'+b'$ with $a'$ in $W$ and $b'$ in $S[t]A_{+}$. Since the 
inclusion is true for $m=1$,
$$ t^{m}(a'+b') \in t^{m}E_{\Theta } + S[t]A_{+} + t^{m+1}S[[t]], $$
and by definition, $t^{m}E_{\Theta }$ is contained in $E_{\Theta }+S[t]A_{+}$. Moreover, 
$q(0)c$ is in $qc+tS[t]$. Then
$$ t^{m}c \in E_{\Theta }+S[t]A_{+}+t^{m+1}S[t] \quad  \text{and} \quad 
w \in E_{\Theta }+S[t]A_{+}+t^{m+1}S[t] ,$$
whence the statement. 

Turn to the second assertion. By (i), $\widehat{{\cal I}_{*}}$ is the sum of 
$\widehat{W}$ and $S[[t]]A_{+}$. An element of $\widehat{W}$ is the sum of terms 
$t^m w_m$, with $m\in\N$ and $w_n\in W$. For $m$ big enough, $t^m w_m \in S[t]A_+$ by 
Lemma~\ref{l2sg2},(i). So $\widehat{{\cal I}_{*}}$ is the sum of $W$ and 
$S[[t]]A_{+}$, whence the assertion by the previous inclusion.

(iii) By definition, the elements $w_{k,i}, (k,i) \in \Theta,$ are homogenous. So it 
suffices to prove that they are linearly independent over $\k$. Let 
$(c_{k,i}, (k,i)\in \Theta)$ be a sequence in $\k$, with finite support, such that 
$$ \sum_{k\in K} \sum_{i=0}^{m_{k}-1} c_{k,i} w_{k,i} = 0.$$ 
Let us prove that $c_{k,i}=0$ for all $(k,i)$. Suppose $c_{k,i}\neq 0$ for some $(k,i)$. 
A contradiction is expected. Let $K'$ be the set of $k$ such that $c_{k,i}\neq 0$ for
some $i$. Denote by $i_{0}$ the smallest integer such that $c_{k,i_{0}}\neq 0$ for some 
$k$ in $K'$ and set:
$$ K'_{0} := \{k \in K' \; \vert \; c_{k,i_{0}} \neq 0 \}.$$
Then 
$$ \sum_{k\in K'_{0}} c_{k,i_{0}} w_{k}(0) = 0 ,$$
whence the contradiction since the elements $(w_{k}(0),k \in K)$ are linearly independent.

(iv) Let $K_{i}$ be the set of $k$ such that $w_{k}$ is in $S[t]^{[i]}$. For such $k$,
$w_{k}(0)$ is in $S^{[i]}$. Hence $K_{i}$ is finite since $S^{[i]}$ has finite dimension
and since the elements $(w_{k}(0),k\in K)$ are linearly independent. For $k$ in $K$, 
$\k[t]w_{k}\cap E_{\Theta }$ has dimension $m_{k}$ by (iii). Hence $E_{\Theta }^{[i]}$ 
has finite dimension. 

(v) Let $k$ be in $K_{i}$. Set 
$$\Theta ^{[i]} := \Theta \cap (K_{i}\times {\Bbb N}).$$
By Proposition~\ref{psg3},(iii), $t^{l+m_{k}}w_{k}$ is in $V'_{0}[[t]]A_{+}$ since 
$t^{m_{k}}w_{k}$ is in $S[t]A_{+}$ by definition, whence the assertion since 
$E_{\Theta }^{[i]}$ is generated by the finite sequence 
$(w_{k,j}, (k,j) \in \Theta ^{[i]})$.
\end{proof}

\begin{defi} 
We say that a subset $T$ of $\Theta $ is {\it complete} if 
$$ (k,i) \in T \Longrightarrow (k,j) \in T , \; \forall j \in \{0,\ldots,i\} .$$
\end{defi}

For $T$ subset of $\Theta $, denote by $K_{T}$ the image of $T$ by the projection
$(k,i)\mapsto k$, and by $E_{T}$ the subspace of $E_{\Theta }$ generated by 
the elements $w_{k,i}$, $(k,i) \in T$. In particular, $K_\Theta=K$. 

\begin{lemma}\label{l2sg4}
For some complete subset $T$ of $\Theta $ such that $K_{T}=K$, the subspace 
$E_{T}$ is a complement to $S[t]A_{+}$ in $E_{\Theta }+S[t]A_+$. In particular, the sum 
of $E_{T}$ and $S[t]A_{+}$ is direct. 
\end{lemma}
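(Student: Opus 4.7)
The plan is to work degree by degree in the Slodowy grading and produce $T$ as a disjoint union $T = \bigsqcup_d T^{[d]}$. Since $t$ has Slodowy degree zero, each $w_{k,i} = t^i w_k$ has the same Slodowy degree as $w_k$; hence $\Theta$ decomposes as a disjoint union of finite sets $\Theta^{[d]} = \{(k,i) \in \Theta : k \in K_d\}$, where $K_d := \{k \in K : w_k \in S[t]^{[d]}\}$ is finite by Lemma~\ref{lsg4}(iv). It therefore suffices to construct, for each $d$, a complete subset $T^{[d]} \subseteq \Theta^{[d]}$ with $K_{T^{[d]}} = K_d$ such that $\{w_{k,i} : (k,i) \in T^{[d]}\}$ is a basis of the finite-dimensional quotient $(E_{\Theta}^{[d]} + (S[t]A_+)^{[d]})/(S[t]A_+)^{[d]}$; the disjoint union of these local $T^{[d]}$ then satisfies the lemma.

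Fix $d$ and enumerate $K_d = \{k_1, \ldots, k_p\}$ in any order. The plan is a greedy construction: set $U_1 := 0$, and for $j = 1, \ldots, p$ take $n_{k_j}$ to be the largest integer $n$ with $1 \leq n \leq m_{k_j}$ such that $w_{k_j, 0}, \ldots, w_{k_j, n-1}$ are linearly independent modulo $U_j + (S[t]A_+)^{[d]}$; then set $U_{j+1} := U_j + \sum_{i=0}^{n_{k_j} - 1} \k w_{k_j, i}$ and adjoin the indices $(k_j, 0), \ldots, (k_j, n_{k_j} - 1)$ to $T^{[d]}$. Completeness is manifest from this construction, and linear independence modulo $(S[t]A_+)^{[d]}$ of the selected family is built in. To check that $n_{k_j} \geq 1$ is well-defined, so that $K_{T^{[d]}} = K_d$, I would evaluate at $t = 0$: if $w_{k_j}$ lay in $U_j + (S[t]A_+)^{[d]}$, then (noting that only terms with $i = 0$ survive in $U_j$ at $t = 0$) one would have $w_{k_j}(0) \in \sum_{j' < j} \k w_{k_{j'}}(0) + SA(0)_+$, contradicting the linear independence of $(w_k(0))_{k \in K}$ in $V''_0 \subseteq V_0$ together with the equality $V_0 \cap SA(0)_+ = \{0\}$.

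The main difficulty is the spanning property: that $w_{k_j, i} \in U_{j+1} + (S[t]A_+)^{[d]}$ for every $i \in \{0, \ldots, m_{k_j} - 1\}$. I would prove by induction on $j$ that $U_{j+1} + (S[t]A_+)^{[d]}$ is stable under multiplication by $t$. The key ingredients are that $(S[t]A_+)^{[d]}$ is itself $t$-stable (since $t$ has Slodowy degree zero and $S[t]A_+$ is an $S[t]$-ideal) and that for each $j' \leq j$ the element $t w_{k_{j'}, n_{k_{j'}} - 1} = w_{k_{j'}, n_{k_{j'}}}$ lies in $U_{j'+1} + (S[t]A_+)^{[d]} \subseteq U_{j+1} + (S[t]A_+)^{[d]}$: this is immediate when $n_{k_{j'}} = m_{k_{j'}}$ (the element equals $t^{m_{k_{j'}}} w_{k_{j'}} \in S[t]A_+$) and follows from the maximality of $n_{k_{j'}}$ when $n_{k_{j'}} < m_{k_{j'}}$. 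Granted this $t$-stability, the same alternative gives $w_{k_j, n_{k_j}} \in U_{j+1} + (S[t]A_+)^{[d]}$, and then a second induction on $i \geq n_{k_j}$ via $w_{k_j, i+1} = t \cdot w_{k_j, i}$ places every $w_{k_j, i}$ in $U_{j+1} + (S[t]A_+)^{[d]}$. Once the three properties hold in every degree, $T := \bigsqcup_d T^{[d]}$ satisfies the lemma, and the directness of $E_T + S[t]A_+$ then follows from $E_T \cap S[t]A_+ = \{0\}$.
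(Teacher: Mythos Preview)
Your proof is correct. The approach differs from the paper's in one notable way: the paper invokes Zorn's Lemma on the poset of complete subsets $T \subseteq \Theta$ satisfying $K \times \{0\} \subseteq T$ and $E_T \cap S[t]A_+ = \{0\}$, takes a maximal element $T_*$, and then argues by induction (using maximality at each step) that every $w_{k,i}$ lies in $E_{T_*} + S[t]A_+$. Your degree-by-degree greedy construction bypasses Zorn's Lemma entirely by exploiting the finiteness of each $\Theta^{[d]}$ established in Lemma~\ref{lsg4}(iv). The spanning step in both proofs rests on the same mechanism---multiplying by $t$ and using maximality (in the paper) or the failure of the greedy step (in your version) to absorb the next $w_{k,i}$---so the core idea is shared; your version is simply more explicit and constructive. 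One minor remark: the ``induction on $j$'' you invoke for the $t$-stability of $U_{j+1} + (S[t]A_+)^{[d]}$ is not actually needed, since your argument for each basis vector $w_{k_{j'}, i}$ with $j' \leq j$ already places $t\,w_{k_{j'}, i}$ directly in $U_{j'+1} + (S[t]A_+)^{[d]} \subseteq U_{j+1} + (S[t]A_+)^{[d]}$.
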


\begin{proof}
Since $V''_{0}\cap SA(0)_{+}=\{0\}$, the sum of $E_{K\times \{0\}}$ and $S[t]A_{+}$ is 
direct. Let ${\cal T}$ be the set of complete subsets $T$ of $\Theta $ satisfying the 
following conditions:
\begin{itemize}
\item [{\rm (1)}] for all $k$ in $K$, $(k,0)$ is in $T$, 
\item [{\rm (2)}] the sum of $E_{T}$ and $S[t]A_{+}$ is direct.
\end{itemize}
Since the sum of $E_{K\times \{0\}}$ and $S[t]A_{+}$ is direct, ${\cal T}$ is not empty. 
If $(T_{j},j\in {\goth J})$ is an increasing sequence of elements of ${\cal T}$,
with respect to the inclusion, its union is in ${\cal T}$. Then, by Zorn's Lemma, 
${\cal T}$ has a maximal element. Denote it by $T_{*}$. It remains to prove
that $w_{k,i}$ is in $E_{T_{*}}+S[t]A_{+}$ for all $(k,i)$ in $\Theta $. 

Let $k$ be in $K$. Denote by $i$ the biggest integer such that $(k,i)$ is in 
$T_{*}$. Prove by induction on $i'$ that for $m_{k}>i'>i$, $w_{k,i'}$ is 
in $E_{T_{*}}+S[t]A_{+}$. By maximality of $T_{*}$ and $i$,
$w_{k,i+1}$ is in $E_{T_{*}}+S[t]A_{+}$. Suppose that $w_{k,i'}$ is 
in $E_{T_{*}}+S[t]A_{+}$. Then, for some $a$ in $S[t]A_{+}$ and 
$c_{m,j}, (m,j) \in T_{*}$ in $\k$, 
$$w_{k,i'} =  \sum_{(m,j) \in T_{*}} c_{m,j} w_{m,j} + a,$$
whence
$$w_{k,i'+1} =  \sum_{(m,j) \in T_{*}} c_{m,j} t^{j+1}w_{m} + ta.$$ 
By maximality of $T_{*}$, $t^{j+1}w_{m}$ is in $E_{T_{*}}+S[t]A_{+}$
for all $(m,j)$ such that $t^{j}w_{m}$ is in $T_{*}$. Hence 
$w_{k,i'+1}$ is in $E_{T_{*}}+S[t]A_{+}$. The lemma follows.
\end{proof}

Fix a complete subset $T_{*}$ of $\Theta $ such that 
$$K_{T_{*}}=K \qquad  \text{and} \qquad 
E_{\Theta }+S[t]A_{+} = E_{T_{*}} \oplus S[t]A_{+} ,$$
and set 
$$E := E_{T_{*}}.$$
Such a set $T_*$ does exist by Lemma \ref{l2sg4}. 

\begin{coro}\label{csg4}
{\rm (i)} The space $S[[t]]$ is the direct sum of $V'_{0}[[t]]$, $E$ and $S[[t]]A_{+}$.

{\rm (ii)} The space $S[[t]]$ is the sum of $EA$ and $V'_{0}[[t]]A$.
\end{coro}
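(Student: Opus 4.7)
The plan is to prove (i) by assembling previously proven ingredients, reduce the direct-sum property to a single intersection claim, and then deduce (ii) by iteration.

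For (i), I will start from Proposition~\ref{psg3},(ii), which already gives $S[[t]]=V'_{0}[[t]]\oplus (W+S[[t]]A_{+})$. Since each $w_{k,i}=t^{i}w_{k}$ lies in the ideal ${\cal I}_{*}$ of $S[t]$ and in $V_{0}[t]$, I obtain the inclusions $E\subset E_{\Theta}\subset W$; conversely, Lemma~\ref{lsg4},(ii) combined with Lemma~\ref{l2sg4} yields $W\subset E_{\Theta}+S[t]A_{+}= E\oplus S[t]A_{+}$. These two inclusions together give $W+S[[t]]A_{+}=E+S[[t]]A_{+}$, and thus $S[[t]]=V'_{0}[[t]]+E+S[[t]]A_{+}$. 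For any relation $v+e+s=0$ in this sum, the element $v=-(e+s)$ must lie in $V'_{0}[[t]]\cap (W+S[[t]]A_{+})=\{0\}$ by Proposition~\ref{psg3},(ii), which forces $v=0$ and reduces the entire directness question to showing $E\cap S[[t]]A_{+}=\{0\}$.

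This last intersection statement is the crux of (i) and the main obstacle. Let $a\in E\cap S[[t]]A_{+}$ be homogeneous of Slodowy degree $d$. Evaluating $a=\sum_{j}b_{j}Q_{j}$ at $t=0$ gives $a(0)\in SA(0)_{+}$, while $a\in V_{0}[t]$ forces $a(0)\in V_{0}$; hence $a(0)\in V_{0}\cap SA(0)_{+}=\{0\}$, and the completeness of $T_{*}$ (which ensures $(k,i)\in T_{*}$ with $i\geq 1$ implies $(k,i-1)\in T_{*}$) allows me to factor $a=tb$ with $b\in E^{[d]}$. To iterate this $t$-division, I consider the finite-dimensional torsion $\k[[t]]$-module $T^{[d]}:=(E+S[[t]]A_{+})^{[d]}/(S[[t]]A_{+})^{[d]}$, which by Lemma~\ref{lsg4},(v) is annihilated by $t^{l_{d}}$ and which receives a natural surjection from $E^{[d]}$. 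The injectivity of $E^{[d]}\twoheadrightarrow T^{[d]}$, and hence $E^{[d]}\cap (S[[t]]A_{+})^{[d]}=\{0\}$, will follow from a $\k[[t]]$-rank comparison: both $(E+S[[t]]A_{+})^{[d]}$ and $(S[[t]]A_{+})^{[d]}$ are free $\k[[t]]$-modules of rank $\dim S^{[d]}-\dim V'^{[d]}_{0}$ (using the isomorphism $V'_{0}((t))\otimes_{\k}A\simeq S((t))$ of Proposition~\ref{psg3},(iii) to compute ranks after inverting $t$), and the dimension of the torsion cokernel can be matched with $\dim_{\k}E^{[d]}$ via the Hilbert-series identity of Proposition~\ref{psg3},(iv).

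For part (ii), I will iterate (i): from $S[[t]]=V'_{0}[[t]]+E+S[[t]]A_{+}\subset V'_{0}[[t]]A+EA+S[[t]]A_{+}$, expanding $S[[t]]A_{+}$ recursively (using $A_{+}A\subset A$) gives $S[[t]]\subset V'_{0}[[t]]A+EA+S[[t]]A_{+}^{N}$ for every positive integer $N$. Since the generators $Q_{1},\ldots,Q_{\rg}$ of $A_{+}$ all have Slodowy degree $2d_{j}\geq 2$, any element of $A_{+}^{N}$ has Slodowy degree at least $2N$, so in a fixed Slodowy degree $d$ the piece $(S[[t]]A_{+}^{N})^{[d]}$ vanishes whenever $2N>d$. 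Consequently, for each $d$ we get $S[[t]]^{[d]}\subset (V'_{0}[[t]]A)^{[d]}+(EA)^{[d]}$, and summing over all degrees yields $S[[t]]=EA+V'_{0}[[t]]A$.
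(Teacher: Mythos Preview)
Your reduction of (i) to the single claim $E\cap S[[t]]A_{+}=\{0\}$ is correct, and your proof of (ii) is essentially the paper's argument. The problem lies in your justification of $E\cap S[[t]]A_{+}=\{0\}$.

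Your rank computation is fine: both $(E+S[[t]]A_{+})^{[d]}$ and $(S[[t]]A_{+})^{[d]}$ are free $\k[[t]]$-modules of the same rank $\dim S^{[d]}-\dim {V'_{0}}^{[d]}$. But equality of ranks only tells you that the quotient $T^{[d]}$ is torsion; it does \emph{not} determine $\dim_{\k}T^{[d]}$. Over $\k[[t]]$, a rank-$r$ submodule of $\k[[t]]^{r}$ can have quotient $\bigoplus_{j}\k[[t]]/(t^{a_{j}})$ with $\sum a_{j}$ arbitrary, and nothing in Proposition~\ref{psg3},(iv) constrains these exponents. So the ``Hilbert-series matching'' step is a genuine gap: you have $\dim_{\k}T^{[d]}\leq\dim_{\k}E^{[d]}$ from surjectivity, but no lower bound. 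Your earlier $t$-division step ($a=tb$ with $b\in E$) does not help either, because you would need $b\in S[[t]]A_{+}$ to iterate, and that amounts to $t$ being regular on $S[[t]]/S[[t]]A_{+}$, which is essentially what is being proved.

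The paper closes this gap differently, by a flatness argument over $\k[t]$ rather than a rank count over $\k[[t]]$. From Lemma~\ref{l2sg4} one has the $\k[t]$-module quotient $P:=(E_{\Theta}+S[t]A_{+})/S[t]A_{+}\simeq E$. The point is that $P$ is $t$-primary torsion: the class of $w_{k,i}$ is killed by $t^{m_{k}-i}$ since $t^{m_{k}}w_{k}\in S[t]A_{+}$. Hence $\k[[t]]\otimes_{\k[t]}P\simeq P$ degree by degree. Tensoring the short exact sequence $0\to S[t]A_{+}\to E_{\Theta}+S[t]A_{+}\to P\to 0$ with the flat extension $\k[[t]]$ (and using $\k[[t]](E_{\Theta}+S[t]A_{+})=E_{\Theta}+S[[t]]A_{+}$, which follows from $t^{m_{k}}w_{k}\in S[t]A_{+}$) gives $(E_{\Theta}+S[[t]]A_{+})/S[[t]]A_{+}\simeq P\simeq E$. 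Since $E_{\Theta}+S[[t]]A_{+}=E+S[[t]]A_{+}$, the surjection $E^{[d]}\to T^{[d]}$ is between spaces of equal finite dimension, hence injective. This is the missing ingredient; once you insert it, the rest of your argument goes through.
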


\begin{proof}
(i) According to Proposition~\ref{psg3},(ii), $S[[t]]$ is the direct sum of $V'_{0}[[t]]$ 
and $W+S[[t]]A_{+}$. By Lemma~\ref{lsg4},(ii) (and its proof), $W+S[[t]]A_{+}$ is equal 
to $E_{\Theta }+S[[t]]A_{+}$. Since $E_{\Theta }+S[t]A_{+}$ is the direct sum of $E$ and 
$S[t]A_{+}$, we deduce that $W+S[[t]]A_{+}$ is the direct sum of $E$ and $S[[t]]A_{+}$. 
Hence, $S[[t]]$ is the direct sum of $V'_{0}[[t]]$, $E$ and $S[[t]]A_{+}$.

(ii) By (i) and by induction on $m$,
$$ S[[t]] \subset V'_{0}[[t]]A + EA + S[[t]]A_{+}^{m} .$$
Hence $S[[t]]$ is the sum of $V'_{0}[[t]]A$ and $EA$ since $S[[t]]$ is graded and $A_{+}$
is generated by elements of positive degree.
\end{proof}

\begin{defi} 
For $k$ in $K$, denote by $\nu_{k}$ the degree of $w_{k}$. For $T$ and $T'$ 
subsets of $\Theta $, we say that $T$ {\em is smaller than} $T'$, and we denote 
$T \prec T'$, if the following conditions are satisfied:
\begin{itemize}
\item [{\rm (1)}] $T$ is contained in $T'$ 
\item [{\rm (2)}] if for $k$ in $K_{T}$ and $k'$ in $K_{T'}$, we have 
$\nu _{k'} < \nu_{k}$, then $k'$ is in $K_{T}$.
\end{itemize}
\end{defi} 

Let $\mu $ be the linear map
$$ \tk {\k}EA \oplus \tk {\k}{V'_{0}[[t]]}A \longrightarrow S[[t]], \qquad
w\tens a + v\tens b \longmapsto wa + vb .$$
For $T$ a subset of $T_{*}$, denote by $\mu _{T}$ the restriction of $\mu $ to the 
subspace
$$ \tk {\k}{E_{T}}A \oplus \tk {\k}{V'_{0}[[t]]}A .$$
 
\begin{lemma}\label{l3sg4}
Let ${\cal T}_*$ be the set of subsets $T$ of $T_{*}$ such that $\mu _{T}$ is injective.

{\rm (i)} The set ${\cal T}_*$ is not empty.

{\rm (ii)} The set ${\cal T}_*$ has a maximal element with respect to the order $\prec $.

{\rm (iii)} The set $T_{*}$ is in ${\cal T}_*$.
\end{lemma}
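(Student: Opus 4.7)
For (i), I would take $T = \emptyset$: the map $\mu_\emptyset$ is then the restriction to $\tk {\k}{V'_{0}[[t]]}A$ of the graded isomorphism $\tk {\k}{V'_{0}((t))}A \to S((t))$ furnished by Proposition~\ref{psg3}(iii); composing with the inclusion $S[[t]] \hookrightarrow S((t))$ gives injectivity of $\mu_\emptyset$, so $\emptyset \in {\cal T}_*$.

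For (ii), the plan is to apply Zorn's Lemma to $({\cal T}_*, \prec)$. Given a $\prec$-chain $(T_j)_{j\in J}$ in ${\cal T}_*$, I would take $T := \bigcup_j T_j$ as the candidate upper bound. Any element of the domain of $\mu_T$ involves only finitely many basis vectors $w_{k,i}$; since the chain is directed by $\prec$, these finitely many indices all lie in some $T_{j_0}$, so a putative kernel element of $\mu_T$ is a kernel element of the injective $\mu_{T_{j_0}}$ and hence zero. Thus $T\in{\cal T}_*$. The relation $T_j\prec T$ for each $j$ follows by checking condition~(2) using transitivity of $\prec$ along the chain. Zorn then yields a $\prec$-maximal element.

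For (iii), I would argue by contradiction via the maximal element $T_0$ obtained in~(ii). Suppose $T_0\subsetneq T_*$; the goal is to exhibit $(k_0,i_0)\in T_*\setminus T_0$ such that $T_0\prec T_0\cup\{(k_0,i_0)\}$ and $T_0\cup\{(k_0,i_0)\}\in{\cal T}_*$, contradicting maximality. Two cases would arise. If there exists $(k_0,i_0)\in T_*\setminus T_0$ with $k_0\in K_{T_0}$, then $K_{T_0\cup\{(k_0,i_0)\}}=K_{T_0}$ and condition~(2) of $\prec$ holds vacuously. Otherwise $T_0=T_*\cap (K_{T_0}\times\N)$ with $K_{T_0}\subsetneq K$, and the $\prec$-maximality of $T_0$ should force the existence of $k_0\in K\setminus K_{T_0}$ with $\nu_{k_0}\geq\nu_k$ for every $k\in K_{T_0}$, so one can take $i_0=0$. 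In both cases, to verify that $\mu_{T_0\cup\{(k_0,i_0)\}}$ remains injective, I would isolate a would-be kernel element with nonzero coefficient $a\in A$ on the tensor $w_{k_0,i_0}\otimes a$, multiply the resulting relation in $S[[t]]$ by a sufficiently large power $t^N$ so that Lemma~\ref{lsg4}(v) places every occurrence of an element $w_k$ inside $V'_0[[t]]A_+$, and then use the direct-sum decomposition $S[[t]]=V'_0[[t]]\oplus E\oplus S[[t]]A_+$ from Corollary~\ref{csg4}(i) together with the injectivity of $\mu_\emptyset$ to transport the relation into $\tk {\k}{V'_{0}[[t]]}A$; the injectivity of $\mu_{T_0}$ would then force $a=0$.

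The main obstacle lies in~(iii): the case analysis is delicate, and the technical heart of the argument is the multiplication-by-$t^N$ manoeuvre combined with the graded direct-sum decomposition. Getting the indexing and the choice of $(k_0,i_0)$ exactly right so as to simultaneously verify the $\prec$-condition and the injectivity of the extended map is where the substantive effort lies.
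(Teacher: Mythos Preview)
Parts (i) and (ii) are fine; taking $T=\varnothing$ in (i) is in fact simpler than the paper's choice $T=\{(k,0)\}$, and your Zorn argument in (ii) matches the paper's.

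Part (iii) has a genuine gap, and it is precisely the ``substantive effort'' you flag at the end. Two things go wrong. First, your case analysis for producing $(k_0,i_0)$ with $T_0\prec T_0\cup\{(k_0,i_0)\}$ is not the right organizing principle: in Case~2 there is no reason $\prec$-maximality alone produces a $k_0\notin K_{T_0}$ with $\nu_{k_0}\geq\nu_k$ for all $k\in K_{T_0}$. The paper instead picks $k$ with $\nu_k$ \emph{minimal} among indices having some $(k,i)\in T_*\setminus T$, and then takes the least such $i_*$. This minimality is not cosmetic; it is the engine of the whole argument.

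Second, your ``multiply by $t^N$ and invoke $\mu_\varnothing$'' manoeuvre does not close the loop. After expanding a putative relation
\[
w_{k,i_*}a=\sum_{(n,j)\in T}w_{n,j}a_{n,j}+\sum_i v_i b_i
\]
and pushing everything into the $V'_0[[t]]A$-side, the injectivity of $\mu_T$ only yields a system of linear equations among the $a_{l,s}$ and $b_i$ with coefficients in $\tilde A$; it does not give $a=0$. The missing step is a \emph{divisibility} argument: the paper proves, by downward induction on $\nu_l$, that $a$ divides every $a_{l,s}$ in $A$ (this is exactly where minimality of $\nu_k$ is used, to guarantee that the auxiliary expansions of $t^{m_k}w_k$ and $t^{m_k-i_*}w_{n,j}$ only involve $w_{l,s}$ with $(l,s)\in T$). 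One then shows $a$ divides each $b_i$ in $\k[[t]]A$ (here the fact that $a$ has positive Slodowy degree while $t$ has degree~$0$, together with Lemma~\ref{lsg1}(iii), lets you strip the power of $t$). Dividing through by $a$ gives $w_{k,i_*}\in E_TA+V'_0[[t]]A$, hence $w_{k,i_*}\in E_T$ by Corollary~\ref{csg4}(i), contradicting $(k,i_*)\notin T$. Your sketch does not contain this divisibility step, and without it the argument cannot terminate.
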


\begin{proof}
(i) For $k$ in $K$, set $T_{k} := \{(k,0)\}$. Suppose that $T_{k}$ is not in 
${\cal T}_*$. A contradiction is expected. Then for some $a$ in $A\setminus \{0\}$, 
$w_{k}a$ is in $V'_{0}[[t]]A_{+}$, whence
$$ w_{k}a = \sum_{i\in J} v_{i}b_{i}$$ 
with $(b_{i},i\in J)$ in $\k[[t]]A_{+}$ with finite support. By Lemma~\ref{lsg4},(v), 
for some positive integer, $t^{l}w_{k}$ is in $V'_{0}[[t]]A_{+}$. Then 
$$ t^{l}w_{k} = \sum_{i\in J} v_{i}c_{i}$$ 
with $(c_{i},i\in J)$ in $\k[[t]]A_{+}$ with finite support. Hence
$$ \sum_{i\in J} v_{i}t^{l}b_{i} = \sum_{i\in J} v_{i}c_{i}a .$$
According to Proposition~\ref{psg3},(iii), $t^{l}b_{i}=c_{i}a$ for all $i$. Since
$a\neq 0$, $a(0)\neq 0$ by Proposition~\ref{psg1},(ii). Then, by Lemma \ref{lsg1},(ii), 
$c_{i}=t^{l}c'_{i}$ for some $c'_{i}$ in $\tilde{A}=\k[[t]]A$. As a result, 
$$ w_{k} = \sum_{i\in J} v_{i}c'_{i} ,$$
whence the contradiction by Corollary~\ref{csg4},(i).

(ii) Let $(T_{l},l\in L)$ be a net in ${\cal T}_*$ with respect to 
$\prec$. Let $T$ be the 
union of the sets $T_{l}$, $l\in L$. Since $E_{T}$ is the space generated by the 
subspaces $E_{T_{l}}, l\in L$, the map  $\mu _{T}$ is injective. Let $l_{0}$ be in $L$ 
and $k$ in $K_{T}$ such that $\nu_{k} < \nu _{k'}$ for some $k'$ in $K_{T_{l_{0}}}$. 
Since $K_{T}$ is the union of the sets $K_{T_{l}}$, $l\in L$, we deduce that $k$ is in 
$K_{T_{l}}$ for some $l$ in $L$. By properties of the nets, for some $l'$ in $L$, 
$T_{l}\prec T_{l'}$ and $T_{l_{0}}\prec T_{l'}$ so that $k$ is in $K_{T_{l'}}$. Hence, 
$k$ is in $K_{T_{l_{0}}}$, whence $T_{l_{0}}\prec T$. As a result, $\prec $ is an 
inductive order in ${\cal T}_{*}$, and by Zorn's Theorem, it has a maximal element.

(iii) Let $T$ be a maximal element of ${\cal T}_*$ with respect to $\prec$. Suppose $T$
strictly contained in $T_{*}$. A contradiction is expected. Let $k$ be in $K$ such that 
$(k,i)$ is not in $T$ and $(k,i)$ is in $T_{*}$ for some $i$. We can suppose 
that $\nu_{k}$ is minimal under this condition. Let $i_{*}$ be the smallest integer 
such that $(k,i_{*})$ is not in $T$ and $(k,i_{*})$ is in $T_{*}$. Then 
$T \prec T\cup \{(k,i_{*})\}$. So, by the maximality of $T$, for some $a$ in 
$A\setminus \{0\}$, 
$$ w_{k,i_{*}}a \in E_{T}A + V'_{0}[[t]]A .$$
Since $E_{T}$, $V'_{0}[[t]]$, $A$, $w_{k,i_{*}}$ are homogenous, we can suppose that $a$ 
is homogenous. Then $a$ has positive degree.
Otherwise, 
$ w_{k,i_{*}}\in E_{T}A + V'_{0}[[t]]A \subset   E_{T} + V'_{0}[[t]] +S[[t]]A_+$, 
and we deduce from Corollary~\ref{csg4},(i), that $w_{k,i_{*}} \in E_{T}$ since 
$w_{k,i_{*}} \in E_{T_*}$. This is impossible by the choice of $(k,i_*)$. 
Thus, by Corollary~\ref{csg4},(ii),
$$ w_{k,i_{*}}a \in E_{T}A_{+} + V'_{0}[[t]]A_{+}.$$
Hence
$$w_{k,i_{*}}a = \sum_{(n,j)\in T} w_{n,j}a_{n,j} + \sum_{i\in J} v_{i}b_{i}$$
with $(a_{n,j},(n,j)\in T)$ in $A_{+}$  and $(b_{i},i\in J)$ in 
$\tilde{A}_{+}$ with finite support. 

By Corollary~\ref{csg4},(ii), 
$$ t^{m_{k}}w_{k} = \sum_{(l,s)\in T_{*}} w_{l,s} a_{l,s,k} + 
\sum_{i\in J} v_{i}b_{i,k} $$
with $(a_{l,s,k},(l,s) \in T_{*})$ in $A_{+}$ and $(b_{i,k},i\in J)$ in $\tilde{A}_{+}$
with finite support. Moreover these two sequences are homogenous, so that $a_{l,s,k}=0$
if $\nu _{l}\geq \nu_{k}$. By minimality of $\nu_{k}$, $(l,s)$ is in $T$ if 
$a_{l,s,k}\neq 0$. For $(n,j)$ in $T$ such that $m_{k}-i_{*}+j\geq m_{n}$, 
$$ t^{m_{k}-i_{*}}w_{n,j} = \sum_{(l,s)\in T_{*}} w_{l,s} a_{l,s,n,j} + 
\sum_{i\in J} v_{i}b_{i,n,j} $$
with $(a_{l,s,n,j},(l,s) \in T_{*})$ in $A_{+}$ and $(b_{i,n,j},i\in J)$ in 
$\tilde{A}_{+}$ with finite support. Moreover these two sequences are homogenous, so that 
$a_{l,s,n,j}=0$ if $\nu _{l}\geq \nu _{n}$. So, by minimality of $\nu_{k}$, 
$(l,s)$ is in $T$ if $a_{l,s,n,j}\neq 0$ and $\nu _{n}\leq \nu_{k}$. As a result, 
\begin{eqnarray*}
\sum\limits_{(l,s)\in T} w_{l,s} a_{l,s,k}a + \sum\limits_{i\in J} v_{i}b_{i,k}a  &= & 
 \sum\limits_{(n,j)\in T} w_{n,j}t^{m_{k}-i_{*}}a_{n,j} 
+ \sum\limits_{i\in J} v_{i} t^{m_{k}-i_{*}}b_{i} \\
\smallskip\\
&= & \sum_{\genfrac {}{}{0pt}{}{(n,j)\in T}
{ m_{k}-i_{*}+j < m_{n}}} w_{n,m_{k}-i_{*}+j}a_{n,j}
+ \sum_{\genfrac {}{}{0pt}{}{(n,j)\in T}{ m_{k}-i_{*}+j \geq  m_{n}}}
w_{l,s}a_{l,s,n,j}a_{n,j} \\ \smallskip\\
& & + \sum_{i\in J} v_{i}t^{m_{k}-i^{*}}b_{i}
+ \sum_{\genfrac {}{}{0pt}{}{(n,j)\in T}{ m_{k}-i_{*}+j \geq  m_{n}}} 
\sum_{i\in J} v_{i}b_{i,n,j}a_{n,j}
\end{eqnarray*}
whence
\begin{eqnarray*}
\sum_{(l,s)\in T} w_{l,s} a_{l,s,k}a + \sum_{i\in J} v_{i}b_{i,k}a& = &
\sum_{\genfrac {}{}{0pt}{}{(n,j)\in T}{m_{k}-i_{*}+j<m_{n}}} w_{n,m_{k}-i_{*}+j}a_{n,j}
+ \sum_{\genfrac {}{}{0pt}{}{(n,j)\in T}{m_{k}-i_{*}+j \geq m_{n}}} 
\sum_{(l,s)\in T} w_{l,s} a_{l,s,n,j}a_{n,j} \\
\smallskip\\
& &+ \sum_{i\in J} v_{i}(t^{m_{k}-i_{*}}b_{i} 
+ \sum_{\genfrac {}{}{0pt}{}{(n,j)\in T}{m_{k}-i_{*}+j \geq m_{n}}} b_{i,n,j}a_{n,j}) .
\end{eqnarray*}
Since $\mu _{T}$ is injective, for all $i$ in $J$, 
\begin{eqnarray}\label{eqsg4}
t^{m_{k}-i_{*}} b_{i} + 
\sum_{\genfrac {}{}{0pt}{}{(n,j)\in T}{m_{k}-i_{*}+j \geq m_{n}}} 
b_{i,n,j}a_{n,j} - b_{i,k}a = 0,  
\end{eqnarray}
and for all $(l,s)$ in $T$, 
\begin{eqnarray}\label{eq2sg4}
a_{l,s+i_{*}-m_{k}} + \sum\limits_{\genfrac {}{}{0pt}{}{(n,j)\in T}
{m_{k}-i_{*}+j \geq m_{n}}} a_{n,j}a_{l,s,n,j} - a_{l,s,k}a = 0 .
\end{eqnarray}
with $a_{l,s}=0$ if $s<0$. 

\begin{claim}\label{clsg4}
For all $(l,s)$ in $T$, $a$ divides $a_{l,s}$ in $A$.
\end{claim}

\begin{proof}[Proof of Claim~\ref{clsg4}]
Prove the claim by induction on $\nu _{l}$. Let $l$ be in $K_{T}$ such that 
$$ \nu _{l'} > \nu _{l} \quad  \text{and} \quad (l',s') \in T \Longrightarrow 
a_{l',s'} = 0 .$$ 
Then by Equality (\ref{eq2sg4}), $a_{l,s+i_{*}-m_{k}}=a_{l,s,k}a$, whence the satement 
for $l$. Suppose that $a$ divides $a_{l',s'}$ in $A$ for all $(l',s')$ in $T$ such that
$\nu _{l'}>\nu _{l}$. By Equality (\ref{eq2sg4}) and the induction hypothesis, $a$ 
divides $a_{l,s+i_{*}-m_{k}}$ in $A$ since $a_{l,s,n,j}=0$ for 
$\nu _{n}\leq \nu _{l}$, whence the claim.
\end{proof}

By Claim~\ref{clsg4} and Equality (\ref{eqsg4}), for all $i$ in $J$, $a$ divides 
$t^{m_{k}-i_{*}}b_{i}$ in $\k[[t]]A$. Since $a$ has positive degree, all prime divisor
of $a$ in $A$ has positive degree and does not divide $t$ since $t$ has degree $0$. Then, 
by Lemma~\ref{lsg1},(iii), $a$ divides $b_{i}$ in $\k[[t]]A$. As a result, 
$$ w_{k,i_{*}} \in E_{T}A + V'_{0}[[t]]A$$
whence
$$w_{k,i_{*}} \in V'_{0}[[t]] + E_{T} + S[[t]]A_{+} .$$
Since $w_{k,i_{*}}$ is in $E$, $w_{k,i_{*}}$ is in $E_{T}$ by Corollary~\ref{csg4},(i). 
We get a contradiction because $(k,i_{*})$ is not in $T$.
\end{proof}

\begin{coro}\label{c2sg4}
The canonical map
$$ \tk {\k}EA \oplus \tk {\k}{V'_{0}[[t]]}A \longrightarrow S[[t]]$$
is an isomorphism. In particular, $S[[t]]$ is a free extension of $A$.
\end{coro}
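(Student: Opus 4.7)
The plan is to observe that the canonical map appearing in the statement is precisely the map $\mu$ introduced just before Lemma \ref{l3sg4}, and then to combine the injectivity and surjectivity results that have already been established.

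First, I would note that surjectivity of $\mu$ is immediate from Corollary~\ref{csg4},(ii), which states that $S[[t]] = EA + V'_{0}[[t]]A$. Every element of $S[[t]]$ can therefore be written as $\mu(w\tens a + v\tens b)$ for suitable $w\in E$, $v\in V'_{0}[[t]]$, $a,b\in A$ (and hence, by linearity, as the image of an element of the domain).

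Next, injectivity of $\mu$ follows directly from Lemma~\ref{l3sg4},(iii): the set $T_{*}$ belongs to ${\cal T}_{*}$, meaning that $\mu _{T_{*}}=\mu $ is injective by the very definition of ${\cal T}_{*}$. Combining these two facts, $\mu $ is a $\k$-linear bijection, i.e., an isomorphism.

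For the "In particular" part, observe that $\tk {\k}{E}{A}$ is a free $A$-module admitting as basis any $\k$-basis of $E$, and likewise $\tk {\k}{V'_{0}[[t]]}{A}$ is a free $A$-module admitting as basis any $\k$-basis of $V'_{0}[[t]]$. Their direct sum is therefore a free $A$-module, and so via the isomorphism $\mu $, the algebra $S[[t]]$ is a free extension of $A$. There is no real obstacle here, as the substantive content has already been carried out in Corollary~\ref{csg4} (surjectivity) and in Lemma~\ref{l3sg4} (injectivity, which is the genuinely hard step, relying on the careful Zorn's-Lemma construction of $T_{*}$ and the divisibility argument using primality of the homogeneous elements of $A$ in $\tilde{A}$ via Lemma~\ref{lsg1},(iii)).
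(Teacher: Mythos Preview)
Your proof is correct and follows essentially the same approach as the paper: injectivity from Lemma~\ref{l3sg4},(iii) (since $T_{*}\in{\cal T}_{*}$ means $\mu_{T_{*}}=\mu$ is injective), surjectivity from Corollary~\ref{csg4},(ii), and freeness because the domain is visibly a free $A$-module. The paper's proof is just a terser version of what you wrote.
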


\begin{proof}
By Lemma~\ref{l3sg4}, $T_{*}$ is the biggest element of ${\cal T}_*$. Hence $\mu $
is injective. Then, by Corollary~\ref{csg4},(ii), $\mu $ is bijective. As a matter of
fact, $\mu $ is an isomorphism of $A$-modules, whence the corollary.
\end{proof}

\subsection{} \label{sg5}
Recall that $\tilde{A}$ is the subalgebra of $S[[t]]$ generated by $\k[[t]]$ and $A$. 
Our next aim is to show that $S[[t]]$ is a free extension of $\tilde{A}$ 
(cf. Theorem \ref{tsg5}). Theorem~\ref{tge2} will then follows. 

For $I$ an ideal of $\tilde{A}$, 
denote by $\sigma _{I}$ and $\nu _{I}$ the canonical morphisms
$$ \xymatrix{ \tk {A}{S[[t]]}I \ar[r]^{\sigma _{I}} & \tk {A}{S[[t]]}{\tilde{A}}
&& \tk {\tilde{A}}{S[[t]]}I \ar[r]^{\nu _{I}} & S[[t]]I } .$$
Consider on $\tk {A}{S[[t]]}I$ and 
$\tk {\tilde{A}}{S[[t]]}I$ the linear topologies such that 
$\{t^{n}(\tk {A}{S[[t]]}I)\}_{n\in {\Bbb N}}$ and 
$\{t^{n}(\tk {\tilde{A}}{S[[t]]}I)\}_{n\in {\Bbb N}}$ are systems of neighborhood of
$0$ in these $S[[t]]$-modules. Denote by $\varphi _{I}$ the canonical morphism 
$$ \xymatrix{ \tk {A}{S[[t]]}I \ar[r]^{\varphi _{I}} & \tk {\tilde{A}}{S[[t]]}I }$$
and by ${\cal K}_{I}$ its kernel. Then $\varphi _{I}$ is continuous with respect to the 
above topologies.

\begin{lemma}\label{lsg5}
Let $I$ be an ideal of $\tilde{A}$.

{\rm (i)} The morphism $\sigma _{I}$ is injective.

{\rm (ii)} The module ${\cal K}_{I}$ is the $S[[t]]$-submodule of $\tk {A}{S[[t]]}I$ 
generated by the elements $r\tens a - 1\tens ra$ with $r$ in $\k[[t]]$ and $a$ in $I$. 
\end{lemma}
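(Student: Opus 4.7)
The plan is to leverage two previously-established facts: the freeness of $S[[t]]$ as an $A$-module (Corollary \ref{c2sg4}) and the identification $\tilde{A}\simeq \tk{\k}{\k[[t]]}A$ (Lemma \ref{lsg1},(i)), which expresses every element of $\tilde{A}$ as a \emph{finite} $\k[[t]]$-linear combination of elements of $A$.

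For (i), I would observe that $I\hookrightarrow \tilde{A}$ is a morphism of $A$-modules. Since $S[[t]]$ is flat over $A$ by Corollary \ref{c2sg4}, tensoring this inclusion over $A$ with $S[[t]]$ preserves injectivity, giving the injectivity of $\sigma_I$. Equivalently, apply the exact functor $\tk{A}{S[[t]]}(-)$ to the short exact sequence
$$ 0 \longrightarrow I \longrightarrow \tilde{A} \longrightarrow \tilde{A}/I \longrightarrow 0.$$

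For (iii), the plan proceeds in two steps. First, by the standard description of change-of-rings kernels, ${\cal K}_I$ is generated as an $S[[t]]$-module by the elements $(ms)\tens a - m\tens sa$ with $m\in S[[t]]$, $s\in\tilde{A}$, $a\in I$. The $S[[t]]$-module structure on $\tk{A}{S[[t]]}I$ lives on the left tensor factor, so such an element equals $m\cdot(s\tens a - 1\tens sa)$; hence ${\cal K}_I$ is the $S[[t]]$-submodule generated by $s\tens a - 1\tens sa$ with $s\in\tilde{A}$, $a\in I$. Second, invoking Lemma \ref{lsg1},(i), write $s=\sum_i r_i b_i$ as a finite sum with $r_i\in\k[[t]]$ and $b_i\in A$; then $A$-bilinearity of the tensor gives
$$ s\tens a - 1\tens sa \;=\; \sum_i \bigl(r_ib_i\tens a - 1\tens r_ib_ia\bigr) \;=\; \sum_i \bigl(r_i\tens b_ia - 1\tens r_i(b_ia)\bigr),$$
and since $A\subset\tilde{A}$ and $I$ is an ideal of $\tilde{A}$, each element $b_ia$ lies in $I$, so each summand has the required form. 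The argument is essentially formal commutative algebra; no substantial obstacle arises, the only subtle point being the use of the finite presentation of $\tilde{A}$ over $A$ to avoid having to manipulate infinite series in $\k[[t]]$ inside the tensor product.
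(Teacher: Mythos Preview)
Your proof is correct and follows essentially the same approach as the paper's: both use flatness of $S[[t]]$ over $A$ from Corollary~\ref{c2sg4} for part~(i), and for part~(iii) both reduce the generating set via the finite expansion $s=\sum_i r_ib_i$ with $r_i\in\k[[t]]$, $b_i\in A$ coming from Lemma~\ref{lsg1},(i), then use $A$-bilinearity to push each $b_i$ across the tensor. Your attribution of the finite expansion to Lemma~\ref{lsg1},(i) is in fact cleaner than the paper's reference to ``(i)'' at that step.
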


\begin{proof}
(i) According to Corollary~\ref{c2sg4}, $S[[t]]$ is a flat extension of $A$. 
The assertion follows since $I$ is contained in $\tilde{A}$.

(ii) Let ${\cal K}'_{I}$ be the $S[[t]]$-submodule of $\tk {A}{S[[t]]}I$ generated by the 
elements $r\tens a - 1\tens ra$  with $r$ in $\k[[t]]$ and $a$ in $I$. Then 
${\cal K}'_{I}$ is contained in ${\cal K}_{I}$. Let us prove the opposite inclusion. 

Let $(x,y)$ be in $S[[t]]\times I$ and let $a$ be in $\tilde{A}$. According to (i),
$a$ has an expansion
$$ a = \sum_{i=1} r_{i}a_{i}$$ 
with $\poi r1{,\ldots,}{m}{}{}{}$ in $\k[[t]]$ and $\poi a1{,\ldots,}{m}{}{}{}$ in 
$A$. Then, in $\tk {A}{S[[t]]}I$,
$$x\tens ay - ax\tens y = \sum_{i=1}^{m} x\tens r_{i}a_{i}y - r_{i}x\tens a_{i}y =
\sum_{i=1}^{m} x(1\tens r_{i}a_{i}y - r_{i}\tens a_{i}y) \in {\cal K}'_{I} .$$
As a result, ${\cal K}_{I}={\cal K}'_{I}$ since ${\cal K}_{I}$ is the 
S[[t]]-submodule of $\tk A{S[[t]]}I$ generated by the $xa\tens y-x\tens ay$'s.  
\end{proof}

\begin{coro}\label{csg5}
Let $I$ be an ideal of $\tilde{A}$. The module ${\cal K}_{I}$ is the closure of the
$S[[t]]$-submodule of $\tk {A}{S[[t]]}I$ 
generated by the set $\{t\tens a-1\tens ta\}_{a\in I}$.
\end{coro}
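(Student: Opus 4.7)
The plan is to build on the explicit description of ${\cal K}_I$ in Lemma~\ref{lsg5}(ii) as the $S[[t]]$-submodule of $M:=\tk A{S[[t]]}I$ generated by the family $\{r\tens a-1\tens ra : r\in\k[[t]],\,a\in I\}$. Letting ${\cal K}'_I$ denote the $S[[t]]$-submodule generated by the restricted family $\{t\tens a-1\tens ta\}_{a\in I}$, the inclusion ${\cal K}'_I\subseteq{\cal K}_I$ is immediate, so the corollary amounts to the equality ${\cal K}_I=\overline{{\cal K}'_I}$ in the $t$-adic topology of $M$.

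The key inclusion is ${\cal K}_I\subseteq\overline{{\cal K}'_I}$, and I would prove it by approximating each generator $r\tens a-1\tens ra$ of ${\cal K}_I$ by polynomial truncations. Writing $r=\sum_{n\geq 0}c_n t^n$ and $r_N=\sum_{n=0}^N c_n t^n$, a first telescoping identity
\[
t^n\tens a - 1\tens t^n a \;=\; \sum_{k=0}^{n-1} t^k\bigl(t\tens t^{n-k-1}a - 1\tens t^{n-k}a\bigr)
\]
places every $t^n\tens a-1\tens t^n a$ inside ${\cal K}'_I$ (using that $t^{n-k-1}a\in I$ since $I$ is an ideal of $\tilde A$), and hence $r_N\tens a-1\tens r_N a\in{\cal K}'_I$ by linearity. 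For the tail $r-r_N=t^{N+1}s$ with $s\in\k[[t]]$, the first summand is $(r-r_N)\tens a=t^{N+1}(s\tens a)\in t^{N+1}M$, while a second telescoping
\[
1\tens t^{N+1}sa \;=\; t^{N+1}\tens sa \;-\; \sum_{k=0}^N t^k\bigl(t\tens t^{N-k}sa - 1\tens t^{N-k+1}sa\bigr)
\]
rewrites $1\tens(r-r_N)a$ modulo ${\cal K}'_I$ as $t^{N+1}\tens sa\in t^{N+1}M$. Combining, $r\tens a-1\tens ra\in{\cal K}'_I+t^{N+1}M$ for every $N\geq 0$, so $r\tens a-1\tens ra$ lies in the $t$-adic closure $\overline{{\cal K}'_I}$.

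The reverse inclusion $\overline{{\cal K}'_I}\subseteq{\cal K}_I$ reduces, via continuity of $\varphi_I$ and ${\cal K}'_I\subseteq\ker\varphi_I={\cal K}_I$, to proving that ${\cal K}_I$ is closed in $M$, equivalently that the target $\tk{\tilde A}{S[[t]]}I$ is $t$-adically Hausdorff. I expect this last point to be the main obstacle: it should be established by showing that the natural map $\nu_I\colon\tk{\tilde A}{S[[t]]}I\to S[[t]]I\subseteq S[[t]]$ is injective, using the $A$-freeness of $S[[t]]$ from Corollary~\ref{c2sg4} together with the presentation $\tilde A\cong\k[[t]]\tens_\k A$ of Lemma~\ref{lsg1}(i) to perform a base-change argument. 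Once the target is Hausdorff, continuity forces any $x\in\overline{{\cal K}'_I}$ to satisfy $\varphi_I(x)\in\bigcap_n t^n(\tk{\tilde A}{S[[t]]}I)=0$, yielding $x\in{\cal K}_I$ as desired.
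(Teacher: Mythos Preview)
Your argument for the inclusion ${\cal K}_I\subseteq\overline{{\cal K}'_I}$ is correct and is essentially the paper's: the paper shows by induction on $n$ that $t^{n}\tens a-1\tens t^{n}a$ lies in ${\cal L}_I$ (your ${\cal K}'_I$), hence $r\tens a-1\tens ra\in{\cal L}_I$ for polynomial $r$, and concludes by approximation that $r\tens a-1\tens ra\in\overline{{\cal L}_I}$ for all $r\in\k[[t]]$. Your two telescoping identities simply unwind the same induction.

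The gap is in your reverse inclusion. You rightly observe that $\overline{{\cal K}'_I}\subseteq{\cal K}_I$ follows once ${\cal K}_I$ is closed, equivalently once $\tk{\tilde A}{S[[t]]}I$ is $t$-adically Hausdorff, and you propose to obtain Hausdorffness from the injectivity of $\nu_I$. But injectivity of $\nu_I$ for every ideal $I$ is precisely the flatness of $S[[t]]$ over $\tilde A$, i.e.\ Theorem~\ref{tsg5}(i); and the paper proves that theorem via Proposition~\ref{psg5}(ii), which in turn invokes Corollary~\ref{csg5}. So your proposed route is circular. The $A$-freeness of $S[[t]]$ (Corollary~\ref{c2sg4}) together with the presentation $\tilde A\cong\k[[t]]\tens_{\k}A$ does not by itself yield $\tilde A$-flatness through a direct base change; the whole point of the delicate work in \S\ref{sg5} is exactly to pass from $A$-freeness to $\tilde A$-flatness.

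The paper dispatches this direction in one sentence: ``Since $\varphi_I$ is continuous, ${\cal K}_I$ is a closed submodule.'' As you note, this tacitly assumes the target is Hausdorff, which is not justified at this stage. Observe, however, that the only subsequent use of the corollary, in the proof of Proposition~\ref{psg5}(ii), is the inclusion ${\cal K}_I\subseteq\overline{{\cal K}'_I}$: there one takes $a$ with $t\varphi_I(a)=0$, notes $ta\in{\cal K}_I$, and only needs that $ta$ be approximable modulo $t^{l}(\tk{A}{S[[t]]}I)$ by elements of ${\cal K}'_I$. That is the inclusion you have correctly established.
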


\begin{proof}
Let ${\cal L}_{I}$ be the $S[[t]]$-submodule generated by the set 
$\{t\tens a-1\tens ta\}_{a\in I}$. Prove by induction on $n$ that 
$t^{n}\tens a - 1\tens t^{n}a$ is in ${\cal L}_{I}$ for all $a$ in $I$. The statement 
is straightforward for $n=0,1$. Suppose $n\geq 2$ and the statement true for $n-1$. 
For $a$ in $I$, 
$$ t^{n}a-1\tens t^{n}a = t^{n-1}(t\tens a- 1\tens ta) +
t^{n-1}\tens ta - 1\tens t^{n-1}ta .$$
By induction hypothesis, $t^{n-1}\tens ta - 1\tens t^{n-1}ta$ is in ${\cal L}_{I}$, 
whence $t^{n}\tens a - 1\tens t^{n}a$ is in ${\cal L}_{I}$. As a result, for $r$ in
$\k[t]$, $r\tens a - 1\tens ra$ is in ${\cal L}_{I}$. So, for $r$ in $\k[[t]]$, 
$r\tens a - 1 \tens ra$ is in the closure of ${\cal L}_{I}$ in $\tk {A}{S[[t]]}I$.
Since $\varphi _{I}$ is continuous, ${\cal K}_{I}$ is a closed submodule of 
$\tk {A}{S[[t]]}I$, whence the corollary by Lemma~\ref{lsg5},(iii).
\end{proof}

\begin{prop}\label{psg5}
Let $I$ be an ideal of $\tilde{A}$. 

{\rm (i)} The canonical morphism 
$$ \tk {\tilde{A}}{V'_{0}\tilde{A}}I \longrightarrow \tk {\tilde{A}}{S[[t]]}I$$
is an embedding.

{\rm (ii)} For the structure of $S[[t]]$-module on $\tk {\tilde{A}}{S[[t]]}I$,
$t$ is not a divisor of $0$ in $\tk {\tilde{A}}{S[[t]]}I$.
\end{prop}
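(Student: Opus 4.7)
My plan is to exploit the $A$-module direct sum decomposition $S[[t]] = EA \oplus V'_{0}\tilde{A}$ from Corollary \ref{c2sg4}, combined with the description of the transition kernel between the $A$- and $\tilde{A}$-tensor products provided by Lemma \ref{lsg5} and Corollary \ref{csg5}. The key preliminary observation is that $V'_{0}\tilde{A}$ is canonically isomorphic to $\tk\k{V'_{0}}\tilde{A}$, hence is a free $\tilde{A}$-module; consequently $\tk{\tilde{A}}{V'_{0}\tilde{A}}{I}$ identifies with $\tk\k{V'_{0}}{I}$, and since $t$ is not a zero divisor in $\tilde{A}$ by Lemma \ref{lsg1}(ii), $t$ acts injectively on this module.

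For (i), I would consider the commutative square
\[
\xymatrix{
\tk A{V'_{0}\tilde{A}}{I} \ar[r]^{\alpha}\ar[d]_{p_{V}} & \tk A{S[[t]]}{I} \ar[d]^{p} \\
\tk{\tilde{A}}{V'_{0}\tilde{A}}{I} \ar[r]^{\beta} & \tk{\tilde{A}}{S[[t]]}{I}
}
\]
whose vertical arrows are the canonical surjections from Lemma \ref{lsg5}. The top map $\alpha$ is injective, because tensoring the $A$-direct sum $S[[t]] = EA \oplus V'_{0}\tilde{A}$ over $A$ with $I$ preserves the direct sum, and $\alpha$ is the inclusion of the second summand. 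The injectivity of $\beta$ is then equivalent to the identity $\ker(p_{V}) = \ker(p)\cap \tk A{V'_{0}\tilde{A}}{I}$. By Corollary \ref{csg5}, both sides are closures in the $t$-adic topology of the $S[[t]]$-submodules generated by $\{t\otimes a - 1\otimes ta\}_{a\in I}$, computed in their respective ambient modules, and the inclusion $\subseteq$ is immediate. For the reverse inclusion, I would proceed grade-by-grade in the Slodowy grading, in which each component is finitely generated over $\k[[t]]$: using the torsion estimate $t^{l_{i}}E^{[i]}\subset V'_{0}[[t]]A_{+}$ from Lemma \ref{lsg4}(v), I can adjust successive approximations from the bigger submodule so that the $E$-component tends to zero $t$-adically, thereby witnessing an element of $\ker(p)\cap \tk A{V'_{0}\tilde{A}}{I}$ as a limit of generators living inside $\tk A{V'_{0}\tilde{A}}{I}$ itself.

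For (ii), given $\xi\in\tk{\tilde{A}}{S[[t]]}{I}$ with $t\xi=0$, I would lift $\xi$ to $\hat\xi\in\tk A{S[[t]]}{I}$ and split $\hat\xi = \hat\xi_{E}+\hat\xi_{V}$ along the $A$-decomposition. Let $\mathrm{pr}_{V}$ denote the $A$-linear projection onto $\tk A{V'_{0}\tilde{A}}{I}$. From $t\hat\xi\in\ker(p)$ one gets $\mathrm{pr}_{V}(t\hat\xi) = t\hat\xi_{V} + \mathrm{pr}_{V}(t\hat\xi_{E}) \in \ker(p)\cap \tk A{V'_{0}\tilde{A}}{I} = \ker(p_{V})$ by (i). Using Lemma \ref{lsg4}(v) once more to show that the correction term $\mathrm{pr}_{V}(t\hat\xi_{E})$ can itself be absorbed into $\ker(p_{V})$, one concludes that $t\hat\xi_{V}\in\ker(p_{V})$; the injectivity of $t$ on $\tk{\tilde{A}}{V'_{0}\tilde{A}}{I}$ then forces $\hat\xi_{V}\in\ker(p_{V})$, and backtracking yields $\hat\xi\in\ker(p)$, i.e.\ $\xi=0$.

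The main technical obstacle is that the $A$-decomposition $S[[t]]=EA\oplus V'_{0}\tilde{A}$ is \emph{not} $\tilde{A}$-linear: multiplication by $t$ may carry an element of $E$ out of $EA$, as witnessed by $t\cdot w_{k,m_{k}-1}=t^{m_{k}}w_{k}\in S[t]A_{+}$, whose decomposition along $EA\oplus V'_{0}\tilde{A}$ could have a nonzero $V'_{0}\tilde{A}$-component. Controlling this mixing via the torsion bounds of Lemma \ref{lsg4}(v) --- both in the kernel identity for (i) and in the zero-divisor analysis for (ii) --- is where the substance of the proof lies.
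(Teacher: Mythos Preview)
Your approach has genuine gaps in both parts, and in each case the paper's proof is quite different and more direct.

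For (i), the paper does not compare kernels of $p$ and $p_{V}$ at all. It simply observes that, since $V'_{0}\tilde{A}$ is free over $\tilde{A}$ on the basis $(v_{i})$ of $V'_{0}$ (this follows from Proposition~\ref{psg3}(iii)), the left vertical map in the square
\[
\xymatrix{
\tk{\tilde{A}}{V'_{0}\tilde{A}}{I} \ar[r]^{\beta}\ar[d]^{\delta}_{\simeq} & \tk{\tilde{A}}{S[[t]]}{I} \ar[d]^{\delta} \\
V'_{0}I \ar[r]^{\dd} & S[[t]]I
}
\]
is an \emph{isomorphism}, and the bottom map is obviously injective. Injectivity of $\beta$ is then immediate: $\beta(a)=0$ implies $\dd(\delta a)=0$, hence $\delta a=0$, hence $a=0$. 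Your route via $\ker(p)\cap(\tk{A}{V'_{0}\tilde{A}}{I})=\ker(p_{V})$ is not circular, but the reverse inclusion is not established: you assert that both sides are closures of the submodule generated by $\{t\otimes a-1\otimes ta\}$, yet $\ker(p)$ is the closure of the \emph{$S[[t]]$-}submodule so generated, and intersecting with the summand does not automatically give the closure of the \emph{$V'_{0}\tilde{A}$-}submodule. The ``grade-by-grade adjustment'' you sketch is precisely the mixing problem you flag at the end, and it is not resolved.

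For (ii), the gap is sharper. You claim that from $t\hat\xi\in\ker(p)$ one gets $\mathrm{pr}_{V}(t\hat\xi)\in\ker(p)$. But $\mathrm{pr}_{V}$ is only $A$-linear, and $\ker(p)$ is the closed $S[[t]]$-span of the elements $t\otimes a-1\otimes ta$; there is no reason for $\mathrm{pr}_{V}$ to preserve it. Concretely, for $s\in S[[t]]$ one has $\mathrm{pr}_{V}\bigl(s(t\otimes a-1\otimes ta)\bigr)=\mathrm{pr}_{V}(st)\otimes a-\mathrm{pr}_{V}(s)\otimes ta$, and this lies in $\ker(p)$ only if $\mathrm{pr}_{V}(st)=t\,\mathrm{pr}_{V}(s)$, which is exactly the $t$-linearity that fails (e.g.\ $s=w_{k,m_{k}-1}\in E$ gives $\mathrm{pr}_{V}(s)=0$ but $\mathrm{pr}_{V}(st)$ need not vanish). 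The subsequent claim that $\mathrm{pr}_{V}(t\hat\xi_{E})$ can be ``absorbed into $\ker(p_{V})$'' via Lemma~\ref{lsg4}(v) is not substantiated either.

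The paper's argument for (ii) avoids the decomposition entirely. It lifts $\xi$ to $a\in\tk{A}{S[[t]]}{I}$ and, for each $l\ge 2$, uses Corollary~\ref{csg5} to write
\[
ta-\sum_{i} b_{i}(t\otimes a_{i}-1\otimes ta_{i})\in t^{l}\bigl(\tk{A}{S[[t]]}{I}\bigr).
\]
Splitting $b_{i}=b_{i,0}+tb'_{i}$ with $b_{i,0}\in S$, one rewrites this as $ta'-a''\in t^{l}(\cdots)$ where $a'=a-\sum b'_{i}(\cdots)$ satisfies $\varphi_{I}(a')=\varphi_{I}(a)$ and $a''=\sum b_{i,0}(t\otimes a_{i}-1\otimes ta_{i})$. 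The point is an explicit computation of $\sigma_{I}(a'')$ using the embedding $\sigma_{I}$ of Lemma~\ref{lsg5}(i): expanding each $a_{i}=\sum_{n}t^{n}a_{i,n}$ with $a_{i,n}\in A$, one finds that divisibility of $\sigma_{I}(a'')$ by $t$ in $\tk{\k}{S[[t]]}\k[[t]]$ forces $\sum_{i}b_{i,0}a_{i,n}=0$ for all $n$, hence $\sigma_{I}(a'')=0$ and thus $a''=0$. This gives $a'\in t^{l-1}(\cdots)$, so $\varphi_{I}(a)\in t^{l-1}\bigl(\tk{\tilde{A}}{S[[t]]}{I}\bigr)$ for every $l$; Krull's intersection theorem then yields $(1+tb)\varphi_{I}(a)=0$ for some $b$, and combined with $t\varphi_{I}(a)=0$ this gives $\varphi_{I}(a)=0$.
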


\begin{proof}
(i) We have the commutative diagram
$$\xymatrix{ \tk {\tilde{A}}{V'_{0}\tilde{A}}I \ar[r]^{\dd} \ar[d]^{\delta} & 
\tk {\tilde{A}}{S[[t]]}I \ar[d]^{\delta} \\
V'_{0}I \ar[r]^{\dd} & S[[t]]I } $$
with canonical arrows $\dd$ and $\delta$. 
According to Proposition~\ref{psg3},(iii),
the left down arrow $\delta$ is an isomorphism. Let $a$ be in 
$\tk {\tilde{A}}{V'_{0}\tilde{A}}I$ such that $\dd a = 0$. Then 
$\dd \rond \delta a = 0$, whence $\delta a = 0$ since the bottom horizontal arrow
$\dd$ is an embedding so that $a = 0$.

(ii) Let $a$ be in $\tk {A}{S[[t]]}I$ such that $t\varphi _{I}(a) = 0$. According to 
Corollary~\ref{csg5}, for $l$ in ${\Bbb N}$ such that $l\geq 2$, 
$$ ta - \sum_{i=1}^{m} b_{i}(t\tens a_{i}-1\tens ta_{i}) \in 
t^{l}\tk {A}{S[[t]]}I$$
for some $\poi b1{,\ldots,}{m}{}{}{}$ in $S[[t]]$ and for some
$\poi a1{,\ldots,}{m}{}{}{}$ in $I$. For $i=1,\ldots,m$, 
$$ b_{i} = b_{i,0} + tb'_{i}$$
with $b_{i,0}$ in $S$ and $b'_{i}$ in $S[[t]]$, whence
$$ t(a-\sum_{i=1}^{m} b'_{i}(t\tens a_{i}-1\tens ta_{i})) - 
\sum_{i=1}^{m} b_{i,0}(t\tens a_{i}-1-\tens ta_{i})  \in 
t^{l}\tk {A}{S[[t]]}I .$$
Set:
$$ a' := a-\sum_{i=1}^{m} b'_{i}(t\tens a_{i}-1\tens ta_{i}) \quad  \text{and} \quad
a'' = \sum_{i=1}^{m} b_{i,0}(t\tens a_{i}-1\tens ta_{i}) .$$
Then $\varphi _{I}(a)=\varphi _{I}(a')$ and $\sigma _{I}(a'')$ is in 
$t\tk {\k}{S[[t]]}\k[[t]]$. Moreover, for $i=1,\ldots,m$, $a_{i}$ has a unique expansion
$$ a_{i} = \sum_{n\in {\Bbb N}} t^{n}a_{i,n}$$
with $a_{i,n},n\in {\Bbb N},$ in $A$. Then
\begin{eqnarray*}
\sigma _{I}(a'')& =&  
\sum_{i=1}^{m} b_{i,0} (\sum_{n\in {\Bbb N}}ta_{i,n}\tens t^{n} - a_{i,n}\tens t^{n+1})\\
&=&  t\sum_{i=1}^{m} a_{i,0}b_{i,0}\tens 1 + 
\sum_{n\in {\Bbb N}^{*}} \sum_{i=1}^{m} b_{i,0}(ta_{i,n}-a_{i,n-1})\tens t^{n} .
\end{eqnarray*}
Since the right hand side is divisible by $t$ in $\tk {\k}{S[[t]]}\k[[t]]$, for all 
positive integer $n$, 
$$ \sum_{i=1}^{m} b_{i,0}a_{i,n-1} = 0$$
since $b_{i,0}$ and $a_{i,n-1}$ are in $S$ for all $i$. Hence $\sigma _{I}(a'')=0$
and $a''=0$ by Lemma~\ref{lsg5},(i). Thus, 
$$ a' \in t^{l-1}\tk {A}{S[[t]]}I .$$
As a result, $\varphi _{I}(a)$ is in $t^{l}\tk {\tilde{A}}{S[[t]]}I$ for all positive 
integer $l$. Since the $S[[t]]$-module $\tk {\tilde{A}}{S[[t]]}I$ is finitely 
generated, by a Krull's theorem~\cite[Ch. 3, Theoreom 8.9]{Mat}, for some $b$ in $S[[t]]$,
$(1+tb)\varphi _{I}(a)=0$, whence $\varphi _{I}(a)=0$ since $t\varphi _{I}(a)=0$.
\end{proof}

Remind that ${\cal V}_{0}$ is the nullvariety of $A(0)_{+}$ in ${\goth g}^{f}$, 
and that $\tilde{A}=\k[[t]]A$.

\begin{theorem}\label{tsg5}
{\rm (i)} The algebra $S[[t]]$ is a free extension of $\tilde{A}$.

{\rm (ii)} The varieties ${\cal V}$ and ${\cal V}_{*}$ are equal. Moreover, 
${\cal V}_{0}$ is equidimensional of dimension $r-\rg$.

{\rm (iii)} The $A(0)$-module $S$ is free and $V_{0}=V'_{0}$. 
In particular, the canonical morphism 
$$V_0 \otimes_\k A(0) \longrightarrow S,\quad v\tens a \longmapsto va$$ 
is an isomorphism. 
\end{theorem}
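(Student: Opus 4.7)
The plan is to establish (i), (iii), (ii) in this order.

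For (i), the injectivity of the multiplication map $V'_0\otimes_\k\tilde{A}\to S[[t]]$ is immediate from Proposition \ref{psg3},(iii): the map factors through $V'_0((t))\otimes_\k A\cong S((t))$ and is therefore injective. Surjectivity is the content that must be established. By Corollary \ref{c2sg4}, there is an $A$-module decomposition $S[[t]]=EA\oplus V'_0\tilde{A}$, so it suffices to show the $\tilde{A}$-module $M:=S[[t]]/V'_0\tilde{A}$ vanishes. Through the $A$-module isomorphism $M\cong EA$ given by this decomposition, Lemma \ref{lsg4},(v) produces for every Slodowy degree $i$ an integer $l_i$ with $t^{l_i}E^{[i]}\subseteq V'_0\tilde{A}$, so every homogeneous element of $M$ is annihilated by some power of $t$; that is, $M$ is a $t$-torsion $\tilde{A}$-module. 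To complement this, I will show $M$ is also $t$-torsion-free using Proposition \ref{psg5}: tensoring the short exact sequence $0\to V'_0\tilde{A}\to S[[t]]\to M\to 0$ with an arbitrary ideal $I$ of $\tilde{A}$, part (i) of that proposition yields the short exact sequence $0\to V'_0\tilde{A}\otimes_{\tilde{A}}I\to S[[t]]\otimes_{\tilde{A}}I\to M\otimes_{\tilde{A}}I\to 0$, and part (ii) ensures $t$ is a non-zero-divisor on the middle term. A snake-lemma chase for multiplication by $t$, applied uniformly in $I$, then forces $M$ to be flat, hence $t$-torsion-free, over $\tilde{A}$. Combining with the first half gives $M=0$, and (i) follows. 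The technical heart of this argument is precisely the $t$-torsion-freeness step, where Proposition \ref{psg5} is genuinely used.

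For (iii), I reduce the isomorphism $V'_0\otimes_\k\tilde{A}\cong S[[t]]$ of (i) modulo $t$. Since $\tilde{A}/t\tilde{A}=A$ and $S[[t]]/tS[[t]]=S$, this gives $V'_0\otimes_\k A\cong S$; composing with the graded algebra isomorphism $A\to A(0)$, $a\mapsto a(0)$, of Proposition \ref{psg1},(ii) produces $V'_0\otimes_\k A(0)\cong S$, so that $S$ is free over $A(0)$ with basis $V'_0$. Now $V_0$ was chosen as any graded complement to $SA(0)_{+}$ in $S$, and in view of this freeness any such complement has Hilbert series $P_S/P_{A(0)}=P_{V'_0}$, the last equality being Proposition \ref{psg3},(iv). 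Combined with the inclusion $V'_0\subseteq V_0$, this forces $V_0=V'_0$, and (iii) follows.

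For (ii), the equidimensionality of ${\cal V}_0$ and its dimension $r-\rg$ follow from (iii) together with Proposition \ref{pgf1},(ii), since $A(0)$ is a polynomial algebra of dimension $\rg$ and $S$ is free over it. For the equality ${\cal V}={\cal V}_*$, I invoke Krull's Hauptidealsatz: the ideal $S[t]A_+$ is generated by the $\rg$ elements $Q_1,\ldots,Q_\rg$ in the polynomial ring $S[t]$ of dimension $r+1$, so every irreducible component of ${\cal V}=V(S[t]A_+)$ has dimension at least $r+1-\rg$. But ${\cal V}\cap V(t)={\cal V}_0\times\{0\}$ has dimension $r-\rg<r+1-\rg$ by the previous step, so no irreducible component of ${\cal V}$ can lie in $V(t)$; by the very definition of ${\cal V}_*$, this forces ${\cal V}={\cal V}_*$.
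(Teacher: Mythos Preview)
Your argument for part (i) has a genuine gap in the step where you claim the snake-lemma chase forces $M$ to be flat (hence $t$-torsion-free). Concretely, apply your snake lemma for multiplication by $t$ to the case $I=\tilde{A}$: since $V'_0\tilde{A}$ and $S[[t]]$ are both $t$-torsion-free, you obtain
\[
0 \longrightarrow M[t] \longrightarrow V'_0\tilde{A}/tV'_0\tilde{A} \longrightarrow S[[t]]/tS[[t]]=S.
\]
Now $V'_0\tilde{A}/tV'_0\tilde{A}\cong V'_0\otimes_\k(\tilde{A}/t\tilde{A})$, and under the identification $\tilde{A}/t\tilde{A}\cong A(0)$ the displayed map becomes precisely the multiplication map $V'_0\otimes_\k A(0)\to S$. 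So $M[t]$ is \emph{equal} to the kernel of this map, which is exactly what you are trying to prove vanishes in (iii). Your argument is therefore circular: (i) as you set it up needs the injectivity in (iii), while your (iii) is deduced from (i). Varying $I$ does not help: the same counterexample scheme (take $\tilde{A}=\k[[t]]$, $V'_0\tilde{A}=t\k[[t]]$, $S[[t]]=\k[[t]]$, $M=\k$) satisfies all of your hypotheses---Proposition~\ref{psg5},(i) and (ii) hold trivially---yet $M$ is neither flat nor $t$-torsion-free.

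The paper avoids this circularity by proving something \emph{weaker} than $M=0$ at stage (i): it shows only that $S[[t]]$ is flat over $\tilde{A}$, i.e.\ that $\nu_I\colon S[[t]]\otimes_{\tilde{A}}I\to S[[t]]I$ is injective for every ideal $I$. For $a\in\ker\nu_I$, Lemma~\ref{lsg4},(v) gives $t^{l}a$ in the image of $V'_0\tilde{A}\otimes_{\tilde{A}}I$ for $l$ large; there the multiplication map is an isomorphism (Proposition~\ref{psg3},(iii)), so $t^{l}a=0$, and then Proposition~\ref{psg5},(ii) gives $a=0$. From flatness one first deduces (ii) by descending to the localization $S[t]_0$ over $\tilde{A}_0$ and using the fibre-dimension theorem, and only then (iii) via Proposition~\ref{pgf1}; the equality $V_0=V'_0$ in (iii) is what finally upgrades flatness to freeness in (i). Your reduction-mod-$t$ derivation of (iii) from the full isomorphism in (i), and your Hauptidealsatz argument for (ii), are both fine once (i) is in hand---but (i) itself needs the paper's detour through flatness.
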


\begin{proof}
(i) First of all, prove that $S[[t]]$ is a flat extension of $\tilde{A}$. Then the 
freeness of the extension will result from the equality $V_{0}=V'_{0}$, Lemma~\ref{lsg3}
and Proposition~\ref{psg3},(iii).

By the criterion of flatness \cite[Ch. 3, Theorem 7.7]{Mat}, it is equivalent to say that
for all ideal $I$ of $\tilde{A}$, the canonical morphism $\nu _{I}$,
$$ \tk {\tilde{A}}{S[[t]]}I \longrightarrow S[[t]]I$$
is injective. Let $a$ be in the kernel of $\nu _{I}$. Consider the commutative 
diagram
$$\xymatrix{ \tk {\tilde{A}}{V'_{0}\tilde{A}}I \ar[r]^{\dd} \ar[d]^{\delta } & 
\tk {\tilde{A}}{S[[t]]}I \ar[d]^{\delta } \\
V'_{0}I \ar[r]^{\dd} & S[[t]]I } $$
of the proof of Proposition~\ref{psg5},(i). According to Lemma~\ref{lsg4},(v), for $l$ 
sufficiently big, $t^{l}a=\dd b $ for some $b$ in 
$\tk {\tilde{A}}{V'_{0}\tilde{A}}I$. Then $\delta b = 0$ since 
$\nu _{I}(t^{l}a) = 0$. By Proposition~\ref{psg3},(iii), $\delta $ is an isomorphism. 
Hence $b=0$ and $t^{l}a=0$. Then, by Proposition~\ref{psg5},(ii), $a=0$, whence the 
the flatness.

(ii) Denote by $\k[t]_{0}$ the localization of $\k[t]$ at $t\k[t]$. Then $\k[[t]]$ is 
a faithfully flat extension of $\k[t]_{0}$. Hence, $S[[t]]$ is a faithfully flat 
extension of 
$$S[t]_{0}:=\tk {\k[t]}{\k[t]_{0}}S.$$ Set   
$$\tilde{A}_0 := \tk {\k}{\k[t]_{0}}A.$$
Then  
$$\tilde{A} = \tk {\k[t]_{0}}{\k[[t]]}\tilde{A}_0$$ 
so that $\tilde{A}$ is 
faithfully flat extension of $\tilde{A}_0$. For $M$ a $\tilde{A}_0$-module, we have 
$$ \tk {\k[t]_{0}}{\k[[t]]}(\tk {\tilde{A}_0}{S[t]_{0}}M) =
\tk {\tilde{A}_0}{(\tk {\k[t]_{0}}{\k[[t]]}S[t]_{0})}M = 
\tk {\tilde{A}}{S[[t]]}(\tk {\tilde{A}_0}{\tilde{A}}M) .$$ 
Hence, $S[t]_{0}$ is a flat extension of $\tilde{A}_0$ since so is the extension
$S[[t]]$ of $\tilde{A}$. 

The variety ${\cal V}$ is the union of ${\cal V}_{*}$ and ${\cal V}_{0}\times \{0\}$. 
Moreover ${\cal V}_{0}\times \{0\}$ is the nullvariety in ${\goth g}^{f}\times \k$ of 
the ideal of $\k[t]A$ generated by $t$ and $A_{+}$. Then, by 
\cite[Ch. 5, Theorem 15.1]{Mat}, ${\cal V}_{0}$ is equidimensional of dimension $r-\rg$ 
since $S[t]_{0}$ is a flat extension of $\tilde{A}_0$ by (i) and since $\tilde{A}_{0}$ 
has dimension $\rg+1$. Since ${\cal V}$ is the nullvariety of $\rg$ functions, all 
irreducible component of ${\cal V}$ has dimension at least $r+1-\rg$ by 
\cite[Ch. 5, Theorem 13.5]{Mat}. Hence any irreducible component of 
${\cal V}_{0}\times \{0\}$ is not an irreducible component of 
${\cal V}$. As a result, ${\cal V}_{0}\times \{0\}$ is contained
in ${\cal V}_{*}$ and so ${\cal V}={\cal V}_{*}$. 

(iii)  Since $A(0)$ is a polynomial algebra, $S$ is a free extension of $A(0)$ by (ii) 
and Proposition~\ref{pgf1}. Moreover, by Lemma~\ref{l2gf1}, the linear map
$$ \tk {\k}{V_{0}}A(0) \longrightarrow S, \qquad v\tens a \longmapsto va$$
is a homogenous isomorphism with respect to the grading of $\tk {\k}{V_{0}}A(0)$
induced by those of $V_{0}$ and $A(0)$. As a result, for all nonnegative integer $i$, 
$$ \dim S^{[i]} = \sum_{j=0}^{i} \dim V_{0}^{[i-j]} \mul \dim A(0)^{[j]},$$
whence $\dim V_{0}^{[i]}=\dim {V'_{0}}^{[i]}$ for all $i$ by Proposition~\ref{psg3},(iv)
since $\dim A^{[i]}=\dim A(0)^{[i]}$ for all $i$ by Proposition~\ref{psg1},(ii). Then
$V_{0}=V'_{0}$.
\end{proof}

As explained in Subsection \ref{ge2}, by Theorem~\ref{tge1} 
and Proposition~\ref{pgf1},(ii), Theorem~\ref{tge2} results from Theorem~\ref{tsg5},(ii).

\begin{rema} 
According to the part (ii) of Theorem \ref{tsg5}, ${\cal I}_{*}$ is the radical of 
$S[t]A_{+}$. Hence $S[t]A_{+}$ is radical by Lemma~\ref{l2sg2},(ii), and then 
${\cal I}_{*}=S[t]A_{+}$. 
\end{rema}

\section{Consequences of Theorem \ref{ti4} for the simple classical Lie algebras} 
\label{ca} 
This section concerns some applications of Theorem \ref{ti4} to the simple classical Lie 
algebras. 

\subsection{}\label{ca1}
The first consequence of Theorem~\ref{tge2} is the following.

\begin{theorem}  \label{tca1} 
Assume that $\g$ is simple of type {\bf A} or {\bf C}. Then all the elements of $\g$ are 
good. 
\end{theorem}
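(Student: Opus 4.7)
The plan is to reduce to nilpotent elements in simple factors via Proposition \ref{pge1}, then invoke Theorem \ref{tge2}. By Proposition \ref{pge1}, an element $x\in\g$ is good if and only if $x_{\n}$ is good in the derived algebra $[\g^{x_{\s}},\g^{x_{\s}}]$. When $\g$ is of type {\bf A}, this derived algebra is a direct sum of simple Lie algebras all of type {\bf A}; when $\g$ is of type {\bf C}, it is a direct sum of simple Lie algebras of types {\bf A} and {\bf C}. Goodness is preserved under direct sums: given good elements in each factor, concatenating generating sequences of invariants whose nullvarieties have the right codimension in each dual produces such a sequence in the product (the rank is additive, and the nullvariety of the concatenated sequence is the product of the individual nullvarieties). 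Hence it suffices to treat a nilpotent element $e$ in a simple Lie algebra of type {\bf A} or {\bf C}.

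To handle this case, I would apply Theorem \ref{tge2} with the classical generators $\poi q1{,\ldots,}{\rg}{}{}{}$ of $\ai g{}{}$: in type {\bf A}, the traces $x\mapsto \tr(x^{i+1})$ for $i=1,\ldots,\rg$; in type {\bf C}, the even-degree coefficients of the characteristic polynomial. The hypothesis of Theorem \ref{tge2} is the algebraic independence of the initial components $\poi {\ie q}1{,\ldots,}{\rg}{}{}{}$ of their restrictions to $e+\g^{f}$. This is precisely what is established in \cite[Theorems 4.2 and 4.4]{PPY}: one computes the degrees $\poi {\delta }1{,\ldots,}{\rg}{}{}{}$ of the $\ie q_{i}$ and verifies the equality $\dim\g^{e}+\rg-2(\poi {\delta }1{+ \cdots +}{\rg}{}{}{}) = 0$, which by \cite[Theorem 2.1]{PPY} forces the $\ie q_{i}$ to be algebraically independent. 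Theorem \ref{tge2} then yields that $e$ is good. In type {\bf A}, this was already obtained directly in \cite[Theorem 5.4]{PPY}, so the genuinely new content here concerns type {\bf C}.

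The principal obstacle is the degree identity above. It is not formal: it requires an explicit computation of $\dim\g^{e}$ in terms of the partition attached to the nilpotent orbit, together with a description of the Slodowy degrees of the chosen invariants. The partition-by-partition analysis of \cite{PPY} in types {\bf A} and {\bf C} is what makes the identity available. By contrast, the reduction via Proposition \ref{pge1} is routine, relying only on the standard description of centralizers of semisimple elements in classical Lie algebras and on the factorization of $\ai gx{}$ across a direct sum.
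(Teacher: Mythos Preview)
Your proposal is correct and follows essentially the same route as the paper's one-line proof, which simply cites \cite[Theorems 4.2 and 4.4]{PPY}, Theorem~\ref{tge2}, and Proposition~\ref{pge1}. You have usefully spelled out the two points the paper leaves implicit: that the derived algebra of $\g^{x_{\s}}$ is a direct sum of simple factors of types {\bf A} (and possibly {\bf C}), and that goodness passes to and from direct summands, so that Theorem~\ref{tge2} may be applied factor by factor.
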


\begin{proof}
This follows from~\cite[Theorems 4.2 and 4.4]{PPY}, Theorem \ref{tge2} and 
Proposition~\ref{pge1}. Further, in type {\bf A}, the result is given 
by~\cite[Theorem 5.4]{PPY}.
\end{proof}

\subsection{}\label{ca2}
In this subsection and the next one, $\g$ is assumed to be simple of type ${\bf B}$ or 
${\bf D}$. More precisely, we assume that ${\goth g}$ is the simple Lie algebra 
${\goth {so}}({\Bbb V})$ for some vector space $\V$ of dimension 
$2\rg+1$ or $2\rg$. 
Then $\g$ is embedded into $\tilde{\g}:=\mathfrak{gl}(\V)={\rm End}(\V)$. 
For $x$ an endomorphism of ${\Bbb V}$ and for $i\in\{1,\ldots,\dim {\Bbb V}\}$, denote by
$Q_{i}(x)$ the coefficient of degree $\dim {\Bbb V}-i$ of the characteristic polynomial 
of $x$. Then, for any $x$ in ${\goth g}$, $Q_{i}(x)=0$ whenever $i$ is odd. 
Define a generating family $(\poi q1{,\ldots,}{\rg}{}{}{})$ of the algebra $\ai g{}{}$ as
follows. For $i=1,\ldots,\rg-1$, set $q_{i}:=Q_{2i}$. If $\dim {\Bbb V}=2\rg +1$, set 
$q_{\rg}=Q_{2\rg}$ and if $\dim {\Bbb V}=2\rg$, let $q_{\rg}$ be the Pfaffian that is a 
homogenous element of degree $\rg$ of $\ai g{}{}$ such that $Q_{2\rg}=q_{\rg}^{2}$. 

Let $(e,h,f)$ be an ${\goth {sl}}_{2}$-triple of ${\goth g}$. Following the notations of 
Subsection \ref{ge2}, for $i\in\{1,\ldots,\rg\}$,  
denote by ${\ie q_{i}}$ the initial homogenous component of 
the restriction to $\g^{f}$ of the polynomial 
function $x\mapsto q_{i}(e+x)$, 
and by $\delta_i$ the degree of ${\ie q_{i}}$. 
According to~\cite[Theorem~2.1]{PPY}, $\poi {\ie q}1{,\ldots,}{\rg}{}{}{}$ are 
algebraically independent if and only if 
$$\dim {\goth g}^{e}+\rg - 2(\poi {\delta }1{+\cdots +}{\rg}{}{}{}) = 0.$$
Our first aim in this subsection is to describe the sum $\dim {\goth g}^{e}+\rg - 
2(\poi {\delta }1{+\cdots +}{\rg}{}{}{})$ in term of the partition of $\dim \V$ 
associated with $e$.

\begin{rema}
The sequence of the degrees $(\poi {\delta }1{,\ldots ,}{\rg}{}{}{})$ is described by 
\cite[Remark 4.2]{PPY}. 
\end{rema}

For $\lambda=(\poi {\lambda }1{,\ldots ,}{k}{}{}{})$ a sequence of positive integers, 
with $\poi {\lambda }1{\geq \cdots \geq }{k}{}{}{}$, set: 
$$\vert \lambda  \vert := k, \qquad \qquad 
r(\lambda ) :=\poi {\lambda }1{+\cdots +}{k}{}{}{}.$$
Assume that the partition $\lambda$ of $r(\lambda )$ is associated with a nilpotent 
orbit of ${\goth {so}}(\k^{r(\lambda )})$. Then the even integers of $\lambda$ have an 
even multiplicity, \cite[\S5.1]{CMa}. Thus $k$ and $r(\lambda )$ have the same parity. 
Moreover, there is an involution $i\mapsto i'$ of 
$\{1,\ldots,k\}$ such that $i=i'$ if $\lambda_{i}$ is odd, and $i'\in \{i-1,i+1\}$ if 
$\lambda_{i}$ is even. Set:
$$S(\lambda ) := \sum_{i=i', \, i \textrm{ odd}} i - \sum_{i=i', \, i \textrm{ even}} i$$
and denote by $n_{\lambda }$ the number of even integers in the sequence $\lambda$. 

From now on, assume that $\lambda$ is the partition of $\dim {\Bbb V}$ 
associated with the nilpotent orbit $G.e$. 

\begin{lemma}\label{lca2}
{\rm (i)} If $\dim {\Bbb V}$ is odd, i.e., $k$ is odd, then 
$$ \dim {\goth g}^{e}+\rg - 2(\poi {\delta }1{+\cdots +}{\rg}{}{}{}) = 
\frac{n_{\lambda }-k-1}{2} + S(\lambda ).$$

{\rm (ii)} If $\dim {\Bbb V}$ is even, i.e., $k$ is even, then 
$$ \dim {\goth g}^{e}+\rg - 2(\poi {\delta }1{+\cdots +}{\rg}{}{}{}) = 
\frac{n_{\lambda }+k}{2} + S(\lambda ).$$
\end{lemma}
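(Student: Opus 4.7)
The plan is to reduce the claimed identity to a purely combinatorial statement about the partition $\lambda$ and then verify it by bookkeeping. First, I recall the standard formula for the dimension of a nilpotent centralizer in $\mathfrak{so}(\V)$: if $\lambda^{*}=(\mu_{1},\mu_{2},\ldots)$ denotes the dual partition of $\lambda$, then
$$\dim \g^{e}\;=\;\tfrac{1}{2}\Bigl(\sum_{j\geq 1}\mu_{j}^{\,2}\;-\;\#\{j:\mu_{j}\text{ odd}\}\Bigr),$$
as in \cite[\S 3.1]{CMa}. Using the identity $\sum_{j}\mu_{j}^{\,2}=\sum_{i}(2i-1)\lambda_{i}$ this can be rewritten directly in terms of $\lambda$.

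Second, I invoke \cite[Remark 4.2]{PPY}, which describes the degrees $\delta_{1},\ldots,\delta_{\rg}$ of the initial components $\ie{q_{1}},\ldots,\ie{q_{\rg}}$ as explicit combinatorial expressions in the Jordan block sizes of $e$; this yields an explicit formula for $\sum_{i=1}^{\rg}\delta_{i}$ in terms of the parts $\lambda_{1},\ldots,\lambda_{k}$. Notice that the description must be different in the two cases, since the top generator $q_{\rg}$ is $Q_{2\rg}$ in type $\mathbf{B}$ but only a ``half'' of $Q_{2\rg}$ in type $\mathbf{D}$ (the Pfaffian): this is what will eventually produce the different constants in (i) and (ii).

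Next I substitute these two formulas into the quantity $\dim\g^{e}+\rg-2\sum_{i}\delta_{i}$. After collecting terms, I split the parts of $\lambda$ into two groups according to the involution $i\mapsto i'$: the odd parts, which are fixed points ($i=i'$), and the even parts, which by the orthogonal-partition condition come in consecutive equal pairs $(i,i+1)$ with $\lambda_{i}=\lambda_{i+1}$ even. The paired even parts each contribute an integer constant to the sum $\delta_{1}+\cdots+\delta_{\rg}$, totalling $n_{\lambda}/2$ after the necessary manipulations, while the odd parts produce an alternating contribution by index, which is precisely $S(\lambda)=\sum_{i=i',\,i\text{ odd}}i-\sum_{i=i',\,i\text{ even}}i$. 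The remaining constant $(-k-1)/2$ in type $\mathbf{B}$ or $k/2$ in type $\mathbf{D}$ comes from the index count of generators and the above degree discrepancy for $q_{\rg}$ in the two types.

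The main obstacle will be the last combinatorial step: carefully matching the contributions of $\sum_{j}\mu_{j}^{\,2}$ (expressed via parts of $\lambda$) with the contributions from $2\sum_{i}\delta_{i}$, and checking that the mixed terms cancel to leave exactly $S(\lambda)$ plus the expected constant. This requires a careful re-indexing of pairs of consecutive equal even parts and an explicit check that the alternating sign pattern arising in $S(\lambda)$ matches the indexing pattern produced by $\sum_{i}\delta_{i}$; since all relevant quantities are linear in the part-multiplicity data, it suffices to verify the identity on partitions consisting of a single part-value, treating separately the cases of an odd value (multiplicity arbitrary, with constraints imposed by the global parity of $k$) and an even value (always in even multiplicity).
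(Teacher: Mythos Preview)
Your approach is correct in principle but takes a much longer route than the paper's. The paper does not compute $\dim\g^{e}$ or $\sum_{i}\delta_{i}$ separately at all: it simply quotes the identity \cite[\S4.4,~(14)]{PPY}, which already gives
\[
2(\delta_{1}+\cdots+\delta_{\rg})\;=\;\dim\g^{e}+\tfrac{\dim\V}{2}+\tfrac{k-n_{\lambda}}{2}-S(\lambda)
\]
in type $\mathbf{B}$ (with the obvious adjustment by $k$ in type $\mathbf{D}$, coming from the Pfaffian generator via $\delta_{\rg}=k/2$). Subtracting this from $\dim\g^{e}+\rg$ and using $\dim\V=2\rg+1$ or $2\rg$ is then a two-line arithmetic computation.

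What you are proposing---combining the Collingwood--McGovern formula for $\dim\g^{e}$ with the individual degrees from \cite[Remark~4.2]{PPY} and then carrying out the combinatorial matching---is precisely the derivation of formula~(14) in \cite[\S4.4]{PPY}. So your plan would work, but it re-proves a result already available in the reference the paper cites. The advantage of your route is that it is self-contained and makes the origin of $S(\lambda)$ transparent; the advantage of the paper's route is brevity.
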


\begin{proof}
(i) If $\dim{\Bbb V}$ is odd, then by \cite[\S4.4, (14)]{PPY}, 
$$2(\poi {\delta }1{+\cdots +}{\rg}{}{}{})= \dim \g^{e} + \frac{\dim\V}{2}+
\frac{k-n_\lambda}{2}-S(\lambda),$$
whence
$$ \dim {\goth g}^{e}+\rg - 2(\poi {\delta }1{+\cdots +}{\rg}{}{}{}) = 
\frac{n_{\lambda }-k-1}{2} + S(\lambda )$$
since $\dim {\Bbb V}=2\rg+1$. 

(ii) If $\dim {\Bbb V}$ is even, then $\delta _{\rg}=k/2$ by \cite[Remark 4.2]{PPY} 
and by \cite[\S4.4, (14)]{PPY},  
$$2(\poi {\delta }1{+\cdots +}{\rg}{}{}{}) +k=\dim \g^{e} + \frac{\dim\V}{2}+
\frac{k-n_\lambda}{2}-S(\lambda)$$ 
whence
$$\dim{\goth g}^{e}+\rg-2(\poi {\delta }1{+\cdots +}{\rg}{}{}{})=
\frac{n_{\lambda }+k}{2}+S(\lambda )$$
since $\dim\V=2\rg$. 
\end{proof}

\noi
The sequence $\lambda=(\lambda_1,\ldots,\lambda_k)$ satisfies one of the following five 
conditions:  
\begin{itemize}
\item [{\rm 1)}] $\lambda_{k}$ and $\lambda_{k-1}$ are odd, 
\item [{\rm 2)}] $\lambda_{k}$ and $\lambda_{k-1}$ are even,  
\item [{\rm 3)}] $k>3$, $\lambda_{k}$ and $\lambda_{1}$ are odd and $\lambda_{i}$ is 
even for any $i\in\{2,\ldots,k-1\}$,
\item [{\rm 4)}] $k>4$, $\lambda_{k}$ is odd and there is $k'\in\{2,\ldots,k-2\}$ 
such that $\lambda_{k'}$ is odd and $\lambda_{i}$ is even for any 
$i\in\{k'+1,\ldots,k-1\}$,
\item [{\rm 5)}] $k=1$ or $\lambda_{k}$ is odd and $\lambda_{i}$ is even for any $i<k$.
\end{itemize}
For example, $(4,4,3,1)$ satisfies Condition (1);
$(6,6,5,4,4)$ satisfies Condition (2);
$(7,6,6,4,4,4,4,3)$ satisfies Condition (3);
$(8,8,7,5,4,4,2,2,3)$ satisfies Condition (4) with $k'=4$;
$(9)$ and $(6,6,4,4,3)$ satisfy Condition (5). 
If $k=2$, then one of the conditions (1) or (2) is satisfied. 

\begin{defi}\label{dca2}
Define a sequence $\lambda^{*}$ of positive integers, with 
$\vert \lambda^* \vert\leq \vert \lambda  \vert$,
 as follows: 
\begin{itemize}
\item[-] if $k=2$ or if Condition (3) or (5) is satisfied, then set $\lambda^{*}:=\lambda$,
\item[-] if Condition (1) or (2) is satisfied, then set: 
$$\lambda^{*} := (\poi {\lambda }1{,\ldots ,}{k-2}{}{}{}),$$ 
\item[-] if $k>3$ and if Condition (4) is satisfied, 
then set $$\lambda^{*} := (\poi {\lambda }1{,\ldots ,}{k'-1}{}{}{}).$$ 
\end{itemize}
\end{defi}

In any case, the partition of $r(\lambda^{*})$ corresponding to $\lambda^{*}$ is 
associated with a nilpotent orbit of ${\goth {so}}(\k^{r(\lambda^{*})})$. 
Recall that $n_\lambda$ is the number of even integers in the sequence $\lambda$.  

\begin{defi} \label{d2ca2}
Denote by $d_{\lambda }$ the integer defined by: 
\begin{itemize}
\item[-] if $k=2$, then $d_{\lambda }:=n_{\lambda }$,
\item[-] if $k>2$ and if Condition (1) or (4) is satisfied, 
then $d_{\lambda }:= d_{\lambda^{*}}$,
\item[-] if $k>2$ and if Condition (2) is satisfied,
then $d_{\lambda }:= d_{\lambda^{*}}+2$,
\item[-] if $k>2$ and if Condition (3) is satisfied, 
then $d_{\lambda }:= 0$,   
\item[-] if Condition (5) is satisfied, then $d_{\lambda }:= 0$.
\end{itemize}
\end{defi}

\begin{lemma}\label{l2ca2}
{\rm (i)} Assume that $k$ is odd. 
If Condition {\rm (1)}, {\rm (2)} or {\rm (5)} is satisfied, then
$$ \frac{n_{\lambda }-k-1}{2} + S(\lambda ) = 
\frac{n_{\lambda^{*}}-\vert \lambda^{*} \vert-1}{2} + S(\lambda^{*}).$$ 
Otherwise, 
$$\frac{n_{\lambda }-k-1}{2} + S(\lambda ) = 
\frac{n_{\lambda^{*}}-\vert \lambda^{*} \vert-1}{2} + S(\lambda^{*}) +
k-\vert \lambda^{*} \vert-2.$$ 

{\rm (ii)} If $k$ is even, then 
$$ \frac{n_{\lambda }+k}{2} + S(\lambda ) = 
\frac{n_{\lambda^{*}}+\vert \lambda^{*} \vert}{2} + S(\lambda^{*}) 
+ d_{\lambda }-d_{\lambda^{*}}.$$ 
\end{lemma}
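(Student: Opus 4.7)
\smallskip

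The proof will be a case-by-case verification following the five cases (1)--(5) listed before Definition~\ref{dca2}. The key observation is that the involution $i \mapsto i'$ of $\{1,\ldots,k\}$ fixes exactly the indices $i$ with $\lambda_i$ odd, so that $S(\lambda)$ decomposes as a sum of contributions $\epsilon_i \cdot i$, where $\epsilon_i = +1$ if $i$ is odd and $\epsilon_i = -1$ if $i$ is even, ranging over indices $i$ with $\lambda_i$ odd. Also essential is the parity constraint forced by the fact that the even parts of $\lambda$ have even multiplicity: in any maximal run $\lambda_{j+1},\ldots,\lambda_{j'-1}$ of consecutive even entries (bounded by odd entries or by the endpoints), the number of entries $j'-j-1$ must be even.

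Cases (3) and (5) are immediate: here $\lambda^{*}=\lambda$ by Definition~\ref{dca2}, so both identities reduce to $0=0$ (in (ii), also $d_{\lambda}-d_{\lambda^{*}}=0$). I will first verify that these cases are compatible with the parity assumption of each statement: in case~(3) the block $\lambda_2,\ldots,\lambda_{k-1}$ consists of $k-2$ even entries, forcing $k$ even, so (3) does not appear in (i); in case~(5) with $k>1$, the $k-1$ even entries force $k$ odd, so (5) does not appear in (ii) beyond the trivial situation.

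For the non-trivial cases (1), (2), and (4), the plan is to compute the four quantities $n_{\lambda}-n_{\lambda^{*}}$, $|\lambda|-|\lambda^{*}|$, $S(\lambda)-S(\lambda^{*})$, and $d_{\lambda}-d_{\lambda^{*}}$ directly from the definitions, and then to substitute into the two sides of the claimed identity. In case~(1), removing the two odd entries $\lambda_{k-1},\lambda_{k}$ gives $|\lambda|-|\lambda^{*}|=2$, $n_{\lambda}=n_{\lambda^{*}}$, $d_{\lambda}=d_{\lambda^{*}}$, and $S(\lambda)-S(\lambda^{*})=\epsilon_{k-1}(k-1)+\epsilon_{k}k$, which equals $+1$ when $k$ is odd and $-1$ when $k$ is even. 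Case~(2) is similar with $n_{\lambda}=n_{\lambda^{*}}+2$, $d_{\lambda}=d_{\lambda^{*}}+2$, and $S(\lambda)=S(\lambda^{*})$ because $k-1,k$ are then paired by the involution. In case~(4) with $k'$ the index described in the definition, one has $|\lambda|-|\lambda^{*}|=k-k'+1$, $n_{\lambda}-n_{\lambda^{*}}=k-1-k'$, $d_{\lambda}=d_{\lambda^{*}}$, and $S(\lambda)-S(\lambda^{*})=\epsilon_{k'}k'+\epsilon_{k}k$; the parity constraint on the block $\lambda_{k'+1},\ldots,\lambda_{k-1}$ forces $k-k'-1$ even, and hence determines the parities of $\epsilon_{k'},\epsilon_{k}$ in terms of the parity of $k$.

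The main obstacle, if any, is really only bookkeeping: keeping track of the parities of $k$ and $k'$ under the parity constraint on even-multiplicity blocks, and carefully verifying that the telescoping
\[
\frac{n_{\lambda}\pm k\mp 1}{2}+S(\lambda)=\frac{n_{\lambda^{*}}\pm|\lambda^{*}|\mp 1}{2}+S(\lambda^{*})+(\text{correction})
\]
yields exactly the advertised correction ($0$, $d_{\lambda}-d_{\lambda^{*}}$, or $k-|\lambda^{*}|-2$) in each of the cases. No deeper structural argument is required; the whole lemma is a finite combinatorial check.
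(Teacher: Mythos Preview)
Your proposal is correct and follows essentially the same approach as the paper's own proof: a direct case-by-case verification of the differences $n_{\lambda}-n_{\lambda^{*}}$, $|\lambda|-|\lambda^{*}|$, $S(\lambda)-S(\lambda^{*})$, and $d_{\lambda}-d_{\lambda^{*}}$ in each of the conditions (1)--(5), followed by substitution. The paper carries out exactly these computations (with the same values you obtain), while you add the helpful extra remarks about which conditions are excluded by the parity of $k$ and why the block of even entries in case~(4) forces $k$ and $k'$ to have opposite parities.
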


\begin{proof}
(i) If Condition (3) or (5) is satisfied, there is nothing to prove. If Condition (1) is
satisfied, 
$$\begin{array}{ccc}
n_{\lambda }=n_{\lambda^{*}}, && S(\lambda )=S(\lambda^{*}) + 1.\end{array}$$ 
Then
$$ \frac{n_{\lambda }-k-1}{2} + S(\lambda ) = \frac{n_{\lambda^{*}}-\vert \lambda^{*} 
\vert-1}{2} - 1 +S(\lambda^{*})+1$$   
whence the assertion. If Condition (2) is satisfied, 
$$\begin{array}{ccc}
n_{\lambda }=n_{\lambda^{*}}+2, && S(\lambda )=S(\lambda^{*}). \end{array}$$ 
Then,
$$ \frac{n_{\lambda }-k-1}{2} + S(\lambda ) = \frac{n_{\lambda^{*}}-\vert \lambda^{*} 
\vert-1}{2}+ S(\lambda^{*})$$
whence the assertion. If Condition (4) is satisfied, 
$$\begin{array}{ccc}
n_{\lambda }=n_{\lambda^{*}}+k-\vert \lambda^{*} \vert -2,  && 
S(\lambda )=S(\lambda^{*})+k-\vert \lambda^{*} \vert -1 .\end{array}$$ 
Then,
$$\frac{n_{\lambda }-k-1}{2} + S(\lambda ) = \frac{n_{\lambda^{*}}-\vert \lambda^{*} 
\vert -1}{2}-1 + S(\lambda^{*}) +k- \vert \lambda^{*} \vert -1$$
whence the assertion. 

(ii) If $k=2$ or if $k>2$ and Condition (3) or (5) is satisfied, there is nothing to 
prove. Let us suppose that $k>3$. If Condition (1) is satisfied, 
$$\begin{array}{ccc}
n_{\lambda }=n_{\lambda^{*}}, && S(\lambda )=S(\lambda^{*})-1 .\end{array}$$ 
Then
$$ \frac{n_{\lambda }+k}{2} + S(\lambda ) = 
\frac{n_{\lambda^{*}}+\vert \lambda^{*} \vert}{2} 
+1+S(\lambda^{*})-1$$   
whence the assertion since $d_{\lambda }=d_{\lambda^{*}}$. If Condition (2) is satisfied, 
$$\begin{array}{ccc}
n_{\lambda }=n_{\lambda^{*}} +2, && S(\lambda )=S(\lambda^{*}) .\end{array}$$ 
Then,
$$ \frac{n_{\lambda }+k}{2} + S(\lambda ) = 
\frac{n_{\lambda^{*}}+\vert \lambda^{*} \vert}{2}+2+S(\lambda^{*})$$
whence the assertion since $d_{\lambda }-d_{\lambda^{*}}=2$. If Condition (4) is 
satisfied, 
$$\begin{array}{ccc}
n_{\lambda }=n_{\lambda^{*}} + k-\vert \lambda^{*} \vert - 2, && 
S(\lambda )=S(\lambda^{*}) + \vert \lambda^{*} \vert +1 -k .\end{array}$$ 
Then,
$$ \frac{n_{\lambda }+k}{2} + S(\lambda ) = 
\frac{n_{\lambda^{*}}+\vert \lambda^{*} \vert}{2}
+ k-\vert \lambda^{*} \vert -1 + S(\lambda^{*}) + \vert \lambda^{*} \vert - k +1 $$
whence the assertion since $d_{\lambda }=d_{\lambda^{*}}$.
\end{proof}

\begin{lemma}\label{l3ca2}
{\rm (i)} If $\lambda_{1}$ is odd and if $\lambda_{i}$ is even for $i\geq 2$, then 
$\dim {\goth g}^{e}+\rg - 2(\delta_1+\cdots+\delta_\rg)=0.$

{\rm (ii)} If $k$ is odd, then $\dim {\goth g}^{e}+\rg - 
2(\poi {\delta }1{+\cdots +}{\rg}{}{}{})= n_{\lambda }-d_\lambda.$ 

{\rm (iii)} If $k$ is even, then $\dim {\goth g}^{e}+\rg - 
2(\poi {\delta }1{+\cdots +}{\rg}{}{}{}) = d_\lambda.$
\end{lemma}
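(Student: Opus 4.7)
The plan is to deduce all three parts from Lemmas \ref{lca2} and \ref{l2ca2}, treating (ii) and (iii) together by induction on $k=|\lambda|$. Set $E(\lambda):=\dim{\goth g}^{e}+\rg-2(\poi {\delta }1{+\cdots +}{\rg}{}{}{})$. For (i), the hypothesis that $\lambda_{1}$ is odd and $\lambda_{2},\ldots,\lambda_{k}$ are even gives $n_{\lambda}=k-1$; since even parts of a partition associated with ${\goth {so}}({\Bbb V})$ occur with even total multiplicity, $k-1$ is even, so $k$ is odd. The involution $i\mapsto i'$ has a unique fixed point at $i=1$, so $S(\lambda)=1$, and Lemma~\ref{lca2},(i) yields $E(\lambda)=\frac{(k-1)-k-1}{2}+1=0$.

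For (ii) and (iii), a preliminary step shows that in each of the five enumerated conditions above Definition \ref{dca2}, $|\lambda^{*}|$ has the same parity as $k$. Conditions (3) and (5) (with $k>1$) force $k$ to be even and $k$ to be odd respectively, since they produce $k-2$ and $k-1$ even parts. In Condition (4), the even parts $\lambda_{k'+1},\ldots,\lambda_{k-1}$ are sandwiched between the odd parts $\lambda_{k'}$ and $\lambda_{k}$; the inequality $\lambda_{k'-1}\geq \lambda_{k'}>\lambda_{k'+1}$ forces these even values not to appear outside the block, hence the block length $k-1-k'$ is a sum of even multiplicities and is itself even, so $|\lambda^{*}|=k'-1$ has the same parity as $k$. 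In Conditions (1) and (2) this is immediate from $|\lambda^{*}|=k-2$.

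The base cases and the cases where $\lambda^{*}=\lambda$ are then handled directly via Lemma~\ref{lca2}. For (ii): when $k=1$ (Condition (5) degenerate) one finds $E(\lambda)=0=n_{\lambda}-d_{\lambda}$; when $k>1$ satisfies Condition (5), the unique fixed point of the involution is $i=k$, giving $S(\lambda)=k$, so Lemma~\ref{lca2},(i) yields $E(\lambda)=\frac{(k-1)-k-1}{2}+k=k-1=n_{\lambda}-d_{\lambda}$. For (iii): the case $k=2$ is verified directly in both Conditions (1) and (2) using $d_{\lambda}=n_{\lambda}$ from Definition~\ref{d2ca2}; in Condition (3) one has $n_{\lambda}=k-2$, $d_{\lambda}=0$, and fixed points $\{1,k\}$ with $1$ odd and $k$ even, so $S(\lambda)=1-k$, whence $E(\lambda)=\frac{(k-2)+k}{2}+(1-k)=0=d_{\lambda}$.

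The inductive step treats Conditions (1), (2) and (4), where $\lambda^{*}$ is a strictly shorter partition of the same parity as $\lambda$, so the induction hypothesis applies. For (iii), Lemma~\ref{l2ca2},(ii) yields $E(\lambda)=E(\lambda^{*})+d_{\lambda}-d_{\lambda^{*}}=d_{\lambda^{*}}+d_{\lambda}-d_{\lambda^{*}}=d_{\lambda}$. For (ii), combining Lemma~\ref{l2ca2},(i) with the explicit relations between $n_{\lambda},n_{\lambda^{*}}$ in each case and the corresponding relations $d_{\lambda}=d_{\lambda^{*}}$ in Conditions (1) and (4), $d_{\lambda}=d_{\lambda^{*}}+2$ in Condition (2) from Definition~\ref{d2ca2}, one checks that $E(\lambda)=n_{\lambda}-d_{\lambda}$ follows from $E(\lambda^{*})=n_{\lambda^{*}}-d_{\lambda^{*}}$. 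The only conceptually delicate point in the whole argument is the parity claim for Condition (4); beyond that, everything is a routine arithmetic check case by case.
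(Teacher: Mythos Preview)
Your proof is correct and follows essentially the same route as the paper: induction on $k=|\lambda|$ via Lemmas~\ref{lca2} and~\ref{l2ca2}, with Conditions~(3) and~(5) treated directly and Conditions~(1), (2), (4) handled inductively. Your write-up is in fact slightly more careful than the paper's: you explicitly verify that $|\lambda^{*}|$ has the same parity as $k$ in Condition~(4) (so that the induction hypothesis, which is parity-sensitive through Lemma~\ref{lca2}, actually applies to $\lambda^{*}$), and you include the degenerate base $k=1$, whereas the paper starts the odd case at $k=3$ and leaves the parity check in Condition~(4) implicit.
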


\begin{proof}
(i) By the hypothesis, $n_{\lambda }=k-1$ and $S(\lambda )=1$, whence the assertion
by Lemma \ref{lca2},(i).

(ii) Let us prove the assertion by induction on $k$. For $k=3$, if $\lambda_{1}$ and 
$\lambda_{2}$ are even, $n_{\lambda }=2$, $d_{\lambda }=0$ and $S(\lambda ) = 3$, whence 
the equality by Lemma \ref{lca2},(i). Assume that $k>3$ and suppose that 
the equality holds for the integers smaller than $k$. If Condition (1) or (2) is 
satisfied, then by Lemma \ref{lca2},(i), Lemma \ref{l2ca2},(i) and by induction 
hypothesis,
$$ \dim {\goth g}^{e}+\rg - 2(\poi {\delta }1{+\cdots +}{\rg}{}{}{}) =  
n_{\lambda^{*}}-d_{\lambda^{*}}.$$ 
But if Condition (1) or (2) is satisfied, then $n_{\lambda }-d_{\lambda }=n_{\lambda^{*}}
-d_{\lambda^{*}}$. If Condition (5) is satisfied, then 
$$\begin{array}{ccccccc}
n_{\lambda } = k-1, && S(\lambda ) = k, && d_{\lambda } = 0, \end{array}$$
whence the equality by Lemma \ref{lca2},(i). Let us suppose that Condition (4) is 
satisfied. By Lemma \ref{lca2},(i), Lemma \ref{l2ca2},(i) and by induction hypothesis,
$$\dim {\goth g}^{e}+\rg - 2(\poi {\delta }1{+\cdots +}{\rg}{}{}{})= 
n_{\lambda^{*}} - d_{\lambda^{*}} + k-\vert \lambda^{*} \vert -2 = 
n_{\lambda }-d_{\lambda } $$
whence the assertion since Condition (3) is never satisfied when $k$ is odd. 

(iii) The statement is clear for $k=2$ by Lemma \ref{lca2},(ii). Indeed, if Condition (1) 
is satisfied, then $d_\lambda=n_\lambda=0$ and $S(\lambda)=-1$ and if Condition (2) is 
satisfied, then $d_\lambda=n_\lambda=2$ and $S(\lambda)=0$. If Condition (3) is satisfied,
$n_{\lambda }=k-2$ and $S(\lambda )=1-k$, whence the statement by Lemma~\ref{lca2},(ii).
When Condition (4) is satisfied, by induction on $\vert \lambda \vert$, the statement 
results from Lemma \ref{lca2},(ii) and Lemma \ref{l2ca2},(ii), whence the assertion since
Condition (5) is never satisfied when $k$ is even.    
\end{proof}

\begin{coro}\label{cca2}
{\rm (i)} If $\lambda_{1}$ is odd and if $\lambda_{i}$ is even for all $i\geq 2$, then 
$e$ is good. 

{\rm (ii)} If $k$ is odd and if $n_{\lambda }=d_{\lambda }$, then $e$ is good. In 
particular, if $\g$ is of type ${\bf B}$, then the even nilpotent elements of ${\goth g}$ 
are good.

{\rm (iii)} If $k$ is even and if $d_\lambda=0$, then $e$ is good. 
In particular, if ${\goth g}$ is of type ${\bf {D}}$ and of odd rank, then the even 
nilpotent elements of ${\goth g}$ are good.
\end{coro}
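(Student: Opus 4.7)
The plan is that each item of the corollary follows quickly by combining Lemma~\ref{l3ca2} with \cite[Theorem~2.1]{PPY} and our main Theorem~\ref{tge2}. Recall that by \cite[Theorem~2.1]{PPY}, the polynomials $\ie q_{1},\ldots,\ie q_{\rg}$ are algebraically independent if and only if
$$\dim {\goth g}^{e}+\rg - 2(\delta_{1}+\cdots +\delta_{\rg}) = 0,$$
and whenever this holds Theorem~\ref{tge2} gives that $e$ is good. So in each of the three items it suffices to verify that the above quantity vanishes.

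For~(i), this is precisely the content of Lemma~\ref{l3ca2},(i), so $e$ is good. For~(ii), with $k$ odd, Lemma~\ref{l3ca2},(ii) identifies the quantity with $n_{\lambda}-d_{\lambda}$, which vanishes under the hypothesis $n_{\lambda}=d_{\lambda}$. Similarly for~(iii), with $k$ even, Lemma~\ref{l3ca2},(iii) gives that the quantity equals $d_{\lambda}$, which vanishes by hypothesis.

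It remains to justify the two ``in particular'' statements, which reduce to a combinatorial check on the partition $\lambda$ of an even nilpotent element. In type ${\bf B}$, $\dim{\Bbb V}=2\rg+1$ is odd; an even nilpotent element of $\mathfrak{so}({\Bbb V})$ corresponds to a partition all of whose parts have the same parity, and since the parts sum to an odd number they must all be odd. Hence $n_{\lambda}=0$, and $k$ is odd. I would then establish by induction on $k$, using Condition~(1) of Subsection~\ref{ca2} and Definition~\ref{d2ca2} (base cases $k=1$ via Condition~(5), and $k=2$ directly, both giving $d_{\lambda}=0$), that $d_{\lambda}=d_{\lambda^{*}}=\cdots =0$. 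Thus $n_{\lambda}=d_{\lambda}=0$ and~(ii) applies.

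In type ${\bf D}$ with $\rg$ odd, $\dim{\Bbb V}=2\rg$; an even nilpotent again has all parts of the same parity. If all parts were even, then since in type ${\bf D}$ each even part must occur with even multiplicity, the sum of parts would be divisible by $4$, contradicting $\dim{\Bbb V}=2\rg\equiv 2\pmod 4$. So all parts of $\lambda$ are odd; their sum being even forces $k$ to be even as well. The same induction as above (now with even $k$, remaining in Condition~(1) at each step, with base case $k=2$ giving $d_{\lambda}=n_{\lambda}=0$) yields $d_{\lambda}=0$, so~(iii) applies. There is no real obstacle: the heavy lifting is in Lemma~\ref{l3ca2} and Theorem~\ref{tge2}; the remainder is a routine partition bookkeeping that the even-part parity and multiplicity constraints for $\mathfrak{so}({\Bbb V})$ make transparent.
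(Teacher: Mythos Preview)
Your proof is correct and follows essentially the same approach as the paper: both invoke \cite[Theorem~2.1]{PPY} together with Lemma~\ref{l3ca2} and Theorem~\ref{tge2} for the main clauses, then reduce the ``in particular'' statements to the observation that for even nilpotent elements in these types all parts of $\lambda$ are odd, giving $n_{\lambda}=d_{\lambda}=0$. You simply spell out the induction on $k$ showing $d_{\lambda}=0$ and the $\pmod 4$ argument for type ${\bf D}$ with odd rank, where the paper just cites Definition~\ref{d2ca2} and \cite[\S1.3.1]{Ba}.
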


\begin{proof}
As it has been already noticed, by \cite[Theorem~2.1]{PPY}, the polynomials 
$\poi {\ie q}1{,\ldots ,}{\rg}{}{}{}$ are algebraically independent if and only if 
$$\dim {\goth g}^{e}+\rg - 2(\poi {\delta }1{+\cdots +}{\rg}{}{}{}) = 0.$$
So, by Theorem \ref{tge2} and Lemma \ref{l3ca2}, if either $\lambda_{1}$ is odd and 
$\lambda_{i}$ is even for all $i\geq 2$, or if $k$ is odd and $n_{\lambda }=d_\lambda$, 
or if $k$ is even and $d_{\lambda }=0$, then $e$ is good. 

Suppose that $e$ is even. Then the integers $\poi {\lambda }1{,\ldots ,}{k}{}{}{}$ have 
the same parity, cf.~e.g.~\cite[\S1.3.1]{Ba}. Moreover, $n_{\lambda }=d_{\lambda }=0$ 
whenever $\poi {\lambda }1{,\ldots ,}{k}{}{}{}$ are all odd (cf.~Definition~\ref{d2ca2}).
This in particular occurs if either ${\goth g}$ is of type ${\bf {B}}$, or if ${\goth g}$
is of type ${\bf {D}}$ with odd rank. 
\end{proof}

\begin{rema}\label{rca2}
The fact that the even nilpotent elements of ${\goth g}$ without (only) even Jordan 
blocks are good if ${\goth g}$ is of type {\bf B} or  {\bf D} was already 
observed by O.~Yakimova in \cite[Corollary 8.2]{Y4} in a different formulation. Corollary
\ref{cca2} is more general. 
\end{rema}

\begin{defi}\label{d3ca2}
A sequence $\lambda=(\poi {\lambda }1{,\ldots ,}{k}{}{}{})$ is said to be 
{\em very good} if $n_\lambda=d_\lambda$ whenever $k$ is odd and if $d_\lambda=0$ 
whenever $k$ is even. A nilpotent element of $\g$ is said to be 
{\em very good} if it is associated with a very good partition of $\dim\V$. 
\end{defi}
According to Corollary~\ref{cca2}, if $e$ is very good then $e$ is good. 
The following lemma characterizes the very good sequences. 

\begin{lemma}\label{l4ca2}
{\rm (i)} If $k$ is odd then $\lambda$ is very good if and only if $\lambda_1$ is 
odd and if  $(\poi {\lambda }2{,\ldots ,}{k}{}{}{})$ 
is a concatenation of sequences satisfying Conditions~{\rm (1)} 
or~{\rm (2)}  with $k=2$.

{\rm (ii)} If $k$ is even then $\lambda$ is very good if and only if 
$\lambda $ is a concatenation of sequences satisfying Condition~{\rm (3)} or 
Condition~{\rm (1)} with $k=2$.
\end{lemma}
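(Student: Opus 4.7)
The plan is to prove both parts simultaneously by strong induction on $k=|\lambda|$, exploiting the inductive definition of $n_\lambda$ and $d_\lambda$ via the passage $\lambda\mapsto\lambda^*$ from Definitions~\ref{dca2}--\ref{d2ca2}. Two preparatory observations drive the argument. First, a direct induction shows that $n_\lambda\geq d_\lambda$ for every valid partition $\lambda$: equality holds at $k=2$; Conditions~(1) and~(2) preserve $n_\lambda-d_\lambda$; and in Conditions~(3), (4), (5) we have $d_\lambda\in\{0,d_{\lambda^*}\}$ while $n_\lambda\geq n_{\lambda^*}$. Second, the even-multiplicity rule on even parts of an orthogonal partition forces parity constraints: Condition~(3) occurs only for $k$ even, Condition~(5) with $k>1$ only for $k$ odd, and in Condition~(4) the integer $k-k'-1$ is necessarily even (hence $|\lambda^*|=k'-1$ has the same parity as $k$); moreover $k'\leq k-3$, since if $k'=k-2$ then $\lambda_{k-1}$ would be an even entry of multiplicity~$1$ in $\lambda$ (as $\lambda_{k-2}>\lambda_{k-1}$ by parity, while every earlier entry strictly exceeds $\lambda_{k-1}$).

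For part~(i), the base case $k=1$ (Condition~(5)) gives $n_\lambda=d_\lambda=0$, matching the claim vacuously. For $k\geq 3$ odd, only Conditions~(1), (2), (4), (5) can apply. Under Condition~(1) or~(2), one has $n_\lambda-d_\lambda=n_{\lambda^*}-d_{\lambda^*}$ with $|\lambda^*|=k-2$ odd; the induction hypothesis then identifies very-goodness of $\lambda$ with that of $\lambda^*$, to which one appends the pair $(\lambda_{k-1},\lambda_k)$, itself a Condition~(1) or~(2) pair with $k=2$. Under Condition~(5) ($k\geq 3$), $n_\lambda-d_\lambda=k-1>0$, so $\lambda$ is not very good; moreover the terminal pair in the unique length-two splitting of $(\lambda_2,\ldots,\lambda_k)$ has mixed parity, so no allowed decomposition exists. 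Under Condition~(4), the preparatory bounds give $n_\lambda-d_\lambda\geq k-k'-1\geq 2>0$, so again $\lambda$ is not very good, and again the terminal pair $(\lambda_{k-1},\lambda_k)$ has mixed parity.

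For part~(ii), the base $k=2$ is direct: Condition~(1) gives $d_\lambda=0$ (very good, a single Condition~(1)$(k=2)$ block), Condition~(2) gives $d_\lambda=2$. For $k\geq 4$ even, only Conditions~(1)--(4) can apply. Condition~(2) yields $d_\lambda=d_{\lambda^*}+2>0$, failing very-goodness, and the terminal even-even pair obstructs any allowed decomposition. Condition~(1) preserves $d_\lambda=d_{\lambda^*}$ with $|\lambda^*|=k-2$ even, so the induction hypothesis descends to $\lambda^*$, and one appends the Condition~(1)$(k=2)$ block $(\lambda_{k-1},\lambda_k)$. Condition~(3) makes $\lambda=\lambda^*$ itself a single Condition~(3) block. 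Condition~(4) preserves $d_\lambda=d_{\lambda^*}$ with $|\lambda^*|=k'-1$ even, and the tail $(\lambda_{k'},\ldots,\lambda_k)$ has length $k-k'+1\geq 4$ and pattern (odd, even, \ldots, even, odd), i.e., is exactly a Condition~(3) block; the induction on $\lambda^*$ then supplies the claimed decomposition. Conversely, any concatenation of Condition~(3) or Condition~(1)$(k=2)$ blocks can be peeled from the right, the last block corresponding to Condition~(1) or Condition~(4) for $\lambda$ and reducing very-goodness to that of a shorter sequence.

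The main obstacle is the combinatorial bookkeeping around Condition~(4), especially the exclusion $k'=k-2$ forced by the even-multiplicity rule: this is what guarantees that the Condition~(4) tail has length at least~$4$ and thus qualifies as a Condition~(3) block in the decomposition for part~(ii), making the induction close correctly in both directions.
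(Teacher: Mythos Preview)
Your proof is correct and follows essentially the same inductive strategy as the paper: strong induction on $k=|\lambda|$ via the reduction $\lambda\mapsto\lambda^*$, splitting into cases according to Conditions~(1)--(5). Your treatment is in fact more careful than the paper's: you make explicit the inequality $n_\lambda\geq d_\lambda$ (which the paper invokes without comment when disposing of Condition~(4) in part~(i)), and you spell out the constraint $k'\leq k-3$ in Condition~(4) that guarantees the tail $(\lambda_{k'},\ldots,\lambda_k)$ is a genuine Condition~(3) block of length $\geq 4$ in part~(ii), whereas the paper asserts this concatenation without justification. (Incidentally, your multiplicity argument for $k'\neq k-2$ is correct but slightly redundant: once you know $k-k'-1$ is even, the parity of $k'$ already forces $k'\leq k-3$.)
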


For example, the partitions $(5,3,3,2,2)$ and $(7,5,5,4,4,3,1,1)$ of $15$ 
and $30$ respectively are very good.  

\begin{proof}
(i) Assume that $\lambda_1$ is odd and that  $(\lambda_{2},\ldots,\lambda_k)$ 
is a concatenation of sequences satisfying Conditions~(1) or~(2) with $k=2$. 
So, if $k>1$, then $n_\lambda-d_\lambda=n_{\lambda^*}-d_{\lambda^*}$. 
Then, a quick induction proves that 
$n_\lambda-d_\lambda=n_{(\lambda_1)}-d_{(\lambda_1)}=0$ since $\lambda_1$ is odd. 
The statement is clear for $k=1$. 

Conversely, assume that $n_{\lambda}-d_{\lambda}=0$. If $\lambda$ satisfies Conditions (1)
or (2), then $n_\lambda-d_\lambda=n_{\lambda^*}-d_{\lambda^*}$ and 
$\vert \lambda^* \vert < \vert \lambda \vert$. So, we can assume that $\lambda$ does not
satisfy Conditions (1) or (2). Since $k$ is odd, $\lambda$ cannot satisfy Condition (3). 
If $\lambda$ satisfies Condition (4), then 
$n_\lambda-d_\lambda=n_\lambda-d_{\lambda^*} > n_{\lambda^*}-d_{\lambda^*}\geq 0$. 
This is impossible since $n_\lambda-d_\lambda=0$. 
If $\lambda$ satisfies Condition (5), then $n_\lambda-d_\lambda=n_\lambda$.   
So, $n_\lambda-d_\lambda=0$ if and only if $k=1$. 
Thereby, the direct implication is proven. 

(ii) Assume that $\lambda $ is a concatenation of sequences satisfying Condition~(3) or 
Condition~(1) with $k=2$. In particular, $\lambda $ does not satisfy Condition (2). 
Moreover, Condition (5) is not satisfied since $k$ is even. Then $d_{\lambda }=0$ 
by induction on $\vert \lambda  \vert$, whence $e$ is very good. 

Conversely, suppose that $d_{\lambda }=0$. If $k=2$, Condition (1) is satisfied and 
if $k=4$, then either Condition (3) is satisfied, or $\poi {\lambda }1{,\ldots,}{4}{}{}{}$
are all odd. Suppose $k>4$. Condition (2) is not satisfied since 
$d_{\lambda }=d_{\lambda _{*}}+2$ in this case. If Condition (1) is satisfied then 
$d_{\lambda _{*}}=0$ and $\lambda $ is a concatenation of $\lambda ^{*}$ and 
$(\lambda _{k-1},\lambda _{k})$. If Condition (4) is satisfied, then $d_{\lambda _{*}}=0$ 
and $\lambda $ is a concatenation of $\lambda _{*}$ and a sequence satisfying 
Condition (3), whence the assertion by induction on $\vert \lambda \vert$ since Condition
(5) is not satisfied when $k$ is even.
\end{proof}

\subsection{}\label{ca3}
Assume in this subsection that $\lambda=(\poi {\lambda }1{,\ldots ,}{k}{}{}{})$ satisfies 
the following condition: 

\med

\begin{tabular}{ccl}
$(\ast)$&&{\em For some $k'\in\{2,\ldots,k\}$, $\lambda_i$ is even for all 
$i\leq k'$,  and $(\lambda_{k'+1},\ldots,\lambda_k)$} \\ && {\em is very good.}
\end{tabular} 

\med

\noi
In particular, $k'$ is even and by Lemma \ref{l4ca2}, $\lambda _{k'+1}$ is odd and 
$\lambda$ is not very good. For example, the sequences $\lambda=(6,6,4,4,3,2,2)$ and 
$(6,6,4,4,3,3,3,2,2,1)$ satisfy the condition $(\ast)$ with $k'=4$. Define a sequence 
$\nu=(\poi {\nu }1{,\ldots ,}{k}{}{}{})$ of integers of $\{1,\ldots,\rg\}$ by 
$$\forall \, i\in\{1,\ldots,k'\},\qquad 
\nu_i:=\frac{\lambda_1+\cdots+\lambda_i}{2}.$$
If $k'=k$, then 
$\nu_k=(\poi {\lambda }1{+\cdots +}{k}{}{}{})/2=r(\lambda)/2=\rg$.
Define elements $p_1,\ldots,p_{k'}$ of S$(\g^{e})$ as follows: 
\begin{itemize}
\item[-] if $k'<k$, set for $i\in\{1,\ldots,k'\}$, $p_i:={\ie {q_{\nu_i}}}$, 

\item[-] if $k'=k$, set for $i\in\{1,\ldots,k'-1\}$, $p_i:={\ie {q_{\nu_i}}}$ 
and set $p_k:=({\ie {q_{\nu_k}}})^2$. 
In this case, set also $\tilde{p}_k:={\ie {q_{\nu_k}}}.$
\end{itemize}

Remind that $\delta_i$ is the degree of $\ie {q_i}$ for $i=1,\ldots,\rg$. 
The following lemma is a straightforward consequence of \cite[Remark 4.2]{PPY}: 

\begin{lemma}\label{lca3}
{\rm (i)} For all $i\in\{1,\ldots,k'\}$, $\deg\,p_i=i$.

{\rm (ii)} Set $\nu_0:=0$. 
Then for $i \in\{1,\ldots,k'\}$ and $r\in\{1,\ldots,\nu_{k'}-1\}$, 
$$\delta_r=i \iff \nu_{i-1} < r \leq \nu_i .$$
In particular, for $r\in\{1,\ldots,\nu_{k'}-2\}$,  
$\delta_{r}<\delta_{r+1}$ if and only if $r$ is a value of the sequence 
$\nu$. 
\end{lemma}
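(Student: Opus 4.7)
The approach is to invoke directly the formula of \cite[Remark 4.2]{PPY} expressing each $\delta_{r}$ in terms of the partition $\lambda$, and then to unwind the definitions of $p_{i}$ and $\nu_{i}$.

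The content of \cite[Remark 4.2]{PPY}, applied in our setting, can be rephrased as follows in the range $r \leq \nu_{k'}$: $\delta_{r}$ is the unique positive integer $i$ such that
$$\lambda_{1}+\cdots +\lambda_{i-1} \;<\; 2r \;\leq\; \lambda_{1}+\cdots +\lambda_{i},$$
with the convention that the empty sum equals $0$. Since $\lambda_{1},\ldots,\lambda_{k'}$ are by assumption all even, multiplying the inequalities $\nu_{i-1} < r \leq \nu_{i}$ by $2$ reproduces exactly this condition. Hence $\delta_{r}=i$ if and only if $\nu_{i-1}<r\leq \nu_{i}$ for $i\in\{1,\ldots,k'\}$ and $r\in\{1,\ldots,\nu_{k'}-1\}$, which is the first part of (ii). The ``in particular'' statement then follows formally: the map $r\mapsto \delta_{r}$ is non-decreasing, and it jumps by exactly $1$ at the value $r$ if and only if $r$ is a right endpoint of one of the intervals $(\nu_{i-1},\nu_{i}]$, i.e.\ $r=\nu_{i}$ for some $i\in\{1,\ldots,k'-1\}$.

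Part (i) is then a direct computation. If either $k'<k$, or $k'=k$ and $i<k$, then by construction $p_{i}=\ie{q_{\nu_{i}}}$, whose degree is $\delta_{\nu_{i}}$, and part (ii) gives $\delta_{\nu_{i}}=i$. There remains only the case $k'=k$ and $i=k$: then $\nu_{k}=r(\lambda)/2=\rg$, and $p_{k}=(\ie{q_{\nu_{k}}})^{2}=(\ie{q_{\rg}})^{2}$; in this situation $\dim\V$ is even, so by \cite[Remark 4.2]{PPY} one has $\delta_{\rg}=k/2$ (as already used in the proof of Lemma~\ref{lca2}(ii)), hence $\deg p_{k}=2\delta_{\rg}=k$.

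The only point requiring a bit of care — and the ``main obstacle'' insofar as there is one — is the bookkeeping around the type $\mathbf{D}$ special case where $q_{\rg}$ was introduced as a square root of $Q_{2\rg}$: this is precisely why $p_{k}$ must be defined as $(\ie{q_{\nu_{k}}})^{2}$ rather than $\ie{q_{\nu_{k}}}$ when $k'=k$, and why the formula of \cite[Remark 4.2]{PPY} has to be read off separately at $r=\rg$ in that case. Beyond this, the argument is a direct unwinding of the definitions of $\nu_{i}$ and $p_{i}$, as the paper suggests.
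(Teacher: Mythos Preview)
Your proposal is correct and follows exactly the approach the paper intends: the paper gives no detailed proof, stating only that the lemma ``is a straightforward consequence of \cite[Remark 4.2]{PPY}'', and your argument is precisely that straightforward unwinding. Your choice to establish (ii) first and deduce (i) from it, together with the separate treatment of the type~$\mathbf{D}$ case $k'=k$, $i=k$ (where $q_{\rg}$ is the Pfaffian and $\delta_{\rg}=k/2$), is the natural way to organise the computation.
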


\begin{example}\label{eca3}
Consider the partition $\lambda=(8,8,4,4,4,4,2,2,1,1)$ of $38$. Then $k=10$, $k'=8$ and 
$\nu=(4,8,10,12,14,16,17,18)$.  We represent in Table~\ref{fig:1} the degrees of 
the polynomials $p_1,\ldots,p_8$ and ${\ie {q_{1}}},\ldots,{\ie {q_{18}}}$. 
Note that $\deg {\ie {q_{19}}}=5$. In the 
table, the common degree of the polynomials appearing on the $i$th column is $i$. 

\begin{table}[ht!]
\begin{center}
{\tiny \begin{tabular}{l c c c c c c c c c}
&${\ie {q_{4}}}\!=\!p_1$&${\ie {q_{8}}}\!=\!p_2$& & & & & & &\\\\
&${\ie {q_{3}}}$& ${\ie {q_{7}}}$ & & & & & & &\\\\
&${\ie {q_{2}}}$&${\ie {q_{6}}}$&${\ie {q_{10}}}\!=\!p_3$&${\ie {q_{12}}}\!=\!p_4$&
${\ie {q_{14}}}\!=\!p_5$&${\ie {q_{16}}}\!=\!p_6$& &\\\\
&${\ie {q_{1}}}$&${\ie {q_{5}}}$&${\ie {q_{9}}}$&${\ie {q_{11}}}$&
${\ie {q_{13}}}$&${\ie {q_{15}}}$&${\ie {q_{17}}}\!=\!p_7$&${\ie {q_{18}}}\!=\!p_8$\\\\
\hline\\
degrees&1&2&3&4&5&6&7&8\\
\small
\end{tabular}}
\end{center}
\caption{~}\label{fig:1}
\end{table}
\end{example}
Let $\goth s$ be the subalgebra of $\g$ generated by $e,h,f$ and decompose 
$\V$ into simple $\goth s$-modules $\V_1,\ldots,\V_k$ of dimension $\lambda_1,
\ldots,\lambda_k$ respectively. One can order them so that for $i\in\{1,\ldots,k'/2\}$, 
$\V_{(2(i-1)+1)'}=\V_{2i}$. For $i\in\{1,\ldots,k\}$, denote by $e_{i}$ the restriction 
to $\V_i$ of $e$ and set $\varepsilon_i:=e_i^{\lambda_i-1}$. Then $e_i$ is a regular 
nilpotent element of $\mathfrak{gl}(\V_i)$ and 
$(\ad h)\varepsilon_i=2(\lambda_i-1)\varepsilon _{i}$.

For $i\in\{1,\ldots,k'/2\}$, set 
$$\V[i]:=\V_{2(i-1)+1}+\V_{2i}$$
and set 
$$\V[0]:=\V_{k'+1}\oplus\cdots\oplus\V_k.$$ 
Then for $i\in\{0,1,\ldots,k'/2\}$, denote by $\g_i$ the simple Lie algebra 
${\goth {so}}(\V[i])$. For $i\in\{1,\ldots,k'/2\}$, $e_{2(i-1)+1}+e_{2i}$ is an even 
nilpotent element of $\g_i$ with Jordan blocks of size 
$(\lambda_{2(i-1)+1},\lambda_{2i})$. Let $i\in\{1,\ldots,k'/2\}$ and set: 
$$z_i:=\varepsilon_{2(i-1)+1}+\varepsilon_{2i}.$$
Then $z_i$ lies in the center of $\g^{e}$ and 
$$(\ad h)z_i=2(\lambda_{2(i-1)+1}-1)z_{i}=2(\lambda_{2i}-1)z_{i} .$$  
Moreover, $2(\lambda_{2i}-1)$ is the highest weight of $\ad h$ acting on 
$\g_i^{e}:=\g_i\cap\g^{e}$, and the intersection of the $2(\lambda_{2i}-1)$-eigenspace of
$\ad h$ with $\g_i^{e}$ is spanned by $z_i$, see for instance~\cite[\S1]{Y4}. Set
\begin{eqnarray*}
\overline{\g}&:=&\g_0\oplus \g_1\oplus\cdots\oplus\g_{k'/2}
\;=\;{\goth {so}}(\V[0])\oplus{\goth {so}}(\V[1])\oplus\cdots\oplus{\goth {so}}(\V[k'/2] )
\end{eqnarray*}
and denote by $\overline{\g}^{e}$ (resp.~$\overline{\g}^{f}$) the centralizer of $e$ 
(resp.~$f$) in $\overline{\g}$. For $p\in \es S{\g^{e}}$, denote by $\overline{p}$ its 
restriction to $\overline{\g}^{f}\simeq(\overline{\g}^{e})^*$; it is an element of 
$\es S{\overline{\g}^{e}}$. Our goal is to describe the elements 
$\overline{p}_1,\ldots,\overline{p}_{k'}$ (see Proposition \ref{pca3}). 
The motivation comes from Lemma~\ref{l2ca3}.

Let $\g_{\rm reg}^{f}$ (resp.~$\overline{\g}_{\rm reg}^{f}$) be 
the set of elements $x\in\g^{f}$ (resp.~$\overline{\g}^f$) such that $x$ is a regular 
linear form on $\g^{e}$ (resp.~$\overline{\g}^{e}$). 

\begin{lemma}\label{l2ca3} 
{\rm (i)} The intersection $\g_{\rm reg}^{f}\cap\overline{\g}^{f}$ is a dense open subset
of $\overline{\g}_{{\rm reg}}^{f}$. 

{\rm (ii)} The morphism 
$$\theta: \quad G^{e}_{0} \times \overline{\g}^{f} \longrightarrow \g^{f}, 
\quad (g,x) \longmapsto g.x$$
is a dominant morphism from $G^{e}_{0}\times \overline{\g}^{f}$ to ${\goth g}^{f}$.
\end{lemma}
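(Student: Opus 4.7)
The plan is to prove (ii) first and deduce (i) from it by a short equivariance argument. I interpret the action $g.x$ in $\theta$ as the coadjoint action of $G^{e}_{0}$ on $(\g^{e})^{*} \simeq \g^{f}$ via the Killing form, which explicitly reads $g.x = \pi_{f}(\Ad(g)\, x)$, with $\pi_{f}: \g \to \g^{f}$ the projection along $[e, \g]$. Granting (ii), part (i) is immediate: $\g^{f}_{\rm reg}$ is Zariski open in $\g^{f}$ by upper semi-continuity of the index function, so $\g^{f}_{\rm reg} \cap \overline{\g}^{f}$ is open in the irreducible variety $\overline{\g}^{f}$ and a fortiori in $\overline{\g}^{f}_{\rm reg}$; density then reduces to non-emptiness. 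By (ii) the image of $\theta$ contains a dense open subset of $\g^{f}$, meeting $\g^{f}_{\rm reg}$ in some $y = g.x_{0}$ with $x_{0} \in \overline{\g}^{f}$, and since $\Ad(g)$ preserves $\g^{e}$ and carries $(\g^{e})^{x_{0}}$ onto $(\g^{e})^{y}$, we conclude $x_{0} \in \g^{f}_{\rm reg} \cap \overline{\g}^{f}$.

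For (ii), $G^{e}_{0}$-equivariance of $\theta$ reduces dominance to surjectivity of its tangent map at some point $(1, x_{0})$. Explicit computation shows that this tangent map sends $(\xi, v) \in \g^{e} \times \overline{\g}^{f}$ to $-\pi_{f}([\xi, x_{0}]) + v$, so surjectivity is equivalent to $\pi_{f}([\g^{e}, x_{0}]) + \overline{\g}^{f} = \g^{f}$. The block decomposition $\V = \bigoplus_{i=0}^{k'/2}\V[i]$ induces a splitting of $\g$ as an $\overline{\g}$-module: $\g = \overline{\g} \oplus \g'$, with $\g'$ the space of endomorphisms sending each $\V[i]$ into $\bigoplus_{j \neq i} \V[j]$. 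Since $e, f \in \overline{\g}$, both $\ad e$ and $\ad f$ preserve this splitting, giving $\g^{e} = \overline{\g}^{e} \oplus (\g')^{e}$, $\g^{f} = \overline{\g}^{f} \oplus (\g')^{f}$, and $[e, \g'] \subseteq \g'$, so that $\pi_{f}$ respects the decomposition. For $x_{0} \in \overline{\g}^{f}$ this yields $\pi_{f}([\overline{\g}^{e}, x_{0}]) \subseteq \overline{\g}^{f}$ and $\pi_{f}([(\g')^{e}, x_{0}]) \subseteq (\g')^{f}$, so the required identity reduces to
$$\pi_{f}([(\g')^{e}, x_{0}]) = (\g')^{f}.$$
Since $\g'$ is a finite-dimensional $\mathfrak{sl}_{2}$-module (via $\mathfrak{s} := \k\langle e, h, f\rangle$), the weight-space dimensions of $\ad h$ on $\g'$ are symmetric under $n \mapsto -n$, whence $\dim (\g')^{e} = \dim (\g')^{f}$. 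Thus the $\k$-linear map $(\g')^{e} \to (\g')^{f}$, $\xi \mapsto \pi_{f}([\xi, x_{0}])$, is surjective iff it is injective, iff $(\g^{e})^{x_{0}} \cap (\g')^{e} = 0$.

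The main obstacle is then to exhibit a single $x_{0} \in \overline{\g}^{f}$ satisfying this last condition. I would take $x_{0} = \sum_{i=0}^{k'/2} y_{i}$, with $y_{i} \in \g_{i}^{f}$ chosen regular in $\g_{i}^{f}$ (in particular $y_{0}$ regular in the Slodowy slice of the very good nilpotent $e|_{\V[0]}$ in $\g_{0}$, which is available by Corollary~\ref{cca2}), and verify injectivity block-by-block: for each pair $i \neq j$, the corresponding summand of $(\g')^{e}$ consists of maps $\V[j] \to \V[i]$ commuting with $e$, decomposing further as a direct sum of $\Hom$-spaces between the simple $\mathfrak{s}$-constituents of $\V[j]$ and $\V[i]$. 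The linear map $\xi \mapsto \pi_{f}([\xi, y_{i} + y_{j}])$ restricted to each such summand is explicit, and the question of its injectivity reduces to a uniform linear-algebraic statement about highest-weight and lowest-weight spaces in tensor products of simple $\mathfrak{sl}_{2}$-modules. This mechanical verification, carried out in the spirit of the matrix computations of \cite{Y4} and \cite{PPY}, is the technical heart of the lemma: the hypothesis $(\ast)$ enters here through the compatibility of the paired block sizes $(\lambda_{2(i-1)+1}, \lambda_{2i})$ for $i \leq k'/2$ and the very-goodness of $(\lambda_{k'+1}, \ldots, \lambda_{k})$, which together rule out off-diagonal cancellations.
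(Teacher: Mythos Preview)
Your reduction of (ii) to the condition $(\g^{e})^{x_{0}}\cap(\g')^{e}=0$ for some $x_{0}\in\overline{\g}^{f}$ is correct and matches the paper's setup (with $\g'=\mathfrak m$, the Killing-orthogonal of $\overline{\g}$). The deduction of (i) from (ii) is also essentially right, though the passage ``a fortiori in $\overline{\g}^{f}_{\rm reg}$'' hides a step: the inclusion $\g^{f}_{\rm reg}\cap\overline{\g}^{f}\subset\overline{\g}^{f}_{\rm reg}$ is not automatic; it needs $\ind\overline{\g}^{e}=\ind\g^{e}=\rg$, which follows from Theorem~\ref{ti1} applied to each factor of $\overline{\g}$ (the paper cites \cite[Theorem~3]{Y1}).

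The genuine gap is that your ``mechanical verification'' is precisely the content of the lemma, and you have not carried it out. Observe that $(\g^{e})^{x_{0}}\cap(\g')^{e}=0$ with $x_{0}\in\overline{\g}^{f}$ is \emph{equivalent} to $x_{0}\in\g^{f}_{\rm reg}\cap\overline{\g}^{f}$: one direction is clear, and conversely if $x_{0}$ is regular for $\g^{e}$ then $\dim(\g^{e})^{x_{0}}=\rg=\ind\overline{\g}^{e}\le\dim(\overline{\g}^{e})^{x_{0}}\le\dim(\g^{e})^{x_{0}}$, forcing $(\g^{e})^{x_{0}}\subset\overline{\g}^{e}$. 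So your proposed direct construction of $x_{0}$ is nothing other than a proof of the non-emptiness in (i), and this is exactly where hypothesis~$(\ast)$ is needed --- the block-by-block analysis you sketch is the substance of \cite[\S4, Lemma~3]{Y1}. Choosing each $y_{i}$ regular in $\g_{i}^{f}$ controls only the diagonal stabiliser; the off-diagonal vanishing is the nontrivial part.

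The paper proceeds in the opposite order and thereby avoids the computation entirely: it first obtains (i) by observing that $(\ast)$ places $\lambda$ under the hypothesis of \cite[\S4, Lemma~3]{Y1}, so $\g^{f}_{\rm reg}\cap\overline{\g}^{f}$ is dense in $\overline{\g}^{f}$ by that reference. Then for (ii), picking any $x\in\g^{f}_{\rm reg}\cap\overline{\g}^{f}$, the index argument above gives $(\g^{e})^{x}=(\overline{\g}^{e})^{x}$, hence $\dim\mathfrak m^{e}.x=\dim\mathfrak m^{e}$, and the tangent map at $(1,x)$ is surjective by dimension count. Your route would work if completed, but it reproves Yakimova's lemma rather than invoking it; the paper's order is shorter precisely because the hard step is already in \cite{Y1}.
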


\begin{proof}
(i) Since $\lambda$ satisfies the condition $(\ast)$, it satisfies the condition (1) 
of the proof of ~\cite[\S4, Lemma~3]{Y1} and so, $\g_{\rm reg}^{f}\cap\overline{\g}^{f}$ 
is a dense open subset of $\overline{\g}^{f}$. Moreover, since $\g^{e}$ and 
$\overline{\g}^{e}$ have the same index by \cite[Theorem 3]{Y1}, 
$\g_{\rm reg}^{f}\cap\overline{\g}^{f}$ is contained in $\overline{\g}_{{\rm reg}}^{f}$. 

(ii) Let ${\goth m}$ be the orthogonal complement to $\overline{{\goth g}}$ in 
${\goth g}$ with respect to the Killing form $\dv ..$. Since the restriction to 
$\overline{\g}\times\overline{\g}$ of $\dv ..$ is nondegenerate, 
$\g=\overline{\g}\oplus\mathfrak{m}$ and 
$[\overline{\g},\mathfrak{m}]\subset\mathfrak{m}$. Set 
$\mathfrak{m}^{e}:=\mathfrak{m}\cap\g^{e}$. 
Since the restriction to $\overline{\g}^{f}\times\overline{\g}^{e}$ of $\dv ..$ is 
nondegenerate, we get the decomposition 
$$\g^{e}=\overline{\g}^{e}\oplus\mathfrak{m}^{e}$$ 
and $\mathfrak{m}^{e}$ is the orthogonal complement to $\overline{\g}^{f}$ in $\g^{e}$. 
Moreover, $[\overline{\g}^{e},\mathfrak{m}^{e}]\subset\mathfrak{m}^{e}$. 

By (i), $\g_{\rm reg}^{f}\cap\overline{\g}^{f}\not=\varnothing$. 
Let $x\in \g_{\rm reg}^{f}\cap\overline{\g}^{f}$. The tangent map at $(1_{{\goth g}},x)$ 
of $\theta$ is the linear map
$$\begin{array}{ccc}
{\goth g}^{e}\times \overline{\g}^{f} \longrightarrow {\goth g}^{f}, && 
(u,y) \longmapsto u.x + y, \end{array}$$
where $u.$ denotes the coadjoint action of $u$ on $\g^{f}\simeq(\g^{e})^*$. The index of 
$\overline{\g}^{e}$ is equal to the index of ${\g}^{e}$ and $[\overline{\g}^{e},
\mathfrak{m}^{e}]\subset\mathfrak{m}^{e}$. So, the stabilizer of $x$ in 
$\overline{\g}^{e}$ coincides with the stabilizer of $x$ in ${\g}^{e}$. In particular, 
$\dim\mathfrak{m}^{e}.x=\dim\mathfrak{m}^{e}$. As a result, $\theta$ is a submersion at 
$(1_{{\goth g}},x)$ since 
$\dim {\goth g}^{f}=\dim \mathfrak{m}^{e}+\dim \overline{\g}^{f}$. 
In conclusion, $\theta$ is a dominant morphism from $G^{e}_{0}\times \overline{\g}^{f}$ 
to ${\goth g}^{f}$.
\end{proof}

Let $(\mu_1,\ldots,\mu_m)$ be the strictly decreasing sequence of the values of the 
sequence $(\poi {\lambda }1{,\ldots ,}{k'}{}{}{})$ and let $k_{1},\ldots,k_m$ be the 
multiplicity of $\poi {\mu }1{,\ldots ,}{m}{}{}{}$ respectively in this sequence. By our 
assumption, the integers $\mu_1,\ldots,\mu_m,k_{1},\ldots,k_m$ are all even. Notice that 
$\poi {k}1{+\cdots +}{m}{}{}{}=k'$. The set $\{1,\ldots,k'\}$ decomposes into parts 
$\poi {K}1{,\ldots ,}{m}{}{}{}$ of cardinality $k_1,\ldots,k_m$ respectively given by:   
$$\forall\, s\in\{1,\ldots,m\},\qquad 
K_s:=\{\poi k0{+\cdots +}{s-1}{}{}{}+1,\ldots,k_0+\cdots+k_{s}\}.$$
Here, the convention is that $k_0:=0$. 

\begin{rema} \label{rca3}
For $s\in\{1,\ldots,m\}$ and $i\in K_s$,
$$\nu_{i}:= k_0 (\frac{\mu_0}{2})+\cdots +k_{s-1} (\frac{\mu_{s-1}}{2})
+j (\frac{\mu_s}{2}),$$
where $j=i-(k_0+\cdots+k_{s-1})$ and $\mu _{0}=0$.
\end{rema}

Decompose also the set $\{1,\ldots,k'/2\}$ into parts $\poi I1{,\ldots,}{m}{}{}{}$ of 
cardinality $k_1/2,\ldots,k_m/2$ respectively, with 
$$\forall\, s\in\{1,\ldots,m\},\qquad 
I_s:=\{\frac{k_0+\cdots+k_{s-1}}{2}+1,\ldots,\frac{k_0+\cdots+k_{s}}{2}\}.$$
For $p\in {\rm S}(\g^{e})$ an eigenvector of $\ad h$,  denote by ${\rm wt}(p)$ its 
$\ad h$-weight.

\begin{lemma}\label{l3ca3}
Let $s \in\{1,\ldots,m\}$ and $i\in K_s$.

{\rm (i)} Set $j=i-(k_0+\cdots+k_{s-1})$. Then, 
$${\rm wt}(\overline{p}_i)=2(2\nu_i-i)=\sum_{l=1}^{s-1} 2k_l(\mu_l -1)+2j(\mu_s-1).$$
Moreover, if $p\in\{{\ie {q_1}},\ldots,{\ie {q_{\rg-1}}},({\ie {q_{\rg}}})^2\}$ is of 
degree $i$, then 
${\rm wt}(p)={\rm wt}(\overline{p}) \leq 2(2 \nu_i-i)$ and the equality holds if and only
if $p=p_i$. 

{\rm (ii)} The polynomial $\overline{p}_i$ is in 
$\k[z_{l},\; l\in I_1\cup\ldots \cup I_s]$. 
\end{lemma}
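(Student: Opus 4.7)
The plan is to combine the Slodowy-weight calculus of Section~\ref{sg} with a careful study of the block decomposition $\V = \V[0]\oplus\V[1]\oplus\cdots\oplus\V[k'/2]$ to prove both parts of the lemma.

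For part~(i), I will exploit Proposition~\ref{psg1},(i), which relates the standard grading and the Slodowy grading on $\es S{\g^{e}}$: since the weight of $x_i$ is $n_i$ and its Slodowy degree is $n_i+2$, any $\ad h$-weight vector satisfies $\wt = (\text{Slodowy degree}) - 2(\text{standard degree})$. By Proposition~\ref{psg1},(i), $p_i = \ie{q_{\nu_i}}$ has Slodowy degree $2d_{\nu_i} = 4\nu_i$, and by Lemma~\ref{lca3},(i) its standard degree is $i$, whence $\wt(p_i) = 4\nu_i - 2i = 2(2\nu_i - i)$; the case $p_k = (\ie{q_\rg})^2$ (when $k' = k$ and $i = k$) is analogous. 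Since $h \in \overline{\g}$, restriction to $\overline{\g}^{f}$ preserves weights, so $\wt(\overline{p}_i) = 2(2\nu_i - i)$. The second equality in~(i) follows directly from Remark~\ref{rca3}. For the moreover claim, the same computation gives $\wt(\ie{q_r}) = 2(2r - \delta_r)$; if $\delta_r = i$, then Lemma~\ref{lca3},(ii) forces $\nu_{i-1} < r \leq \nu_i$, so $\wt(\ie{q_r}) \leq 2(2\nu_i - i)$ with equality iff $r = \nu_i$, i.e.\ $\ie{q_r} = p_i$; the case $p = (\ie{q_\rg})^2$ is handled similarly.

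For part~(ii), the key observation is that for $x \in \overline{\g}^{f}$ the element $e + x$ acts block-diagonally on $\V = \bigoplus_j \V[j]$, so its characteristic polynomial factorizes, yielding
$$
q_{\nu_i}(e+x) \;=\; \sum_{(r_j)\,:\,\sum_j r_j = \nu_i}\; \prod_{j=0}^{k'/2} Q^{\V[j]}_{2r_j}(e_j+x_j),
$$
and analogously for the Pfaffian when $p_i$ involves it. Writing $\delta^j_{r_j}$ for the standard degree of the initial component $\ie{(Q^{\V[j]}_{2r_j})}$ on $\g_j^{f}$, each summand has initial component of standard degree $\sum_j \delta^j_{r_j}$, so $\overline{p}_i$ equals the sum of $\prod_j \ie{(Q^{\V[j]}_{2r_j})}$ over those tuples $(r_j)$ with $\sum_j r_j = \nu_i$ that minimize $\sum_j \delta^j_{r_j}$, the minimum being $\delta_{\nu_i} = i$ by Lemma~\ref{lca3},(ii).

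I will then characterize these optimal tuples: $r_0 = 0$; $r_j = 0$ for $j \in I_{l'}$ with $l' > s$; and $r_j \in \{0, \mu_{l'}/2, \mu_{l'}\}$ for $j \in I_{l'}$ with $l' \leq s$. This is a linear-programming-type argument. For $j \in I_{l'}$ with $l' \geq 1$, Lemma~\ref{lca3},(ii) applied to the $(\mu_{l'},\mu_{l'})$ partition in $\mathfrak{so}_{2\mu_{l'}}$ gives $\delta^j_{r_j} = 1$ when $1 \leq r_j \leq \mu_{l'}/2$ and $\delta^j_{r_j} = 2$ when $\mu_{l'}/2 < r_j \leq \mu_{l'}$, so the best rate $r_j/\delta^j_{r_j}$ equals $\mu_{l'}/2$ and is attained only for $r_j \in \{\mu_{l'}/2, \mu_{l'}\}$. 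For $j = 0$ the very-good hypothesis combined with $\lambda_{k'+1}$ being odd and at most $\mu_m - 1 < \mu_s$ forces the analogous rate to be strictly below $\mu_s/2$. A shifting argument (moving cost out of $\g_0$ or out of $I_{l'>s}$ into the unsaturated portion of $I_s$, which has residual capacity since $j \leq k_s$) then rules out any contribution from those slots. Finally, each surviving factor $\ie{(Q^{\V[j]}_{\mu_{l'}})}$ (standard degree $1$, weight $2(\mu_{l'}-1)$) and $\ie{(Q^{\V[j]}_{2\mu_{l'}})}$ (standard degree $2$, weight $4(\mu_{l'}-1)$) lies in the top-weight line of $\g_j^{e}$ and ${\rm S}^2(\g_j^{e})$ respectively, which is spanned by $z_j$ and $z_j^2$, hence is a scalar multiple thereof. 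Therefore every term contributing to $\overline{p}_i$ is a polynomial in the $z_l$ with $l \in I_1 \cup \cdots \cup I_s$, establishing~(ii). The main obstacle will be making the $\g_0$-rate bound fully rigorous: although the rate comparison is intuitive, a clean argument requires invoking the explicit description of $\delta^0_r$ attached to the very good partition $(\lambda_{k'+1},\ldots,\lambda_k)$ (via Lemma~\ref{l4ca2} and~\cite[Remark~4.2]{PPY}) and showing that the maximum of $r_0/\delta^0_{r_0}$ is bounded above by $(\lambda_{k'+1}-1)/2$, which is strictly less than $\mu_s/2$.
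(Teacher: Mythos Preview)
Your argument is correct, and for part~(i) it follows essentially the same route as the paper (both rely on the Slodowy weight formula, though the paper packages it via~\cite[Lemma~4.3]{PPY} while you spell out the computation using Proposition~\ref{psg1},(i)).

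For part~(ii), your approach is genuinely different from the paper's. The paper lifts to $\tilde{\goth g}=\mathfrak{gl}(\V)$, considers the initial component $\ie{\overline{Q}_{2\nu_i}}$ on the block-diagonal $\mathfrak{gl}$-centralizer, and then invokes~\cite[Lemma~4.3]{PPY} to identify the surviving monomials as products of the $\varepsilon$'s, which restrict to polynomials in the $z_l$. Your approach stays inside $\mathfrak{so}$ and uses the block factorization of $\det(T-e-x)$ together with an explicit optimization over tuples $(r_j)$. What the paper buys is brevity (one citation does the combinatorics); what you buy is self-containment (no external structure result is needed, and your argument in fact yields $\overline{p}_i\neq 0$ as a byproduct, since the optimal tuples produce linearly independent monomials in the $z_l$).

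One simplification: the ``main obstacle'' you flag --- bounding the rate $r_0/\delta^0_{r_0}$ in $\goth g_0$ --- is easier than you anticipate. You do not need the explicit description of the $\delta^0_r$ from~\cite[Remark~4.2]{PPY}. Since the highest $\ad h$-weight in $\goth g_0^{e}$ is $2(\lambda_{k'+1}-1)$, any homogeneous weight vector in ${\rm S}(\goth g_0^{e})$ satisfies $\wt \le 2(\lambda_{k'+1}-1)\cdot\deg$; applying this to $\ie{Q^{\V[0]}_{2r_0}}$ gives $2(2r_0-\delta^0_{r_0}) \le 2(\lambda_{k'+1}-1)\delta^0_{r_0}$, i.e.\ $r_0/\delta^0_{r_0}\le \lambda_{k'+1}/2$. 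Since $\lambda_{k'+1}$ is odd and $\mu_s$ is even with $\lambda_{k'+1}\le\lambda_{k'}=\mu_m\le\mu_s$, we get $\lambda_{k'+1}/2 < \mu_s/2$ strictly. Then the linear-programming step is clean: writing $D_{l'}=\sum_{j'\in I_{l'}}\delta^{j'}_{r_{j'}}\le k_{l'}$ and $d_0=\delta^0_{r_0}$, one has $\nu_i=\sum r_{j'}\le \sum_{l'}(\mu_{l'}/2)D_{l'}+(\lambda_{k'+1}/2)d_0$, and maximizing the right side for fixed total $D=\sum D_{l'}+d_0$ (greedy on decreasing rates $\mu_1/2>\cdots>\mu_m/2>\lambda_{k'+1}/2$) gives a value strictly below $\nu_i$ whenever $D<i$. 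Hence $\min D=i$, and equality forces $d_0=0$, $D_{l'}=k_{l'}$ for $l'<s$, $D_s=j$, $D_{l'}=0$ for $l'>s$, with each slot at its optimal rate --- exactly the characterization you want.
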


\begin{proof}
(i) This is a consequence of \cite[Lemma 4.3]{PPY}  (or \cite[Theorem~6.1]{Y4}), 
Lemma \ref{lca3} and Remark~\ref{rca3}. 

(ii) Let $ \tilde{\g}^{f}$ be the centralizer of $f$ in $\tilde{\g}=\mathfrak{gl}(\V)$, 
and let ${\ie {\overline{Q}_{2\nu_i}}}$ be the initial homogenous component of the 
restriction to 
$$\left(\mathfrak{gl}(\V[0])\oplus\mathfrak{gl}(\V[1])\oplus\cdots \oplus
\mathfrak{gl}(\V[k'/2])\right)\,\cap\, \tilde{\g}^{f}$$ 
of the polynomial function $x\mapsto Q_{2\nu_i}(e+x)$. Since $\overline{p}_i\not= 0$, 
$\overline{p}_i$ is the restriction to $\overline{{\goth g}}^{f}$ of 
${\ie {\overline{Q}_{2\nu_i}}}$ and we have  
$${\rm wt}({\ie {\overline{Q}_{2\nu_i}}})={\rm wt}(\overline{p}_i)=2(2\nu_i-i),
\qquad \deg {\ie {\overline{Q}_{2\nu_i}}}=\deg \overline{p}_i=i.$$
Then, by (i) and \cite[Lemma 4.3]{PPY}, ${\ie {\overline{Q}_{2\nu_i}}}$ is a sum of 
monomials whose restriction to $\overline{{\goth g}}^{f}$ is zero and of monomials of the
form 
$$(\varepsilon_{\varsigma^{(1)} 1}\ldots \varepsilon_{\varsigma^{(1)} k_1}) \cdots
(\varepsilon_{\varsigma^{(s-1)} 1 } \ldots \varepsilon_{\varsigma^{(s-1)} k_{s-1}})
(\varepsilon_{\varsigma^{(s)} j_1}\ldots \varepsilon_{\varsigma^{(s)} j_i})$$
where $\poi j1{<\cdots <}{i}{}{}{}$ are integers of $K_s$, and 
$\varsigma^{(1)},\ldots,\varsigma^{(s-1)}$, $\varsigma^{(s)}$ 
are permutations of $K_1,\ldots,K_{s-1}$, $\{j_1,\ldots,j_i\}$ respectively. Hence, 
$\overline{p}_i$ is in $\k[z_l,\; l\in I_1\cup\ldots \cup I_s]$. More precisely, for 
$l\in I_1\cup\ldots \cup I_s$, the element $z_l$ appears in $\overline{p}_i$ with a 
multiplicity at most $2$ since $z_l=\varepsilon_{2(l-1)+1}+\varepsilon_{2l}$. 
\end{proof}

Let $s\in\{1,\ldots,m\}$ and $i\in K_s$. In view of Lemma \ref{l3ca3},(ii), we aim to give
an explicit formula for $\overline{p}_i$ in term of the elements $z_1,\ldots,z_{k'/2}$. 
Besides, according to Lemma~\ref{l3ca3},(ii), we can assume that $s=m$. As a first step, 
we state inductive formulae. If $k'>2$, set 
$$\overline{\g}':={\goth {so}}(\V[1])\oplus\cdots\oplus {\goth {so}}(\V[k'/2-1]),$$ 
and let $\overline{p}'_1,\ldots,\overline{p}'_{k'}$ be the restrictions to 
$(\overline{\g}')^{f}:=\overline{\g}'\cap\g^{f}$ of 
$\overline{p}_1,\ldots,\overline{p}_{k'}$ respectively. Note 
that $\overline{p}'_{k'-1}=\overline{p}'_{k'}=0$. 
Set by convention $k_0:=0$, $p_0:=1$, $p'_0:=1$ and $p_{-1}:=0$. It will be also 
convenient to set 
$$k^*:=k_0+\cdots+k_{m-1}.$$ 

\begin{lemma} \label{l4ca3} 
{\rm (i)} If $k_m=2$, then  
$$\overline{p}_{k^*+1}=-2 \,\overline{p}'_{k^*}\, z_{k'/2}\quad \text{ and }
\quad\overline{p}_{k^*+2}= \overline{p}'_{k^*}\,(z_{k'/2})^2.$$

\small

{\rm (ii)} If $k_m>2$, then 
$$\overline{p}_{k^*+1}=\overline{p}'_{k^*+1}-
2\, \overline{p}'_{k^*}\,z_{k'/2}$$
and for $j=2,\ldots,k_m$, 
$$\overline{p}_{k^*+j}=\overline{p}'_{k^*+j}-
2\, \overline{p}'_{k^*+j-1}\,z_{k'/2}+\overline{p}'_{k^*+j-2}\,(z_{k'/2})^2.$$
\end{lemma}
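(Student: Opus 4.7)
The proof rests on the factorization of the characteristic polynomial of $e+x$ afforded by the $\overline{\g}$-invariant decomposition $\V = \V''\oplus \V[k'/2]$, where $\V'':=\V[0]\oplus\cdots\oplus\V[k'/2-1]$. For $x\in \overline{\g}^{f}$, write $x = x_{*}+y$ with $x_{*}\in(\g_{0}\oplus\overline{\g}')^{f}$ acting on $\V''$ and $y\in\g_{k'/2}^{f}$ acting on $\V[k'/2]$. Then $\det(T-(e+x))|_{\V} = \chi_{*}(T)\,\chi_{k'/2}(T)$ with $\chi_*(T)=\det(T-(e+x_*))|_{\V''}$ and $\chi_{k'/2}(T)=\det(T-(e+y))|_{\V[k'/2]}$. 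Expanding $\chi_*(T)=\sum_{a}A_{a}(x_{*})T^{\dim \V''-a}$ and $\chi_{k'/2}(T)=\sum_{b}B_{b}(y)T^{2\mu_{m}-b}$ and extracting the coefficient of $T^{\dim\V-2\nu_{k^*+j}}$ gives
$$Q_{2\nu_{k^*+j}}(e+x) = \sum_{a+b=2\nu_{k^*+j}} A_{a}(x_{*})\,B_{b}(y).$$

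The key computation is the determination of the Slodowy-initial components of the $B_{b}$'s restricted to $\g_{k'/2}^{f}$. The nilpotent $e|_{\V[k'/2]}$ has Jordan type $(\mu_{m},\mu_{m})$ in $\g_{k'/2}\simeq\mathfrak{so}(2\mu_{m})$, and $z_{k'/2}$ is the unique (up to scalar) element of $\g_{k'/2}^{e}$ of highest $\ad h$-weight $2(\mu_{m}-1)$. Applying Lemma~\ref{l3ca3} within $\g_{k'/2}$, together with a direct calculation on a Jordan basis of $\V[k'/2]$, will show that the only $B_b$'s with non-zero Slodowy-initial restriction to $\g_{k'/2}^{f}$ are $B_0=1$, $B_{\mu_m}$ and $B_{2\mu_m}$, with $\ie{B_{\mu_m}}=-2\,z_{k'/2}$ and $\ie{B_{2\mu_m}}=(z_{k'/2})^{2}$; the Pfaffian identity $\det(e+y)|_{\V[k'/2]} = q_{\mu_m}(e+y)^{2}$ on $\mathfrak{so}(2\mu_m)$ handles the constant term.

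I will then apply the same reasoning to the reduced partition $\lambda''$ obtained from $\lambda$ by removing two copies of $\mu_{m}$: the Slodowy-initial component of $A_{2\nu'_l}$ restricted to $(\overline{\g}')^{f}$ coincides with $\overline{p}'_l$, where $\nu'_l=\nu_l$ for $l\leq k^*+k_m-2$. An $\ad h$-weight match based on Lemma~\ref{l3ca3}(i), exploiting the formula $\mathrm{wt}(\overline{p}_{k^*+j}) = 2(2\nu_{k^*+j}-(k^*+j))$ and $\mathrm{wt}(z_{k'/2})= 2(\mu_m-1)$, singles out the admissible $(a,b)$ pairs with $a+b=2\nu_{k^*+j}$ as $(2\nu_{k^*+j},0)$, $(2\nu_{k^*+j-1},\mu_m)$ and $(2\nu_{k^*+j-2},2\mu_m)$. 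Substituting yields the formula of part~(ii). When $j=1$, the last pair does not satisfy $a+b=2\nu_{k^*+1}$ (since $\lambda_{k^*}=\mu_{m-1}>\mu_m$), which correctly explains the absence of a $(z_{k'/2})^{2}$ term in that formula. Part~(i) is the specialization $k_m=2$: the identities $\overline{p}'_{k^*+1}=\overline{p}'_{k^*+2}=0$ already noted in the setup collapse the formulas of part~(ii) to those of part~(i).

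The main obstacle is the pointwise verification of the specific scalars $-2$ and $1$ in the Slodowy-initial values of $B_{\mu_m}$ and $B_{2\mu_m}$: this requires a direct Jordan-basis computation of the characteristic polynomial on $\V[k'/2]$ together with the Pfaffian identity, and forms the technically delicate part of the argument. Once these constants are fixed, the $\ad h$-weight bookkeeping that rules out all other $(a,b)$ contributions is routine, relying on the strict monotonicity of $2\nu_i-i$ across the blocks $K_s$ and the bound $\mathrm{wt}(p)\leq 2(2\nu_i-i)$ for invariants of degree $i$ provided by Lemma~\ref{l3ca3}(i).
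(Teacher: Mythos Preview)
Your approach is essentially the same as the paper's: both factor the characteristic polynomial along $\V=\V''\oplus\V[k'/2]$, compute the $\V[k'/2]$-factor explicitly (giving $T^{2\mu_m}-2sT^{\mu_m}+s^2$, hence the constants $-2$ and $1$), and then use the $\ad h$-weight bookkeeping of Lemma~\ref{l3ca3}(i) to isolate the contributing terms from the $\V''$-factor. The paper first invokes Lemma~\ref{l3ca3}(ii) to reduce the computation of $\overline{p}_i$ to the line $\sum_i s_iw_i$, on which the $\V[k'/2]$-factor literally has only the three coefficients $B_0,B_{\mu_m},B_{2\mu_m}$; your phrasing ``the only $B_b$'s with non-zero Slodowy-initial restriction to $\g_{k'/2}^{f}$'' is not literally correct on all of $\g_{k'/2}^{f}$ (every $\ie{B_{2r}}$ is nonzero there), so you should either make that reduction to the $w$-line explicit, or sharpen the claim to ``non-zero projection onto $\k[z_{k'/2}]$'', which is what the weight argument actually gives.
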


\begin{proof}
For $i=1,\ldots,k'/2$, let $w_i$ be the element of $\g_i^{f}:=\g_i\cap \g^{f}$ such that 
$$(\ad h)w_i=-2(\lambda_{2i}-1)w_{i} \quad \textrm{ and }\quad \det (e_i+w_i)=1.$$ 
Remind that $p_i(y)$, for $y\in\g^{f}$, is the initial homogenous component of the 
coefficient of the term $T^{\dim\V-2\nu_i}$ in the expression $\det(T-e-y)$. By Lemma 
\ref{l3ca3},(ii), in order to describe $\overline{p}_i$, it suffices to compute 
$\det(T-e-s_1 w_1-\cdots - s_{k'/2}w_{k'/2})$, with $\poi s1{,\ldots,}{k'/2}{}{}{}$ in 
$\k$.

\small

\noi
1) To start with, consider the case $k'=k_m=2$. By Lemma \ref{l3ca3}, 
$p_1=a z_1$ and $p_2= b z_1^2$ for some $a,b\in\k$. One has,  
$$\det(T-e-s_1w_1)=T^{2\mu_1}-2s_1 T^{\mu_1}+s_1^2.$$
As a result, $a=-2$ and $b=1$. This proves (i) in this case. 

\small

\noi
2) Assume from now that $k'>2$. Setting $e':=e_1+\cdots+e_{k'/2-1}$, observe that 
\begin{eqnarray}\label{eqca3}
&&\det(T-e-s_1 w_1-\cdots - s_{k'/2}w_{k'/2})\\\nonumber
&&\hspace{2cm}= \det(T-e'-s_1 w_1-\cdots - s_{k'/2-1}w_{k'/2-1})\, 
\det(T-e_{k'/2}- s_{k'/2}w_{k'/2})\\
\nonumber
&&\hspace{2cm}=\det(T-e'-s_1 w_1-\cdots - s_{k'/2-1}w_{k'/2-1})\, 
(T^{2 \mu_{m}}-2  s_{k'/2} T^{\mu_m} + s_{k'/2} ^2)
\end{eqnarray}
where the latter equality results from Step (1). 

\small
 
(i) If $k_m=2$, then $k^*=k'-2$ and the constant term in 
$\det(T-e'-s_1 w_1-\cdots -s_{k'/2-1}w_{k'/2-1})$ is $\overline{p}'_{k^*}$. By 
Lemma~\ref{l3ca3},(i), 
$${\rm wt}(\overline{p}_{k^*+1}) = {\rm wt}(\overline{p}'_{k^*}) + {\rm wt}(z_{k'/2})$$ 
and $\overline{p}'_{k^*}$ is the only element appearing in the coefficients of 
$\det(T-e'-s_1 w_1-\cdots - s_{k'/2-1}w_{k'/2-1})$ 
of this weight. Similarly, 
$${\rm wt}(\overline{p}_{k^*+2}) = 
{\rm wt}(\overline{p}'_{k^*}) +{\rm wt}((z_{k'/2})^2)$$ 
and $\overline{p}'_{k^*}$ is the only element appearing in the coefficients of 
$\det(T-e'-s_1 w_1-\cdots - s_{k'/2-1}w_{k'/2-1})$ of this weight.  
As a consequence, the equalities follow. 

\small

(ii) Suppose $k_m>2$. Then by Lemma \ref{l3ca3},(i), 
$${\rm wt}(\overline{p}_{k^*+1})={\rm wt}(\overline{p}'_{k^*+1})
={\rm wt}(\overline{p}'_{k^*}) +{\rm wt}(z_{k'/2}).$$
Moreover, $\overline{p}'_{k^*+1}$ and $\overline{p}'_{k^*}$ are the 
only elements appearing in the coefficients of 
$\det(T-e'-s_1 w_1-\cdots - s_{k'/2-1}w_{k'/2-1})$ of this weight with degree $k^*+1$ and
$k^*$ respectively. Similarly, by Lemma \ref{l3ca3},(i), for $j \in\{2,\ldots,k_m\}$, 
$${\rm wt}(\overline{p}_{k^*+j})={\rm wt}(\overline{p}'_{k^*+j})
={\rm wt}(\overline{p}'_{k^*+j-1}) +{\rm wt}(z_{k'/2})
={\rm wt}(\overline{p}'_{k^*+j-2})+{\rm wt}((z_{k'/2})^2).$$ 
Moreover, $\overline{p}'_{k^*+j}$, $\overline{p}'_{k^*+j-1}$ and 
$\overline{p}'_{k^*+j-2}$ are the 
only elements appearing in the coefficients of 
$\det(T-e'-s_1 w_1-\cdots - s_{k'/2-1}w_{k'/2-1})$  
of this weight with degree $k^*+j$, $k^*+j-1$ and $k^*+j-2$ 
respectively. 

In both cases, this forces the inductive formula (ii) through the 
factorization (\ref{eqca3}).
\end{proof}

For a subset $I=\{\poi i1{,\ldots,}{l}{}{}{}\} \subseteq \{1,\ldots,k'/2\}$ of 
cardinality $l$, denote by $\poi {\sigma }{I,1}{,\ldots,}{I,l}{}{}{}$ the elementary 
symmetric functions of $\poi z{i_{1}}{,\ldots,}{i_{l}}{}{}{}$:
$$\forall \, j \in\{1,\ldots,l\}, \qquad 
\sigma_{I,j}=\sum_{1 \leq a_1 < a_2<\cdots < a_{j} \leq l} 
z_{i_{a_1}}z_{i_{a_2}}\ldots z_{i_{a_j}}.$$
Set also $\sigma_{I,0}:=1$ and $\sigma_{I,j}:=0$ if $j>l$ so that $\sigma_{I,j}$ is well 
defined for any nonnegative integer $j$. Set at last $\sigma_{I,j}:=1$ for any $j$ if 
$I=\varnothing$. If $I=I_s$, with $s\in\{1,\ldots,m\}$, denote by $\sigma_j^{(s)}$, for 
$j \geq 0$, the elementary symmetric function $\sigma_{I_s,j}$. 

\begin{prop}\label{pca3}
Let $s\in\{1,\ldots,m\}$ and $j\in\{1,\ldots,k_s\}$. Then 
$$\overline{p}_{k_0+\cdots+k_{s-1}+j}\,=\,(-1)^{j} \, \overline{p}_{k_0+\cdots+k_{s-1}} 
\,\sum_{r=0}^{j} \sigma_{j-r}^{(s)}\sigma_r^{(s)}
\,=\,(-1)^{j}\,  (\sigma_{k_0/2}^{(1)}\ldots \sigma_{k_{s-1}/2}^{(s-1)})^2 \,
\sum_{r=0}^{j} \sigma_{j-r}^{(s)}\sigma_r^{(s)}.$$
\end{prop}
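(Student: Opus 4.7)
The plan is to establish both equalities by induction on $k'/2$, the number of paired blocks, using the inductive relations from Lemma~\ref{l4ca3}. The base case $k'/2 = 1$ (so $k'=2$, $m=1$, $k_1=2$, $I_1=\{1\}$) follows immediately from Step 1 of the proof of Lemma~\ref{l4ca3}: $\overline{p}_1 = -2z_1$ and $\overline{p}_2 = z_1^2$, which agree with the proposition's right-hand side since $\sigma_0^{(1)} = 1$, $\sigma_1^{(1)} = z_1$, and $\sigma_2^{(1)} = 0$, giving $\sum_{r=0}^1 \sigma_{1-r}^{(1)}\sigma_r^{(1)} = 2z_1$ and $\sum_{r=0}^2 \sigma_{2-r}^{(1)}\sigma_r^{(1)} = z_1^2$.

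For the inductive step, I would work with $\overline{\g}'$ and its associated subset $I'_m := I_m \setminus \{k'/2\}$, noting that $\overline{p}_{k^*}$ depends only on $z_1,\ldots,z_{k'/2-1}$ (by Lemma~\ref{l3ca3}(ii)) and therefore coincides with $\overline{p}'_{k^*}$. In \emph{Case 1} ($k_m = 2$), $k^* = k_0+\cdots+k_{m-1}$ is the top index in $\overline{\g}'$, so the induction hypothesis (in its second form, taking $j = k_{m-1}$ and noting that only the $r = k_{m-1}/2$ term survives in $\sum_r \sigma_{k_{m-1}-r}^{(m-1)}\sigma_r^{(m-1)}$) yields $\overline{p}'_{k^*} = (\sigma_{k_0/2}^{(1)}\cdots \sigma_{k_{m-1}/2}^{(m-1)})^2$. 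Substituting into Lemma~\ref{l4ca3}(i) and observing that $I_m = \{k'/2\}$ gives the announced formula directly.

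In \emph{Case 2} ($k_m > 2$), the induction hypothesis applied to $\overline{\g}'$ gives, for $0 \leq j \leq k_m - 2$,
$$\overline{p}'_{k^*+j} = (-1)^j\,\overline{p}_{k^*}\sum_{r=0}^{j} \sigma_{I'_m,j-r}\sigma_{I'_m,r}.$$
Substituting this into Lemma~\ref{l4ca3}(ii) (and handling $j=1$ separately since there is no $(z_{k'/2})^2$ term then) produces
$$\overline{p}_{k^*+j} = (-1)^j\,\overline{p}_{k^*}\Big[\sum_{r=0}^{j}\sigma_{I'_m,j-r}\sigma_{I'_m,r} + 2z_{k'/2}\sum_{r=0}^{j-1}\sigma_{I'_m,j-1-r}\sigma_{I'_m,r} + z_{k'/2}^2\sum_{r=0}^{j-2}\sigma_{I'_m,j-2-r}\sigma_{I'_m,r}\Big].$$
The key identity to finish is the Pascal-type relation $\sigma_{I_m,r} = \sigma_{I'_m,r} + z_{k'/2}\,\sigma_{I'_m,r-1}$, which when squared and summed gives exactly the bracket above; this yields $\sum_{r=0}^j \sigma_{I_m,j-r}\sigma_{I_m,r}$, completing the first equality.

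The second equality then follows from the first by induction on $s$: applied with $j = k_s$, the first equality gives $\overline{p}_{k_0+\cdots+k_s} = \overline{p}_{k_0+\cdots+k_{s-1}}\,(\sigma_{k_s/2}^{(s)})^2$ since $|I_s| = k_s/2$ forces all but the middle term of the sum to vanish and $(-1)^{k_s} = 1$. The principal bookkeeping obstacle is verifying the bracket identity and aligning the boundary cases $j \in \{1,2\}$ with the formulas of Lemma~\ref{l4ca3}(ii), but both reduce to elementary symmetric-function identities after re-indexing $r \mapsto r-1$ and using $\sigma_{I'_m,-1} = 0$.
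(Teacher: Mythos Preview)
Your proposal is correct and follows essentially the same route as the paper's own proof: induction on $k'/2$, reduction to $s=m$ via Lemma~\ref{l3ca3}(ii), the same case split $k_m=2$ versus $k_m>2$ handled by the two parts of Lemma~\ref{l4ca3}, the Pascal-type identity $\sigma_r^{(m)}=\sigma_{I'_m,r}+z_{k'/2}\,\sigma_{I'_m,r-1}$ to collapse the bracket, and the same derivation of the second equality by induction on $s$ using that only the middle term of $\sum_r \sigma_{k_s-r}^{(s)}\sigma_r^{(s)}$ survives.
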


\begin{example}\label{e2ca3}
If $m=1$, then $k'=k_1$ and 
$$p_1=-\sigma_1^{(1)}\sigma_0^{(1)}-\sigma_0^{(1)}\sigma_1^{(1)}=-2\sigma_1^{(1)}
=-2(z_1+\cdots+z_{k'/2}),$$ 
$$p_2=\sigma_2^{(1)}\sigma_0^{(1)}+(\sigma_1^{(1)})^2+\sigma_0^{(1)}\sigma_2^{(1)}
=2\sigma_2^{(1)}+(\sigma_1^{(1)})^2,$$ $$\cdots,$$
$$\overline{p}_{k'}=(\sigma_{k'/2}^{(1)})^2=(z_1z_2\ldots z_{k'/2})^2.$$
\end{example}

\begin{proof} 
By Lemma \ref{l3ca3},(ii), we can assume that $s=m$. Retain the notations 
of Lemma~\ref{l4ca3}. In particular, set again 
$$k^*:=k_0+\cdots+k_{m-1}.$$ 

We prove the statement by induction on $k'/2$.  
If $k'=2$, then $m=1$, $k_m=k'=2$ and the statement 
follows from by Lemma~\ref{l4ca3},(i). 
Assume now that $k'>2$ and the statement true for the  
polynomials $\overline{p}'_1,\ldots,\overline{p}'_{k'-1}$. 

If $k_m=2$, the statement follows from Lemma~\ref{l4ca3},(i). 

Assume $k_m>2$. 
For any $r \geq 0$, we set $\sigma'_r:=\sigma_{I',r}$ where 
$I'=\{\frac{k^*}{2}+1,\ldots,\frac{k'}{2}-1\}\subset I_m$.  In particular, 
$\sigma'_0=1$ by convention. Observe that for any $r\geq 1$, 
$$\sigma_r^{(m)}=\sigma'_r +\sigma'_{r-1} z_{k'/2}.$$
Setting $\sigma'_{-1}:=0$, the above equality remains true for $r=0$. 
By the induction hypothesis and by 
Lemma~\ref{l4ca3},(ii), for $j\in\{2,\ldots,k_m\}$, 
\begin{eqnarray*} 
\overline{p}_{k^*+j} &=& \overline{p}'_{k^*+j} -2\,\overline{p}'_{k^*+j-1}\,z_{k'/2}
+\overline{p}'_{k^*+j-2}\,(z_{k'/2})^2 \quad\\
&=& \overline{p}_{k^*} \big( (-1)^{j}
\sum_{r=0}^{j} \sigma'_{j-r}\sigma'_r - 2 (-1)^{j-1}
\sum_{r=0}^{j-1} \sigma'_{j-r-1}\sigma'_r \, z_{k'/2}
+ (-1)^{j-2}\sum_{r=0}^{j-2} \sigma'_{j-r-2}\sigma'_r \, z_{k'/2}^2 \big) .\\
&=& (-1)^{j} \,\overline{p}_{k^*} \big( \sum_{r=0}^{j} 
\sigma'_{j-r}\sigma'_r + 2\,( \sum_{r=0}^{j-1} \sigma'_{j-r-1}\sigma'_r )\, z_{k'/2}
+ ( \sum_{r=0}^{j-2} \sigma'_{j-r-2}\sigma'_r )\, z_{k'/2}^2\big)
\end{eqnarray*}
since $ \overline{p}'_{k^*}= \overline{p}_{k^*}$. 
On the other hand, we have  
\begin{eqnarray*}
\sum_{r=0}^{j} \sigma_{j-r}^{(m)}\sigma_r^{(m)} &=&  
\sum_{r=0}^{j} (\sigma'_{j-r} +\sigma'_{j-r-1} z_{k'/2})
(\sigma'_r +\sigma'_{r-1} z_{k'/2})\\
&=&  \sum_{r=0}^{j} \sigma'_{j-r}\sigma'_r +( \sum_{r=0}^{j} \sigma'_{j-r-1} \sigma'_r 
+ \sum_{r=0}^{j} \sigma'_{j-r}\sigma'_{r-1}) \,z_{k'/2}
+ ( \sum_{r=0}^{j} \sigma'_{j-r-1} \sigma'_{r-1}) \,z_{k'/2}^2\\
&=&\sum_{r=0}^{j} \sigma'_{j-r}\sigma'_r + 
2 \,( \sum_{r=0}^{j-1} \sigma'_{j-r-1}\sigma'_r ) \,z_{k'/2}
+( \sum_{r=0}^{j-2} \sigma'_{j-r-2}\sigma'_r ) \,z_{k'/2}^2.
\end{eqnarray*}
Thereby, for any $j\in\{2,\ldots,k_m\}$, we get
$$\overline{p}_{k^*+j} = (-1)^{j} \,\overline{p}_{k^*}\sum_{r=0}^{j} 
\sigma_{j-r}^{(m)}\sigma_r^{(m)}.$$
For $j=1$, since $ \overline{p}'_{k^*}= \overline{p}_{k^*}$, 
by Lemma~\ref{l4ca3},(ii), and our induction hypothesis, 
\begin{eqnarray*} 
\overline{p}_{k^*+1} = \overline{p}'_{k^*+1} -2\,\overline{p}'_{k^*}\,z_{k'/2}
= \overline{p}_{k^*} (-2\sigma'_1)-2\,\overline{p}_{k^*}\,z_{k'/2}
= \overline{p}_{k^*} (-2\sigma_1^{(m)}).
\end{eqnarray*}
This proves the first equality of the proposition. 

\small

For the second one, it suffices to prove by induction on $s\in\{1,\ldots,m\}$ that 
$$\overline{p}_{k_0+\cdots+k_{s-1}}=
(\sigma_{k_0/2}^{(1)}\ldots \sigma_{k_{s-1}/2}^{(s-1)})^2.$$ 
For $s=1$, then $\overline{p}_{k_0+\cdots+k_{s-1}}=\overline{p}_0=1$ and 
$\sigma_{\varnothing,0}=1$ by convention. Assume $s>2$ and the statement true for 
$1,\ldots,s-1$. By the first equality with $j=k_s$, 
$\overline{p}_{k_0+\cdots+k_{s}}=(-1)^{k_s}\, \overline{p}_{k_0+\cdots+k_{s-1}} 
(\sigma_{k_s/2}^{(s)})^2$, whence the statement by induction hypothesis since $k_s$ is 
even. 
\end{proof}

\begin{rema}\label{r3ca3}
Remind that the polynomial ${\tilde{p}}_{k}$ was defined 
before Lemma \ref{lca3}.
As a by product of the previous formula, whenever $k'=k$, we obtain 
$$\overline{\tilde{p}}_{k}=\sigma_{k_0/2}^{(1)}\ldots \sigma_{k_{m}/2}^{(m)} .$$ 
\end{rema}

For $s\in\{1,\ldots,m\}$ and $j\in\{1,\ldots,k_s\}$, set 
$$\rho_{k_0+\cdots+k_{s-1}+j}:=\displaystyle{\frac{\overline{p}_{k_0+\cdots+k_{s-1}
+j}}{\overline{p}_{k_0+\cdots+k_{s-1}}}}.$$
Proposition \ref{pca3} says that $\rho_{k_0+\cdots+k_{s-1}+j}$ is an element of 
${\rm Frac}({\rm S}(\g^{e})^{\g^{e}})\cap {\rm S}(\g^{e})={\rm S}(\g^{e})^{\g^{e}}$.

\begin{lemma}\label{l5ca3}
Let $s\in\{1,\ldots,m\}$ and $j\in\{k_s/2+1,\ldots,k_s\}$. There is a polynomial 
$R_j^{(s)}$ of degree $j$ such that 
$$\rho_{k_0+\cdots+k_{s-1}+j}=R_j^{(s)}(\rho_{k_0+\cdots+k_{s-1}+1},\ldots,
\rho_{k_0+\cdots+k_{s-1}+k_s/2}).$$
In particular, for any $j\in\{k_1/2+1,\ldots,k_1\}$, we have  
$$\overline{p}_{j}=R_j^{(1)}(\overline{p}_{1},\ldots,\overline{p}_{k_1/2}).$$
\end{lemma}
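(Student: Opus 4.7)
The plan is to exploit the generating-function identity implicit in Proposition~\ref{pca3}. Fix $s\in\{1,\ldots,m\}$ and, for $j\in\{0,\ldots,k_s\}$, set $\tau_j^{(s)} := \sum_{r=0}^{j}\sigma_r^{(s)}\sigma_{j-r}^{(s)}$, so that Proposition~\ref{pca3} reads $\rho_{k_0+\cdots+k_{s-1}+j} = (-1)^j\tau_j^{(s)}$. Introducing the polynomial
\[
P_s(t)\ :=\ \prod_{i\in I_s}(1+z_{i}t)\ =\ \sum_{r=0}^{k_s/2}\sigma_r^{(s)}\,t^r,
\]
the definition of $\tau_j^{(s)}$ is exactly the identity $P_s(t)^2 = \sum_{j=0}^{k_s}\tau_j^{(s)}\,t^j$. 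The whole proof will rest on inverting this square.

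In a first step, I will show that $\sigma_1^{(s)},\ldots,\sigma_{k_s/2}^{(s)}$ are polynomials in $\tau_1^{(s)},\ldots,\tau_{k_s/2}^{(s)}$. Indeed, for each $l\in\{1,\ldots,k_s/2\}$, extracting the coefficient of $t^l$ in the above identity gives $\tau_l^{(s)} = 2\sigma_l^{(s)} + \sum_{r=1}^{l-1}\sigma_r^{(s)}\sigma_{l-r}^{(s)}$; since $\mathrm{char}\,\Bbbk = 0$, a routine induction on $l$ produces a polynomial $F_l$ of weighted degree $l$ (where each $\tau_i^{(s)}$ is assigned weight $i$) such that $\sigma_l^{(s)} = F_l(\tau_1^{(s)},\ldots,\tau_l^{(s)})$. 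In a second step, for $j\in\{k_s/2+1,\ldots,k_s\}$, the vanishing $\sigma_r^{(s)}=0$ for $r>k_s/2$ turns the definition of $\tau_j^{(s)}$ into
\[
\tau_j^{(s)}\ =\ \sum_{r=j-k_s/2}^{k_s/2}\sigma_r^{(s)}\,\sigma_{j-r}^{(s)},
\]
a weighted homogeneous polynomial of degree $j$ in $\sigma_1^{(s)},\ldots,\sigma_{k_s/2}^{(s)}$; substituting the $F_l$ into this expression will express $\tau_j^{(s)}$ as a weighted homogeneous polynomial of degree $j$ in $\tau_1^{(s)},\ldots,\tau_{k_s/2}^{(s)}$. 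Tracking the signs $(-1)^l$ then yields the desired polynomial $R_j^{(s)}$, of degree $j$ when the variable corresponding to $\rho_{k_0+\cdots+k_{s-1}+l}$ is given weight $l$.

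For the last assertion, the conventions $k_0 = 0$ and $\overline{p}_0 = 1$ yield $\rho_l = \overline{p}_l$ for every $l\in\{1,\ldots,k_1\}$, so the $s=1$ case of the general statement immediately gives $\overline{p}_j = R_j^{(1)}(\overline{p}_1,\ldots,\overline{p}_{k_1/2})$. I do not expect any serious obstacle in this lemma; its whole content is the formal inversion of a polynomial square in characteristic zero, and the only point requiring care is the consistent bookkeeping of weighted degrees through the inductive substitution.
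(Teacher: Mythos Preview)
Your argument is correct and follows essentially the same route as the paper's proof: both first invert the relation $\rho_{k_0+\cdots+k_{s-1}+l}=(-1)^l\bigl(2\sigma_l^{(s)}+\sum_{r=1}^{l-1}\sigma_r^{(s)}\sigma_{l-r}^{(s)}\bigr)$ by induction on $l\le k_s/2$ to express each $\sigma_l^{(s)}$ as a (weighted) degree-$l$ polynomial in $\rho_{k_0+\cdots+k_{s-1}+1},\ldots,\rho_{k_0+\cdots+k_{s-1}+l}$, and then substitute into the formula for $\rho_{k_0+\cdots+k_{s-1}+j}$ with $j>k_s/2$. Your generating-function packaging via $P_s(t)^2$ is a tidy way to organize the same computation, but the content is identical.
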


\begin{proof}
1) Prove by induction on $j\in\{1,\ldots,k_s/2\}$ that for some polynomial $T_j^{(s)}$ of
degree $j$, 
$$\sigma_j^{(s)}=T_j^{(s)}(\rho_{k_0+\cdots+k_{s-1}+1},\ldots,
\rho_{k_0+\cdots+k_{s-1}+j}).$$ 
By Proposition \ref{pca3}, $\rho_{k_0+\cdots+k_{s-1}+1}=-(\sigma_1^{(s)}\sigma_0^{(s)}+
\sigma_0^{(s)}\sigma_1^{(s)})=-2\sigma_1^{(s)}$. Hence, the statement is true for $j=1$. 
Suppose $j\in\{2,\ldots,k_s/2\}$ and the statement true for 
$\sigma_1^{(s)},\ldots,\sigma_{j-1}^{(s)}$. 
Since $j \leq k_s/2$, $\sigma_{j}^{(s)}\not=0$, and by Proposition \ref{pca3}, 
$$\rho_{k_0+\cdots+k_{s-1}+j}\,=\,(-1)^j (\sigma_{j}^{(s)}\sigma_0^{(s)}+
\sigma_0\sigma_j^{(s)}) + (-1)^{j}\sum_{r=1}^{j-1} \sigma_{j-r}^{(s)}\sigma_r^{(s)}
\,=\,2 (-1)^j \sigma_{j}^{(s)} + 
(-1)^{j}\sum_{r=1}^{j-1} \sigma_{j-r}^{(s)}\sigma_r^{(s)}.$$
So, the statement for $j$ follows from our induction hypothesis. 

\small 

\noi
2) Let $j\in\{k_s/2+1,\ldots,k_s\}$. Proposition~\ref{pca3} shows that 
$\rho_{k_0+\cdots+k_{s-1}+j}$ is a polynomial in 
$\sigma_1^{(s)},\ldots,\sigma_{k_s/2}^{(s)}$. Hence, by Step 1), 
$\rho_{k_0+\cdots+k_{s-1}+j}$ is a polynomial in 
$$\rho_{k_0+\cdots+k_{s-1}+1},\ldots,\rho_{k_0+\cdots+k_{s-1}+k_s/2}.$$ 
Furthermore, by Proposition \ref{pca3} and Step (1), this polynomial has degree $j$. 
\end{proof}

\begin{rema}\label{r4ca3}
By Remark~\ref{r3ca3} and the above proof, if $k'=k$ then for some polynomial 
$\tilde{R}$ of degree $k_m/2$, 
$$\displaystyle{\frac{\overline{\tilde{p}}_{k}}{\sigma_{k_0/2}^{(1)}\ldots 
\sigma_{k_{m-1}/2}^{(m-1)}}}=\sigma_{k_{m}/2}^{(m)}=\tilde{R}(\rho_{k_0+\cdots+k_{m-1}+1},
\ldots,\rho_{k_0+\cdots+k_{m-1}+k_m/2}).$$
\end{rema}

\begin{theorem}\label{tca3}
{\rm (i)} Assume that $\lambda$ satisfies the condition $(\ast)$ and that  
$\lambda_1=\cdots=\lambda_{k'}$. Then $e$ is good. 

{\rm (ii)} Assume that $k=4$ and that $\lambda_1,\lambda_2,\lambda_3,\lambda_4$ 
are even. Then $e$ is good. 
\end{theorem}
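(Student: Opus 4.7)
The plan is to apply Theorem~\ref{tge2} to a modified set of homogeneous generators of $\ai g{}{}$. The standard generators $q_1,\ldots,q_\rg$ fail the hypothesis of Theorem~\ref{tge2} because, by Lemmas~\ref{l3ca2} and~\ref{l4ca2}, when $\lambda$ verifies $(\ast)$ without being very good, the defect $\dim\g^{e}+\rg-2(\delta_1+\cdots+\delta_\rg)$ is strictly positive. The remedy is to replace the ``redundant'' generators $q_{\nu_j}$ (those $j$ for which the restriction $\overline{p}_j$ to $\overline{\g}^f$ becomes algebraically dependent on the preceding restrictions, as described in Lemma~\ref{l5ca3}) by $G$-invariant polynomials whose initial components on $\g^f$ have strictly lower degree.

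Concretely, for each $s\in\{1,\ldots,m\}$ and each $j\in\{k_s/2+1,\ldots,k_s\}$, using the polynomial identity $\rho_{k_0+\cdots+k_{s-1}+j}=R_j^{(s)}(\rho_{k_0+\cdots+k_{s-1}+1},\ldots,\rho_{k_0+\cdots+k_{s-1}+k_s/2})$ of Lemma~\ref{l5ca3}, I would clear denominators using $\overline{p}_{k_0+\cdots+k_{s-1}}=(\sigma_{k_0/2}^{(1)}\cdots\sigma_{k_{s-1}/2}^{(s-1)})^2$ from Proposition~\ref{pca3} and lift the resulting polynomial identity from $\overline{\g}^f$ back to $\g$, producing a $G$-invariant $\tilde q_{\nu_{k_0+\cdots+k_{s-1}+j}} := q_{\nu_{k_0+\cdots+k_{s-1}+j}} - \Lambda_{s,j}(q_{\nu_1},\ldots,q_{\nu_{k_0+\cdots+k_{s-1}+k_s/2}})$ whose restriction $\kappa(\tilde q_{\nu_{k_0+\cdots+k_{s-1}+j}})$ to $\cal S_e$ vanishes on $\overline{\g}^f$. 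By Proposition~\ref{pca3} and Lemma~\ref{l3ca3}, the subtraction kills the leading weight component on $\g^f$, forcing the new initial component $\ie{\tilde q_{\nu_{k_0+\cdots+k_{s-1}+j}}}$ to be supported on the orthogonal complement $\mathfrak{m}^e$ of $\overline{\g}^e$ in $\g^e$, hence to have strictly smaller standard degree $\tilde\delta$.

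Second, I would verify that the new degrees $\tilde\delta_1,\ldots,\tilde\delta_\rg$ satisfy $\dim\g^{e}+\rg-2(\tilde\delta_1+\cdots+\tilde\delta_\rg)=0$, which by~\cite[Theorem~2.1]{PPY} is equivalent to the algebraic independence of the new initial components $\ie{\tilde q_1},\ldots,\ie{\tilde q_\rg}$. The degree drop at each replaced index is read off from the weight identity ${\rm wt}(\overline{p}_{k_0+\cdots+k_{s-1}+j}) = {\rm wt}(\overline{p}_{k_0+\cdots+k_{s-1}})+2(2\nu_{k_0+\cdots+k_{s-1}+j}-(k_0+\cdots+k_{s-1}+j))$ of Lemma~\ref{l3ca3}, together with Lemma~\ref{l2ca3} (which guarantees that the lift from $\overline{\g}^f$ to $\g^f$ is faithful on $G_0^{e}$-orbits). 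Summing the individual degree drops, the total reduction equals exactly $n_\lambda-d_\lambda$ or $d_\lambda$ (according to the parity of $k$), which is the defect given by Lemma~\ref{l3ca2}. Once this bookkeeping is confirmed, Theorem~\ref{tge2} yields that $e$ is good.

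The main obstacle is the combinatorial verification that the individual degree drops accumulate to precisely the defect. This is where the two hypotheses of the theorem enter in distinct ways. In case~(i), the assumption $\lambda_1=\cdots=\lambda_{k'}$ collapses the index set so that $m=1$ and the symmetric-function structure from Example~\ref{e2ca3} makes the drops transparent: each replaced $\tilde q_{\nu_j}$ for $j\in\{k'/2+1,\ldots,k'\}$ contributes a drop of $2$ in the degree sum, and the very good part $(\lambda_{k'+1},\ldots,\lambda_k)$ (handled by Corollary~\ref{cca2}) contributes nothing to the defect. In case~(ii), we necessarily have $k'=k=4$ by Lemma~\ref{l4ca2} (since a length-$2$ all-even sequence is not very good), so there is no $\g_0$ factor, and the verification reduces to a direct analysis over the small number of possibilities $m\in\{1,2,3,4\}$ for the partition of $4$ into block-multiplicities $(k_1,\ldots,k_m)$, using Proposition~\ref{pca3}.
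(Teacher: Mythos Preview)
Your overall strategy---replace certain $q_{\nu_j}$ by $q_{\nu_j}$ minus a polynomial in the other generators so as to raise $\deg\ie{(\cdot)}$, then invoke \cite[Theorem~2.1]{PPY} and Theorem~\ref{tge2}---is exactly the paper's approach, and for case~(i) with $k'<k$ your plan is essentially correct: here $m=1$, so $\overline{p}_{k_0+\cdots+k_{s-1}}=\overline{p}_0=1$ and Lemma~\ref{l5ca3} gives a genuine polynomial identity $p_j=R_j^{(1)}(p_1,\ldots,p_{k'/2})$ on $\g^f$ (via Lemma~\ref{l2ca3}), yielding the replacement $r_j:=q_{\nu_j}-R_j^{(1)}(q_{\nu_1},\ldots,q_{\nu_{k'/2}})$ with $\deg\ie{r_j}\geq j+1$. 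The paper then only checks $\dim\g^e+\rg-2\hat\delta\leq 0$ (not $=0$); the reverse inequality is automatic from \cite[Theorem~2.1]{PPY}.

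There are, however, two genuine gaps. First, when $k'=k$ you cannot use $p_{k}=(\ie{q_\rg})^2$ directly: the relation from Lemma~\ref{l5ca3} concerns $p_k$, not $\tilde p_k=\ie{q_\rg}$, and you need a substitute for $q_\rg$ itself. The paper uses Remark~\ref{r4ca3} to get $\tilde p_k=\tilde R(p_1,\ldots,p_{k/2})$ and replaces $q_{\nu_k}$ by $q_{\nu_k}-\tilde R(q_{\nu_1},\ldots,q_{\nu_{k/2}})$, which has $\deg\ie{(\cdot)}\geq k/2+1$. Second, and more seriously, in case~(ii) with $m=2$ your ``clear denominators and lift'' step fails: for $s=2$ the relation $\rho_4=R_2^{(2)}(\rho_3)$ becomes, after clearing $\overline{p}_{k_1}=\overline{p}_2$, the identity $\overline{p}_2\,\overline{p}_4=\tfrac14\overline{p}_3^{\,2}$, which does \emph{not} isolate any single $\overline{p}_j$ as a polynomial in the others, so no replacement of the shape $q_{\nu_j}-\Lambda(\text{other }q_{\nu_i})$ arises. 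The paper sidesteps this by using the Pfaffian again: from Proposition~\ref{pca3} one reads off $\overline{p}_3=\overline{p}_1\,\overline{\tilde p}_4$ directly, giving the substitute $r_3:=q_{\nu_3}-q_{\nu_1}q_{\nu_4}$ (together with $r_2:=q_{\nu_2}-\tfrac14 q_{\nu_1}^2$), which does the job. Finally, note that in case~(ii) the $k_s$ are even and sum to $4$, so only $m\in\{1,2\}$ occurs, not $m\in\{1,2,3,4\}$.
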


For example, $(6,6,6,6,5,3)$ satisfies the hypothesis of (i) and 
$(6,6,4,4)$ satisfies the hypothesis of (ii). 

\begin{rema} \label{r2ca3}
If $\lambda$ satisfies the condition $(\ast)$ then by Lemma~\ref{l3ca2}, 
$$\dim {\goth g}^{e}+\rg - 2 (\delta_1+\cdots+\delta_\rg) =k'.$$
Indeed, if $k$ is odd, then $n_\lambda-d_\lambda=n_{\lambda'}-d_{\lambda'}$ 
where $\lambda'=(\lambda_1,\ldots,\lambda_{k'},\lambda_{k'+1})$ so that 
$n_\lambda-d_\lambda=n_{\lambda'}-d_{\lambda'}=n_{\lambda'}=k'$ since 
$\lambda_{k'+1}$ is odd.  If $k$ is even, then 
$d_\lambda=n_{\lambda'}=k'$ where $\lambda'=(\lambda_1,\ldots,\lambda_{k'})$. 
\end{rema}

\begin{proof}
(i) In the previous notations, the hypothesis means that $m=1$ 
and $k'=k_m$. According to 
Lemma~\ref{l5ca3} and Lemma \ref{l2ca3}, for $j\in\{k'/2+1,\ldots,k'-1\}$, 
$${p}_j=R_j^{(1)}({p}_1,\ldots,{p}_{k'/2}),$$
where $R_j^{(1)}$ is a polynomial of degree $j$. Moreover, if $k'=k$, then by 
Remark~\ref{r4ca3} and Lemma \ref{l2ca3},   
$${\tilde{p}}_{k}=\tilde{R}({p}_1,\ldots,{p}_{k/2}),$$
where $\tilde{R}$ is a polynomial of degree $k/2$. 

\small

-\; If $k'<k$, set for any $j\in\{k'/2+1,\ldots,k'\}$, 
$$r_j:=q_{\nu_j}-R_j^{(1)}({q}_{\nu_1},\ldots,{q}_{\nu_{k'/2}}).$$
Then by Lemma~\ref{lca3}, 
$$\forall \, j\in\{k'/2+1,\ldots,k'\},\quad \deg {\ie {r_j}} \geq j+1.$$

\small

-\; If $k'=k$, set for $j\in\{k/2+1,\ldots,k'-1\}$, 
$$r_j:=q_{\nu_j}-R_j^{(1)}({q}_{\nu_1},\ldots,{q}_{\nu_{k'/2}}) \; \text{ and }\;
r_k:=q_{\nu_k}-\tilde{R}({q}_{\nu_1},\ldots,{q}_{\nu_{k/2}}).$$
Then by Lemma~\ref{lca3}, 
$$\forall \, j\in\{k/2+1,\ldots,k-1\},\quad \deg {\ie {r_j}} \geq j+1 
\quad \textrm{ and }\quad \deg {\ie {r_k}} \geq k/2+1.$$ 
In both cases, $$\{ q_{j} \; \vert \; j \in \{1,\ldots,\rg\}\smallsetminus 
\{\nu_{k'/2+1},\ldots,\nu_{k'}\} \} 
\;\cup \; \{{{r_{k'/2+1}}},\ldots,{{r_{k'}}}\}$$ 
is a homogenous generating system of S$(\g)^\g$. Denote by $\hat{\delta}$ the sum 
of the degrees of the polynomials 
$${\ie {q_j}}, \;  j \in \{1,\ldots,\rg\}\smallsetminus \{\nu_{k'/2+1},\ldots,\nu_{k'}\},
\quad {\ie {r_{k'/2+1}}},\ldots,{\ie {r_{k'}}}.$$
The above discussion shows that $\hat{\delta} \geq \delta_1+\cdots+\delta_\rg +k'/2$. 
By Remarks \ref{r2ca3}, we obtain   
$$\dim {\goth g}^{e}+\rg - 2 \hat{\delta} \leq 0.$$
In conclusion, by \cite[Theorem 2.1]{PPY} and Theorem \ref{tge2}, $e$ is good. 

(ii) In the previous notations, the hypothesis means that $k'=k=4$.  
If $m=1$ the statement is a consequence of (i). Assume that $m=2$. Then by 
Proposition \ref{pca3}, 
$\overline{p}_1=-2z_1$, $\overline{p}_2=z_1^2$, $\overline{p}_3=-2z_1^2z_2$ and 
$\overline{p}_4=(z_1z_2)^2$. Moreover, $\overline{\tilde{p}}_4=z_1z_2$. 
Hence, by Lemma \ref{l2ca3}, $p_2=\frac{1}{4}p_1^2$ and $p_3=p_1\tilde{p}_4$.  
Set $r_2:=q_{\nu_2}-\frac{1}{4}q_{\nu_1}^2$ and $r_3:=q_{\nu_3}-q_{\nu_1}q_{\nu_4}$. 
Then $\deg {\ie {r_2}}\geq 3$ and $\deg {\ie {r_3}}\geq 4$. Moreover, 
$$\{\poi q1{,\ldots,}{\rg}{}{}{}\} \smallsetminus \{q_{\nu_2},q_{\nu_3}\} \cup 
\{r_2,r_3\}$$
is a homogenous generating system of S$(\g)^\g$. Denoting by $\hat{\delta}$ the sum 
of the degrees of the polynomials 
$$\{{\ie {q_1}},\ldots,{\ie {q_\rg}}\} \smallsetminus 
\{{\ie {q_{\nu_2}}},{\ie {q_{\nu_3}}}\} \cup 
\{{\ie {r_2}},{\ie {r_3}}\},$$
we obtain that $\hat{\delta} \geq \delta_1+\cdots+\delta_\rg +2$. But 
$\dim {\goth g}^{e}+\rg
- 2 (\delta_1+\cdots+\delta_\rg)=k'=4$ by Remark \ref{r2ca3}. So, $\dim {\goth g}^{e}
+\rg -2 \hat{\delta} \leq 0$. 
In conclusion, by \cite[Theorem~2.1]{PPY} and Theorem \ref{tge2}, $e$ is good. 
\end{proof}

\begin{rema} \label{r5ca3} 
Assume that $\g=\mathfrak{so}(\mathbb{V})$, with $\dim \mathbb{V}=12$, and that 
$e$ belongs to the nilpotent orbit of $\g$ associated with the partition $(5,5,1,1)$ of 
$12$. Then the degrees of $\ie{q_1},\ie{q_2},\ie{q_3},\ie{q_4},\ie{q_5},\ie{q_6}$ 
are $1,1,2,2,2,2$ respectively. Since $10=1+1+2+2+2+2 =(\dim\g^{e}+\ell)/2$, the 
polynomial functions $\ie{q_1},\ie{q_2},\ie{q_3},\ie{q_4},\ie{q_5},\ie{q_6}$ are 
algebraically independent, and by Theorem \ref{tge2}, $S(\g^{e})^{\g^{e}}$ is polynomial. 
One can satisfy that $\ie{q_5}=z^2$ for some $z$ in the center $\z(\g^{e})$ 
of $\g^{e}$. Since $\z(\g^{e})$ has dimension 3, for any other choice of homogenous  
generators $q_1,\ldots,q_\ell$ of $S(\g)^\g$, $S(\g^{e})^{\g^{e}}$ cannot be generated 
by the elements $\ie{q_1},\ie{q_2},\ie{q_3},\ie{q_4},\ie{q_5},\ie{q_6}$ for degree 
reasons. 

This shows that Condition (2) of Theorem \ref{ti2} cannot be removed to ensure 
that $S(\g^{e})^{\g^{e}}$ is a polynomial algebra in the variables 
$\ie{q_1},\ie{q_2},\ie{q_3},\ie{q_4},\ie{q_5},\ie{q_6}$. However, one can sometimes, 
as in this example, provide explicit generators. 
\end{rema}

\section{Examples in simple exceptional Lie algebras} \label{et}
We give in this section examples of good nilpotent elements in simple exceptional Lie 
algebras of type ${\bf E}_{6}$, ${\bf F}_{4}$ or ${\bf G}_{2}$ which are not covered 
by \cite{PPY}. These examples are all obtained through Theorem \ref{tge2}. 

According to 
\cite[Theorem~0.4]{PPY} and Theorem \ref{tge2}, the elements of the minimal nilpotent 
orbit of $\g$, for $\g$ not of type ${\bf E}_{8}$, are good. In addition, as it is 
explained in the Introduction, the elements of the regular, or subregular, nilpotent orbit 
of $\g$ are good. So we do not consider here these cases.

\begin{example}\label{eet} 
Suppose that ${\goth g}$ has type ${\bf E}_{6}$. Let ${\Bbb V}$ be the module
of highest weight the fundamental weight $\varpi _{1}$ with the notation of Bourbaki.
Then ${\Bbb V}$ has dimension 27 and ${\goth g}$ identifies with a subalgebra of 
${\goth {sl}}_{27}(\k)$. For $x$ in ${\goth {sl}}_{27}(\k)$ and for $i=2,\ldots,27$, let 
$p_{i}(x)$ be the coefficient of $T^{27-i}$ in $\det (T -x)$ and denote by $q_{i}$ the 
restriction of $p_{i}$ to ${\goth g}$. Then $(q_{2},q_{5},q_{6},q_{8},q_{9},q_{12})$ is a 
generating family of $\ai g{}{}$ since these polynomials are algebraically independent, 
\cite{Me}. 
Let $(e,h,f)$ be an ${\goth {sl}}_{2}$-triple of ${\goth g}$. Then $(e,h,f)$
is an ${\goth {sl}}_{2}$-triple of ${\goth {sl}}_{27}(\k)$. We denote by ${\ie {p_i}}$ 
the initial homogenous component of the restriction to $e+\tilde{\g}^{f}$ of 
$p_i$ where $\tilde{\g}^{f}$ is the centralizer of $f$ in ${\goth {sl}}_{27}(\k)$. 
As usual, ${\ie {q_i}}$ denotes the initial homogenous component of the 
restriction to $e+{\g}^{f}$ of $q_i$. 
For $i=2,5,6,8,9,12$,
$$ \deg \ie p_{i} \leq \deg \ie q_{i}.$$ 
In some cases, from the knowledge of the maximal eigenvalue of the 
restriction of $\ad h$ to ${\goth g}$ and the $\ad h$-weight of $\ie p_{i}$, 
it is possible to deduce that $\deg {\ie {p_{i}}}<\deg {\ie {q_{i}}}$. On the other hand, 
$$\deg \ie q_{2} + \deg \ie q_{5} + \deg \ie q_{6} + \deg \ie q_{8} + \deg \ie q_{9} + 
\deg \ie q_{12} \leq \frac{1}{2}(\dim {\goth g}^{e} + 6),$$
with equality if and only if 
$\ie q_{2},\ie q_{5},\ie q_{6},\ie q_{8},\ie q_{9},\ie q_{12}$ are algebraically 
independent. From this, it is possible to deduce in some cases that $e$ is good.  
These cases are listed in Table \ref{tabet} where the nine columns are indexed in the 
following way:
\begin{itemize}
\item [1:] the label of the orbit $G.e$ in the Bala-Carter classification, 
\item [2:] the weighted Dynkin diagram of $G.e$, 
\item [3:] the dimension of ${\goth g}^{e}$,
\item [4:] the partition of $27$ corresponding to the nilpotent element $e$ of 
${\goth {sl}}_{27}(\k)$,
\item [5:] the degrees of 
$\ie p_{2},\ie p_{5},\ie p_{6},\ie p_{8},\ie p_{9},\ie p_{12}$,
\item [6:] their $\ad h$-weights,
\item [7:] the maximal eigenvalue $\nu$ of the restriction of $\ad h$ to 
${\goth g}$,
\item [8:] the sum $\Sigma$ of the degrees of
$\ie p_{2},\ie p_{5},\ie p_{6},\ie p_{8},\ie p_{9},\ie p_{12}$,
\item [9:] the sum $\Sigma'=\frac{1}{2}(\dim\g^{e}+\rg )$. 
\end{itemize}

{\tiny \begin{table}[ht!]
\begin{tabular}{ccccccccccc}
&&Label&{\setlength{\unitlength}{0.01in}
\begin{picture}(90,20)
\put(3,0){\circle{6}}
\put(23,0){\circle{6}}
\put(43,0){\circle{6}}
\put(63,0){\circle{6}}
\put(83,0){\circle{6}}    
\put(43,-20){\circle{6}} 
\put(6,0){\line(1,0){14}}
\put(26,0){\line(1,0){14}}
\put(46,0){\line(1,0){14}}
\put(66,0){\line(1,0){14}}
\put(43,-3){\line(0,-1){14}}
\end{picture}} 
& $\dim\g^{e}$ & partition & $\deg \ie p_{i}$ & weights & $\nu$ & $\Sigma$& $\Sigma'$\\
\bigskip\\ 
\hline
{\bf\footnotesize 1.}&&$D_5$ &{\tiny$\begin{array}{ccccc}\\
2&0&2&0&2\\
&&2&&\end{array}$}
& 10 & (11,9,5,1,1) & 1,1,1,1,1,1 & 2,8,10,14,16,22 & 14 & 6 & 8 \\
{\bf\footnotesize 2.}&& $E_6(a_3)$ &{\tiny$\begin{array}{ccccc}\\
2&0&2&0&2\\
&&0&&\end{array}$}& 
12 & $(9,7,5^{2},1)$ & 1,1,1,1,1,2 & 2,8,10,14,16,20 & 10 & 7 & 9 \\
{\bf\footnotesize 3.}&& $D_5(a_1)$ &{\tiny$\begin{array}{ccccc}\\
1&1&0&1&1\\
&&2&&\end{array}$}
& 14 & (8,7,6,3,2,1) & 1,1,1,1,2,2 & 2,8,10,14,14,20 & 10 & 8 & 10 \\
{\bf\footnotesize 4.}&&$A_5$ &{\tiny$\begin{array}{ccccc}\\
2&1&0&1&2\\
&&1&&\end{array}$}
& 14 & $(9,6^{2},5,1)$ & 1,1,1,1,1,2 & 2,8,10,14,16,20 & 10 & 7 & 10 \\
{\bf\footnotesize 5.}&&$A_4+A_1$ &{\tiny$\begin{array}{ccccc}\\
1&1&0&1&1\\
&&1&&\end{array}$}
& 16 & $(7,6,5,4,3,2)$ & 1,1,1,2,2,2 & 2,8,10,12,14,20 & 8 & 9 & 11 \\
{\bf\footnotesize 6.}&&$D_4$ &{\tiny$\begin{array}{ccccc}\\
0&0&2&0&0\\
&&2&&\end{array}$}
& 18 & $(7^{3},1^{6})$ & 1,1,1,2,2,2 & 2,8,10,12,14,20 & 10 & 9 & 12 \\
{\bf\footnotesize 7.}&&$D_4(a_1)$ &{\tiny$\begin{array}{ccccc}\\
0&0&2&0&0\\
&&0&&\end{array}$}
& 20 & $(5^{3},3^{3},1^{3})$ & 1,1,2,2,2,3 & 2,8,8,12,14,18 & 6 & 11 & 13 \\
{\bf\footnotesize 8.}&&$2A_2+A_1$ &{\tiny$\begin{array}{ccccc}\\
1&0&1&0&1\\
&&0&&\end{array}$}
& 24 & $(5,4^{2},3^{3},2^{2},1)$ & 1,1,2,2,2,3 & 2,8,8,12,14,18 & 5 & 11 & 15 
\small\\
\hline
\small
\end{tabular}
\caption{\footnotesize Data for {\bf E}$_6$} \label{tabet}
\end{table}}

\noindent
In all cases, we observe that $\Sigma<\Sigma'$, i.e., 
$$\deg \ie p_{2} + \deg \ie p_{5} + \deg \ie p_{6} + \deg \ie p_{8} + \deg \ie p_{9} + 
\deg \ie p_{12} < \frac{1}{2}(\dim {\goth g}^{e} + 6).$$
So, we need some arguments that we give below. 
\begin{itemize}
\item [{\bf 1.}] Since $14 < 16$, $\deg \ie p_{i} < \deg \ie q_{i}$ for $i=9,12$.
\item [{\bf 2.}] Since $10 < 14$, $\deg \ie p_{i} < \deg \ie q_{i}$ for $i=8,9$.
\item [{\bf 3.}] Since $10 < 14$, $\deg \ie p_{8} < \deg \ie q_{8}$. Moreover,
the multiplicity of the weight $10$ equals $1$. So, either  
$\deg \ie q_{6} > 1$, or $\deg \ie q_{12} > 2$, or $\ie q_{12} \in \k \ie q_{6}^2$.
\item [{\bf 4.}] Since $10 < 14$, $\deg \ie p_{i} < \deg \ie q_{i}$ for $i=8,9$.
Moreover, the multiplicity of the weight $10$ equals $1$. So, either  
$\deg \ie q_{6} > 1$, or $\deg \ie q_{12} > 2$, or $\ie q_{12} \in \k \ie q_{6}^2$.
\item [{\bf 5.}] Since $8 < 10$ and $2\mul 8 < 20$, $\deg \ie p_{i} < \deg \ie q_{i}$ 
for $i=6,12$.
\item [{\bf 6.}] Since the center of ${\goth g}^{e}$ has dimension $2$ and the weights
of $h$ in the center are $2$ and $10$, $\deg \ie p_{5}<\deg \ie q_{5}$. Moreover, since
the weights of $h$ in ${\goth g}^{e}$ are $0,2,6,10$, $\deg \ie p_{9} < \deg \ie q_{9}$
and since the multiplicity of the weight $10$ equals $1$, either 
$\deg \ie q_{6} > 1$, or $\deg \ie q_{12} > 2$, or $\ie q_{12} \in \k \ie q_{6}^2$.
\item [{\bf 7.}] Since $6 < 8$ and $2\mul 6 < 14$, $\deg \ie p_{i} < \deg \ie q_{i}$ 
for $i=5,9$.
\item [{\bf 8.}] Since $5 < 8$, $2\mul 5 < 12$ and $3\mul 5 < 18$, 
$\deg \ie p_{i} < \deg \ie q_{i}$ for $i=5,8,9,12$.  
\end{itemize}
In cases ${\bf 1,2,5,7,8}$, the discussion shows that 
$$\deg \ie q_{2} + \deg \ie q_{5} + \deg \ie q_{6} + \deg \ie q_{8} + \deg \ie q_{9} + 
\deg \ie q_{12} = \frac{1}{2}(\dim {\goth g}^{e} + 6).$$
Hence, $\ie q_{2},\ie q_{5},\ie q_{6},\ie q_{8},\ie q_{9},\ie q_{12}$ are algebraically 
independent and by Theorem \ref{tge2}, $e$ is good. 
In cases ${\bf 3,4,6}$, if the 
above equality does not hold, then for some $a$ in $\k^{*}$,
$$\deg \ie q_{2} + \deg \ie q_{5} + \deg \ie q_{6} + \deg \ie q_{8} + \deg \ie q_{9} + 
\deg \ie (q_{12}-aq_{6}^{2}) = \frac{1}{2}(\dim {\goth g}^{e} + 6).$$
Hence $\ie q_{2},\ie q_{5},\ie q_{6},\ie q_{8},\ie q_{9},\ie (q_{12}-aq_{6}^{2})$ are 
algebraically independent and by Theorem \ref{tge2}, $e$ is good. 

In conclusion, it remains nine unsolved nilpotent orbits in type {\bf E}$_6$. 
\end{example}

\begin{example}\label{e2et} 
Suppose that ${\goth g}$ is simple of type ${\bf F}_{4}$. Let ${\Bbb V}$ be the module
of highest weight the fundamental weight $\varpi _{4}$ with the notation of Bourbaki.
Then ${\Bbb V}$ has dimension 26 and ${\goth g}$ identifies with a subalgebra of 
${\goth {sl}}_{26}(\k)$. For $x$ in ${\goth {sl}}_{26}(\k)$ and for $i=2,\ldots,26$, let 
$p_{i}(x)$ be the coefficient of $T^{26-i}$ in $\det (T -x)$ and denote by $q_{i}$ the 
restriction of $p_{i}$ to ${\goth g}$. Then $(q_{2},q_{6},q_{8},q_{12})$ is a generating 
family of $\ai g{}{}$ since these polynomials are algebraically independent, \cite{Me}. 
Let $(e,h,f)$ be an ${\goth {sl}}_{2}$-triple of ${\goth g}$. Then $(e,h,f)$
is an ${\goth {sl}}_{2}$-triple of ${\goth {sl}}_{26}(\k)$. As in Example~\ref{eet}, in 
some cases, it is possible to deduce that $e$ is good. These 
cases are listed in Table \ref{tab2et}, indexed as in Example~\ref{eet}.

{\tiny \begin{table}[ht!]
\begin{tabular}{ccccccccccc}
&&Label&{\setlength{\unitlength}{0.01in}
\begin{picture}(105,30)
\put(20,4){\circle{6}}
\put(40,4){\circle{6}}
\put(60,4){\circle{6}}
\put(80,4){\circle{6}}
\put(23,4){\line(1,0){14}}
\put(63,4){\line(1,0){14}}
\put(42,6){\line(1,0){16}}
\put(42,2){\line(1,0){16}}
\put(45,0){\normalsize $>$}
\end{picture}}
& $\dim\g^{e}$ & partition& $\deg \ie p_{i}$ & weights & $\nu$ & 
$\Sigma$& $\Sigma'$
\small \\
\hline
{\bf\footnotesize 1.}&&$F_4(a_2)$ &{\tiny$\begin{array}{ccccc}\\
0&2&0&2\\\\\end{array}$}
& 8 & $(9,7,5^{2})$ & 1,1,1,2 & 2,10,14,20 & 10 & 5 & 6 \\
{\bf\footnotesize 2.}&&$C_3$ &{\tiny$\begin{array}{ccccc}\\
1&0&1&2\\\\\end{array}$}
& 10 & $(9,6^{2},5)$ & 1,1,1,2 & 2,10,14,20 & 10 & 5 & 7 \\
{\bf\footnotesize 3.}&&$B_3$ &{\tiny$\begin{array}{ccccc}\\
2&2&0&0\\\\\end{array}$}
& 10 & $(7^{3},1^{5})$ & 1,1,2,2 & 2,10,12,20 & 10 & 6 & 7 \\
{\bf\footnotesize 4.} &&$F_4(a_3)$&{\tiny$\begin{array}{ccccc}\\
0&2&0&0\\\\\end{array}$}
& 12 & $(5^{3},3^{3},1^{2})$ & 1,2,2,3 & 2,8,12,18 & 6 & 8 & 8 \\
{\bf\footnotesize 5.}&&$C_3(a_1)$ &{\tiny$\begin{array}{ccccc}\\
1&0&1&0\\\\\end{array}$}
& 14 & $(5^{2},4^{2},3,2^{2},1)$ & 1,2,2,3 & 2,8,12,18 & 6 & 8 & 9 \\ 
{\bf\footnotesize 6.} &&$\tilde{A}_2+A_1$&{\tiny$\begin{array}{ccccc}\\
0&1&0&1\\\\\end{array}$}
& 16 & $(5,4^{2},3^{3},2^{2})$ & 1,2,2,3 & 2,8,12,18 & 5 & 8 & 10\\
\hline
\small
\end{tabular}
\caption{\footnotesize Data for {\bf F}$_4$} \label{tab2et}
\end{table}}

For the orbits {\bf 1}, {\bf 2}, {\bf 3}, {\bf 5},{\bf 6}, we observe 
that $\Sigma<\Sigma'$. So, we need some more arguments to conclude as in 
Example~\ref{eet}.
\begin{itemize}
\item [ {\bf 1.}] Since $10 < 14$, $\deg \ie p_{8} < \deg \ie q_{8}$.
\item [ {\bf 2.}] Since $10 < 14$, $\deg \ie p_{8} < \deg \ie q_{8}$. Moreover,
the multiplicity of the weight $10$ equals $1$ so that 
$\deg \ie q_{6} > 1$ or $\deg \ie q_{12} > 2$ or $\ie q_{12} \in \k \ie q_{6}^2$.
\item [{\bf 3.}] The multiplicity of the weight $10$ equals $1$. So, either  
$\deg \ie q_{6} > 1$, or $\deg \ie q_{12} > 2$, or $\ie q_{12} \in \k \ie q_{6}^2$.
\item [{\bf 5.}] Suppose that $\ie q_{2},\ie q_{6},\ie q_{8},\ie q_{12}$ have 
degree $1,2,2,3$. We expect a contradiction. Since the center has dimension
$2$ and since the multiplicity of the weight $6$ equals $1$, for $z$ of weight $6$
in the center, $\ie q_{6}\in \k ez$, $\ie q_{8}\in \k z^{2}$, $\ie q_{12} \in \k z^{3}$.
So, for some $a$ and $b$ in $\k^{*}$,
$$ \begin{array}{ccc}
 \ie q_{2}^{2} \ie q_{8}-a\ie q_{6}^{2} = 0, && \ie q_{12}^{2} - b \ie q_{8}^{3} = 0
\end{array}$$
Hence, $q_{2},q_{6},q_{2}^{2}q_{8}-aq_{6}^{2},q_{12}^{2}-bq_{8}^{3}$ are algebraically 
independent element of $\ai g{}{}$ such that
$$ \deg \ie q_{2} + \deg \ie q_{6} + \deg \ie (q_{2}^{2}q_{8}-aq_{6}^{2})
+ \deg \ie (q_{12}^{2}-bq_{8}^{3}) \geq  1 + 2 + 5 + 7 > 2 + 3 + 9$$
whence a contradiction by \cite[Theorem~2.1]{PPY} (see Lemma~\ref{lrc1}). 
\item [{\bf 6.}] Since $2\mul 5 < 12$ and $3 \mul 5 < 18$, 
$\deg \ie q_{8} > \deg \ie p_{8}$ and $\deg \ie q_{12} > \deg \ie p_{12}$.  
\end{itemize}

In conclusion, it remains six unsolved nilpotent orbits in type {\bf F}$_4$. 
\end{example}

\begin{example}\label{e3et} 
Suppose that ${\goth g}$ is simple of type ${\bf G}_{2}$. Let ${\Bbb V}$ be the module
of highest weight the fundamental weight $\varpi _{1}$ with the notation of Bourbaki.
Then ${\Bbb V}$ has dimension $7$ and ${\goth g}$ identifies with a subalgebra of 
${\goth {sl}}_{7}(\k)$. For $x$ in ${\goth {sl}}_{7}(\k)$ and for $i=2,\ldots,7$, let 
$p_{i}(x)$ be the coefficient of $T^{7-i}$ in $\det (T -x)$ and denote by $q_{i}$ the 
restriction of $p_{i}$ to ${\goth g}$. Then $q_{2},q_{6}$ is a generating 
family of $\ai g{}{}$ since these polynomials are algebraically independent, \cite{Me}. 
Let $(e,h,f)$ be an ${\goth {sl}}_{2}$-triple of ${\goth g}$. Then $(e,h,f)$
is an ${\goth {sl}}_{2}$-triple of ${\goth {sl}}_{7}(\k)$. 
There is only one nonzero nilpotent orbit 
which is neither regular, subregular or minimal. 
For $e$ in it, we deduce that $e$ is good from Table \ref{tab3et}, indexed as in Example~\ref{eet}, 
since $\Sigma=\Sigma'$.  

{\tiny 
\begin{table}[ht!]
\begin{tabular}{ccccccccc}
Label&{\setlength{\unitlength}{0.01in}
\begin{picture}(40,17)(0,4)
  \put(8,5){\circle{6}}
  \put(30,5){\circle{6}}
  \put(8,2){\line(1,0){22}}
  \put(11,5){\line(1,0){16}}
  \put(8,7.7){\line(1,0){22}}
  \put(14,1){\normalsize $<$}
\end{picture}}
& $\dim\g^{e}$ & partition& $\deg \ie p_{i}$ & weights & $\nu$ & 
$\Sigma$\hspace{1cm}& $\Sigma'$
\small \\
\hline
$\tilde{A}_1$ &{\tiny$\begin{array}{ccccc}\\
1&0\\\\\end{array}$}
& 6 & ($3,2^{2}$) & 1,3 & 2,6 & 3 & 4 & 4 \\
\hline
\small
\end{tabular}
\caption{\footnotesize Data for {\bf G}$_2$} \label{tab3et}
\end{table}}
In conclusion, all elements are good in type ${\bf G}_{2}$. 
\end{example}

\section{Other examples, remarks and a conjecture} \label{rc}
This section provides examples of nilpotent elements which satisfy the polynomiality 
condition but that are not good. We also obtain an example of nilpotent element in 
type {\bf D}$_{7}$ which does not satisfy the polynomiality condition 
(cf.~Example \ref{erc3}). Then we conclude with some remarks and a conjecture. 

\subsection{Some general results}\label{rc1}
In this subsection, ${\goth g}$ is a simple Lie algebra over $\k$ and $(e,h,f)$ is 
an ${\goth {sl}}_{2}$-triple of ${\goth g}$. For $p$ in $\e Sg$, $\ie p$ is the initial
homogenous component of the restriction of $p$ to the Slodowy slice $e+{\goth g}^{f}$.
Recall that $\k[e+{\goth g}^{f}]$ identifies with $\es S{{\goth g}^{e}}$ by the Killing
form $\dv ..$ of ${\goth g}$. 

Let $\eta _{0}\in \tk {\k}{{\goth g}^{e}}\ex 2{{\goth g}^{f}}$ be the bivector
defining the Poisson bracket on $\es S{{\goth g}^{e}}$ induced from the Lie bracket. 
According to the main theorem
of~\cite{Pr}, $\es S{{\goth g}^{e}}$ is the graded algebra relative   
to the Kazhdan filtration of the finite $W$-algebra associated with 
$e$ so that $\es S{{\goth g}^{e}}$ inherits another Poisson structure. 
The so-obtained graded algebra structure is the Slodowy graded algebra 
structure (see Subsection \ref{sg1}). 
Let $\eta\in\tk {\k}{\es S{{\goth g}^{e}}}\ex 2{{\goth g}^{f}}$ be the bivector 
defining this other Poisson structure. According to~\cite[Proposition 6.3]{Pr} 
(see also \cite[\S2.4]{PPY}), 
$\eta _{0}$ is the initial homogenous component of $\eta $. Denote by $r$ the 
dimension of ${\goth g}^{e}$ and set:
$$\begin{array}{ccc}
\omega := \eta ^{(r-\rg)/2} \in \tk {\k}{\es S{{\goth g}^{e}}}\ex {r-\rg}{{\goth g}^{f}},
&& \omega _{0} := 
\eta _{0}^{(r-\rg)/2} \in \tk {\k}{\es S{{\goth g}^{e}}}\ex {r-\rg}{{\goth g}^{f}}.  
\end{array}$$ 
Then $\omega _{0}$ is the initial homogenous component of $\omega $.

Let $\poi v1{,\ldots,}{r}{}{}{}$ be a basis of ${\goth g}^{f}$. For $\mu $ in 
$\tk {\k}{\es S{{\goth g}^{e}}}\ex {i}{{\goth g}^{e}}$, denote by $j(\mu )$ the image of 
$\poi v1{\wedge \cdots \wedge }{r}{}{}{}$ by the right interior product of $\mu$ so that
$$j(\mu ) \in \tk {\k}{\es S{{\goth g}^{e}}}\ex {r-i}{{\goth g}^{f}}.$$

\begin{lemma}\label{lrc1}
Let $\poi q1{,\ldots,}{\rg}{}{}{}$ be some homogenous generators of $\ai g{}{}$ and 
let $\poi r1{,\ldots,}{\rg}{}{}{}$ be algebraically independent homogenous elements
of $\ai g{}{}$.

{\rm (i)} For some homogenous element $p$ of $\ai g{}{}$, 
$$ \poi {\dd r}1{\wedge \cdots \wedge }{\rg}{}{}{} = 
p\, \poi {\dd q}1{\wedge \cdots \wedge }{\rg}{}{}{}.$$

{\rm (ii)} The following inequality holds:
$$ \sum_{i=1}^{\rg} \deg {\ie r}_{i} \leq \deg {\ie p} + 
\frac{1}{2}(\dim {\goth g}^{e}+\rg).$$

{\rm (iii)} The polynomials $\poi {\ie r}1{,\ldots,}{\rg}{}{}{}$ are algebraically 
independent if and only if 
$$ \sum_{i=1}^{\rg} \deg {\ie r}_{i} = \deg {\ie p} + 
\frac{1}{2}(\dim {\goth g}^{e}+\rg).$$
\end{lemma}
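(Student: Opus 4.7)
For (i), my plan is to apply Chevalley's theorem: since $\mathrm{S}(\g)^\g = \k[q_1,\ldots,q_\ell]$, each $r_i$ can be written uniquely as $r_i = R_i(q_1,\ldots,q_\ell)$ for some polynomial $R_i$ in $\ell$ variables, and the chain rule then gives
$$ \dd r_1 \wedge \cdots \wedge \dd r_\ell \;=\; \det\!\left(\frac{\partial R_i}{\partial q_j}\right)_{\!i,j}\dd q_1 \wedge \cdots \wedge \dd q_\ell. $$
Taking $p := \det(\partial R_i/\partial q_j)$, which is a polynomial in the $G$-invariant functions $q_j$, produces a homogeneous element of $\mathrm{S}(\g)^\g$ of the expected degree by comparing gradings on both sides.

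For (ii) and (iii), my plan is to reduce to the case $r_i = q_i$ (so $p=1$) treated in \cite[Theorem 2.1]{PPY}. The key geometric input of that result is the identity
$$ j(\dd \ie q_1 \wedge \cdots \wedge \dd \ie q_\ell) \;=\; c\cdot \omega_0, $$
obtained by restricting $\dd q_1 \wedge \cdots \wedge \dd q_\ell$ to the Slodowy slice $e + \g^f$ and using the bi-Hamiltonian identification of \cite[Proposition 6.3]{Pr} (together with the degree counting of \cite[\S2.4]{PPY}) to relate the resulting wedge, after applying $j$, to the top exterior power of $\eta_0$. Pulling back the identity from (i) to the Slodowy slice and extracting initial Slodowy components then yields
$$ j(\dd \ie r_1 \wedge \cdots \wedge \dd \ie r_\ell) \;=\; c\,\ie p\, \omega_0 \;+\; (\text{terms of strictly higher Slodowy degree}), $$
and comparing Slodowy degrees on both sides delivers the inequality of (ii), using the computable Slodowy degree of $\omega_0$ and the fact that the Slodowy degree of each $\dd \ie r_i$ is determined by $\deg \ie r_i$.

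For (iii), I will note that $\omega_0 \ne 0$: the bivector $\eta_0$ has generic rank $\dim \g^e - \ell$ by Theorem~\ref{ti1}, so $\omega_0 = \eta_0^{(r-\ell)/2}$ is its top non-vanishing exterior power. Equality in the degree bound then amounts to the leading Slodowy term on the left above being nonzero, i.e.\ $\dd \ie r_1 \wedge \cdots \wedge \dd \ie r_\ell \ne 0$, which in characteristic zero is equivalent to algebraic independence of $\ie r_1, \ldots, \ie r_\ell$; the converse direction is immediate. The hard part will be the precise Slodowy-degree bookkeeping in the identity $j(\dd \ie q_1 \wedge \cdots \wedge \dd \ie q_\ell) = c\,\omega_0$, which is the substance of \cite[Theorem 2.1]{PPY}; once granted, absorbing the Jacobian factor $\ie p$ from (i) into the degree count is a routine matter.
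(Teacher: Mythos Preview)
Your plan follows the paper's proof closely, and part (i) is identical. There is, however, a genuine slip in the key identity you invoke for (ii)--(iii). The equation
\[
j(\dd\, \ie q_1 \wedge \cdots \wedge \dd\, \ie q_\ell) \;=\; c\,\omega_0
\]
does \emph{not} hold in general: the left-hand side vanishes whenever $\ie q_1,\ldots,\ie q_\ell$ are algebraically dependent, and the lemma makes no assumption ruling this out. Nor does the relation from (i) descend cleanly to initial components, so you cannot simply multiply the displayed identity by $\ie p$ to reach the $r_i$. The identity that holds unconditionally lives one level up:
\[
j\bigl(\dd \kappa(q_1) \wedge \cdots \wedge \dd \kappa(q_\ell)\bigr) \;=\; a\,\omega, \qquad a\in\k^{*},
\]
where $\omega=\eta^{(r-\ell)/2}$ is built from the \emph{full} Poisson bivector $\eta$ on the Slodowy slice; this is \cite[Theorem~1.2]{PPY} (not Theorem~2.1), and it is what the paper uses. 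Combining with (i) gives $j(\dd\kappa(r_1)\wedge\cdots\wedge\dd\kappa(r_\ell))=a\,\kappa(p)\,\omega$, an equality of \emph{inhomogeneous} elements. Comparing initial homogeneous components now works: the right-hand side starts with $a\,\ie p\,\omega_0$ in degree $\deg\ie p+\tfrac12(\dim\g^{e}-\ell)$, while the lowest-degree contribution on the left is $j(\dd\,\ie r_1\wedge\cdots\wedge\dd\,\ie r_\ell)$, sitting (if nonzero) in degree $\sum_i\deg\ie r_i-\ell$. This gives (ii), and (iii) follows exactly as you describe. Your own displayed formula conflates the two levels: once you write $\dd\,\ie r_i$ the expression is homogeneous and carries no ``higher terms''; those belong to $\dd\kappa(r_i)$.
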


\begin{proof}
(i) Since $\poi q1{,\ldots,}{\rg}{}{}{}$ are generators of $\ai g{}{}$, for 
$i\in\{1,\ldots,\rg\}$, $r_{i}=R_{i}(\poi q1{,\ldots,}{\rg}{}{}{})$ where $R_{i}$ is a 
polynomial in $\rg$ indeterminates, whence the assertion with
$$p= \det (\frac{\partial R_{i}}{\partial q_{j}}, \; 1\leq i,j \leq \rg).$$

(ii) Remind that for $p$ in $\e Sg$, 
$\kappa (p)$ denotes the restriction to $\g^{f}$ of the polynomial function 
$x\mapsto p(e+x)$. 
According to~\cite[Theorem~1.2]{PPY},
$$ j(\dd \kappa(q_1)\wedge \cdots \wedge \dd \kappa(q_\rg)) = a \omega $$
for some $a$ in $\k^{*}$. Hence by (i), 
$$ j(\dd \kappa(r_1)\wedge \cdots \wedge \dd \kappa(r_\rg))  = a \kappa (p) \omega.$$
The initial homogenous component of the right-hand side is 
$a\ie p \omega _{0}$ and the degree of the initial homogenous component of the 
left-hand side is at least 
$$ \poi {\deg {\ie r}}1{+\cdots +}{\rg}{}{}{}-\rg.$$
The assertion follows since $\omega _{0}$ has degree 
$$ \frac{1}{2}(\dim {\goth g}^{e}-\rg).$$

(iii) If $\poi {\ie r}1{,\ldots,}{\rg}{}{}{}$ are algebraically independent, then the 
degree of the initial homogenous component of 
$j(\poi {\dd r}1{\wedge \cdots \wedge }{\rg}{}{}{})$ equals 
$$ \poi {\deg {\ie r}}1{+\cdots +}{\rg}{}{}{}-\rg$$
whence
$$ \poi {\deg {\ie r}}1{+\cdots +}{\rg}{}{}{} = \deg {\ie p} + 
\frac{1}{2}(\dim {\goth g}^{e}+\rg)$$
by the proof of (ii). Conversely, if the equality holds, then 
\begin{eqnarray}\label{eqrc1}
j(\poi {\dd {\ie r}}1{\wedge \cdots \wedge }{\rg}{}{}{}) = a \ie{p} \omega _{0}
\end{eqnarray}
by the proof of (ii). In particular, $\poi {\ie r}1{,\ldots,}{\rg}{}{}{}$ are 
algebraically independent.
\end{proof}

\begin{coro}\label{crc1}
For $i=1,\ldots,\rg$, let $r_{i} := R_{i}(\poi q1{,\ldots,}{i}{}{}{})$ be a homogenous
element of $\ai g{}{}$ such that 
$\displaystyle{\frac{\partial R_{i}}{\partial q_{i}}}\not=0$. Then 
$\poi {\ie r}1{,\ldots,}{\rg}{}{}{}$ are algebraically independent if and only if 
$$ \poi {\deg \ie r}1{+\cdots +}{\rg}{}{}{} = 
\sum_{i=1}^{\rg} \deg \ie{p_{i}} +
\frac{1}{2}(\dim {\goth g}^{e}+\rg) $$
with $p_{i}=\displaystyle{\frac{\partial R_{i}}{\partial q_{i}}}$ for $i=1,\ldots,\rg$.
\end{coro}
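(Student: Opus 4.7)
The strategy is to apply Lemma~\ref{lrc1},(iii) to the sequence $(r_1, \ldots, r_\rg)$; the corollary then amounts to identifying the auxiliary element $p$ of Lemma~\ref{lrc1},(i) as the product $p_1 \cdots p_\rg$, and computing its initial-component degree.

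First, I would observe that since $R_i$ depends only on $q_1, \ldots, q_i$, the Jacobian matrix $\bigl(\partial R_i / \partial q_j\bigr)_{1 \le i,j \le \rg}$ is lower triangular, with diagonal entries $\partial R_i/\partial q_i = p_i$, each nonzero by hypothesis. Its determinant is therefore $\prod_{i=1}^\rg p_i$, which is nonzero since $\es S{\goth g}$ is an integral domain. This shows simultaneously that $r_1, \ldots, r_\rg$ are algebraically independent (so that Lemma~\ref{lrc1} does apply) and that the element $p$ provided by Lemma~\ref{lrc1},(i) for our $(r_1, \ldots, r_\rg)$ is exactly $p = p_1 \cdots p_\rg$.

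Second, I would verify multiplicativity of the initial-component operation: for nonzero $a, b \in \es S{\goth g}$ with $\kappa(a), \kappa(b) \neq 0$, one has $\ie{(ab)} = \ie a \cdot \ie b$. This is immediate from $\kappa(ab) = \kappa(a)\kappa(b)$ together with the fact that $\es S{{\goth g}^e}$ is a domain, so that the lowest-degree component of a product equals the product of lowest-degree components. One should note in passing that for any nonzero invariant $q \in \ai g{}{}$, $\kappa(q) \neq 0$, since a nonzero invariant cannot vanish on the Slodowy slice (as $e + {\goth g}^f$ meets the regular semisimple locus and in particular cuts out a cross-section of the adjoint quotient). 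Iterating gives $\ie p = \ie{p_1} \cdots \ie{p_\rg}$, and hence
$$\deg \ie p = \sum_{i=1}^\rg \deg \ie{p_i}.$$

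Finally, substituting this identity into the equivalence of Lemma~\ref{lrc1},(iii) — namely that $\ie r_1, \ldots, \ie r_\rg$ are algebraically independent if and only if $\sum_{i=1}^\rg \deg \ie{r_i} = \deg \ie p + \tfrac{1}{2}(\dim {\goth g}^e + \rg)$ — yields exactly the stated criterion. The proof is entirely a specialization of Lemma~\ref{lrc1} combined with multiplicativity of $\ie{}$, and I do not anticipate any real obstacle; the only minor point to make explicit is that $\kappa$ sends nonzero invariants to nonzero functions, which ensures all initial components under consideration are well defined and nonzero.
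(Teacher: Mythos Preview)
Your proof is correct and follows the same approach as the paper: compute the lower-triangular Jacobian to identify $p=\prod_i p_i$, then invoke Lemma~\ref{lrc1},(iii). You are in fact more explicit than the paper in justifying the step $\deg \ie p = \sum_i \deg \ie{p_i}$ via multiplicativity of the initial-component map on nonzero invariants, which the paper leaves implicit.
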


\begin{proof}
Since $\displaystyle{\frac{\partial R_{i}}{\partial q_{i}}}\not=0$ 
for all $i$, $\poi r1{,\ldots,}{\rg}{}{}{}$ are 
algebraically independent and 
$$\poi {\dd r}1{\wedge \cdots \wedge }{\rg}{}{}{} = 
\prod_{i=1}^{\rg} \frac{\partial R_{i}}{\partial q_{i}} 
\poi {\dd q}1{\wedge \cdots \wedge }{\rg}{}{}{}$$
whence the corollary by Lemma~\ref{lrc1},(iii).
\end{proof}

Let ${\goth g}_{\rm sing}^{f}$ be the set of nonregular elements of the dual $\g^{f}$ of 
${\goth g}^{e}$. Recall that if ${\goth g}_{\rm sing}^{f}$ has codimension at least $2$ 
in ${\goth g}^{f}$, we will say that ${\goth g}^{e}$ is {\em nonsingular}.

\begin{coro}\label{c2rc1}
Let $\poi r1{,\ldots,}{\rg}{}{}{}$ and $p$ be as in 
Lemma~\ref{lrc1} and such that $\poi {\ie r}1{,\ldots,}{\rg}{}{}{}$ are algebraically
independent.

{\rm (i)} If $\ie p$ is a greatest  divisor of 
$\poi {\dd {\ie r}}1{\wedge \cdots \wedge }{\rg}{}{}{}$ in 
$\tk {\k}{\es S{{\goth g}^{e}}}\ex {\rg}{{\goth g}^{e}}$, then 
${\goth g}^{e}$ is nonsingular. 

{\rm (ii)} Assume that there are homogenous polynomials 
$\poi p1{,\ldots,}{\rg}{}{}{}$ in $\es S{{\goth g}^{e}}^{\g^{e}}$ 
satisfying the following conditions:
\begin{itemize}
\item [{\rm 1)}] $\poi {\ie r}1{,\ldots,}{\rg}{}{}{}$ are in 
$\k[\poi p1{,\ldots,}{\rg}{}{}{}]$,
\item [{\rm 2)}] if $d$ is the degree of a greatest  divisor of 
$\poi {\dd p}1{\wedge \cdots \wedge }{\rg}{}{}{}$ in $\es S{{\goth g}^{e}}$, then
$$ \poi {\deg p}1{+\cdots +}{\rg}{}{}{} = d+\frac{1}{2}(\dim {\goth g}^{e}+\rg) .$$
\end{itemize}
Then ${\goth g}^{e}$ is nonsingular. 
\end{coro}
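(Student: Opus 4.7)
The geometric content of the statement is that $\omega _{0}\in \es S{\g^{e}}\otimes \ex{r-\rg}{\g^{f}}$ vanishes at $x\in\g^{f}$ exactly when $x$ is nonregular: the linear Poisson bivector $\eta _{0}(x)$ has rank $r-\dim (\g^{e})^{x}$, which attains its generic maximum $r-\rg$ precisely on the regular locus. Thus the zero locus of $\omega _{0}$ equals $\g^{f}_{{\rm sing}}$, and, since $\es S{\g^{e}}$ is a unique factorization domain, $\g^{e}$ is nonsingular if and only if the components of $\omega _{0}$ in any basis of $\ex{r-\rg}{\g^{f}}$ share no nonconstant common divisor. In both (i) and (ii), the goal is therefore to show that this greatest common divisor is a unit in $\es S{\g^{e}}$.

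For (i), I would invoke the identity (\ref{eqrc1}) from the proof of Lemma~\ref{lrc1},(iii), which under the algebraic independence of $\ie r_{1},\ldots ,\ie r_{\rg}$ reads
\[
j(\dd \ie r_{1}\wedge \cdots \wedge \dd \ie r_{\rg})=a\, \ie p\, \omega _{0},\qquad a\in \k^{*}.
\]
The operator $j$ is a $\es S{\g^{e}}$-linear isomorphism from $\es S{\g^{e}}\otimes \ex{\rg}{\g^{e}}$ onto $\es S{\g^{e}}\otimes \ex{r-\rg}{\g^{f}}$, so it preserves the greatest common divisor of components up to a unit of $\es S{\g^{e}}$. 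By hypothesis $\ie p$ is a greatest divisor of the left-hand side, hence of $\ie p\,\omega _{0}$, which forces the greatest common divisor of the components of $\omega _{0}$ to be a unit.

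For (ii), I would proceed in the same spirit but with extra bookkeeping. Condition~1 gives $\ie r_{i}=R'_{i}(p_{1},\ldots ,p_{\rg})$ for some polynomials $R'_{i}$; the algebraic independence of the $\ie r_{i}$'s forces that of the $p_{i}$'s and yields
\[
\dd \ie r_{1}\wedge \cdots \wedge \dd \ie r_{\rg}=\tilde q\,\dd p_{1}\wedge \cdots \wedge \dd p_{\rg},\qquad \tilde q :=\det \bigl (\partial R'_{i}/\partial p_{j}\bigr ).
\]
Applying $j$ and using (\ref{eqrc1}) yields $a\,\ie p\,\omega _{0}=\tilde q\cdot j(\dd p_{1}\wedge \cdots \wedge \dd p_{\rg})$. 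Factor $j(\dd p_{1}\wedge \cdots \wedge \dd p_{\rg})=P\,\omega _{1}$ with $P$ a greatest divisor of degree $d$ and $\omega _{1}$ having components of trivial greatest common divisor; analogously write $\omega _{0}=G\,\omega '_{0}$. Then $a\,\ie p\, G\,\omega '_{0}=\tilde q\, P\,\omega _{1}$, and a standard componentwise divisibility argument in the UFD $\es S{\g^{e}}$ forces $\ie p\, G$ and $\tilde q\, P$ to be associates. Combining the degree identity $\deg \ie p=\sum \deg \ie r_{i}-\frac{1}{2}(\dim \g^{e}+\rg)$ from Lemma~\ref{lrc1},(iii) with the Jacobian relation $\deg \tilde q=\sum \deg \ie r_{i}-\sum \deg p_{i}$, the degree comparison collapses to
\[
\deg G\; =\; d+\tfrac{1}{2}(\dim \g^{e}+\rg)-\sum _{i=1}^{\rg}\deg p_{i},
\]
which vanishes by Condition~2. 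Hence $G\in \k^{*}$ and $\g^{e}$ is nonsingular.

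The only nontrivial point in the above outline is the associate-up-to-unit claim: if $A\,\omega '_{0}=B\,\omega _{1}$ in the free $\es S{\g^{e}}$-module $\es S{\g^{e}}\otimes \ex{r-\rg}{\g^{f}}$ with both $\omega '_{0}$ and $\omega _{1}$ having components of trivial greatest common divisor, then matching components yields $B\mid A(\omega '_{0})_{k}$ for every $k$, hence $B\mid A$ by triviality of the GCD, and symmetrically $A\mid B$. Everything else reduces to routine degree bookkeeping.
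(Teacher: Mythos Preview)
Your proposal is correct and follows essentially the same route as the paper's proof: both parts rely on identity~(\ref{eqrc1}) together with the identification of $\g^{f}_{\rm sing}$ as the zero locus of $\omega_{0}$, then factor out greatest divisors and compare degrees to conclude. Your version spells out the UFD divisibility step and the degree bookkeeping a little more explicitly, but the underlying argument is the same as the paper's.
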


\begin{proof}
(i) Suppose that $\ie p$ is a greatest  divisor of 
$\poi {\dd {\ie r}}1{\wedge \cdots \wedge }{\rg}{}{}{}$ in 
$\tk {\k}{\es S{{\goth g}^{e}}}\ex {\rg}{{\goth g}^{e}}$. Then for some $\omega _{1}$
in $\tk {\k}{\es S{{\goth g}^{e}}}\ex {\rg}{{\goth g}^{e}}$ whose nullvariety in 
${\goth g}^{f}$ has codimension at least $2$,
$$ \poi {\dd {\ie r}}1{\wedge \cdots \wedge }{\rg}{}{}{} = \ie p\,\omega _{1}.$$
Therefore $j(\omega _{1})=a\omega _{0}$ by Equality~(\ref{eqrc1}). Since  
$x$ is in ${\goth g}^{f}_{\rm sing}$ if and only if $\omega _{0}(x)=0$, 
we get (i). 

(ii) By Condition (1),
$$ \poi {\dd {\ie r}}1{\wedge \cdots \wedge }{\rg}{}{}{} = q\, 
\poi {\dd p}1{\wedge \cdots \wedge }{\rg}{}{}{}$$
for some $q$ in $\ai ge{}$, and for some greatest  divisor $q'$ of 
$\poi {\dd p}1{\wedge \cdots \wedge }{\rg}{}{}{}$ in 
$\tk {\k}{\es S{{\goth g}^{e}}}\ex {\rg}{{\goth g}^{e}}$, 
$$\poi {\dd p}1{\wedge \cdots \wedge }{\rg}{}{}{} = q'\omega _{1}.$$
So, by Equality~(\ref{eqrc1}),
\begin{eqnarray} \label{eq2rc1}
qq'j(\omega _{1}) = a \ie p\, \omega _{0} ,
\end{eqnarray} 
so that $\ie p$ divides $qq'$ in $\es S{{\goth g}^{e}}$. By Condition (2) and Equality 
(\ref{eq2rc1}),
$\omega _{0}$ and $\omega _{1}$ have the same degree. Then $qq'$ is in $\k ^{*}\ie p$, 
and for some $a'$ in $\k^{*}$,
$$ j(\omega _{1}) = a' \omega _{0},$$ 
whence (ii), again since $x$ is in ${\goth g}^{f}_{\rm sing}$ if
and only if $\omega _{0}(x)=0$.
\end{proof}
 
The following proposition is a particular case of \cite[\S5.7]{JS}. 
More precisely, part (i) follows from \cite[Remark 5.7]{JS} and 
part (ii) follows from \cite[Theorem 5.7]{JS}. 

\begin{prop}\label{prc1}
Suppose that ${\goth g}^{e}$ is nonsingular. 

{\rm(i)} If there exist algebraically independent homogenous polynomials 
$\poi p1{,\ldots,}{\rg}{}{}{}$ in $\es S{{\goth g}^{e}}^{\g^{e}}$ such that
$$ \poi {\deg p}1{+\cdots +}{\rg}{}{}{} = \frac{1}{2}(\dim {\goth g}^{e}+\rg)$$
then $\ai ge{}{}$ is a polynomial algebra generated by $\poi p1{,\ldots,}{\rg}{}{}{}$.

{\rm (ii)} Suppose that the semiinvariant elements of $\es S{{\goth g}^{e}}$
are invariant. If $\ai ge{}$ is a polynomial algebra then it is generated by 
homogenous polynomials $\poi p1{,\ldots,}{\rg}{}{}{}$ such that
$$ \poi {\deg p}1{+\cdots +}{\rg}{}{}{} = \frac{1}{2}(\dim {\goth g}^{e}+\rg).$$
\end{prop}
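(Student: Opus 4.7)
My plan follows the strategy underlying Joseph--Shafrir's work, centered on the ``fundamental semi-invariant'' $\omega_{0}\in S({\goth g}^{e})\otimes\bigwedge^{r-\rg}{\goth g}^{f}$. The nonsingularity hypothesis is critical: it says that $\omega_0$ (which cuts out ${\goth g}^{f}_{\rm sing}$) vanishes on a closed subset of codimension at least $2$, making it behave like a ``fundamental invariant'' in the sense of Kostant.

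For part~(i), set $A:=\k[p_{1},\ldots,p_{\rg}]$; my goal is to prove $A=\ai g{e}{}$. First I would establish the Jacobian identity
\[
j(\dd p_{1}\wedge\cdots\wedge\dd p_{\rg})=c\,\omega_{0},\qquad c\in\k^{*},
\]
which is the $S({\goth g}^{e})$-analog of \cite[Theorem~1.2]{PPY}. The idea is that the left-hand side is $\ad{\goth g}^{e}$-invariant (since the $p_{i}$ are invariants), homogeneous of Slodowy degree $\sum_{i}\deg p_{i}-\rg$, and of the correct form to meet $\omega_{0}$ up to a factor; the degree condition $\sum\deg p_{i}=(\dim{\goth g}^{e}+\rg)/2$ forces equality in the relevant inequality (a direct analog of Lemma~\ref{lrc1},(iii) applied internally to ${\goth g}^{e}$), and the algebraic independence of the $p_{i}$ guarantees $c\neq 0$. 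Nonsingularity then gives that $\dd p_{1}\wedge\cdots\wedge\dd p_{\rg}$ is nonzero off a closed subset of codimension at least $2$ in ${\goth g}^{f}$, so by a standard Jacobian-codimension argument the common zero locus of $p_{1},\ldots,p_{\rg}$ in ${\goth g}^{f}$ has codimension exactly $\rg$. Thus $e$ is good for the family $(p_{1},\ldots,p_{\rg})$, and Theorem~\ref{tge1} yields that $\ai g{e}{}$ is polynomial of Krull dimension $\rg$ and that $S({\goth g}^{e})$ is free over the subalgebra generated by the $p_{i}$. Finally, $A\subseteq\ai g{e}{}$ with both graded, both polynomial of dimension $\rg$, and $S({\goth g}^{e})$ free over $A$: computing Hilbert series (using that the degrees of any homogeneous generating system of $\ai g{e}{}$ also sum to $(\dim{\goth g}^{e}+\rg)/2$) forces $A=\ai g{e}{}$.

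For part~(ii), assume $\ai g{e}{}=\k[p_{1},\ldots,p_{\rg}]$ is polynomial with homogeneous generators $p_{i}$. Again compute $\dd p_{1}\wedge\cdots\wedge\dd p_{\rg}$: the inequality $\sum\deg p_{i}\geq(\dim{\goth g}^{e}+\rg)/2$ is automatic, and we must show equality. In general $j(\dd p_{1}\wedge\cdots\wedge\dd p_{\rg})=\phi\,\omega_{0}$ for some homogeneous $\phi\in S({\goth g}^{e})$ which is an $\ad{\goth g}^{e}$-semi-invariant (it is a quotient of two semi-invariants). The standing hypothesis that semi-invariants of $S({\goth g}^{e})$ are invariants puts $\phi\in\ai g{e}{}$, hence $\phi\in\k[p_{1},\ldots,p_{\rg}]$; comparing this with the fact that $\omega_{0}$ has irreducible support of pure codimension $\geq 2$ (by nonsingularity) and that $\dd p_{1}\wedge\cdots\wedge\dd p_{\rg}$ cannot be divisible by $p_{i}$ (as $p_{i}$ appears in $\dd p_{i}$ with a coefficient coprime to itself), one forces $\phi$ to be a nonzero constant, giving the required degree equality.

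The main obstacle is the invariant-theoretic identity $j(\dd p_{1}\wedge\cdots\wedge\dd p_{\rg})\in\k^{*}\omega_{0}$, whose counterpart in \cite{PPY} for ${\goth g}$-invariants rests on Kostant's theory of the adjoint representation. Transposing it to ${\goth g}^{e}$ is delicate because ${\goth g}^{e}$ is typically neither reductive nor quasi-reductive, so one must argue via the Poisson bivector $\eta$ and its initial component $\eta_{0}=\eta_{0}$ coming from Premet's $W$-algebra description (see~\cite[\S 2.4]{PPY}); it is precisely the nonsingularity of ${\goth g}^{e}$ that lets the ``generic symplectic leaf'' argument go through and promotes $\omega_{0}$ to a \emph{genuine} semi-invariant rather than a mere meromorphic object. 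Once this is granted, the rest of the proof is an exercise in degrees, Hilbert series, and the faithfully flat descent of Proposition~\ref{pgf1}.
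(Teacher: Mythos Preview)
The paper does not prove this proposition; it cites \cite[\S 5.7]{JS} directly.

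Your argument for (i) has a genuine error at the step ``by a standard Jacobian-codimension argument the common zero locus of $p_{1},\ldots,p_{\rg}$ in ${\goth g}^{f}$ has codimension exactly~$\rg$''. This implication is false, and Example~\ref{erc2} of the paper is itself a counterexample: there ${\goth g} = {\goth{so}}(\k^{10})$, $e$ has partition $(3,3,2,2)$, $\rg = 5$, $\dim{\goth g}^{e}=17$, and one produces algebraically independent homogeneous invariants $p_{1},\ldots,p_{5}$ of degrees $1,2,1,5,2$ (summing to $11=(\dim{\goth g}^{e}+\rg)/2$) with $\dd p_{1}\wedge\cdots\wedge\dd p_{5}$ of unit GCD and ${\goth g}^{e}$ nonsingular. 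All hypotheses of (i) hold and indeed $\ai ge{}=\k[p_{1},\ldots,p_{5}]$, \emph{yet the paper explicitly records that the nullvariety of $p_{1},\ldots,p_{5}$ has codimension at most $4$ and $e$ is not good}. So the route through Theorem~\ref{tge1} is blocked. A toy version of the failure: $p_{1}=x,\ p_{2}=y,\ p_{3}=xz+yw$ in $\k[x,y,z,w]$ have Jacobian dropping rank only on $\{x=y=0\}$ (codimension~$2$), but their common zero locus is $\{x=y=0\}$, of codimension~$2$, not~$3$.

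The argument in \cite{JS} avoids goodness entirely. From $j(\dd p_{1}\wedge\cdots\wedge\dd p_{\rg})=c\,\omega_{0}$ one still gets that $\dd p_{1}\wedge\cdots\wedge\dd p_{\rg}$ vanishes only in codimension $\geq 2$. For any homogeneous $q\in\ai ge{}$ the $(\rg{+}1)$-form $\dd q\wedge\dd p_{1}\wedge\cdots\wedge\dd p_{\rg}$ vanishes (at regular points the differentials of invariants all lie in the $\rg$-dimensional conormal to the coadjoint orbit), so $\dd q=\sum_{i} c_{i}\,\dd p_{i}$ with $c_{i}\in \es S{{\goth g}^{e}}$ by normality across codimension~$2$, and the $c_{i}$ are invariant. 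Choosing $q\notin\k[p_{1},\ldots,p_{\rg}]$ of minimal degree forces $c_{i}\in\k[p_{1},\ldots,p_{\rg}]$; the resulting closed $1$-form $\sum_{i} c_{i}\,\dd p_{i}$ on ${\rm Spec}\,\k[p_{1},\ldots,p_{\rg}]\simeq{\Bbb A}^{\rg}$ is exact by the algebraic Poincar\'e lemma, giving $q\in\k[p_{1},\ldots,p_{\rg}]$, a contradiction. Your sketch of (ii) is closer to correct, though the final divisibility assertion needs more care than you give it.
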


\subsection{New examples}\label{rc2}
To produce new examples, our general strategy is to apply Proposition \ref{prc1},(i). 
To that end, we first apply Corollary \ref{c2rc1} in order to prove that ${\goth g}^{e}$ 
is nonsingular. Then, we search for independent homogenous 
polynomials $\poi p1{,\ldots,}{\rg}{}{}{}$ in $\es S{{\goth g}^{e}}^{\g^{e}}$ satisfying 
the conditions of Corollary \ref{c2rc1},(ii) with $d=0$.  

\begin{example}\label{erc2} 
Let $e$ be a nilpotent element of ${\goth {so}}(\k^{10})$ 
associated with the partition $(3,3,2,2)$. 
Then S$(\g^{e})^{\g^{e}}$ is a polynomial algebra but $e$ is not good 
as explained below. 

In this case, $\rg =5$ and let $\poi q1{,\ldots,}{5}{}{}{}$ be as in Subsection~\ref{ca2}.
The degrees of $\poi {\ie q}1{,\ldots,}{5}{}{}{}$ are $1,2,2,3,2$ respectively. By 
a computation performed by Maple, $\poi {\ie q}1{,\ldots,}{5}{}{}{}$ satisfy the
algebraic relation:
$$ \ie q_{4}^{2} - 4 \ie q_{3}\ie q_{5}^{2}=0.$$
Set:
$$ r_{i} := \left \{\begin{array}{ccc} q_{i} & \mbox{ if } & i=1,2,3,5 \\
q_{4}^{2} - 4 q_{3} q_{5}^{2} & \mbox{ if } & i=4. \end{array} \right.$$

The polynomials $\poi r1{,\ldots,}{5}{}{}{}$ are algebraically independent over 
$\k$ and
$$ \poi {\dd r}1{\wedge \cdots \wedge }{5}{}{}{} = 
2\,q_{4}\, \poi {\dd q}1{\wedge \cdots \wedge }{5}{}{}{}$$
Moreover, $\ie r_{4}$ has degree at least 7. Then, by Corollary~\ref{crc1}, 
$\poi {\ie r}1{,\ldots,}{5}{}{}{}$ are algebraically independent since
$$ \frac{1}{2}(\dim {\goth g}^{e}+5)+3 = 14 = 1+2+2+2+7,$$
and by Lemma~\ref{lrc1},(ii) and (iii), $\ie r_{4}$ has degree $7$. 

A precise computation performed by Maple shows that $\ie r_{3}=p_{3}^{2}$ for some 
$p_{3}$ in the center of ${\goth g}^{e}$, and that 
$\ie r_{4}=p_{4}\ie r_{5}$ for some polynomial $p_4$ of degree $5$ in $\ai ge{}$. 
Setting $p_i:=\ie r_{i}$ for $i=1,2,5$, the polynomials $\poi p1{,\ldots,}{5}{}{}{}$ are 
algebraically independent homogenous polynomials of degree $1,2,1,5,2$ respectively. 
Furthermore, a computation performed by Maple proves that the greatest  divisors of
$\poi {\dd p}1{\wedge \cdots \wedge }{5}{}{}{}$ in $\es S{{\goth g}^{e}}$ have degree 
$0$, and that $p_{4}$ is in the ideal of $\es S{{\goth g}^{e}}$ generated by $p_{3}$ and 
$p_{5}$. So, by Corollary~\ref{c2rc1},(ii), $\g^{e}$ is nonsingular, and by 
Proposition~\ref{prc1},(i), $\ai ge{}$ is a polynomial algebra generated by 
$\poi p1{,\ldots,}{5}{}{}{}$. Moreover, $e$ is not good since the nullvariety of 
$\poi p1{,\ldots,}{5}{}{}{}$ in $(\g^{e})^*$ has codimension at most $4$. 
\end{example}

\begin{example}\label{e2rc2}
In the same way, for the nilpotent element $e$ of ${\goth {so}}(\k^{11})$ 
associated with the partition $(3,3,2,2,1)$, we can prove that $\ai ge{}$ is a 
polynomial algebra generated by polynomials of degree $1,1,2,2,7$, $\g^{e}$ is 
nonsingular but $e$ is not good.  

We also obtain that for the nilpotent element $e$ of ${\goth {so}}(\k^{12})$ 
(resp.~${\goth {so}}(\k^{13})$) associated with the partition (5,3,2,2) or (3,3,2,2,1,1) 
(resp.~(5,3,2,2,1), (4,4,2,2,1), or (3,3,2,2,1,1,1)), 
$\ai ge{}$ is a polynomial algebra, $\g^{e}$ is nonsingular but $e$ is not good. 
\end{example}

We can summarize our conclusions for the small ranks.  
Assume that $\g=\mathfrak{so}(\V)$ for some vector space $\V$ 
of dimension $2\rg +1$ or $2\rg$ and let $e\in\g$ be a nilpotent element of $\g$ 
associated with the partition $\lambda=(\lambda_1,\ldots,\lambda_k)$ of $\dim \V$. 
If $\rg \leq 6$, our previous results (Corollary \ref{cca2}, Lemma \ref{l4ca2}, 
Theorem \ref{tca3}, Examples \ref{erc2} and \ref{e2rc2}) show that either $e$ is good, or 
$e$ is not good but S$(\g^{e})^{\g^{e}}$ is nevertheless a polynomial algebra  
and $\g^{e}$ is nonsingular. 
We describe in Table \ref{tabrc2} the partitions $\lambda$ 
corresponding to good $e$, and those corresponding to the case where   
$e$ is not good. The third column of the table gives the degrees of the generators in the 
latter case. 

{\tiny 
\begin{table}[ht!]
\begin{tabular}{lclclcl}
Type && $e$ is good && S$(\g^{e})^{\g^{e}}$ is polynomial, $\g^{e}$ is nonsingular &&
degrees of the generators \\
&&&& but $e$ is not good && 
\small\\
\hline
\small\\
{\bf B}$_n$, {\bf D}$_n$, $n\leq 4$ && any $\lambda$ && $\varnothing$ && \\
{\bf B}$_5$ && $\lambda\not=(3,3,2,2,1)$ && $\lambda=(3,3,2,2,1)$ && $1,1,2,2,7$ \\
{\bf D}$_5$ && $\lambda\not=(3,3,2,2)$ && $\lambda=(3,3,2,2)$ && 1,1,2,2,5 \\
{\bf B}$_6$ && $\lambda\not\in\{(5,3,2,2,1),(4,4,2,2,1),$ && 
$\lambda \in \{(5,3,2,2,1),(4,4,2,2,1),$ && $\{1,1,1,2,2,7; 1,1,2,2,3,6 ;$ \\
&& $(3,3,2,2,1,1,1)\}$ && $ (3,3,2,2,1,1,1)\}$ && $ 1,1,2,2,6,7\}$\\
{\bf D}$_6$ && $\lambda\not\in\{(5,3,2,2),(3,3,2,2,1,1)\}$ && 
$\lambda\in\{(5,3,2,2),(3,3,2,2,1,1)\}$ && \{1,1,1,2,2,5; 1,1,2,2,3,7\}\\
\\
\hline
\small\\
\end{tabular}
\caption{\footnotesize 
Conclusions for $\g$ of type {\bf B}$_\rg$ or {\bf D}$_\rg$ with $\rg\leq 6$} 
\label{tabrc2}
\end{table}}

\begin{rema}\label{rrc3}
The above discussion shows that there are good nilpotent elements for which 
the codimension of $(\g^{e})^*_{\rm sing}$ in $(\g^{e})^*$ is 1. 
Indeed, by \cite[\S3.9]{PPY}, for some nilpotent element $e'$ in ${\bf B}_3$, 
the codimension of $(\g^{e'})^*_{\rm sing}$ in $(\g^{e'})^*$ is 1 but, in ${\bf B}_3$, 
all nilpotent elements are good (cf.~Table \ref{tabrc2}).
\end{rema}

\subsection{A counter-example}\label{rc3} 
From the rank $7$, there are elements that do no satisfy the polynomiality condition. 
The following example provides a new counter-example to Premet's conjecture. 

\begin{example} \label{erc3}
Let $e$ be a nilpotent element of ${\goth {so}}(\k^{14})$ associated with the partition 
$(3,3,2,2,2,2)$. Then $e$ does not satisfy the polynomiality condition. 

In this case, $\rg =7$ and let $\poi q1{,\ldots,}{7}{}{}{}$ be as in Subsection~\ref{ca2}.
The degrees of $\poi {\ie q}1{,\ldots,}{7}{}{}{}$ are $1,2,2,3,4,5,3$ respectively. By 
a computation performed by Maple, we can prove that 
$\poi {\ie q}1{,\ldots,}{7}{}{}{}$ satisfy the two
following algebraic relations:
$$\begin{array}{ccc}
16\ie q_{3}^2{\ie q_{5}}^2+\ie q_{4}^4-8\ie q_{3}\ie q_{5}\ie q_{4}^2-64\ie q_{3}^3
{\ie q_{7}}^2 = 0, &&
\ie q_{3}\ie q_{6}^2-\ie q_{7}^2{\ie q_{4}}^2 = 0 \end{array}$$
Set:
$$r_{i} := \left \{\begin{array}{ccc} q_{i} & \mbox{ if } & i=1,2,3,4,7 \\
16\,q_{3}^2{q_{5}}^2+q_{4}^4-8\,q_{3}q_{5}q_{4}^2-64\,q_{3}^3{q_{7}}^2 & \mbox{ if } &
i=5 \\
q_{3}q_{6}^2-q_{7}^2q_{4}^2 & \mbox{ if } & i=6 \end{array} \right.$$
The polynomials $\poi r1{,\ldots,}{7}{}{}{}$ are algebraically independent over 
$\k$ and
$$ \poi {\dd r}1{\wedge \cdots \wedge }{7}{}{}{} = 
2q_{3}q_{6}\,(32 q_{3}^{2}q_{5}-8q_{3}q_{4}^{2}) 
\,\poi {\dd q}1{\wedge \cdots \wedge }{7}{}{}{}$$
Moreover, $\ie r_{5}$ and $\ie r_{6}$ have degree at least 13 and 
$\ie (2q_{3}q_{6}(32 q_{3}^{2}q_{5}-8q_{3}q_{4}^{2}))$ has degree $15$. Then, by 
Corollary~\ref{crc1}, $\poi {\ie r}1{,\ldots,}{7}{}{}{}$ are algebraically
independent since
$$ \frac{1}{2}(\dim {\goth g}^{e}+7)+15 = 37 = 1+2+2+3+3+26 $$
and by Lemma~\ref{lrc1},(ii) and (iii), $\ie r_{5}$ and $\ie r_{6}$ have degree $13$. 

A precise computation performed by Maple shows that $\ie r_{3}=p_{3}^{2}$ for some 
$p_{3}$ in the center of ${\goth g}^{e}$, $\ie r_{4}=p_{3}p_{4}$ for some polynomial 
$p_4$ of degree $2$ in $\ai ge{}$, $\ie r_{5} = p_{3}^{3}\ie q_{7}p_{5}$ for some 
polynomial $p_{5}$ of degree $7$ in $\ai ge{}$, and $\ie r_{6}=p_{4}\ie r_{7}p_{6}$ for 
some polynomial $p_{6}$ of degree $8$ in $\ai ge{}$. Setting $p_{i}:=\ie r_{i}$ for 
$i=1,2,7$, the polynomials $\poi p1{,\ldots,}{7}{}{}{}$ are algebraically independent 
homogenous polynomials of degree $1,2,1,2,7,8,3$ respectively. 
Let ${\goth l}$ be a reductive factor of $\g^{e}$. 
According to \cite[Ch.\,13]{Ba}, 
$${\goth l}\simeq \mathfrak{so}_2(\k) \times \mathfrak{sp}_4(\k)
\simeq \k\times\mathfrak{sp}_4(\k) .$$
In particular, the center of ${\goth l}$ has dimension 1. 
Let $\{\poi x1{,\ldots,}{37}{}{}{}\}$ be a basis of ${\goth g}^{e}$ such that $x_{37}$ 
lies in the center of ${\goth l}$ and such that 
$\poi x1{,\ldots,}{36}{}{}{}$ are in $[{\goth l},{\goth l}]+{\goth g}^{e}_{\uu}$ with 
${\goth g}^{e}_{\uu}$ the nilpotent radical of ${\goth g}^{e}$. Then $p_{2}$ is a 
polynomial in $\k[\poi x1{,\ldots,}{37}{}{}{}]$ depending on $x_{37}$. As a result, 
by~\cite[Theorems 3.3 and 4.5]{DDV}, the semiinvariant polynomials of 
$\es S{{\goth g}^{e}}$ are invariant.

\begin{claim}\label{clrc3}
The algebra $\g^{e}$ is nonsingular.
\end{claim}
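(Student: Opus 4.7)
The plan is to verify the hypotheses of Corollary~\ref{c2rc1},(ii) for the seven homogeneous invariants $p_1,\ldots,p_7\in\ai ge{}$ of respective degrees $1,2,1,2,7,8,3$ constructed above. Condition~(1) is immediate from the decompositions $\ie r_1=p_1$, $\ie r_2=p_2$, $\ie r_3=p_3^2$, $\ie r_4=p_3p_4$, $\ie r_5=p_3^3 p_5 p_7$, $\ie r_6=p_4 p_6 p_7$, $\ie r_7=p_7$. Since $\deg p_1+\cdots+\deg p_7=24$ and $\tfrac12(\dim{\goth g}^{e}+\rg)=\tfrac12(37+7)=22$, Condition~(2) amounts to showing that a greatest divisor $q'$ of $\dd p_1\wedge\cdots\wedge\dd p_7$ in $\es S{{\goth g}^{e}}$ has degree exactly $d=2$.

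The lower bound $\deg q'\geq 2$ can be extracted from the structural data already in hand. The Jacobian determinant of $(p_j)\mapsto(\ie r_i)$ read off from the decompositions above is block-triangular and equals $q:=2\,p_3^5 p_4 p_7^2$, of degree $13$; thus
$$\dd\ie r_1\wedge\cdots\wedge\dd\ie r_7 \;=\; q\,\dd p_1\wedge\cdots\wedge\dd p_7.$$
Writing $\dd p_1\wedge\cdots\wedge\dd p_7=q'\,\omega_1$ with $\omega_1$ primitive and applying Equality~(\ref{eqrc1}) to $\ie r_1,\ldots,\ie r_7$ gives the identity $qq'\,j(\omega_1)=a\,\ie p\,\omega_0$ of the proof of Corollary~\ref{c2rc1},(ii), where $p=16\,q_3^2 q_6(4q_3q_5-q_4^2)$ is the polynomial of Lemma~\ref{lrc1},(i) for the passage $(q_i)\rightsquigarrow(r_i)$. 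Using the algebraic identities $\ie q_3\ie q_6^2=\ie q_7^2\ie q_4^2$ and $(4\ie q_3\ie q_5-\ie q_4^2)^2=64\ie q_3^3\ie q_7^2$ together with $\ie q_3=p_3^2$, $\ie q_4=p_3 p_4$, $\ie q_7=p_7$, one finds $\ie q_6=\pm p_4 p_7$ and $4\ie q_3\ie q_5-\ie q_4^2=\pm 8 p_3^3 p_7$, so that $\ie p=\pm 128\,p_3^7 p_4 p_7^2$, of degree $15$. Since $j$ is contraction with the constant $r$-vector $v_1\wedge\cdots\wedge v_r$, it preserves the polynomial content of forms; comparing the contents of both sides of $qq'\,j(\omega_1)=a\,\ie p\,\omega_0$ yields $qq'=c\,v\,\ie p$ for some $c\in\k^{*}$, where $v$ is the polynomial $\gcd$ of the coefficients of $\omega_0$. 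Hence $q'=c'\,p_3^2\,v$ with $c'\in\k^{*}$, and $\deg q'=2+\deg v\geq 2$.

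The remaining step, which is the principal technical obstacle, is to establish the matching upper bound $\deg q'\leq 2$. I would carry this out by a direct Maple computation in an explicit weight basis of ${\goth g}^{e}$ adapted to $\ad h$: produce explicit expressions for $p_1,\ldots,p_7$ in the $37$ coordinates, expand $\dd p_1\wedge\cdots\wedge\dd p_7$ as an element of $\tk{\k}{\es S{{\goth g}^{e}}}\ex 7{{\goth g}^{e}}$, and compute the polynomial $\gcd$ of its coefficients. The structural prediction $q'\in\k^{*}\,p_3^2$ from the preceding paragraph guides and cross-checks the computation. Once the equality $\deg q'=2$ is confirmed, Corollary~\ref{c2rc1},(ii) immediately yields that ${\goth g}^{e}$ is nonsingular.
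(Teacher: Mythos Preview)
Your high-level strategy matches the paper's: verify Condition~(2) of Corollary~\ref{c2rc1},(ii) by showing that the greatest divisor $q'$ of $\dd p_1\wedge\cdots\wedge\dd p_7$ has degree exactly~$2$. Your step~1 factorization $qq'=c\,\ie p\,v$ is correct and does yield $p_3^2\mid q'$. But notice that this relation already encodes the entire difficulty: since the zero locus of $\omega_0$ is $\g^f_{\rm sing}$, the content $v$ of $\omega_0$ is a unit if and only if $\g^e$ is nonsingular. So step~1 does not reduce the problem, it only reformulates it; all the work sits in step~2, which you leave as an unperformed Maple computation in $37$ variables involving $\binom{37}{7}$ Jacobian minors of degree~$17$. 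That is not a detail---it is the proof.

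The paper sidesteps this by a genuine reduction. It introduces $\overline{\g}=\mathfrak{so}(\mathbb V_1)\oplus\mathfrak{so}(\mathbb V_2)$ with $\dim\overline{\g}^e=21$, and uses dominance of $G_0^e\times\overline{\g}^f\to\g^f$ to see that invariants are determined by restriction to $\overline{\g}^f$. A Maple computation in these $21$ variables shows the gcd of the restricted form $\dd\overline{p}_1\wedge\cdots\wedge\dd\overline{p}_7$ is $\overline{p_3}{}^{10}$. Since $q'$ is invariant (semiinvariants being invariant here) and restricts to a power of $\overline{p_3}$, injectivity of restriction on invariants forces $q'=p_3^{\,d}$. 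The exponent is then fixed by computing the Pfaffian of the $16\times16$ bracket matrix on a complement to $\overline{\g}^f$ in $\g^e$, which equals $p_3^8$ up to scalar; this yields $d=2$. Thus the paper replaces your single intractable computation by two tractable ones plus a structural argument.
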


\begin{proof}{[Proof of Claim~\ref{clrc3}]}
The space $\k^{14}$ is the orthogonal direct sum of two subspaces ${\Bbb V}_{1}$ and 
${\Bbb V}_{2}$ of dimension $6$ and $8$ respectively and such that $e,h,f$ are 
in $\overline{{\goth g}} := {\goth {so}}({\Bbb V}_{1})\oplus {\goth {so}}({\Bbb V}_{2})$.
Then $\overline{{\goth g}}^{e}=\overline{{\goth g}}\cap\g^{e}$ is a subalgebra of 
dimension $21$ containing the center
of ${\goth g}^{e}$. For $p$ in $\es S{{\goth g}^{e}}$, denote by $\overline{p}$ its 
restriction to $\overline{{\goth g}}^{f}$. 
The partition $(3,3,2,2,2,2)$ satisfies Condition (1) of the proof 
of~\cite[\S4, Lemma 3]{Y1}. So, the proof of Lemma~\ref{l2ca3} remains valid, and the 
morphism 
$$\begin{array}{ccc}
G_{0}^{e}\times \overline{{\goth g}}^{f} \longrightarrow {\goth g}^{f}, &&
(g,x) \longmapsto g(x) \end{array}$$ 
is dominant. As a result, for $p$ in $\ai ge{}$, the differential of $\overline{p}$ is 
the restriction to $\overline{{\goth g}}^{f}$ of the differential of $p$. A computation 
performed by Maple proves that $\overline{p_{3}}^{10}$ is a greatest divisor of 
$\dd \overline{p_{1}}\wedge \cdots \wedge \dd \overline{p_{7}}$ in 
$\es S{\overline{{\goth g}^{e}}}$. If $q$ is a greatest divisor of 
$\dd {p_{1}}\wedge \cdots \wedge \dd{p_{7}}$ in $\es S{{\goth g}^{e}}$, then $q$ is in 
$\ai ge{}$ since the semiinvariant polynomials are invariant. So $q=p_{3}^{d}$ for some 
nonnegative integer $d$. One can suppose that $\{\poi x1{,\ldots,}{16}{}{}{}\}$ is a 
basis of the orthogonal complement to $\overline{{\goth g}}^{f}$ in ${\goth g}^{e}$. Then
the Pfaffian of the matrix
$$\left([x_{i},x_{j}], \; 1\leq i,j \leq 16\right)$$
is in $\k^{*} p_{3}^{8}$ so that $p_{3}^{2}$ is a greatest divisor of 
$\poi {\dd p}1{\wedge \cdots \wedge }{7}{}{}{}$ in $\es S{{\goth g}^{e}}$. Since
$$ \deg p_{1} + \cdots + \deg p_{7} = 2 + 22 = 2 + \frac{1}{2}(\dim {\goth g}^{e}+\rg),$$
we conclude that $\g^{e}$ is nonsingular by Corollary~\ref{c2rc1},(ii).
\end{proof}

\begin{claim}\label{cl2rc3}
Suppose that $\ai ge{}{}$ is a polynomial algebra. Then for some homogenous polynomials
$p'_{5}$ and $p'_{6}$ of degrees at least $5$ and at most $8$ respectively, $\ai ge{}$ is
generated by $p_{1}$, $p_{2}$, $p_{3}$, $p_{4}$, $p'_{5}$, $p'_{6}$, $p_{7}$. 
Furthermore, the possible values for $(\deg p'_{5},\deg p'_{6})$ are $(5,8)$ or $(6,7)$.
\end{claim}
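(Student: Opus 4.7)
The plan is to invoke Proposition~\ref{prc1},(ii). Since $\g^{e}$ is nonsingular by Claim~\ref{clrc3} and since, as noted just above the statement, the semiinvariants of $\es S{\g^{e}}$ are invariant, the proposition applies: under the standing assumption that $\ai ge{}$ is a polynomial algebra, there exist homogeneous generators $P_{1},\ldots,P_{7}$ of $\ai ge{}$ with standard degrees $d_{1}\leq \cdots \leq d_{7}$ satisfying $d_{1}+\cdots +d_{7}=\tfrac{1}{2}(\dim \g^{e}+\rg)=\tfrac{1}{2}(37+7)=22$. Denoting by $H_{d}$ the standard-degree-$d$ homogeneous component of $\ai ge{}$, the multiset $\{d_{1},\ldots,d_{7}\}$ is determined by the Hilbert series
$$\sum_{d\geq 0}(\dim_{\k}H_{d})\,t^{d}=\prod_{i=1}^{7}(1-t^{d_{i}})^{-1},$$
equivalently by the integer $n_{d}:=\#\{i:d_{i}=d\}$ for each $d\geq 1$.

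\medskip

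The key computational step is to determine, with the aid of Maple, $\dim_{\k}H_{d}$ for $d=1,2,3,4$; I expect the values $2,5,9,16$ respectively. Expanding $\prod_{i}(1-t^{d_{i}})^{-1}$ up to order $t^{4}$ gives the coefficient of $t^{1}$ as $n_{1}$; of $t^{2}$ as $n_{2}+\binom{n_{1}+1}{2}$; of $t^{3}$ as $n_{3}+n_{1}n_{2}+\binom{n_{1}+2}{3}$; and of $t^{4}$ as $n_{4}+\binom{n_{1}+3}{4}+\binom{n_{1}+1}{2}n_{2}+\binom{n_{2}+1}{2}+n_{1}n_{3}$. Successive substitution forces $n_{1}=2$, $n_{2}=2$, $n_{3}=1$, $n_{4}=0$; consequently $d_{1}=d_{2}=1$, $d_{3}=d_{4}=2$, $d_{5}=3$, and $d_{6}\geq 5$.

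\medskip

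To match five of the generators with the invariants already available, observe first that $p_{1},p_{3}$ form a basis of $H_{1}$. The five distinct monomials $p_{1}^{2},p_{1}p_{3},p_{3}^{2},p_{2},p_{4}$ lie in $H_{2}$ and are linearly independent, being distinct monomials in the algebraically independent polynomials $p_{1},p_{2},p_{3},p_{4}$; by the dimension count they form a basis of $H_{2}$, so $p_{2},p_{4}$ are linearly independent modulo $(\ai ge{}_{+})^{2}$. Similarly, the nine monomials $p_{1}^{3},p_{1}^{2}p_{3},p_{1}p_{3}^{2},p_{3}^{3},p_{1}p_{2},p_{3}p_{2},p_{1}p_{4},p_{3}p_{4},p_{7}$ are linearly independent in $H_{3}$ and hence, by the dimension count, form a basis of $H_{3}$, so that $p_{7}$ is indecomposable. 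Hence $\{p_{1},p_{3},p_{2},p_{4},p_{7}\}$ can be completed into a minimal homogeneous generating system of $\ai ge{}$; setting $p'_{5}:=P_{6}$ and $p'_{6}:=P_{7}$ with $\deg p'_{5}\leq \deg p'_{6}$, the constraints $\deg p'_{5}+\deg p'_{6}=22-9=13$, $\deg p'_{5}\leq \deg p'_{6}$, and $\deg p'_{5}\geq 5$ leave only the pairs $(5,8)$ and $(6,7)$.

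\medskip

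The main obstacle is the explicit computation of $\dim_{\k}H_{d}$ for $d\leq 4$. Using a basis of $\g^{e}$ adapted to the decomposition $\g^e=\mathfrak{z}\oplus[\l,\l]^e\oplus\g^e_{\uu}$ invoked in the proof of Claim~\ref{clrc3}, this amounts to computing the dimension of the kernel of the coadjoint action of $\g^{e}$ on the degree $d$ component of $\es S{\g^{e}}$, a finite-dimensional linear-algebra task within the scope of a computer algebra system. Beyond these dimension computations, the argument only uses the standard fact that the degrees appearing in a minimal homogeneous generating system of a graded polynomial algebra are uniquely determined, together with the elementary arithmetic of the Hilbert series above.
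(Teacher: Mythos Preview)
Your argument is logically sound and would establish the claim, provided the Maple computation of $\dim_{\k}H_{d}$ for $d\leq 4$ actually returns $2,5,9,16$; the Hilbert-series bookkeeping and the replacement of five of the generators by $p_{1},p_{3},p_{2},p_{4},p_{7}$ via algebraic independence are correct.

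This is, however, a genuinely different route from the paper's. The paper never computes $\dim_{\k}H_{d}$. Instead it argues degree by degree from the known invariants: (a) $n_{1}=2$ is the dimension of the center of $\g^{e}$, quoted from \cite{Mo},\cite{Y2}; (b) since $p_{1},p_{2},p_{3},p_{4}$ (resp.\ $p_{1},p_{2},p_{3},p_{4},p_{7}$) are algebraically independent of degrees $\leq 2$ (resp.\ $\leq 3$), at least four (resp.\ five) of the $\varphi_{i}$ must have degree $\leq 2$ (resp.\ $\leq 3$); (c) the bound $\deg\varphi_{7}\leq 8$ --- and hence $\deg\varphi_{6}\geq 5$, replacing your $n_{4}=0$ step --- comes from the same transcendence-degree principle applied to $p_{6}$ of degree $8$; (d) the remaining possibility $\deg\varphi_{5}=2$ is eliminated by a single targeted Maple check, namely that $p_{7}$ does not vanish on the nullvariety of $p_{1},p_{3}$ in $\g^{f}$. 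Thus the paper trades your four dimension computations for structural arguments plus one lightweight restriction computation; your approach is more uniform but the degree-$4$ step lives in a space of dimension $\binom{40}{4}\approx 9\cdot 10^{4}$.

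One caveat worth flagging: the values $2,5,9,16$ you ``expect'' are precisely what the Hilbert series \emph{would} be if the conclusion of the claim held. Since the surrounding argument ultimately shows that $\ai ge{}$ is \emph{not} polynomial, the numbers $\dim_{\k}H_{d}$ are not determined by any such formula, and nothing short of carrying out the computation justifies them.
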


\begin{proof}{[Proof of Claim~\ref{cl2rc3}]}
Since the semiinvariants are invariants, by Claim~\ref{clrc3} and 
Proposition~\ref{prc1},(ii), there are homogenous generators 
$\poi {\varphi }1{,\ldots,}{\rg}{}{}{}$ of $\ai ge{}$ such that
$$ \poi {\deg \varphi }1{ \leq \cdots \leq }{\rg}{}{}{} ,$$
and 
$$ \poi {\deg \varphi }1{+ \cdots +}{\rg}{}{}{} = \frac{1}{2}(\dim {\goth g}^{e}+\rg) 
= 22.$$
According to~\cite[Theorem 1.1.8]{Mo} or \cite{Y2}, the center of ${\goth g}^{e}$ has 
dimension $2$. Hence, $\varphi _{1}$ and $\varphi _{2}$ have degree $1$. Thereby, we can 
suppose that $\varphi _{1}=p_{1}$ and $\varphi _{2}=p_{3}$ since $p_{1}$ and $p_{3}$ are 
linearly independent elements of the center of ${\goth g}^{e}$. Since $p_{2}$ and $p_{4}$
are homogneous elements of degree $2$ such that $\poi p1{,\ldots,}{4}{}{}{}$ are 
algebraically indepent, $\varphi _{3}$ and $\varphi _{4}$ have degree $2$ and we can 
suppose that $\varphi _{3}=p_{2}$ and $\varphi _{4}=p_{4}$. Since $p_{7}$ has degree $3$, 
$\varphi _{5}$ has degree at most $3$ and at least $2$ since the center of 
${\goth g}^{e}$ has dimension $2$. Suppose that $\varphi _{5}$ has degree $2$. A 
contradiction is expected. Then 
$$ \deg \varphi _{6} + \deg \varphi _{7} = 22-(1+1+2+2+2)=14 .$$ 
Moreover, since $\poi p1{,\ldots,}{7}{}{}{}$ are algebraically independent, $\varphi _{7}$
has degree at most $8$ and $\varphi _{6}$ has degree at least $6$. Hence $p_{7}$ is in 
the ideal of $\k[p_{1},p_{3},\varphi _{3},\varphi _{4},\varphi _{5}]$ generated by 
$p_{1}$ and $p_{3}$. But a computation shows that the restriction of $p_{7}$ to the 
nullvariety of $p_{1}$ and $p_{3}$ in ${\goth g}^{f}$ is different from $0$, whence the 
expected contradiction. As a result, $\varphi _{5}$ has degree $3$ and 
$$ \deg \varphi _{6} + \deg \varphi _{7} = 13 .$$
One can suppose $\varphi _{5}=p_{7}$ and the possible values for
$(\deg \varphi _{6},\deg \varphi _{7})$ are $(5,8)$ and $(6,7)$ since $\varphi _{7}$ has 
degree at most $8$. 
\end{proof}

Suppose that $\ai g{e}{}$ is a polynomial algebra. A contradiction is expected. 
Let $p'_{5}$ and $p'_{6}$ be as in Claim~\ref{cl2rc3} and such that 
$\deg p'_{5}<\deg p'_{6}$. Then $(\deg p'_{5},\deg p'_{6})$ equals $(5,8)$ or $(6,7)$. 
A computation shows that we can choose a basis $\{\poi x1{,\ldots,}{37}{}{}{}\}$ of 
${\goth g}^{e}$ with $x_{37}=p_{3}$, with $p_{1}$, $p_{2}$, $p_{3}$, $p_{4}$, $p_{7}$ 
in $\k[\poi x3{,\ldots,}{37}{}{}{}]$ and with $p_{5}$, $p_{6}$ of degree $1$ in $x_{1}$. 
Moreover, the coefficient of $x_{1}$ in $p_{5}$ is a prime element of 
$\k[\poi x3{,\ldots,}{37}{}{}{}]$, the coefficient of $x_{1}$ in $p_{6}$ is a prime 
element of $\k[\poi x2{,\ldots,}{37}{}{}{}]$ having degree $1$ in $x_{2}$, and the 
coefficient of $x_{1}x_{2}$ in $p_{6}$ equals $a^2p_{3}^{2}$ with $a$ a prime homogenous
polynomial of degree $2$ such that $a,p_{1},p_{2},p_{3},p_{4}$ are algebraically 
independent. In particular, $a$ is not invariant. If $p'_{5}$ has degree $5$, then 
$$ p_{5} = p'_{5}r_{0} + r_{1}$$
with $r_{0}$ in $\k[p_{1},p_{2},p_{3},p_{4}]$ and $r_{1}$ in 
$\k[p_{1},p_{2},p_{3},p_{4},p_{7}]$ so that $p'_{5}$ has degree 1 in $x_{1}$, and the 
coefficient of $x_{1}$ in $p_{5}$ is the product of $r_{0}$ and the coefficient of
$x_{1}$ in $p'_{5}$. But this is impossible since this coefficient is prime. So, 
$p'_{5}$ has degree $6$ and $p'_{6}$ has degree $7$. We can suppose that 
$p'_{6}=p_{5}$. Then 
$$ p_{6} = p_{5}r_{0} + p'_{6}r_{1} + r_{2}$$ 
with $r_{0}$ homogenous of degree $1$ in $\k[p_{1},p_{3}]$, $r_{1}$ homogenous of 
degree $2$ in $\k[p_{1},p_{2},p_{3},p_{4}]$, and $r_{2}$ homogenous of degree 
$8$ in $\k[p_{1},p_{2},p_{3},p_{4},p_{7}]$. According to the above remarks on $p_{5}$
and the coefficient of $x_{1}x_{2}$ in $p_{6}$, $r_{1}$ is in $\k^{*}p_{3}^{2}$
since $r_{1}$ has degree $2$. 

For $p$ in $\es S{{\goth g}^{e}}$, denote by $\overline{p}$ its image in 
$\es S{{\goth g}^{e}}/p_{3}\es S{{\goth g}^{e}}$. A computation shows that for some
$u$ in $\es S{{\goth g}^{e}}/p_{3}\es S{{\goth g}^{e}}$, 
$$\begin{array}{ccc}
\overline{p_{5}} = \overline{p_{4}}^{2}u, && 
\overline{p_{6}} = -\overline{p_{4}}\overline{p_{7}}u.\end{array}$$
Furthermore, $\overline{p_{4}}$ and $\overline{p_{7}}$ are different prime elements of 
$\es S{{\goth g}^{e}}/p_{3}\es S{{\goth g}^{e}}$ and the coefficient $u_{1}$ of 
$x_{1}$ in $u$ is the product of two different polynomials of degree $1$. The coefficient
of $x_{1}$ in $\overline{p_{6}}$ is $u_{1}\overline{p_{4}}^{2}\overline{r_{0}}$ since 
$$\overline{p_{6}}= \overline{p_{5}}\overline{r_{0}}+\overline{r_{2}}. $$
On the other hand, the coefficient of $x_{1}$ in $\overline{p_{6}}$ is 
$-u_{1}\overline{p_{4}}\overline{p_{7}}$, whence the contradiction since $r_{0}$ has 
degree $1$.
\end{example}

\subsection{A conjecture} \label{rc5}
All examples of good elements we achieved 
satisfy the hypothesis of Theorem \ref{tge2}. 

\begin{conj} \label{cjrc5}
Let $\g$ be a simple Lie algebra and let $e$ be a nilpotent of $\g$.
If $e$ is good then for some homogenous generating sequence 
$(\poi q1{,\ldots,}{\rg}{}{}{})$ in $\ai g{}{}$, $\poi {\ie q}1{,\ldots,}{\rg}{}{}{}$ are 
algebraically independent over $\k$. In other words, the converse implication of 
Theorem {\rm \ref{tge2}} holds.  
\end{conj}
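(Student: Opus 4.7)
The plan is to prove the converse of Theorem \ref{tge2}, so I assume $e$ is a good nilpotent element of $\g$. By Theorem \ref{tge1}, $\ai g e{}$ is then a polynomial algebra; fix homogeneous generators $\pi_1, \ldots, \pi_\ell$ of standard degrees $\delta_1 \leq \cdots \leq \delta_\ell$. Fix also arbitrary homogeneous generators $q_1, \ldots, q_\ell$ of $\ai g{}{}$ of degrees $d_1 \leq \cdots \leq d_\ell$; any other graded generating sequence is obtained via a graded polynomial automorphism of $\k[q_1, \ldots, q_\ell]$, and under the simplifying hypothesis that the $d_i$ are distinct, this automorphism has the triangular form $\tilde q_i = R_i(q_1, \ldots, q_i)$ with $\partial R_i / \partial q_i \neq 0$.

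By Corollary \ref{crc1}, the initial components $\ie{\tilde q_1}, \ldots, \ie{\tilde q_\ell}$ coming from such a triangular transformation are algebraically independent precisely when
\[
\sum_{i=1}^\ell \deg \ie{\tilde q_i} \;=\; \sum_{i=1}^\ell \deg \ie{p_i} \;+\; \tfrac{1}{2}(\dim \g^e + \ell),
\]
where $p_i := \partial R_i / \partial q_i \in \ai g{}{}$. The natural strategy is to modify the generators iteratively, starting from the smallest degree: at step $i$, choose $R_i$ so that $\ie{\tilde q_i}$ extracts the maximal amount of ``new'' information in $\ai g e{}$, meaning it is not algebraic over $\k[\ie{\tilde q_1}, \ldots, \ie{\tilde q_{i-1}}]$ whenever this can be arranged. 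If the process can be completed through all $\ell$ steps, then the degree sum attains the upper bound of Lemma \ref{lrc1}(ii), and algebraic independence follows.

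The principal obstacle is that there is no systematic way to lift a given element of $\ai g e{}$ back to an element of $\ai g{}{}$ with a prescribed initial component: the map $\Phi: \ai g{}{} \to \ai g e{},\ q \mapsto \ie{q}$ is neither surjective nor readily understood image-wise. A promising line of attack is to prove the cleaner intermediate statement that $\Phi(\ai g{}{})$ has transcendence degree $\ell$ over $\k$ whenever $e$ is good: granted this, one selects $\ell$ algebraically independent elements $\ie{q_{i_1}}, \ldots, \ie{q_{i_\ell}}$ from $\Phi(\ai g{}{})$ and then, using that $\ai g{}{}$ is polynomial in $\ell$ variables together with a degree-dimension count, argues that these arise from a full graded generating sequence of $\ai g{}{}$. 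The transcendence-degree statement itself would likely follow from a geometric argument relating the nullvariety in $\g^f$ of the ideal generated by $\Phi(\ai g{}{})_+$ to that of $\ai g e{}_+$, which has codimension $\ell$ by goodness. The smooth morphism $\theta_e: G \times (e + \g^f) \to \g$ from Lemma \ref{lsg2}(i) is the natural tool for such a comparison, but controlling it precisely enough to conclude that these two nullvarieties agree (at least at the level of underlying sets) appears to demand genuinely new ideas beyond the methods developed in this paper, and is where I would expect the core difficulty to lie.
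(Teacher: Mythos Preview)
This statement is a \emph{conjecture}, not a theorem: the paper offers no proof of it whatsoever. It is presented in Subsection~\ref{rc5} as an open question, accompanied only by the remark that all examples of good elements obtained in the paper happen to satisfy the hypothesis of Theorem~\ref{tge2}, and by the observation that the existence of algebraically independent $\ie r_1,\ldots,\ie r_\ell$ for \emph{some} (not necessarily generating) $r_i\in\ai g{}{}$ does not force $e$ to be good.

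Your proposal is therefore not a proof to be compared against the paper's, but an outline of an attack on an open problem. You correctly identify the central obstruction yourself: controlling the image of the map $q\mapsto\ie q$ from $\ai g{}{}$ to $\ai g e{}$. A few specific issues with the sketch: the simplifying assumption that the $d_i$ are distinct is false in general (already in type ${\bf D}_\ell$ there are two invariants of degree $\ell$, and in the exceptional types repetitions occur), so triangular automorphisms do not exhaust the graded generating sequences. More seriously, the proposed intermediate statement---that $\Phi(\ai g{}{})$ has transcendence degree $\ell$ whenever $e$ is good---would not obviously yield the conjecture even if true, because elements of $\Phi(\ai g{}{})$ arising from non-generators of $\ai g{}{}$ need not be expressible as $\ie q_i$ for any graded generating family $(q_i)$. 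The geometric comparison you suggest via $\theta_e$ relates the nullvariety of $\kappa(q_1),\ldots,\kappa(q_\ell)$ to the nilpotent cone, which is already exploited in Lemma~\ref{lsg2}; extracting from this a statement about the \emph{initial} components $\ie q_i$ is exactly the passage that required the elaborate Slodowy-grading machinery of Section~\ref{sg} in the forward direction, and there is no evident way to reverse it.
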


Notice that it may happen that for some $r_1,\ldots,r_\ell$ in $\ai g{}{}$, 
the elements $\poi {\ie r}1{,\ldots,}{\rg}{}{}{}$ are algebraically independent over 
$\k$, and that however $e$ is not good. This is the case for instance for the nilpotent 
elements in $\mathfrak{so}(\k^{12})$ associated with the partition $(5,3,2,2)$, 
cf.\ Example \ref{e2rc2}. 

In fact, according to \cite[Corollary 2.3]{PPY}, for any nilpotent $e$ of 
$\g$, there exist $\poi r1{,\ldots,}{\rg}{}{}{}$ in $\ai g{}{}$ such that 
$\poi {\ie r}1{,\ldots,}{\rg}{}{}{}$ are algebraically independent over $\k$.


\begin{thebibliography}{DK00}
\bibitem[AP]{AP} T.~Arakawa and A.~Premet, 
{\em Quantization of Fomenko-Mishchenko algebras via affine $W$-algebras}, 
preprint.  

\bibitem[Ben93]{Ben} D. J. Benson, {\em Polynomial invariants of finite groups},
Cambridge University Press (1993), Cambridge, New York. 

\bibitem[Bol91]{Bolb} A.V.~Bolsinov, 
{\em Commutative families of functions related to consistent Poisson brackets}, 
Acta Applicandae Mathematic\ae, {\bf 24} (1991), {\bf n$^\circ$1}, 253--274.

\bibitem[Bou98]{Bou} N.~Bourbaki, 
{\em Alg\`ebre commutative, Chapitre 10, \'El\'ements de math\'ematiques}, 
Masson (1998), Paris.

\bibitem[BB09]{BB} J.~Brown and J.~Brundan, 
{\em Elementary invariants for centralizers of nilpotent matrices}, 
J. Aust. Math. Soc. {\bf 86} (2009),  {\bf n$^{\circ}$ 1}, 1--15. 

\bibitem[Ca85]{Ba} R.W.~Carter, 
{\em Finite groups of Lie type: Conjugacy classes and complex characters}, 
Wiley, New York, 1985.

\bibitem[CMo10]{CM} J.-Y.~Charbonnel and A.~Moreau, 
{\em The index of centralizers of elements of reductive 
Lie algebras}, Documenta Mathematica, {\bf 15} (2010), 387-421. 

\bibitem[Ch45]{Che} C.~Chevalley, 
{\em Intersection of Algebraic and Algebroid Varieties}, 
Transaction of the American Mathematical Society, {\bf 57} (1945), 1--85.

\bibitem[CMc93]{CMa} D.~Collingwood and W.~McGovern, 
{\em Nilpotent orbits in semisimple {L}ie algebras},
Van Nostrand Reinhold Co. New York, {\bf 65} (1993).
	
\bibitem[DeG08]{DeG} W.A.~de Graaf,
{\em Computing with nilpotent orbits in simple Lie algebras of exceptional type}, 
London Math. Soc. (2008), 1461--1570.

\bibitem[Di74]{Di1} J.~Dixmier, 
{\em Alg\`ebres enveloppantes}, Gauthier-Villars (1974).

\bibitem[DDV74]{DDV} J.~Dixmier, M.~Duflo and M.~Vergne, 
{\em Sur la repr\'esentation coadjointe d'une alg\`ebre de Lie}, 
Composition Mathematica, {\bf 29}, (1974), 309--323.

\bibitem[DV69]{DV} M.~Duflo and M.~Vergne, 
{\em Une propri\'et\'e de la repr\'esentation coadjointe d'une alg\`ebre de Lie}, 
C.R.A.S. Paris (1969).

\bibitem[FF92]{FF} B.~Feigin and E.~Frenkel, 
{\em Affine Kac-Moody algebras at the critical level and Gel'fand-Diki\u{\i} algebras}. 
Infinite analysis, Part A, B (Kyoto, 1991), 197--215, Adv. Ser. Math. Phys., {\bf 16}, 
World Sci. Publ., River Edge, NJ, 1992. 

\bibitem[H77]{Ha} R.~Hartshorne, 
{\em Algebraic Geometry}, 
Graduate Texts in Mathematics {\bf n$^{\circ}$52} (1977), Springer-Verlag, Berlin
Heidelberg New York.

\bibitem[Hir64]{Hi} H.~Hironaka, 
{\em Resolution of singularities of an algebraic variety over a field of 
characteristic zero. I,II}, Annals of Mathematics {\bf  79} (1964), p. 109--203 and
p. 205--326.

\bibitem[JS10]{JS} A.~Joseph and D.~Shafrir, 
{\em Polynomiality of invariants, unimodularity and adapted pairs}, Transformation 
Groups, {\bf 15}, (2010), 851--882.

\bibitem[Ko63]{Ko} B.~Kostant, 
{\em Lie group representations on polynomial rings}, American
Journal of Mathematics {\bf 85} (1963), p. 327--404.

\bibitem[Ma86]{Mat} H.~Matsumura, 
{\em Commutative ring theory} 
Cambridge studies in advanced mathematics (1986), {\bf n$^{\circ}$8}, Cambridge
University Press, Cambridge, London, New York, New Rochelle, Melbourne,
Sydney.

\bibitem[Me88]{Me} M.L.~Mehta,
{\em Basic sets of invariant polynomials for finite reflection groups}, 
Comm. Algebra {\bf 16} (1988), {\bf n$^{\circ}$5}, 1083--1098. 

\bibitem[Mo06c]{Mo} A.~Moreau, 
{\em Quelques propri\'et\'es de l'indice dans une alg\`ebre de Lie 
semi-simple}, PhD Thesis (2006), available at
{http://www.institut.math.jussieu.fr/theses/2006/moreau/}.

\bibitem[Mum88]{Mu} D.~Mumford, 
{\em The Red Book of Varieties and Schemes}, Lecture Notes
in Mathematics (1988), {\bf n$^{\circ}$1358}, Springer-Verlag, Berlin,
Heidelberg, New York, London, Paris, Tokyo.

\bibitem[Pa03]{Pa2} D.I.~Panyushev,
{\em The index of a Lie algebra, the centralizer of a nilpotent element, 
and the normaliser of the centralizer},
Math. Proc. Camb. Phil. Soc., {\bf 134} (2003), 41--59.

\bibitem[PY13]{PY} D.I.~Panyushev and O.~Yakimova, 
{\em Parabolic contractions of semisimple Lie algebras and their invariants},
Selecta Math {\bf 19} (2013), {\bf n$^{\circ}$3}, 699--717. 

\bibitem[PPY07]{PPY} D.I.~Panyushev, A.~Premet and O.~Yakimova, 
{\em On symmetric invariants of centralizers in reductive Lie algebras}, 
Journal of Algebra {\bf 313} (2007), 343--391.

\bibitem[Pr02]{Pr} A.~Premet, 
{\em Special transverse slices and their enveloping algebras},
Advances in Mathematics {\bf 170} (2002), 1--55.

\bibitem[R63]{Ro} M.~Rosenlicht,
{\em A remark on quotient spaces}, 
Anais da Academia brasileira de ciencias {\bf 35} (1963), 487--489.

\bibitem[T12]{To} L.~Topley, 
{\em Invariants of Centralisers in Positive Characteristic}, 
Journal of Algebra {\bf 399} (2014), 1021--1050. 

\bibitem[Y06]{Y1} O.~Yakimova, 
{\em The index of centralisers of elements in classical Lie algebras},
Functional Analysis and its Applications {\bf 40} (2006), 42--51.

\bibitem[Y06b]{Y2} O.~Yakimova, 
{\em Centers of centralisers in the classical Lie algebras},
preprint available at {\texttt http://www.mccme.ru/~yakimova/center/center.pdf} (2006).

\bibitem[Y07]{Y3} O.~Yakimova, 
{\em A counterexample to Premet's and Joseph's conjecture},
Bulletin of the  London Mathematical Society {\bf 39} (2007), 749--754.

\bibitem[Y09]{Y4} O.~Yakimova, 
{\em Surprising properties of centralisers in classical Lie algebras}, 
Ann. Inst. Fourier (Grenoble) {\bf 59} (2009), {\bf n$^{\circ}$3}, 903--935. 


\end{thebibliography}
\end{document}